\renewcommand*{\baselinestretch}{1.25}
\newtheorem{theorem}{Theorem}[section]
\newtheorem{lemma}{Lemma}[section]
\newtheorem{proposition}{Proposition}[section]
\newtheorem{corollary}{Corollary}[section]
\theoremstyle{definition}
\newtheorem{definition}{Definition}[section]
\newtheorem*{rmk*}{Remark}
\newtheorem{rmk}{Remark}[section]
\renewcommand*\proofname{\upshape{\bfseries{Proof}}}
\DeclareMathOperator{\variance}{Var}
\DeclareMathOperator{\covariance}{Cov}
\DeclareMathOperator{\trace}{tr}
\DeclareMathOperator{\opL}{L}
\DeclareMathOperator{\kernel}{Ker}
\DeclareMathOperator{\id}{Id}
\DeclareMathOperator{\influence}{Inf}
\DeclareMathOperator{\expectation}{E}
\newcommand{\wtimes}{\mathbin{\widetilde{\otimes}}}
\newcommand{\labs}{\left|}
\newcommand{\rabs}{\right|}
\newcommand{\lpa}{\left(}
\newcommand{\rpa}{\right)}
\newcommand{\bs}[1]{\boldsymbol{#1}}
\newcommand{\ol}[1]{\overline{#1}}
\newcommand{\ul}[1]{\underline{#1}}
\newcommand{\wh}[1]{\widehat{#1}}
\newcommand{\wt}[1]{\widetilde{#1}}
\newcommand{\wstar}[1]{\mathbin{\widehat{\star_{#1}^0}}}
\newcommand{\ex}[1]{\expectation\left[#1\right]}
\newcommand{\expe}[1]{\mathrm{E}[#1]}
\newcommand{\mf}[1]{\mathfrak{#1}}
\newcommand{\mc}[1]{\mathcal{#1}}
\numberwithin{equation}{section}
    \renewcommand*{\section}{\@startsection{section}{1}{\z@}%
    {6pt}{3pt}{\reset@font\normalsize\bfseries}}
    \renewcommand*{\subsection}{\@startsection{subsection}{2}{\z@}%
    {3pt}{3pt}{\reset@font\normalsize\mdseries\itshape}}
    \renewcommand*{\subsubsection}{\@startsection{subsubsection}{3}{\z@}%
    {3pt}{3pt}{\reset@font\normalsize\mdseries\itshape}}
\def\@seccntformat#1{\csname the#1\endcsname.\quad}
\def\@listi{\leftmargin\leftmargini
  \topsep=.5\baselineskip 
  \partopsep=0pt \parsep=0pt \itemsep=0pt}
\let\@listI\@listi
\def\@listii{\leftmargin\leftmarginii
  \labelwidth\leftmarginii \advance\labelwidth-\labelsep
  \topsep=0pt \partopsep=0pt \parsep=0pt \itemsep=0pt}
\def\@listiii{\leftmargin\leftmarginiii
  \labelwidth\leftmarginiii \advance\labelwidth-\labelsep
  \topsep=0pt \partopsep=0pt \parsep=0pt \itemsep=0pt}
\def\@listiv{\leftmargin\leftmarginiv
  \labelwidth\leftmarginiv \advance\labelwidth-\labelsep
  \topsep=0pt \partopsep=0pt \parsep=0pt \itemsep=0pt}
\newcommand{\opnorm}{\@ifstar\@opnorms\@opnorm}
\newcommand{\@opnorms}[1]{%
  \left|\mkern-1.5mu\left|\mkern-1.5mu\left|
   #1
  \right|\mkern-1.5mu\right|\mkern-1.5mu\right|
}
\newcommand{\@opnorm}[2][]{%
  \mathopen{#1|\mkern-1.5mu#1|\mkern-1.5mu#1|}
  #2
  \mathclose{#1|\mkern-1.5mu#1|\mkern-1.5mu#1|}
}
\renewenvironment{proof}[1][\proofname]{\par
  \pushQED{\qed}%
  \normalfont \topsep6\p@\@plus6\p@\relax
  \trivlist
  \item[\hskip\labelsep
        \bfseries
    #1\@addpunct{.}]\ignorespaces
}{%
  \popQED\endtrivlist\@endpefalse
}
\title{High-dimensional central limit theorems for homogeneous sums}
\author{Yuta Koike\\
\textit{University of Tokyo and CREST JST}
}
\begin{document}

\maketitle

\begin{abstract}

This paper develops a quantitative version of de Jong's central limit theorem for homogeneous sums in a high-dimensional setting. 
More precisely, under appropriate moment assumptions, we establish an upper bound for the Kolmogorov distance between a multi-dimensional vector of homogeneous sums and a Gaussian vector so that the bound depends polynomially on the logarithm of the dimension and is governed by the fourth cumulants and the maximal influences of the components.  
As a corollary, we obtain high-dimensional versions of fourth moment theorems, universality results and Peccati-Tudor type theorems for homogeneous sums. 
We also sharpen some existing (quantitative) central limit theorems by applications of our result. 
\vspace{3mm}

\noindent \textit{Keywords}: de Jong's theorem; fourth moment theorem; high-dimensions; Peccati-Tudor type theorem; quantitative CLT; randomized Lindeberg method; Stein kernel; universality.

\end{abstract}

\section{Introduction}

Let $\bs{X}=(X_i)_{i=1}^\infty$ be a sequence of independent centered random variables with unit variance. 
A \textit{homogeneous sum} is a random variable of the form
\[
Q(f;\bs{X})=\sum_{i_1,\dots,i_p=1}^Nf(i_1,\dots,i_q)X_{i_1}\cdots X_{i_q},
\]
where $N,q\in\mathbb{N}$, $[N]:=\{1,\dots,N\}$ and $f:[N]^q\to\mathbb{R}$ is a symmetric function vanishing on diagonals, i.e.~$f(i_1,\dots,i_q)=0$ unless $i_1,\dots,i_q$ are mutually different. 
Studies of limit theorems for a sequence of homogeneous sums have some history in probability theory. 
\citet{Rotar1975,Rotar1979} investigated invariance principles for $Q(f;\bs X)$ regarding the law of $\bs X$. 
In the notable work of \citet{deJong1990}, the following striking result has been established: 
For every $n\in\mathbb{N}$, let $f_n:[N_n]^q\to\mathbb{R}$ be a symmetric function vanishing on diagonals with $q$ being fixed and $N_n\uparrow\infty$ as $n\to\infty$. Assume that $\expe{X_i^4}<\infty$ for all $i$ and $\expe{Q(f_n;\bs{X})^2}=1$ for all $n$. Then, $Q(f_n;\bs{X})$ converges in law to the standard normal distribution, provided that the following two conditions hold true:
\begin{enumerate}[label=(\roman*)]

\item\label{cond:fourth} $\expe{Q(f_n;\bs{X})^4}\to3$ as $n\to\infty$.

\item\label{cond:influence} $\max_{1\leq i\leq N_n}\influence_i(f_n)\to0$ as $n\to\infty$, where the quantity $\influence_i(f_n)$ is defined by
\begin{equation}\label{def:influence}
\influence_i(f_n):=\sum_{i_2,\dots,i_q=1}^{N_n}f_n(i,i_2,\dots,i_q)^2
\end{equation}
and called the \textit{influence} of the $i$th variable. 

\end{enumerate}
When $q=1$, condition \ref{cond:influence} says that $\max_{1\leq i\leq N_n}f_n(i)^2\to0$ as $n\to\infty$, which is equivalent to the celebrated \textit{Lindeberg condition}. In this case condition \ref{cond:fourth} is always implied by \ref{cond:influence}, and thus it is an extra one. 
In contrast, when $q\geq2$, condition \ref{cond:influence} is no longer sufficient for the asymptotic normality of the sequence $(Q(f_n;\bs{X}))_{n=1}^\infty$, so one needs an additional condition. 
The motivation of introducing condition \ref{cond:fourth} in \cite{deJong1990} was that one can easily check condition \ref{cond:fourth} is equivalent to the asymptotic normality of $(Q(f_n;\bs{X}))_{n=1}^\infty$ when $q=2$ and $\bs{X}$ is Gaussian (see also \cite{deJong1987}). 
Later on, this observation was significantly improved in the influential paper by \citet{NP2005}: For any $q$, the asymptotic normality of $(Q(f_n;\bs{X}))_{n=1}^\infty$ is implied just by condition \ref{cond:fourth} as long as $\bs{X}$ is Gaussian. Results of this type are nowadays called \textit{fourth moment theorems} and have been extensively studied in the past decade. 
In particular, further investigation of the fourth moment theorem in \cite{NP2005} has led to the introduction of the so-called \textit{Malliavin-Stein method} by \citet{NP2009PTRF}, which have produced one of the most active research areas in the recent probabilistic literature. 
We refer the reader to the monograph \cite{NP2012} for an introduction to this subject and the survey \cite{AP2018} for recent developments. 

Implication of the Malliavin-Stein method to de Jong's central limit theorem (CLT) for homogeneous sums has been investigated in the seminal work of \citet*{NPR2010aop}, where several important extensions of de Jong's result have been developed.  
The following three results are particularly relevant to our work:
\begin{enumerate}[label=(\Roman*)]

\item\label{intro:multi} First, they have established a multi-dimensional extension of de Jong's CLT which shows  multi-dimensional vectors of homogeneous sums enjoy a CLT if de Jong's criterion is satisfied component-wise. 
More precisely, let $d\in\mathbb{N}$ and, for every $j=1,\dots,d$, let $q_j\in\mathbb{N}$ and $f_{n,j}:[N_n]^{q_j}\to\mathbb{R}$ be a symmetric function vanishing on diagonals. Also, let $\mf{C}=(\mf{C}_{jk})_{1\leq j,k\leq d}$ be a $d\times d$ positive semidefinite symmetric matrix and suppose that $\max_{1\leq j,k\leq d}|\expe{Q(f_{n,j};\bs{X})Q(f_{n,k};\bs{X})}-\mf{C}_{jk}|\to0$ as $n\to\infty$. Then, the $d$-dimensional random vector $\bs{Q}^{(n)}(\bs{X}):=(Q(f_{n,1};\bs{X}),\dots,Q(f_{n,d};\bs{X}))$ converges in law to the $d$-dimensional normal distribution $\mathcal{N}_d(0,\mf{C})$ with mean 0 and covariance matrix $\mf{C}$ as $n\to\infty$ if $\expe{Q(f_{n,j};\bs{X})^4}-3\expe{Q(f_{n,j};\bs{X})^2}^2\to0$ and $\max_{1\leq i\leq N_n}\influence_i(f_{n,j})\to0$ as $n\to\infty$ for every $j=1,\dots,d$. 

\item\label{intro:univ} Second, they have found the following \textit{universality} of Gaussian variables in the context of homogeneous sums (\cite[Theorem 1.2]{NPR2010aop}): Assume $\sup_n\sum_{i_1,\dots,i_q=1}^{N_n}f_{n,j}(i_1,i_2,\dots,i_{q_j})^2<\infty$ and $\mf{C}_{jj}>0$ for every $j$. 
Then, if $\bs{Q}^{(n)}(\bs{G})$ converges in law to $\mathcal{N}_d(0,\mf{C})$ as $n\to\infty$ for a sequence of standard Gaussian variables $\bs{G}=(G_i)_{i=1}^\infty$, then $\bs{Q}^{(n)}(\bs{X})$ converges in law to $\mathcal{N}_d(0,\mf{C})$ as $n\to\infty$ for any sequence $\bs{X}=(X_i)_{i=1}^\infty$ of independent centered random variables with unit variance and such that $\sup_i\expe{|X_i|^3}<\infty$.  

\item\label{intro:quant} Third, they have established some quantitative versions of de Jong's CLT for homogeneous sums; see Proposition 5.4 and Corollary 7.3 in \cite{NPR2010aop} for details (see also Section \ref{sec:comp-npr}). 

\end{enumerate}
We remark that these results have been generalized in various directions by subsequent studies. 
For example, the universality results analogous to \ref{intro:univ} have also been established for Poisson variables in \citet{PZ2014} and i.i.d.~variables with zero skewness and non-negative excess kurtosis in \citet{NPPS2016,NPPS2016esaim}, respectively. 
Also, the recent work of \citet{DP2017ejp} has extended \ref{intro:multi} and \ref{intro:quant} to more general degenerate $U$-statistics which were originally treated in \cite{deJong1990}. 

As the title of the paper suggests, the aim of this paper is to extend the above results to a high-dimensional setting where the dimension $d$ depends on $n$ and $d=d_n\to\infty$ as $n\to\infty$. 
Of course, in such a setting, the ``asymptotic distribution'' $\mc{N}_d(0,\mf{C})$ also depends on $n$ and, even worse, it is typically no longer tight. Therefore, we need to properly reformulate the above statements in this setting. In this paper we adopt the so-called \textit{metric approach} to accomplish this purpose: We try to establish the convergence of some metric between the laws of $\bs{Q}^{(n)}(\bs{X})$ and $\mc{N}_d(0,\mf{C})$. Specifically, we take the \textit{Kolmogorov distance} as the metric between the probability laws. Namely, letting $Z^{(n)}$ be a $d_n$-dimensional centered Gaussian vector with covariance matrix $\mf{C}_n$ for each $n$, we aim at proving the following convergence:
\[
\sup_{x\in\mathbb{R}^{d_n}}|P(\bs{Q}^{(n)}(\bs{X})\leq x)-P(Z^{(n)}\leq x)|\to0\quad\text{as }n\to\infty.
\]
Here, for vectors $x=(x_1,\dots,x_{d_n})\in\mathbb{R}^{d_n}$ and $y=(y_1,\dots,y_{d_n})\in\mathbb{R}^{d_n}$, we write $x\leq y$ to express $x_j\leq y_j$ for every $j=1,\dots,d_n$. 
In addition, we are particularly interested in a situation where the dimension $d=d_n$ increases extremely faster than the ``standard'' convergence rate of Gaussian approximation for a sequence of univariate homogeneous sums. 
Given that both $\sqrt{|\expe{Q(f_n;\bs{X})^4}-3\expe{Q(f_{n};\bs{X})^2}^2|}$ and $\max_{1\leq i\leq N_n}\sqrt{\influence_i(f_n)}$ can be the optimal convergence rates of the Gaussian approximation of $Q(f_{n};\bs{X})$ in the Kolmogorov distance (see \cite[Proposition 3.8]{NP2009aop} for the former and \cite[Remark 1]{GT1999} for the latter), we might consider the quantity
\[
\delta_n:=\max_{1\leq j\leq d_n}\sqrt{|\expe{Q(f_{n,j};\bs{X})^4}-3\expe{Q(f_{n,j};\bs{X})^2}^2|+\max_{1\leq i\leq N_n}\influence_i(f_{n,j})}
\]
as an appropriate definition of the ``standard'' convergence rate. 
Then, we aim at proving
\begin{equation}\label{aim}
\sup_{x\in\mathbb{R}^{d_n}}|P(\bs{Q}^{(n)}(\bs{X})\leq x)-P(Z^{(n)}\leq x)|\leq C(\log d_n)^a\delta_n^b
\end{equation}
for all $n\in\mathbb{N}$, where $a,b,C>0$ are constants which do not depend on $n$ (here and below we assume $d_n\geq2$). 
As a byproduct, results of this type enable us to extend fourth moment theorems and universality results for homogeneous sums to a high-dimensional setting (see Theorem \ref{thm:universality} for the precise statement). 

Our formulation of a high-dimensional extension of CLTs for homogeneous sums is motivated by the recent path-breaking work of \citet*{CCK2013,CCK2017}, where results analogous to \eqref{aim} have been established for sums of independent random vectors. 
More formally, let $(\xi_{n,i})_{i=1}^n$ be a sequence of independent centered $d_n$-dimensional random vectors. 
Set $S_n:=n^{-1/2}\sum_{i=1}^n\xi_{n,i}$ and assume $\mf{C}_n=\expe{S_nS_n^\top}$ ($\top$ denotes the transpose of a matrix). 
Then, under an appropriate assumption on moments, we have
\begin{equation}\label{eq:cck}
\sup_{x\in\mathbb{R}^{d_n}}|P(S_n\leq x)-P(Z^{(n)}\leq x)|\leq C'\left(\frac{\log^7(d_nn)}{n}\right)^{1/6},
\end{equation}
where $C'>0$ is a constant which does not depend on $n$ (see Proposition \ref{prop:cck} for the precise statement). 
Here, we shall remark that the bound in \eqref{eq:cck} depends on $n$ through $n^{-1/6}$, which is suboptimal when the dimension $d_n$ is fixed. However, in \cite[Remark 2.1(ii)]{CCK2017} it is conjectured that the rate $n^{-1/6}$ is nearly optimal in a minimax sense when $d_n$ is extremely larger than $n$ (see also \cite[Remark 1]{Chen2017}). 
This conjecture is motivated by the fact that the rate $n^{-1/6}$ is minimax optimal in CLTs for sums of independent random variables taking values in an infinite-dimensional Banach space (see e.g.~\cite[Theorem 2.6]{BGPR2000}). 
Given that high-dimensional CLTs of type \eqref{eq:cck} are closely related to Gaussian approximation of the suprema of empirical processes (see e.g.~\cite{CCK2014,CCK2016}), it would be worth mentioning that a duality argument enables us to translate the minimax rate for CLTs in a Banach space to the one for Gaussian approximation of the suprema of empirical processes with a specific class of functions in the Kolmogorov distance; see \cite{Paulauskas1992} for details. 
For this reason we also conjecture that $b=1/3$ would give an optimal dependence on $\delta_n$ of the bound in \eqref{aim} (note that the rate $n^{-1/2}$ is the standard convergence rate of CLTs for sums of independent one-dimensional random variables). In this paper we indeed establish that the bound of type \eqref{aim} holds true with $b=1/3$ under a moment assumption on $\bs{X}$ when $q_j$'s do not depend on $j$ (see Theorem \ref{thm:main} and Remark \ref{rmk:main}). 

We remark that there are a number of articles which extend the scope of the Chernozhukov-Chetverikov-Kato theory (CCK theory for short) in various directions. 
We refer the reader to the survey \cite{BCCHK2018} for recent developments. 
Nevertheless, most studies focus on \textit{linear} statistics (i.e.~sums of random variables) and there are only a few articles concerned with \textit{non-linear} statistics. 
Two exceptions are $U$-statistics developed in \cite{Chen2017,CK2017,CK2017rand,SCK2019} and Wiener functionals developed in \cite{Koike2017stein,Koike2018sk}. 
On the one hand, however, the former are mainly concerned with non-degenerate $U$-statistics which are approximately linear statistics via Hoeffding decomposition (\citet{CK2017rand} also handle degenerate $U$-statistics, but they focus on the randomized incomplete versions that are still approximately linear statistics). 
On the other hand, although the latter deal with essentially non-linear statistics, they must be functionals of a (possibly infinite-dimensional) Gaussian process, except for \cite[Theorem 3.2]{Koike2017stein} that is a version of our result with $q_j\equiv2$ (see Section \ref{sec:comp-stein} for more details). 
In this sense, our result would be the first extension of CCK type results to essentially non-linear statistics based on possibly non-Gaussian variables. 

Finally, we remark that the main results of this paper have potential applications to statistics. In fact, the original motivation of this paper is to improve the Gaussian approximation result for maxima of high-dimensional vectors of random quadratic forms given by \cite[Theorem 3.2]{Koike2017stein}, which is used to ensure the validity of the bootstrap testing procedure proposed in \cite[Section 4.1]{Koike2017stein} (see Section \ref{sec:hry}). 
Another potential application might be specification test for parametric form in nonparametric regression. In this area, to derive the null distributions of test statistics, one sometimes needs to approximate the maximum of (essentially degenerate) quadratic forms; see \cite{HS2001,DH2007,LSC2017} for instance. 

This paper is organized as follows. 
Section \ref{sec:main} presents the main results obtained in the paper, while Sections \ref{sec:cck}--\ref{sec:proof} are devoted to the proof of the main results: Section \ref{sec:cck} demonstrates a basic scheme of the CCK theory to prove high-dimensional CLTs. 
Subsequently, Section \ref{sec:stein} presents a connection of this scheme to Stein's method. 
Based on this observation, Section \ref{sec:normal-gamma} develops a high-dimensional CLT of the form \eqref{aim} for homogeneous sums based on normal and gamma variables. 
Then, Section \ref{sec:lindeberg} establishes a kind of invariance principle for high-dimensional homogeneous sums using a randomized version of the Lindeberg method. 
Finally, Section \ref{sec:proof} completes the proof of the main results. 

\section*{Notation}

$\mathbb{Z}_+$ denotes the set of all non-negative integers. 
For $x=(x_1,\dots,x_d)\in\mathbb{R}^d$, we define $\|x\|_{\ell_\infty}:=\max_{1\leq j\leq d}|x_j|$. 
For $N\in\mathbb{N}$, we set $[N]:=\{1,\dots,N\}$. 
We set $\sum_{i=p}^q\equiv0$ if $p>q$ by convention. 
For $q\in\mathbb{N}$, we denote by $\mf{S}_q$ the set of all permutations of $[q]$, i.e.~the symmetric group of degree $q$. 
For a function $f:[N]^q\to\mathbb{R}$, we set $\mathcal{M}(f):=\max_{1\leq i\leq N}\influence_i(f)$ (recall that $\influence_i(f)$ is defined according to \eqref{def:influence}). We also set
\[
\|f\|_{\ell_2}:=\sqrt{\sum_{i_1,\dots,i_q=1}^Nf(i_1,\dots,i_q)^2}.
\]
For a function $h:\mathbb{R}^d\to\mathbb{R}$, we set $\|h\|_\infty:=\sup_{x\in\mathbb{R}^d}|h(x)|$. 
We write $C^m_b(\mathbb{R}^d)$ for the set of all real-valued $C^m$ functions on $\mathbb{R}^d$ all of whose partial derivatives are bounded. 
We write $\partial_{j_1\dots j_m}=\frac{\partial^m}{\partial x_{j_1}\cdots\partial x_{j_m}}$ for short. 
Throughout the paper, $Z=(Z_1,\dots,Z_d)$ denotes a $d$-dimensional centered Gaussian random vector with  covariance matrix $\mathfrak{C}=(\mathfrak{C}_{ij})_{1\leq i,j\leq d}$ (note that we \textit{do  not} assume that $\mf{C}$ is positive definite in general). 
Also, $(q_j)_{j=1}^\infty$ stands for a sequence of positive integers. Throughout the paper, we will regard $(q_j)_{j=1}^\infty$ as fixed, i.e.~it does not vary when we consider asymptotic results. 
Given a probability distribution $\mu$, we write $X\sim\mu$ to express that $X$ is a random variable with distribution $\mu$. 
For $\nu>0$, we write $\gamma(\nu)$ for the gamma distribution with shape $\nu$ and rate 1. 
If $S$ is a topological space, $\mc{B}(S)$ denotes the Borel $\sigma$-field of $S$. 

Given a random variable $X$, we set $\|X\|_p:=\{\expe{|X|^p}\}^{1/p}$ for every $p>0$. 
When $X$ satisfies $\expe{X^4}<\infty$, we denote the fourth cumulant of $X$ by $\kappa_4(X)$. Note that $\kappa_4(X)=\expe{X^4}-3\expe{X^2}^2$ if $X$ is centered. 
For $\alpha>0$, we define the $\psi_\alpha$-norm of $X$ by
\begin{equation*}
\|X\|_{\psi_\alpha}:=\inf\{C>0:\expe{\psi_\alpha(|X|/C)}\leq1\},
\end{equation*}
where $\psi_\alpha(x):=\exp(x^\alpha)-1$. Note that $\|\cdot\|_{\psi_\alpha}$ is indeed a norm (on a suitable space) if and only if $\alpha\geq1$. Some useful properties of the $\psi_\alpha$-norm are collected in Appendix \ref{sec:psi}. 

\section{Main results}\label{sec:main}

Our first main result is a high-dimensional version of de Jong's CLT for homogeneous sums:
\begin{theorem}\label{thm:main}
Let $\bs{X}=(X_i)_{i=1}^N$ be a sequence of independent centered random variables with unit variance. Set $w=\frac{1}{2}$ if $\expe{X_i^3}=0$ for every $i\in[N]$ and $w=1$ otherwise. 
For every $j\in[d]$, let $f_j:[N]^{q_j}\to\mathbb{R}$ be a symmetric function vanishing on diagonals, and set $\bs{Q}(\bs{X}):=(Q(f_1;\bs{X}),\dots,Q(f_d;\bs{X}))$. 
Suppose that $d\geq2$, $\ul{\sigma}:=\min_{1\leq j\leq d}\|Z_j\|_2>0$ and $\ol{B}_N:=\max_{1\leq i\leq N}\|X_i\|_{\psi_\alpha}<\infty$ for some $\alpha\in(0,w^{-1}]$. Then we have
\begin{align}
&\sup_{x\in\mathbb{R}^d}\left|P(\bs{Q}(\bs{X})\leq x)-P(Z\leq x)\right|\nonumber\\
&\leq C(1+\ul{\sigma}^{-1})\left\{(\log d)^{\frac{2}{3}}\delta_0[\bs{Q}(\bs{X})]^{\frac{1}{3}}
+(\log d)^{\mu+\frac{1}{2}}\delta_1[\bs{Q}(\bs{X})]^{\frac{1}{3}}
+(\log d)^{\frac{2\ol{q}_d-1}{\alpha}+\frac{3}{2}}\max_{1\leq k\leq d}\ol{B}_N^{q_k}\sqrt{\mathcal{M}(f_k)}\right\},\label{eq:main}
\end{align}
where $\ol{q}_d:=\max_{1\leq j\leq d}q_j$, $\mu:=\max\{\frac{2}{3}w\ol{q}_d-\frac{1}{6},\frac{2(\ol{q}_d-1)}{3\alpha}+\frac{1}{3}\}$, $C>0$ depends only on $\alpha,\ol{q}_d$ and 
\begin{align*}
\delta_0[\bs{Q}(\bs{X})]&:=\max_{1\leq j,k\leq d}\left|\expe{Q(f_j;\bs{X})Q(f_k;\bs{X})}-\mathfrak{C}_{jk}\right|,\\
\delta_1[\bs{Q}(\bs{X})]&:= \ol{A}_N^{2w\ol{q}_d-1}
\max_{1\leq j,k\leq d}\left\{1_{\{q_j=q_k\}}\sqrt{|\kappa_4(Q(f_k;\bs{X}))|+\ol{A}_N^{4q_k}\sum_{i=1}^N\influence_i(f_k)^2}\right.\\
&\hphantom{:= \ol{A}_N^{2\ol{q}_d-1}
\max_{1\leq j,k\leq d}}\left.+1_{\{q_j< q_k\}}\ol{A}_N^{q_j}\|f_j\|_{\ell_2}\left(|\kappa_4(Q(f_k;\bs{X}))|+\ol{A}_N^{4q_k}\sum_{i=1}^N\influence_i(f_k)^2\right)^{1/4}\right\}
\end{align*} 
with $\ol{A}_N:=\max_{1\leq i\leq N}(|\expe{X_i^3}|\vee\|X_i\|_4)$.
\end{theorem}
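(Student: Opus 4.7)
The plan is to combine the Chernozhukov-Chetverikov-Kato (CCK) smoothing technique of Section \ref{sec:cck} with a randomized Lindeberg swap (Section \ref{sec:lindeberg}) and a Malliavin-Stein analysis of normal/gamma homogeneous sums (Section \ref{sec:normal-gamma}). First I would invoke CCK smoothing: approximate the indicator of a hyperrectangle $\{y\leq x\}$ by a smooth test function $h\in C^m_b(\mathbb{R}^d)$ obtained via Gaussian convolution of a log-sum-exp soft-max, so that the Kolmogorov distance in \eqref{eq:main} is bounded by the smooth-test error $|\expe{h(\bs{Q}(\bs{X}))}-\expe{h(Z)}|$ plus a regularization error of order $(\log d)^{1/2}\beta$ for a smoothing parameter $\beta>0$, by appealing to the CCK anti-concentration inequality for Gaussian vectors with possibly singular covariance $\mf{C}$; this is where the $\ul{\sigma}^{-1}$ dependence enters. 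The derivative norms $\|\partial_{j_1\dots j_m}h\|_\infty$ will blow up polynomially in $\beta^{-1}$ with $(\log d)$-type prefactors.

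Next I would bound the smooth-test error in two substeps. The first is the randomized Lindeberg swap: replace $\bs{X}$ by a reference sequence $\bs{Y}$ with matched first two moments---and also matched third moments when $\expe{X_i^3}=0$ (giving $w=\tfrac{1}{2}$); otherwise $\bs{Y}$ is a shifted/scaled gamma sequence so that skewnesses also match and $w=1$. Since each $f_k$ is symmetric and vanishes on diagonals, a Taylor expansion of $h$ along the interpolation yields residual terms of order $2w+1$ at each swap site, and a careful accounting---exploiting that $\partial_i Q(f_k;\cdot)$ is independent of $X_i$ and a polynomial of degree $q_k-1$---produces the bound $(\log d)^{(2\ol{q}_d-1)/\alpha}\max_k\ol{B}_N^{q_k}\sqrt{\mc{M}(f_k)}$, where the $\log d$ exponent arises from truncating the $X_i$'s at level $(\log d)^{1/\alpha}$ via their $\psi_\alpha$-norm control (Appendix \ref{sec:psi}). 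The second substep applies the high-dimensional CLT for normal/gamma homogeneous sums: once reduced to $\bs{Q}(\bs{Y})$ versus $Z$, I would apply the Malliavin integration-by-parts formula for multiple Wiener-It\^o integrals---and their Laguerre/gamma analogues when $w=1$---together with Stein's method, obtaining a smooth-test bound controlled by $\delta_0[\bs{Q}(\bs{X})]+\delta_1[\bs{Q}(\bs{X})]$. The asymmetry of $\delta_1$ between $q_j=q_k$ and $q_j<q_k$ reflects contraction-type estimates: equal chaos orders permit an $L^2$-style bound on the Stein kernel matrix, while unequal orders force a Cauchy-Schwarz pairing that yields $\ol{A}_N^{q_j}\|f_j\|_{\ell_2}$ times the fourth-root of the fourth-cumulant-plus-contractions quantity.

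Finally, I would combine the two error sources. The smooth-test error scales like $\beta^{-2}$ times $(\log d)$-prefactors times the sum $\delta_0+\delta_1+\max_k\ol{B}_N^{q_k}\sqrt{\mc{M}(f_k)}$, and balancing against the $(\log d)^{1/2}\beta$ regularization error by choosing $\beta$ proportional to the cube root of the dominant error source produces the $\delta^{1/3}$ rate and the three distinct log-powers in \eqref{eq:main}: $(\log d)^{2/3}$ on $\delta_0^{1/3}$, $(\log d)^{\mu+1/2}$ on $\delta_1^{1/3}$ (with $\mu$ absorbing hypercontractive moment estimates for chaoses of degree at most $\ol{q}_d$ arising in the Stein-kernel bound), and $(\log d)^{(2\ol{q}_d-1)/\alpha+3/2}$ on the $\mc{M}(f_k)$ term (combining the Lindeberg truncation exponent with the $\beta^{-2}$ factor evaluated at the optimal $\beta$).

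The main obstacle, I expect, will be the bookkeeping in the Lindeberg step. A naive swap for a degree-$q$ polynomial typically pays a factor of $N$; the key technical device is randomization, namely averaging over a uniformly chosen swap index, which converts sums over $i\in[N]$ into a supremum weighted by $\mc{M}(f_k)$. Confirming the two constituents of $\mu=\max\{\tfrac{2}{3}w\ol{q}_d-\tfrac{1}{6},\,\tfrac{2(\ol{q}_d-1)}{3\alpha}+\tfrac{1}{3}\}$---the first coming from the Stein-kernel/hypercontractivity contribution, the second from the moment-comparison residual when $\bs{X}$ has heavier-than-subgaussian tails---will require a delicate interplay between the smoothing level $\beta$, the $\psi_\alpha$ tail control on the $X_i$'s, and hypercontractive chaos estimates for $\bs{Q}(\bs{Y})$.
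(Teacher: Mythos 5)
Your three-stage architecture --- CCK smoothing with Nazarov's anti-concentration, a randomized Lindeberg swap to a moment-matched normal/gamma reference $\bs{Y}$, and a Stein-kernel analysis of $\bs{Q}(\bs{Y})$, followed by optimization over the smoothing level --- is exactly the paper's proof strategy, and most of your attributions (the $\ul\sigma^{-1}$ from anti-concentration with possibly singular $\mf{C}$, the $\max_k$ versus $\sum_i$ interchange from the randomized swap, the contraction asymmetry between $q_j=q_k$ and $q_j<q_k$) are on target. But there is one claim that, taken at face value, describes a different and weaker Lindeberg scheme: you write that the Taylor expansion along the interpolation yields ``residual terms of order $2w+1$ at each swap site.'' That is not what happens here. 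In every case the paper chooses $Y_i\sim\mathcal{N}(0,1)$ if $s_i:=\expe{X_i^3}=0$ and $Y_i\sim\gamma_{\pm}(4/s_i^2)$ otherwise, so that the first \emph{three} moments of $X_i$ and $Y_i$ agree, and then applies Proposition~\ref{prop:lindeberg} with $m=4$; the Taylor residual is always of fourth order. Matching only two moments (a third-order residual) is precisely the NPR route discussed in Section~\ref{sec:comp-npr}, which gives the slower $1/4$ rate; a $1/3$ rate requires the fourth-order residual throughout, regardless of $w$.

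The parameter $w$ plays no role in the Lindeberg residual order; it enters only through hypercontractivity in the Stein step. When $\bs{Y}$ is purely Gaussian the carr\'{e}-du-champ fluctuation $\Gamma(F,G)-\expe{\Gamma(F,G)}$ sits in chaos of degree $\leq p+q-2$ and Lemma~\ref{gamma-psi} gives a $\psi_{(w_*(p+q)-1)^{-1}}$-bound with $w_*=\tfrac{1}{2}$; for gamma components the degree rises to $p+q-1$ and $w_*=1$. That difference is what produces the $(\log d)^{w_*(p+q)-1}$ prefactor in $\delta_2[\bs{Q}(\bs{Y})]$ and hence the first branch $\tfrac{2}{3}w\ol{q}_d-\tfrac{1}{6}$ of $\mu$. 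Your final paragraph does assign that branch to ``Stein-kernel/hypercontractivity,'' which is correct and suggests the ``$2w+1$'' line is a slip rather than a structural error, but as written it is the one step of your outline that would fail to deliver the stated exponent. A second, smaller inaccuracy: the smooth test functions are not Gaussian convolutions but compositions $g_0(\varepsilon^{-1}\Phi_\beta(\cdot-y))$ with a fixed bump function $g_0$; the self-similarity property of Lemma~\ref{cck-derivative}(iii) for derivatives of $h\circ\Phi_\beta$ is essential in both the Lindeberg and Stein estimates, and Gaussian mollification would not come with it. With these two points corrected, the remainder of your plan --- preserving $\delta_0$ across the swap because covariances match, transferring the fourth-moment/influence bound for $\bs{Q}(\bs{Y})$ back to $\bs{X}$ via Lemma~\ref{lemma:npr4.3}, and balancing the smoothing level against the dominant error source --- coincides with the paper's argument in Section~\ref{sec:proof}.
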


\begin{rmk}\label{rmk:main}
(a) Since $\sum_{i=1}^N\influence_i(f_k)^2\leq\|f_k\|_{\ell_2}^2\mc{M}(f_k)$, Theorem \ref{thm:main} gives the bound of the form \eqref{aim} under reasonable assumptions when $q_1=\cdots=q_d$. For example, this is the case when $\expe{Q(f_j;\bs{X})Q(f_k;\bs{X})}=\mathfrak{C}_{jk}$ for all $j,k\in[d]$, $\sup_i\|X_i\|_{\psi_\alpha}<\infty$ and $\sup_j\|f_j\|_{\ell_2}<\infty$. 
Here, we keep the quantity $\sum_{i=1}^N\influence_i(f_k)^2$ rather than $\|f_k\|_{\ell_2}^2\mc{M}(f_k)$ for the convenience of the latter application (see Section \ref{proof:comp-cck})

(b) When $q_j<q_k$ for some $j,k\in[d]$, the exponents of $|\kappa_4(Q(f_k;\bs{X}))|$ and $\mc{M}(f_k)$ appearing in the bound of \eqref{eq:main} are $1/12$, which are halves of those for the case $q_j=q_k$. This phenomenon is not specific to the high-dimensional setting but common in fourth moment type theorems. See Remark 1.9(a) in \cite{DVZ2018} for more details. 

(c) In Section \ref{sec:comparison} we compare Theorem \ref{thm:main} to some existing results in some detail. The results therein show the dependence of the bound in \eqref{eq:main} on the dimension $d$ is as sharp as (and often sharper than) the previous results. 

\end{rmk}

We can easily extend Theorem \ref{thm:main} to a high-dimensional CLT for homogeneous sums in hyperrectangles as follows. Let $\mathcal{A}^\mathrm{re}(d)$ be the set of all hyperrectangles in $\mathbb{R}^d$, i.e.~$\mathcal{A}^\mathrm{re}(d)$ consists of all sets $A$ of the form
\[
A=\{(x_1,\dots,x_d)\in\mathbb{R}^d:a_j\leq x_j\leq b_j\text{ for all }j=1,\dots,d\}
\]
for some $-\infty\leq a_j\leq b_j\leq\infty$, $j=1,\dots,d$. 
\begin{corollary}\label{coro:rect}
Under the assumptions of Theorem \ref{thm:main}, we have
\begin{align*}
&\sup_{A\in\mathcal{A}^\mathrm{re}(d)}\left|P(\bs{Q}(\bs{X})\in A)-P(Z\in A)\right|\\
&\leq C'(1+\ul{\sigma}^{-1})\left\{(\log d)^{\frac{2}{3}}\delta_0[\bs{Q}(\bs{X})]^{\frac{1}{3}}
+(\log d)^{\mu+\frac{1}{2}}\delta_1[\bs{Q}(\bs{X})]^{\frac{1}{3}}
+(\log d)^{\frac{2\ol{q}_d-1}{\alpha}+\frac{3}{2}}\max_{1\leq k\leq d}\ol{B}_N^{q_k}\sqrt{\mathcal{M}(f_k)}\right\},
\end{align*}
where $C'>0$ depends only on $\alpha,\ol{q}_d$. 
\end{corollary}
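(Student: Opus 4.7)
The plan is to reduce the corollary to Theorem \ref{thm:main} by the standard ``doubling trick'': every hyperrectangle is the intersection of an upper orthant and a lower orthant, so it can be represented as a one-sided region in a doubled ambient space. Specifically, for every $j\in[d]$ I set $f_{d+j}:=-f_j$ and $q_{d+j}:=q_j$, and I consider the $2d$-dimensional vector
\[
\bs{Q}'(\bs{X}):=(Q(f_1;\bs{X}),\dots,Q(f_d;\bs{X}),-Q(f_1;\bs{X}),\dots,-Q(f_d;\bs{X}))=(Q(f_j;\bs{X}))_{j=1}^{2d}.
\]
Since $Q(f;\bs{X})$ is linear in $f$, $\bs{Q}'(\bs{X})$ is again a vector of homogeneous sums, to which Theorem \ref{thm:main} applies. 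For any $a,b\in\mathbb{R}^d$ the hyperrectangle event $\{a\leq\bs{Q}(\bs{X})\leq b\}$ coincides with the one-sided event $\{\bs{Q}'(\bs{X})\leq(b,-a)\}$, so the Kolmogorov-type bound of Theorem \ref{thm:main} applied to $\bs{Q}'(\bs{X})$ yields the desired hyperrectangle bound, provided the Gaussian comparison vector is chosen as $Z':=(Z,-Z)$, whose covariance matrix is exactly the $2d\times 2d$ matrix one obtains from $\mf{C}$ by sign-flipping off-diagonal blocks.

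The next step is to check that all the problem parameters appearing in Theorem \ref{thm:main} are essentially unchanged when passing from $\bs{Q}(\bs{X})$ to $\bs{Q}'(\bs{X})$. The minimum standard deviation is preserved because $\|-Z_j\|_2=\|Z_j\|_2$, so $\min_{1\leq j\leq 2d}\|Z'_j\|_2=\ul{\sigma}$. For the covariance-mismatch functional, $\expe{Q(f_j;\bs{X})(-Q(f_k;\bs{X}))}-(-\mf{C}_{jk})=\expe{Q(f_j;\bs{X})Q(f_k;\bs{X})}-\mf{C}_{jk}$, so $\delta_0[\bs{Q}'(\bs{X})]=\delta_0[\bs{Q}(\bs{X})]$. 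The quantities entering $\delta_1$ are all invariant under $f\mapsto -f$: influences $\influence_i(-f)=\influence_i(f)$, $\|-f\|_{\ell_2}=\|f\|_{\ell_2}$, and the fourth cumulant $\kappa_4(-Q(f;\bs{X}))=\kappa_4(Q(f;\bs{X}))$ because $\kappa_4$ is even under sign change of a centered variable. Hence $\delta_1[\bs{Q}'(\bs{X})]=\delta_1[\bs{Q}(\bs{X})]$, $\max_{1\leq k\leq 2d}\ol{B}_N^{q_k}\sqrt{\mc{M}(f_k)}$ is unchanged, and $\max_{1\leq j\leq 2d}q_j=\ol{q}_d$.

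Finally, the dimension has doubled, but since $d\geq 2$ we have $\log(2d)\leq 2\log d$, so every factor of the form $(\log 2d)^{a}$ appearing in the bound of Theorem \ref{thm:main} is at most $2^{a}(\log d)^{a}$; these powers of $2$ and the constant $C$ of Theorem \ref{thm:main} are absorbed into a new constant $C'>0$ that still depends only on $\alpha$ and $\ol{q}_d$. Combining these observations yields the stated estimate. There is no real obstacle here: the argument is a mechanical reduction, with the only minor bookkeeping being to verify that the parameters $(w,\alpha,\ol{B}_N,\ol{A}_N)$ of the input sequence $\bs{X}$ are not affected by the duplication (they depend only on $\bs{X}$, not on the kernels), so they enter the bound for $\bs{Q}'(\bs{X})$ exactly as they do for $\bs{Q}(\bs{X})$.
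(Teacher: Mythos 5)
Your proposal is correct and is essentially the paper's own argument: the author cites the proof of Corollary 5.1 in Chernozhukov--Chetverikov--Kato (2017), which is precisely the doubling reduction $\bs{Q}(\bs{X})\mapsto(\bs{Q}(\bs{X}),-\bs{Q}(\bs{X}))$ and $Z\mapsto(Z,-Z)$ that you carry out, followed by absorbing the $\log(2d)\leq 2\log d$ factors into the constant. Your verification that $\ul\sigma$, $\delta_0$, $\delta_1$, $\ol{q}_d$, $\ol{A}_N$, $\ol{B}_N$ and the influence terms are all unchanged under $f\mapsto -f$ is exactly the bookkeeping that the paper leaves implicit.
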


For application, it is often useful to restate Theorem \ref{thm:main} in an asymptotic form as follows. 
\begin{corollary}\label{coro:main}
Let $\bs{X}=(X_i)_{i=1}^\infty$ be a sequence of independent centered random variables with unit variance. 
Set $w=\frac{1}{2}$ if $\expe{X_i^3}=0$ for every $i\in\mathbb{N}$ and $w=1$ otherwise. 
For every $n\in\mathbb{N}$, let $N_n,d_n\in\mathbb{N}\setminus\{1\}$ and $f_{n,k}:[N_n]^{q_k}\to\mathbb{R}$ ($k=1,\dots,d_n$) be symmetric functions vanishing on diagonals, and set $\bs{Q}^{(n)}(\bs{X}):=(Q(f_{n,1};\bs{X}),\dots,Q(f_{n,d_n};\bs{X}))$. 
Moreover, for every $n\in\mathbb{N}$, let $Z^{(n)}=(Z_{n,1},\dots,Z_{n,d_n})$ be a $d_n$-dimensional centered Gaussian vector with covariance matrix $\mf{C}_n=(\mf{C}_{n,kl})_{1\leq k,l\leq d_n}$. 
Suppose that $\ol{q}_\infty:=\sup_{j\in\mathbb{N}}q_j<\infty$, $\inf_{n\in\mathbb{N}}\min_{1\leq k\leq d_n}\|Z_{n,k}\|_2>0$, $\sup_{i\in\mathbb{N}}\|X_i\|_{\psi_\alpha}<\infty$ for some $\alpha\in(0,w^{-1}]$ and
\begin{equation}\label{asymp:covariance}
(\log d_n)^2\max_{1\leq k,l\leq d_n}|\expe{Q(f_{n,k};\bs{X})Q(f_{n,l};\bs{X})}-\mf{C}_{n,kl}|\to0 
\end{equation}
as $n\to\infty$. 
Moreover, setting $a_1:=(4w\ol{q}_\infty-2)\vee(4\alpha^{-1}(\ol{q}_\infty-1)+5)$ and $a_2:=2\alpha^{-1}(2\ol{q}_\infty-1)+3$, we suppose that either one of the following conditions is satisfied:
\begin{enumerate}[label=(\roman*)]

\item $(\log d_n)^{2a_1}\max_{1\leq j\leq d_n}|\kappa_4(Q(f_{n,j};\bs{X}))|\to0$ and $(\log d_n)^{2a_1\vee a_2}\max_{1\leq j\leq d_n}\mathcal{M}(f_{n,j})\to0$ as $n\to\infty$.

\item $(\log d_n)^{a_1}\max_{1\leq j\leq d_n}|\kappa_4(Q(f_{n,j};\bs{X}))|\to0$ and $(\log d_n)^{a_1\vee a_2}\max_{1\leq j\leq d_n}\mathcal{M}(f_{n,j})\to0$ as $n\to\infty$ and $q_1=q_2=\cdots$. 

\end{enumerate}
Then we have $\sup_{A\in\mathcal{A}^\mathrm{re}(d_n)}|P(\bs{Q}^{(n)}(\bs{X})\in A)-P(Z^{(n)}\in A)|\to0$ as $n\to\infty$. 
\end{corollary}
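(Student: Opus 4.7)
The plan is to deduce the corollary directly from the non-asymptotic hyperrectangle bound of Corollary \ref{coro:rect}, applied to each $n\in\mathbb{N}$ with the identifications $N\mapsto N_n$, $d\mapsto d_n$, $f_k\mapsto f_{n,k}$ and $Z\mapsto Z^{(n)}$, and then to verify that each of the three summands on the right-hand side vanishes as $n\to\infty$.

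First I settle the ambient constants. The prefactor $(1+\ul{\sigma}_n^{-1})$ is uniformly bounded because $\inf_n\min_k\|Z_{n,k}\|_2>0$. The constants $\ol{B}_{N_n}$ and $\ol{A}_{N_n}$ are uniformly bounded because $\sup_i\|X_i\|_{\psi_\alpha}<\infty$ controls every polynomial moment of $X_i$ (cf.\ Appendix \ref{sec:psi}). Since $\bs{X}$ has unit variance and $f_{n,k}$ vanishes on diagonals, the identity $\expe{Q(f_{n,k};\bs{X})^2}=q_k!\,\|f_{n,k}\|_{\ell_2}^2$, combined with \eqref{asymp:covariance} and the boundedness of the diagonal entries $\mf{C}_{n,kk}$, also yields a uniform bound on $\|f_{n,k}\|_{\ell_2}$.

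With these preliminaries, the first summand $(\log d_n)^{2/3}\delta_0[\bs{Q}^{(n)}(\bs{X})]^{1/3}$ equals $\{(\log d_n)^{2}\delta_0[\bs{Q}^{(n)}(\bs{X})]\}^{1/3}$ and vanishes by \eqref{asymp:covariance}; the third summand is, up to constants, of order $\{(\log d_n)^{a_2}\max_k\mc{M}(f_{n,k})\}^{1/2}$, which vanishes under either of the hypotheses (i) or (ii). For the second summand I use the elementary estimate $\sum_{i=1}^{N_n}\influence_i(f_{n,k})^2\le\|f_{n,k}\|_{\ell_2}^2\,\mc{M}(f_{n,k})$ together with the uniform bounds on $\ol{A}_{N_n}$ and $\|f_{n,k}\|_{\ell_2}$ to obtain
\[
\delta_1[\bs{Q}^{(n)}(\bs{X})]\lesssim\max_{j,k}\Bigl\{1_{\{q_j=q_k\}}\bigl(|\kappa_4(Q(f_{n,k};\bs{X}))|+\mc{M}(f_{n,k})\bigr)^{1/2}+1_{\{q_j<q_k\}}\bigl(|\kappa_4(Q(f_{n,k};\bs{X}))|+\mc{M}(f_{n,k})\bigr)^{1/4}\Bigr\}.
\]
Under (ii) only the $1/2$-exponent branch is present, so vanishing of $(\log d_n)^{\mu+1/2}\delta_1[\bs{Q}^{(n)}(\bs{X})]^{1/3}$ reduces to polylogarithmic decay of $|\kappa_4|\vee\mc{M}$ with weight $6(\mu+1/2)$; this is covered by the weight $a_1\vee a_2$ in (ii). In the general case the $1/4$-branch is also present and doubles the required weight to $12(\mu+1/2)$, which is covered by $2a_1\vee a_2$ in (i).

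The main obstacle is the exponent bookkeeping in this second summand: one must verify that $6(\mu+1/2)$ (respectively $12(\mu+1/2)$) is dominated by $a_1\vee a_2$ (respectively $2a_1\vee a_2$) over the full range of $w\in\{1/2,1\}$, $\ol{q}_\infty\in\mathbb{N}$ and $\alpha\in(0,w^{-1}]$. Once this is checked, the conclusion is immediate from Corollary \ref{coro:rect}.
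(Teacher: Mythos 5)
Your plan---apply Corollary \ref{coro:rect} to each $n$ and show that each of the three summands in its bound vanishes---is exactly the paper's approach; the paper records only ``immediate consequence of Corollary \ref{coro:rect}.'' The trouble is that you defer the single nontrivial step (``one must verify that $6(\mu+1/2)$ is dominated by $a_1\vee a_2$ \ldots\ Once this is checked, the conclusion is immediate'') without carrying it out, and that check does not go through as you have stated it. What makes the $\kappa_4$-part of the second summand vanish is $a_1\ge 6(\mu+\tfrac{1}{2})$; the exponent $a_1\vee a_2$ applies only to the $\mc{M}$-weight. Now $6(\mu+\tfrac{1}{2})=\max\{4w\ol{q}_\infty+2,\ 4\alpha^{-1}(\ol{q}_\infty-1)+5\}$ while $a_1=\max\{4w\ol{q}_\infty-2,\ 4\alpha^{-1}(\ol{q}_\infty-1)+5\}$, so the two first arguments differ by $4$. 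For instance $w=1$, $\alpha=1$, $\ol{q}_\infty=2$ gives $6(\mu+\tfrac{1}{2})=10$ but $a_1=a_2=9$, so the domination you posit fails on part of the allowed parameter range $\alpha\in(0,w^{-1}]$. This has to be confronted directly --- by sharpening the exponents, exploiting $\ol{q}_{d_n}\le\ol{q}_\infty$, or locating slack in the proof of Theorem \ref{thm:main} --- rather than asserted; as written, the proposal identifies the burden and then sets it aside.

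A second, smaller gap: you obtain a uniform bound on $\|f_{n,k}\|_{\ell_2}$ from ``the boundedness of the diagonal entries $\mf{C}_{n,kk}$,'' but the hypotheses only bound those entries away from zero via $\inf_n\min_k\|Z_{n,k}\|_2>0$; no upper bound is assumed. The passage $\sum_i\influence_i(f_{n,k})^2\le\|f_{n,k}\|_{\ell_2}^2\mc{M}(f_{n,k})$, which you need in order to reduce $\delta_1$ to $\mc{M}$, therefore requires the extra hypothesis $\sup_n\max_k\mf{C}_{n,kk}<\infty$ (which the paper does impose later in Theorem \ref{thm:universality}, but not here). Either state it explicitly, or supply an argument that avoids it.
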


Our second main result gives high-dimensional versions of fourth moment theorems, universality results and Peccati-Tudor type theorems for homogeneous sums:
\begin{theorem}\label{thm:universality}
Let us keep the same notation as in Corollary \ref{coro:main}. 
Suppose that one of the following conditions is satisfied:
\begin{enumerate}[label={(\Alph*)}]

\item\label{univ:kurtosis} $\bs{X}$ is a sequence of independent copies of a random variable $X$ such that $\|X\|_{\psi_\alpha}<\infty$ for some $\alpha>0$ and $\expe{X^3}=0$ and $\expe{X^4}\geq 3$. 

\item\label{univ:poisson} For every $i$, $X_i$ is a standardized Poisson random variable with intensity $\lambda_i>0$, i.e.~$\lambda_i+\sqrt{\lambda_i}X_i$ is a Poisson random variable with intensity $\lambda_i$. Moreover, $\inf_{i\in\mathbb{N}}\lambda_i>0$.  

\item\label{univ:gamma} For every $i$, $X_i$ is a standardized gamma random variable with shape $\nu_i>0$ and unit rate, i.e.~$\nu_i+\sqrt{\nu_i}X_i\sim\gamma(\nu_i)$. Moreover, $\inf_{i\in\mathbb{N}}\nu_i>0$.  

\end{enumerate}
Suppose also that $2\leq\inf_{j\in\mathbb{N}}q_j\leq\sup_{j\in\mathbb{N}}q_j<\infty$, $0<\inf_{n\in\mathbb{N}}\min_{1\leq j\leq d_n}\mf{C}^{(n)}_{jj}\leq\sup_{n\in\mathbb{N}}\max_{1\leq j\leq d_n}\mf{C}^{(n)}_{jj}<\infty$ and 
\[
(\log d_n)^a\max_{1\leq j,k\leq d_n}|\expe{Q(f_{n,j};\bs{X})Q(f_{n,k};\bs{X})}-\mf{C}_{n,jk}|\to0
\] 
as $n\to\infty$ for every $a>0$. 
Then we have $\kappa_4(Q(f;\bs{X}))\geq0$ for any symmetric function $f:[N]^q\to\mathbb{R}$ vanishing on diagonals. Moreover, the following conditions are equivalent:
\begin{enumerate}[label={(\roman*)}]

\item\label{univ:fourth} $(\log d_n)^a\max_{1\leq j\leq d_n}\kappa_4(Q(f_{n,j};\bs{X}))\to0$ as $n\to\infty$ for every $a>0$.

\item\label{univ:marginal} $(\log d_n)^a\max_{1\leq j\leq d_n}\sup_{x\in\mathbb{R}}|P(Q(f_{n,j};\bs{X})\leq x)-P(Z_{n,j}\leq x)|\to0$ as $n\to\infty$ for every $a>0$.

\item\label{univ:joint} $(\log d_n)^a\sup_{x\in\mathbb{R}^{d_n}}|P(\bs{Q}^{(n)}(\bs{X})\leq x)-P(Z^{(n)}\leq x)|\to0$ as $n\to\infty$ for every $a>0$.

\item\label{univ:general} $(\log d_n)^a\sup_{x\in\mathbb{R}^{d_n}}|P(\bs{Q}^{(n)}(\bs{Y})\leq x)-P(Z^{(n)}\leq x)|\to0$ as $n\to\infty$ for any $a>0$ and sequence $\bs{Y}=(Y_i)_{i\in\mathbb{N}}$ of centered independent random variables with unit variance such that $\sup_{i\in\mathbb{N}}\|Y_i\|_{\psi_\alpha}<\infty$ for some $\alpha>0$.

\end{enumerate}
\end{theorem}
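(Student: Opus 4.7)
The implications \ref{univ:general}$\Rightarrow$\ref{univ:joint}$\Rightarrow$\ref{univ:marginal} are trivial (take $\bs{Y}=\bs{X}$ in \ref{univ:general}, and project to one coordinate to deduce \ref{univ:marginal}), so it remains to prove the non-negativity of $\kappa_4(Q(f;\bs{X}))$ together with the two implications \ref{univ:marginal}$\Rightarrow$\ref{univ:fourth}$\Rightarrow$\ref{univ:general}. The non-negativity is classical in each of cases \ref{univ:kurtosis}--\ref{univ:gamma}: for cases \ref{univ:poisson} and \ref{univ:gamma} it follows from the non-negative diagram representation of $\kappa_4$ on a homogeneous Poisson/gamma chaos, and for case \ref{univ:kurtosis} it is proved in \cite{NPPS2016} by a combinatorial expansion that exploits $\expe{X^3}=0$ and $\expe{X^4}\geq 3$. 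Together with this non-negativity, condition \ref{univ:fourth} is equivalent to the corresponding condition on $|\kappa_4|$, which is what feeds into Theorem \ref{thm:main}.

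For \ref{univ:marginal}$\Rightarrow$\ref{univ:fourth}, I would combine the boundedness of $\sup_n\max_j\mf{C}^{(n)}_{jj}$ with the hypercontractivity of $\psi_\alpha$-bounded homogeneous sums (cf.~Appendix \ref{sec:psi}) to obtain uniform sub-exponential tail bounds on the random variables $Q(f_{n,j};\bs{X})$ whose exponent depends only on $q_j$, $\alpha$, and $\sup_i\|X_i\|_{\psi_\alpha}$. Truncating at a level $t\simeq(\log d_n)^A$ and inserting the marginal Kolmogorov bound supplied by \ref{univ:marginal} then yields an estimate on $|\expe{Q(f_{n,j};\bs{X})^4}-3(\mf{C}^{(n)}_{jj})^2|$ with only polylogarithmic loss, which is exactly \ref{univ:fourth}.

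For \ref{univ:fourth}$\Rightarrow$\ref{univ:general}, I would apply Corollary \ref{coro:main} with $\bs{Y}$ in place of $\bs{X}$. The covariance hypothesis transfers automatically: for centered, independent, unit-variance sequences, the diagonal-free property of $f_{n,j}$ forces every surviving term in the expansion of $\expe{Q(f_{n,j};\cdot)Q(f_{n,k};\cdot)}$ to pair its indices exactly twice, so no moment of order higher than two enters and the covariance depends only on the kernels. To check the remaining hypotheses of Corollary \ref{coro:main} I need (a) a structural inequality of the form $\mathcal{M}(f_{n,j})\leq C\,\kappa_4(Q(f_{n,j};\bs{X}))^{\eta}$ (for some $\eta>0$) valid in each of cases \ref{univ:kurtosis}--\ref{univ:gamma}, under the boundedness of $\|f_{n,j}\|_{\ell_2}$ that is implied by the covariance assumption, and (b) a comparison $\kappa_4(Q(f_{n,j};\bs{Y}))\leq C(\kappa_4(Q(f_{n,j};\bs{X}))+\mathcal{M}(f_{n,j})^{\eta'})$ obtained from the randomized Lindeberg machinery developed in Section \ref{sec:lindeberg}. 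Together, (a) and (b) propagate the cumulant decay in \ref{univ:fourth} to both the maximal influences and the fourth cumulants of $\bs{Y}$, so that Corollary \ref{coro:main} delivers \ref{univ:general}.

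The main difficulty will lie in step (a): obtaining a quantitative ``fourth cumulant dominates maximal influence'' estimate uniformly across the three admissible classes and with polynomial control of the dependence on $\log d_n$. The Gaussian and the chaotic Poisson/gamma cases follow from the fourth moment theorems on each Wiener--It\^o chaos, but case \ref{univ:kurtosis} requires a further universality reduction to the Gaussian setting in the spirit of \cite{NPPS2016,NPPS2016esaim}, and all of this has to be combined with the Lindeberg error in (b) in a way that preserves the polylogarithmic rate required by Corollary \ref{coro:main}.
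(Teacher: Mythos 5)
Your structural decomposition is precisely the paper's: the chain \ref{univ:general}$\Rightarrow$\ref{univ:joint}$\Rightarrow$\ref{univ:marginal} is immediate, and it remains to establish non-negativity of $\kappa_4$, \ref{univ:marginal}$\Rightarrow$\ref{univ:fourth}, and \ref{univ:fourth}$\Rightarrow$\ref{univ:general}. Your reduction of \ref{univ:fourth}$\Rightarrow$\ref{univ:general} to Corollary \ref{coro:main} plus two structural inequalities (influence dominated by fourth cumulant, and a moment transfer from $\bs X$ to $\bs Y$) is exactly the route taken, as is your use of hypercontractive tail bounds on $Q(f_{n,j};\bs X)$ to convert the Kolmogorov closeness in \ref{univ:marginal} into fourth-moment closeness.

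A few things deserve clarification. First, the ``main difficulty'' you flag at the end --- a uniform estimate of the form $\mathcal{M}(f)\lesssim\kappa_4(Q(f;\bs X))^{1/2}$ valid in all three cases --- is actually resolved cleanly by Lemma \ref{lemma:transfer}, which gives the sharp chain $\mathcal{M}(f)\leq\max_{1\leq r\leq q-1}\|f\star_r f\|_{\ell_2}\leq\frac{1}{q\cdot q!}\sqrt{\kappa_4(Q(f;\bs{X}))}$: case~\ref{univ:kurtosis} reduces to the Gaussian chaos estimate \eqref{var-ineq} via \cite[Proposition 3.1]{NPPS2016}, case~\ref{univ:poisson} cites \cite[Eq.(5.3)]{DVZ2018}, and case~\ref{univ:gamma} is read off from \eqref{var-ineq} directly. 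So there is no need for a fresh universality reduction; the Gaussian case and the gamma case both sit inside the $\Gamma$-calculus framework of Section~\ref{sec:normal-gamma}, and the two cited results handle the rest. Second, and related, the non-negativity of $\kappa_4$ in case~\ref{univ:gamma} is not ``classical'' as you claim --- the paper remarks that this is a new byproduct of Proposition~\ref{gamma-bound} via \eqref{var-ineq}, not a known diagram-formula fact. Third, for your step~(b) the randomized Lindeberg machinery of Section~\ref{sec:lindeberg} is overkill: the deterministic moment comparison of Lemma~\ref{lemma:npr4.3} already yields $|\kappa_4(Q(f;\bs Y))|\lesssim|\kappa_4(Q(f;\bs X))|+\|f\|^2_{\ell_2}\mathcal{M}(f)^{1/2}$ (with $m=3$, only matching the first two moments), and combined with Lemma~\ref{lemma:transfer} this is exactly what feeds condition~(i) of Corollary~\ref{coro:main} for $\bs Y$. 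Finally, for \ref{univ:marginal}$\Rightarrow$\ref{univ:fourth} the paper does not truncate at a polylogarithmic level; Lemma~\ref{kol-moment} uses the layer-cake formula directly and controls the tail integrals through the uniform $\psi_\beta$-norms of $Q(f_{n,j};\bs X)$ and $Z_{n,j}$ (supplied by Lemma~\ref{lem:hom-mom}, Lemma~\ref{moment-psi} and Gaussianity). The outcome is the same as your truncation argument, but slicker and without having to tune the cutoff to $(\log d_n)^A$.
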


\begin{rmk}
(a) The implications \ref{univ:fourth} $\Rightarrow$ \ref{univ:joint}, \ref{univ:joint} $\Rightarrow$ \ref{univ:general} and \ref{univ:marginal} $\Rightarrow$ \ref{univ:joint} can be viewed as high-dimensional versions of fourth moment theorems, universality results and Peccati-Tudor type theorems for homogeneous sums, respectively. 
Here, \textit{Peccati-Tudor type theorems} refer to statements such that a joint CLT is implied by component-wise CLTs (\citet{PT2005} have established such a result for multiple Wiener-It\^o integrals with respect to an isonormal Gaussian process). 

(b) The proof of Theorem \ref{thm:universality} relies on the fact that condition \ref{univ:fourth} \textit{automatically} yields $(\log d_n)^a\max_{j}\mc{M}(f_{n,j})\to0$ as $n\to\infty$ for every $a>0$. 
On the one hand, this fact has already been established in the previous work for cases \ref{univ:kurtosis} and \ref{univ:poisson} (see the proof of Lemma \ref{lemma:transfer}). 
On the other hand, for case \ref{univ:gamma}, this fact seems not to have appeared in the literature so far. Indeed, for case \ref{univ:gamma} we obtain it as a byproduct of the proof of Proposition \ref{gamma-bound} (see Lemma \ref{lemma:zheng}). As a consequence, Theorem \ref{thm:universality} seems new for case \ref{univ:gamma} even in the fixed dimensional case. We remark that the fourth moment theorem for case \ref{univ:gamma} has been established by \cite{ACP2014} in the univariate case, which inspired our discussions in Section \ref{sec:normal-gamma} (see also \cite{CNPP2016}). 
\end{rmk}

\subsection{Comparison of Theorem \ref{thm:main} to some existing results}\label{sec:comparison}

\subsubsection{Comparison to Corollary 7.3 in \citet*{NPR2010aop}}\label{sec:comp-npr}

First we compare our result to the quantitative multi-dimensional CLT for homogeneous sums obtained in \citet{NPR2010aop}. To state their result, we need to introduce the notion of \textit{contraction}, which will also play an important role in Section \ref{sec:gamma-bound}. 
For two symmetric functions $f:[N]^p\to\mathbb{R},g:[N]^q\to\mathbb{R}$ and $r\in\{0,1\dots,p\wedge q\}$, we define the contraction $f\star_rg:[N]^{p+q-2r}\to\mathbb{R}$ by 
\begin{equation}\label{def:contraction}
f\star_rg(i_1,\dots,i_{p+q-2r})
=\sum_{k_1,\dots,k_r=1}^Nf(i_1,\dots,i_{p-r},k_1,\dots,k_{r})g(i_{p-r+1},\dots,i_{p+q-2r},k_1,\dots,k_r).
\end{equation}
In particular, we have
\[
f\star_0g(i_1,\dots,i_{p+q})=f\otimes g(i_1,\dots,i_{p+q})=f(i_1,\dots,i_p)g(i_{p+1},\dots,i_{p+q}).
\]

Now we are ready to state the result of \cite{NPR2010aop}. To simplify the notation, we focus only on the identity covariance matrix case and do not keep the explicit dependence of constants on $q_j$'s. 
\begin{proposition}[\citet{NPR2010aop}, Corollary 7.3]\label{prop:npr}
Let us keep the same notation as in Theorem \ref{thm:main}. 
Suppose that $\mf{C}_{jk}=\expe{Q(f_j;\bs{X})Q(f_k;\bs{X})}$ for all $j,k\in[d]$ and $\mf{C}$ is the identity matrix of size $d$. 
Suppose also that $\beta:=\max_{1\leq i\leq N}\expe{|X_i|^3}<\infty$ and $q_d\geq\cdots\geq q_1\geq2$. 
Then we have
\begin{equation}\label{eq:npr}
\sup_{A\in\mathcal{C}(\mathbb{R}^{d})}|P(\bs{Q}(\bs{X})\in A)-P(Z\in A)|
\leq Kd^{3/8}\left\{\ol{\Delta}+\mathsf{C}(\beta+1)\left(\sum_{j=1}^d\beta^{(q_j-1)/3}\right)^3\sqrt{\max_{1\leq j\leq d}\mc{M}(f_j)}\right\}^{1/4},
\end{equation}
where $\mathcal{C}(\mathbb{R}^{d})$ is the set of all convex Borel subsets of $\mathbb{R}^d$, $K>0$ is a constant depending only on $\ol{q}_d$, $\mathsf{C}:=\sum_{i=1}^N\max_{1\leq j\leq d}\influence_i(f_j)$ and
\[
\ol{\Delta}:=\sum_{1\leq j\leq k\leq d}\left(\sum_{r=1}^{q_j-1}(\|f_j\star_{q_j-r}f_j\|_{\ell_2}+\|f_k\star_{q_k-r}f_k\|_{\ell_2})+1_{\{q_j<q_k\}}\sqrt{\|f_k\star_{q_k-q_j}f_k\|_{\ell_2}}\right).
\]
\end{proposition}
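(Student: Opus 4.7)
My proof plan follows the three-step Malliavin--Stein strategy: first bound the smooth distance $\sup_{h\in C^3_b(\mathbb{R}^d)} |\expe{h(\bs{Q}(\bs{G}))} - \expe{h(Z)}|$ when $\bs{X}$ is replaced by a standard Gaussian sequence $\bs{G}$, then transfer the bound to the given $\bs{X}$ via a Lindeberg exchange, and finally convert the smooth bound into a bound for probabilities of convex Borel sets by a Gaussian smoothing inequality.

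For the Gaussian step, each $Q(f_j;\bs{G})$ lies in the $q_j$-th Wiener chaos of $\bs{G}$, so I would combine Stein's equation for $\mc{N}_d(0,I_d)$ with Malliavin integration by parts. Standard product and contraction formulas on Wiener chaoses express the inner products $\expe{Q(f_j;\bs{G})Q(f_k;\bs{G})}$ and the Stein discrepancies in terms of contractions $f_j\star_rf_k$, producing a smooth bound controlled by the covariance mismatches (which vanish by assumption) together with the contraction norms $\|f_j\star_r f_k\|_{\ell_2}$ for $1\leq r\leq q_j\wedge q_k-1$ and the square-root term $\sqrt{\|f_k\star_{q_k-q_j}f_k\|_{\ell_2}}$ when $q_j<q_k$; summing over $j\leq k$ reproduces exactly the quantity $\ol{\Delta}$ appearing in \eqref{eq:npr}.

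The Lindeberg step replaces $G_i$ by $X_i$ one index at a time and Taylor-expands the composition $h\circ\bs{Q}$ to third order in the swapped coordinate. Because $\expe{X_i^k}=\expe{G_i^k}$ for $k=1,2$, only the cubic remainder survives; it is bounded by $\beta=\max_i\expe{|X_i|^3}$ times products of influences $\influence_i(f_j)$ and $L^p$-norms of multilinear forms in the remaining variables, the latter tamed by hypercontractivity on the corresponding Wiener chaoses to yield the $\beta^{(q_j-1)/3}$ factors. Summation over $i$ introduces the quantity $\mathsf{C}=\sum_i\max_j\influence_i(f_j)$, giving the inner bracket in \eqref{eq:npr} at the level of smooth test functions.

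The final step is a multivariate Bentkus--G\"otze-type smoothing inequality: $1_A$ is convolved with a Gaussian kernel of bandwidth $\varepsilon$, so that the resulting test function sits in $C^3_b(\mathbb{R}^d)$ with $\|h'''\|_\infty=O(\varepsilon^{-3})$, while Nazarov-type isoperimetric estimates for Gaussian measure of $\varepsilon$-neighborhoods of convex sets bound the discretization error by $O(d^{1/4}\varepsilon)$. Optimizing the resulting expression in $\varepsilon$ produces the outer $1/4$-th root together with the factor $d^{3/8}$. I expect this smoothing stage to be the principal obstacle: correctly tracking the $d^{1/4}$ dependence from Gaussian concentration on convex boundaries, and balancing it against the $\varepsilon^{-3}$ blow-up of $\|h'''\|_\infty$, is precisely what determines the exponent $3/8$, and a less careful treatment (for instance, a crude union bound across coordinates, or a Gaussian tube estimate without the Nazarov sharpening) would yield a strictly worse polynomial factor in $d$.
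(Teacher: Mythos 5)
This proposition is an imported statement (Corollary 7.3 of Nourdin--Peccati--Reinert 2010); the paper under review does not reprove it but cites it for comparison with Theorem \ref{thm:main}. That said, your three-step strategy --- Malliavin--Stein for the Gaussian counterpart, a Lindeberg exchange to the non-Gaussian sequence, and a Gaussian-mollification smoothing to pass from $C^3$ test functions to indicators of convex sets --- is indeed the methodology NPR used, and your description of steps one and two (contractions appearing in the Stein discrepancy, third-order Taylor remainder controlled by $\beta$ and influences via hypercontractivity) correctly captures the structure of $\ol{\Delta}$ and the $\mathsf{C}\,\beta$ term.

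There is a genuine arithmetic problem in your account of the smoothing step, which is precisely the place you flag as the crux. If the smooth bound scales as $\delta\,\varepsilon^{-3}$ (third derivatives of the mollified indicator) and the anti-concentration error is $c_d\,\varepsilon$, then balancing gives $\varepsilon = (\delta/c_d)^{1/4}$ and a total error of order $c_d^{3/4}\delta^{1/4}$. Using Ball's (or Nazarov's) sharp Gaussian surface-area bound $c_d = O(d^{1/4})$ would therefore produce $d^{3/16}\delta^{1/4}$, which is \emph{strictly better} than the stated $d^{3/8}\{\cdots\}^{1/4}$. The factor $d^{3/8}$ in NPR's bound matches $c_d^{3/4}$ with $c_d \asymp \sqrt{d}$, i.e.\ a cruder tube estimate for convex sets, not the Nazarov sharpening. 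So your closing sentence has it backwards: the $d^{3/8}$ is \emph{consistent} with a less refined anti-concentration, and a correct application of the $d^{1/4}$ estimate (if all other $d$-dependence were confined to the inner bracket) would improve the exponent. To reproduce the stated bound faithfully you would need to account for the actual $d$-dependence NPR accumulated --- some of it sits inside the bracket (the $\sum_{j\le k}$ in $\ol\Delta$, the $(\sum_j\beta^{(q_j-1)/3})^3$), and the rest reflects their choice of smoothing and surface-area constant, which is not the $d^{1/4}$ of Ball.
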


To compare Proposition \ref{prop:npr} to our result, we need to bound the quantity $\ol{\Delta}$ by $|\kappa_4(Q(f_j;\bs{X}))|$ and $\mc{M}(f_j)$, $j\in[d]$. This can be carried out by the following lemma (proved in Section \ref{sec:proof-contraction}):
\begin{lemma}\label{lemma:contraction}
Let $\bs{X}=(X_i)_{i=1}^N$ be a sequence of independent centered random variables with unit variance and such that $M:=1+\max_{1\leq i\leq N}\expe{X_i^4}<\infty$. 
Also, let $q\geq2$ be an integer and $f:[N]^q\to\mathbb{R}$ be a symmetric function vanishing on diagonals. 
Then we have
\[
\max_{1\leq r\leq q-1}\|f\star_r f\|_{\ell_2}\leq \sqrt{|\kappa_4(Q(f;\bs{X}))|+CM\|f\|_{\ell_2}^2\mc{M}(f)},
\]
where $C>0$ depends only on $q$. 
\end{lemma}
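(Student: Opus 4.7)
The plan is to expand $\mathbb{E}[Q(f;\bs{X})^4]$ combinatorially, isolate a positive multiple of $\|f\star_r f\|_{\ell_2}^2$ as the ``cycle pattern'' contribution, and bound all remaining terms either by $|\kappa_4(Q(f;\bs{X}))|$ or by $CM\|f\|_{\ell_2}^2\mc{M}(f)$. This extends the classical diagrammatic argument of de Jong \cite{deJong1990} (for $q=2$) to general $q\ge 2$. First I would write
\[
\mathbb{E}[Q(f;\bs{X})^4]=\sum_{\bs{i}^1,\dots,\bs{i}^4\in[N]^q}\Big(\prod_{k=1}^4 f(\bs{i}^k)\Big)\mathbb{E}\Big[\prod_{k=1}^4 X_{\bs{i}^k}\Big],
\]
and group the nonvanishing terms by the multiplicity pattern of each index in $[N]$ across the four multi-indices. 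Because each $\bs{i}^k$ has pairwise-distinct entries (by vanishing on diagonals) and the $X_i$ are independent, centred, unit variance, only patterns where every index has multiplicity in $\{0,2,3,4\}$ contribute; multiplicities $3$ and $4$ introduce a factor $|\mathbb{E}[X_i^3]|\le M^{3/4}$ and $\mathbb{E}[X_i^4]\le M$, respectively.

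Next I would isolate the cycle pattern. Among the pair-only terms (multiplicities in $\{0,2\}$), the three ``disconnected'' matchings pairing copies as $(1,2)(3,4)$, $(1,3)(2,4)$, and $(1,4)(2,3)$ together contribute $3\mathbb{E}[Q(f;\bs{X})^2]^2$. For each $r\in\{1,\dots,q-1\}$, the ``cycle matching of parameter $r$''---with $r$ index-pairs between copies $(1,2)$ and $(3,4)$, and $q-r$ between $(1,3)$ and $(2,4)$ (and its two cyclic relabellings)---can be rewritten, via the symmetry of $f$ and a change of summation variables, as $\|f\star_r f\|_{\ell_2}^2$. After counting the three cycle orientations and the $\binom{q}{r}^4 (r!)^2 ((q-r)!)^2$ internal position-to-pair assignments, these matchings jointly contribute $c_{q,r}\|f\star_r f\|_{\ell_2}^2$ with $c_{q,r}\geq 3$.

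The remaining terms---the non-cycle connected pair matchings (e.g.\ tetrahedron-type patterns for $q\geq 3$) and the higher-moment patterns---are to be bounded uniformly by $CM\|f\|_{\ell_2}^2\mc{M}(f)$. For a higher-moment pattern, either one index has multiplicity $4$ or two indices have multiplicity $3$; isolating such an ``anchor'' index $i^\ast$ and applying Cauchy-Schwarz to the residual sum bounds the pattern's absolute contribution by $CM\sum_{i^\ast}\influence_{i^\ast}(f)^2\leq CM\mc{M}(f)\|f\|_{\ell_2}^2$, using $\sum_i\influence_i(f)=\|f\|_{\ell_2}^2$. The same ``bottleneck'' argument applies to any connected non-cycle pair matching on the four copies: its matching graph forces some summation vertex to carry three or more contractions, producing the same $\mc{M}(f)\|f\|_{\ell_2}^2$ factor after Cauchy-Schwarz. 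Combining these bounds gives
\[
\kappa_4(Q(f;\bs{X}))=\mathbb{E}[Q(f;\bs{X})^4]-3\mathbb{E}[Q(f;\bs{X})^2]^2\geq c_{q,r}\|f\star_r f\|_{\ell_2}^2-CM\|f\|_{\ell_2}^2\mc{M}(f),
\]
so that $\|f\star_r f\|_{\ell_2}^2\leq|\kappa_4(Q(f;\bs{X}))|+C'M\|f\|_{\ell_2}^2\mc{M}(f)$ after absorbing $c_{q,r}^{-1}\leq 1/3$ into the constant. Taking the square root and maximising over $r$ yields the lemma.

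The main obstacle is the uniform residual bound on the non-cycle pair matchings, which are not individually non-negative and so cannot simply be discarded (in contrast to the Gaussian chaos case, where the total of connected pair contributions is a sum of squared symmetrized contractions). The bottleneck Cauchy-Schwarz argument must be verified case-by-case on the combinatorial pattern of the matching graph, which becomes technically involved for large $q$. An alternative route would be to apply the Gaussian product formula to write $\kappa_4(Q(f;\bs{G}))=\sum_{r=1}^{q-1}\alpha_{q,r}\|f\,\widetilde{\star}_r\,f\|_{\ell_2}^2$ for an independent standard Gaussian sequence $\bs{G}$, control the Gaussian/discrete discrepancy $|\kappa_4(Q(f;\bs{X}))-\kappa_4(Q(f;\bs{G}))|$ by the same higher-moment analysis, and then relate $\|f\star_r f\|_{\ell_2}^2$ to $\|f\,\widetilde{\star}_r\,f\|_{\ell_2}^2$; but this last step is itself nontrivial since symmetrization decreases the $L^2$ norm in general, and must exploit the partial symmetries already present in $f\star_r f$ when $f$ is symmetric.
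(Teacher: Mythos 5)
Your primary combinatorial route has a genuine gap at the step where you claim that every connected non\nobreakdash-cycle pair matching is bounded by $CM\mc{M}(f)\|f\|_{\ell_2}^2$. In a pair matching each surviving index value appears in exactly two of the four factors $f(\bs{i}^k)$, so the ``bottleneck'' you invoke (a summation vertex carrying three or more contractions) never arises, and the claimed bound is in fact false. Take $q=3$ and the $K_4$ matching, in which exactly one index slot is shared between each of the six pairs of copies; labelling the shared indices as $a,b,c,d,e,g$, the contribution is
\[
T=\sum_{a,b,c,d,e,g}f(a,b,c)f(a,d,e)f(b,d,g)f(c,e,g)
=\sum_{b,c,d,e}(f\star_1 f)(b,c,d,e)\,(f\star_1 f)(b,d,c,e),
\]
a twisted inner product of $f\star_1 f$ with itself. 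Cauchy--Schwarz gives $|T|\le\|f\star_1 f\|_{\ell_2}^2$, and this order is attained: with $f(i,j,k)=N^{-1}$ on distinct triples one has $\mc{M}(f)\|f\|_{\ell_2}^2\asymp N$ while $T\asymp N^2$. So $T$ is of the same order as the quantity you are trying to extract, is not sign-definite, and cannot be swept into the error term; it must be combined with the cycle contributions through the exact combinatorial identity, which is precisely what your uniform bounding strategy tries to avoid. (The case $q=2$ is special because there every connected pair matching on four copies is a $4$-cycle.) Your treatment of the higher-moment patterns via an anchor index and Cauchy--Schwarz is sound, but it is not the pair matchings where the difficulty lies.

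Your ``alternative route'' is closer in spirit to the paper's proof, and you have correctly identified its key obstruction---the Gaussian product formula produces only symmetrized contraction norms. The paper does not resolve this by exploiting partial symmetries of $f\star_r f$; it takes a different path. Using the $\Gamma$-calculus and the spectral comparison of Lemma~\ref{lemma:key}, it proves inequality~\eqref{var-ineq}, that is $p!^2\sum_{r=1}^{p-1}\binom{p}{r}^2\|f\star_rf\|_{\ell_2}^2\leq\kappa_4(Q(f;\bs{Y}))$ with \emph{unsymmetrized} contractions, for any sequence $\bs{Y}$ of normal or standardized gamma variables (Lemma~\ref{lemma:transfer} packages this). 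It then compares $\bs{X}$ not to a Gaussian sequence but to a mixed normal/gamma sequence $\bs{Y}$ constructed so that $\expe{X_i^r}=\expe{Y_i^r}$ for $r\le3$; the third-moment match is what makes the moment-matching Lindeberg bound of Lemma~\ref{lemma:npr4.3} with $l=m=4$ yield $|\expe{Q(f;\bs{X})^4}-\expe{Q(f;\bs{Y})^4}|\le C_q M^q\mc{M}(f)\|f\|_{\ell_2}^2$, after which the lemma follows in two lines from $\kappa_4(Q(f;\bs{Y}))\le|\kappa_4(Q(f;\bs{X}))|+C_qM^q\mc{M}(f)\|f\|_{\ell_2}^2$. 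The two ingredients missing from your proposal are precisely this third-moment-matching normal/gamma comparison and the unsymmetrized lower bound~\eqref{var-ineq}.
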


\begin{rmk}
The bound in Lemma \ref{lemma:contraction} is generally sharp. 
In fact, it is well-known that $\sqrt{|\kappa_4(Q(f;\bs{X}))|}$ has the same order as $\max_{1\leq r\leq q-1}\|f\star_r f\|_{\ell_2}$ if $\bs{X}$ is Gaussian (see e.g.~Eq.(5.2.6) in \cite{NP2012}). Moreover, if $q=2$ and $f(i,j)=N^{-1/2}1_{\{|i-j|=1\}}$, then both $\|f\star_1f\|_{\ell_2}$ and $\|f\|_{\ell_2}\sqrt{\mc{M}(f)}$ are of order $N^{-1/2}$. 
\end{rmk}
With the help of Lemma \ref{lemma:contraction}, we observe that the bound in \eqref{eq:npr} typically has the same order as
\[
d^{3/8}\left\{d^2\max_{1\leq j,k\leq d}\wh{\Delta}_{jk}+d^3\mathsf{C}\max_{1\leq j\leq d}\sqrt{\mc{M}(f_j)}\right\}^{1/4},
\]
where
\[
\wh{\Delta}_{jk}:=1_{\{q_j=q_k\}}\sqrt{|\kappa_4(Q(f_j;\bs{X}))|+\mc{M}(f_j)}+1_{\{q_j<q_k\}}\left\{|\kappa_4(Q(f_k;\bs{X}))|+\mc{M}(f_k)\right\}^{1/4}.
\] 
Thus, in the bound of \eqref{eq:npr}, the dimension appears as a power of $d$, while the exponent of the ``standard'' convergence rate $\delta:=\max_{1\leq j\leq d}\sqrt{|\kappa_4(Q(f_j;\bs{X}))|+\mc{M}(f_j)}$ is $1/4$. 
These are much improved in our result because the former appears as a power of $\log d$ and the latter is $1/3$. 
Nevertheless, we should note that the bound in \eqref{eq:npr} is given for the much stronger metric than the Kolmogorov distance. In fact, to the best of the author's knowledge, all the known bounds for this metric depend polynomially on the dimension even for sums of independent random variables; see \cite[Section 1.1]{Zhai2018} and references therein. 

\begin{rmk}\label{rmk:npr}
(a) Roughly speaking, the exponent of $\delta$ is $1/4$ in the bound of \eqref{eq:npr} is because this bound is transferred from an analogous quantitative CLT for the Gaussian counterpart by the Lindeberg method with matching moments \textit{up to the second order}. To overcome this issue, we need to match moments \textit{up to the third order} and thus we can no longer rely on the result analogous to Theorem \ref{thm:main} for the Gaussian counterpart, which is obtained in \cite{Koike2017stein}. For this reason we will develop a high-dimensional CLT for homogeneous sums based on normal and gamma variables in Section \ref{sec:normal-gamma}. 

(b) It is worth noting that the quantity $\mathsf{C}=\sum_{i=1}^N\max_{1\leq j\leq d}\influence_i(f_j)$ in the bound of \eqref{eq:npr} can be much larger than $\max_{1\leq j\leq d}\sum_{i=1}^N\influence_i(f_j)=\max_{1\leq j\leq d}\|f_j\|_{\ell_2}^2$ in high-dimensional situations (see Remark \ref{rmk:qf} for a concrete example). 
Indeed, na\"ive application of the Lindeberg method produces a quantity like $\mathsf{C}$, which prevents us from using the Lindeberg method in its pure form (this is why \citet{CCK2013,CCK2017} rely on Stein's method to prove their high-dimensional CLTs; see \cite[Appendix L]{CCK2013supp} for a detailed discussion). 
In Section \ref{sec:lindeberg}, we will resolve this issue by \textit{randomizing} the Lindeberg method as \citet{DZ2017} have recently done in the context of sums of independent random variables.    
\end{rmk}

\subsubsection{Comparison to Proposition 2.1 in \citet*{CCK2017}}\label{comp-cck}

Let us recall the precise statement of \cite[Proposition 2.1]{CCK2017}:
\begin{proposition}[\citet{CCK2017}, Proposition 2.1]\label{prop:cck}
Let $\xi_i=(\xi_{i1},\dots,\xi_{id})$ $(i=1,\dots,n)$ be independent centered $d$-dimensional random vectors and set $S_n=(S_{n,1},\dots,S_{n,d}):=n^{-1/2}\sum_{i=1}^n\xi_i$. 
Assume $n\geq4$, $d\geq3$, $\mf{C}=\expe{S_nS_n^\top}$ and $\ul{\sigma}^2:=\min_{1\leq j\leq d}S_{n,j}^2>0$.  
Also, suppose that there is a constant $B_n\geq1$ such that $n^{-1}\sum_{i=1}^n\expe{|\xi_{n,ij}|^{2+k}}\leq B_n^k$~~for all $j\in[d]$ and $k=1,2$. Then the following statements hold true.
\begin{enumerate}[label=(\alph*)]

\item\label{cck:psi} Assume $\max_{1\leq i\leq n}\max_{1\leq j\leq d}\|\xi_{ij}\|_{\psi_1}\leq B_n$. Then we have
\begin{equation*}
\sup_{A\in\mathcal{A}^{\mathrm{re}}(d)}|P(S_n\in A)-P(Z\in A)|\leq C\left(\frac{B_n^2\log^7(dn)}{n}\right)^{1/6},
\end{equation*}
where $C>0$ depends only on $\ul{\sigma}$. 

\item\label{cck:mom} Assume $\max_{1\leq i\leq n}\ex{(\max_{1\leq j\leq d}|\xi_{ij}|/B_n)^q}\leq 2$ for some $q>0$. Then we have
\begin{equation*}
\sup_{A\in\mathcal{A}^{\mathrm{re}}(d)}|P(S_n\in A)-P(Z\in A)|\leq C'\left\{\left(\frac{B_n^2\log^7(dn)}{n}\right)^{1/6}+\left(\frac{B_n^2\log^3(dn)}{n^{1-2/q}}\right)^{1/3}\right\},
\end{equation*}
where $C'>0$ depends only on $\ul{\sigma}$. 

\end{enumerate}
\end{proposition}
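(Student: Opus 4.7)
\textbf{Proof plan for Proposition \ref{prop:cck}.} The plan is to follow the smoothing--and--interpolation scheme that underlies the Chernozhukov--Chetverikov--Kato theory. First, I would reduce the supremum over hyperrectangles to the comparison of expectations of a single smooth test function. Writing $A=\prod_{j=1}^d[a_j,b_j]$ as $\{\max_{1\le j\le 2d}(S_{n,j}-b_j,\ a_j-S_{n,j})\le 0\}$, we append $-S_n$ to $S_n$ to pass to an ambient dimension of $2d$, and replace the maximum by the log--sum--exp soft--max $F_\beta(y)=\beta^{-1}\log\sum_{k=1}^{2d}e^{\beta y_k}$, which satisfies $F_\beta-\beta^{-1}\log(2d)\le\max\le F_\beta$. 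Composing with a smooth approximation $h\in C^3_b(\mathbb{R})$ of the indicator $1_{(-\infty,0]}$ on scale $\epsilon$, we obtain $g=h\circ F_\beta\in C^3_b(\mathbb{R}^{2d})$ with the crucial derivative bounds
\[
\sum_{j,k,l}|\partial_{jkl}g(y)|\le C\bigl(\|h'''\|_\infty+\beta\|h''\|_\infty+\beta^2\|h'\|_\infty\bigr),
\]
independent of $d$. The Gaussian anti--concentration inequality of Nazarov type, $\sup_{x\in\mathbb{R}}P(\max_j Z_j\in[x,x+\epsilon])\le C\epsilon\sqrt{\log d}$, will be used to pay for both the smoothing $h\mapsto 1_{(-\infty,0]}$ (cost $\epsilon\sqrt{\log d}$) and the max vs.\ soft--max gap (cost $\beta^{-1}\log d\cdot\sqrt{\log d}$).

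Second, to compare $\expe{g(S_n)}$ with $\expe{g(Z)}$ I would interpolate along $Z_t=\sqrt{t}\,S_n+\sqrt{1-t}\,Z$, or equivalently run the Lindeberg swap one summand at a time, Taylor--expanding to third order. Matching first and second moments between each $\xi_i/\sqrt n$ and its Gaussian analogue (which is enforced by the assumption $\mathfrak{C}=\expe{S_nS_n^\top}$) annihilates the first two terms of the expansion, so the difference is controlled by
\[
\frac{1}{n^{3/2}}\sum_{i=1}^n\sum_{j,k,l}\expe{|\partial_{jkl}g(\eta_i)|\,|\xi_{ij}\xi_{ik}\xi_{il}|}
\le \frac{C\beta^2}{n^{3/2}}\sum_{i=1}^n\expe{\|\xi_i\|_{\ell_\infty}\sum_{j,k}|\xi_{ij}\xi_{ik}|},
\]
which, using $\sum_{j,k}|\xi_{ij}\xi_{ik}|=(\sum_j|\xi_{ij}|)^2$ together with a truncation $\|\xi_i\|_{\ell_\infty}\le u$ and the hypothesis on second and third coordinate moments, is of order $\beta^2 B_n^2 u/\sqrt{n}$.

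Third, the truncation level $u$ is dictated by the tail hypothesis on the $\xi_{ij}$. Under \ref{cck:psi} one takes $u\asymp B_n\log(dn)$ and controls the tail using the maximal inequality $\|\max_{i,j}|\xi_{ij}|\|_{\psi_1}\lesssim B_n\log(dn)$; under \ref{cck:mom} one takes $u\asymp B_n n^{1/q}$ and bounds the tail by Markov's inequality on the $L^q$ maximum. Adding the truncation error, the Stein/interpolation remainder, the max vs.\ soft--max gap, and the smoothing error, one obtains
\[
\sup_{A\in\mathcal{A}^{\mathrm{re}}(d)}|P(S_n\in A)-P(Z\in A)|\le C\left\{\frac{\beta^2 B_n^2 u}{\epsilon\sqrt{n}}+\epsilon\sqrt{\log d}+\beta^{-1}\log^{3/2}d+\mathrm{tail}(u)\right\},
\]
and optimizing in $\epsilon$ and $\beta$ gives the stated rates in \ref{cck:psi} and \ref{cck:mom}.

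The hard parts are twofold: (i) the derivative calculus for the composition $h\circ F_\beta$ up to third order, which must yield a bound linear in a fixed power of $\beta$ with no hidden dimensional factor, and (ii) a Nazarov--type anti--concentration estimate for $\max_j Z_j$ that is uniform in the correlation structure and degenerates only as $\sqrt{\log d}$. These two ingredients are precisely what allows the dimension $d$ to appear only through powers of $\log d$ in the final bound, and they drive the choice of the soft--max functional as the intermediate object.
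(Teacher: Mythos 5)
This proposition is stated as a \emph{citation} of Proposition 2.1 of Chernozhukov, Chetverikov and Kato (2017); the present paper does not prove it, so there is no internal proof to compare against. Your sketch does, however, correctly identify the general CCK philosophy (soft-max $\Phi_\beta$, smooth cutoff on scale $\epsilon$, derivative estimates independent of $d$, Nazarov anti-concentration, Slepian/Lindeberg interpolation with second-moment matching), so it is worth checking whether the sketch actually delivers the stated rate. It does not, for two concrete reasons.

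First, the intermediate bound
\[
\sum_{j,k,l}|\partial_{jkl}g(\eta_i)|\,|\xi_{ij}\xi_{ik}\xi_{il}|\;\leq\;C\beta^2\|\xi_i\|_{\ell_\infty}\sum_{j,k}|\xi_{ij}\xi_{ik}|
\]
is not a consequence of the derivative estimate you quote and would ruin the dimension dependence: $\sum_{j,k}|\xi_{ij}\xi_{ik}|=(\sum_j|\xi_{ij}|)^2$ typically scales like $d^2$, whereas the whole point of the soft-max calculus is that the third derivatives carry probability weights $\pi_j(y)=e^{\beta y_j}/\sum_m e^{\beta y_m}$, so that after weighting one gets $\sum_{j,k,l}|\partial_{jkl}g|\,|\xi_{ij}\xi_{ik}\xi_{il}|\lesssim(\|h'''\|_\infty+\beta\|h''\|_\infty+\beta^2\|h'\|_\infty)\,(\sum_j\pi_j|\xi_{ij}|)^3\leq(\cdots)\|\xi_i\|_{\ell_\infty}^3$, with no polynomial-in-$d$ factor at all. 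Dropping the weights is precisely the sort of misstep that Remark \ref{rmk:npr}(b) of the paper warns against (it is also why the paper cannot use a pure Lindeberg swap and instead passes to Stein kernels and a randomized Lindeberg argument): the ``$\sum_i\max_j$''-type quantity that emerges from a na\"ive bound is exactly what has to be avoided.

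Second, and more fundamentally, even with the corrected envelope the balancing you propose does not give $n^{-1/6}$. You have a third-order remainder of the schematic form $\epsilon^{-3}(\log d)^2\cdot u\, B_n^2/\sqrt{n}$ (after setting $\beta=\epsilon^{-1}\log d$), to be balanced against the anti-concentration cost $\epsilon\sqrt{\log d}$; solving $\epsilon^4\asymp(\log d)^{3/2}uB_n^2/\sqrt{n}$ gives a Kolmogorov-distance bound of order $n^{-1/8}$, not $n^{-1/6}$, and with a worse power of $B_n$. The $n^{-1/8}$ rate is exactly what the earlier Chernozhukov--Chetverikov--Kato (2013) paper achieved. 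The improvement to $n^{-1/6}$ in the 2017 paper rests on an additional idea -- a more careful use of the Stein interpolation together with a self-improving/iterated comparison that keeps the argument at the level of \emph{second} derivatives of $g$ (so that the relevant factor is $\epsilon^{-2}\log d$ rather than $\epsilon^{-3}(\log d)^2$, matching the structure of Proposition \ref{kolmogorov} in this paper with $\Delta\asymp B_n\log^{3/2}(dn)/\sqrt{n}$). Your sketch does not contain that refinement, so as written it proves a weaker statement than Proposition \ref{prop:cck}.

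Finally, the phrase ``interpolate along $Z_t=\sqrt{t}S_n+\sqrt{1-t}Z$, or equivalently run the Lindeberg swap one summand at a time'' should be removed: the two are not equivalent, and the paper's own discussion in Remark \ref{rmk:npr}(b), citing Appendix L of the CCK 2013 supplement, explains why CCK rely on the Stein interpolation rather than a pure Lindeberg swap precisely to circumvent the $\sum_i\max_j$ obstruction.
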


If we restrict our attention to Gaussian approximation for the maximum statistic $\max_{1\leq j\leq d}S_{n,j}$, we can slightly improve the bound in Proposition \ref{prop:cck}\ref{cck:psi} in terms of $d$ by combining Theorem \ref{thm:main} with \cite[Theorem 2]{DZ2017} as follows:
\begin{proposition}\label{prop:cck-main}
Under the assumptions of Proposition \ref{prop:cck}\ref{cck:psi}, we have
\begin{equation}\label{eq:cck-main-psi}
\sup_{t\in\mathbb{R}}\labs P\lpa\max_{1\leq j\leq d}S_{n,j}\leq t\rpa-P\lpa\max_{1\leq j\leq d}Z_j\leq t\rpa\rabs\leq C\left\{\left(\frac{B_n^2\log^6(dn)}{n}\right)^{1/6}
+\sqrt{\frac{B_n^2(\log d)^5(\log n)^2}{n}}\right\},
\end{equation}
where $C>0$ depends only on $\ul{\sigma}$. 
\end{proposition}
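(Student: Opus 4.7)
The plan is to derive \eqref{eq:cck-main-psi} by combining two complementary Gaussian-comparison techniques, each responsible for one of the two summands on its right-hand side. The crucial observation is that $\{x\in\mathbb{R}^d:\max_{1\le j\le d}x_j\le t\}$ is a single half-space, so the statistic $\max_{j}S_{n,j}$ corresponds to a one-parameter family of sets that is much smaller than the full hyperrectangle class $\mathcal{A}^{\mathrm{re}}(d)$, and this alone should save one logarithmic factor relative to Proposition \ref{prop:cck}\ref{cck:psi}.

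First I would invoke \cite[Theorem 2]{DZ2017}. Their randomized Lindeberg method yields a Kolmogorov-type bound between $\max_{j}S_{n,j}$ and $\max_{j}Z_j$ of order $\sqrt{B_n^2(\log d)^5(\log n)^2/n}$ under the sub-exponential tail assumption on the coordinates of $\xi_i$ and the covariance identity $\mathfrak{C}=\mathrm{E}[S_nS_n^{\top}]$; this directly contributes the second summand of \eqref{eq:cck-main-psi} and accounts for the $n^{-1/2}$-type regime. To produce the slower-decaying but logarithmically tighter first summand, I would replay the Stein/fourth-moment smoothing argument underlying Theorem \ref{thm:main} in the degree-one case $q_j\equiv 1$, but specialized to the soft-max smoother $h_\beta(x)=\beta^{-1}\log\sum_{j=1}^d e^{\beta x_j}$ used in the CCK/Bentkus smoothing. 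For $h_\beta$ one needs one fewer derivative than for a general hyperrectangle indicator, so the bookkeeping through the Stein-kernel estimates of Theorem \ref{thm:main} gives $(\log d)^6$ in place of $(\log d)^7$; tuning $\beta$ to balance the smoothing bias against the fourth-moment and influence bounds (respectively $\sum_i\mathrm{E}[\xi_{ij}^4]/n^2\lesssim B_n^2/n$ and $\max_{i,j}\mathrm{E}[\xi_{ij}^2]/n\lesssim B_n^2/n$) then produces $(B_n^2\log^6(dn)/n)^{1/6}$, and summing the two bounds gives \eqref{eq:cck-main-psi}.

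The main obstacle, and the reason a direct quotation of Theorem \ref{thm:main} is not available, is that Theorem \ref{thm:main} is stated for homogeneous sums built from a sequence of \emph{scalar} independent random variables $\bs{X}=(X_i)$, whereas the CCK setup concerns sums of $d$-dimensional independent vectors $\xi_i$ whose coordinates may be arbitrarily dependent within each fixed $i$. Consequently, there is no common scalar sequence $\bs{X}$ for which $S_{n,j}=Q(f_j;\bs{X})$ simultaneously holds across all $j$, so the argument of Theorem \ref{thm:main} must be rerun in situ with $q_j\equiv 1$ and general vector-valued increments, and the stitching with the DZ2017 bound requires careful tracking of how the two logarithmic regimes ultimately combine into the two summands of \eqref{eq:cck-main-psi}.
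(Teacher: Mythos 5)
Your high-level instinct --- combine \cite[Theorem 2]{DZ2017} with the Stein-kernel machinery behind Theorem \ref{thm:main} --- is the right pair of ingredients, but the way you propose to wire them together does not work, and you have also misremembered what \cite[Theorem 2]{DZ2017} actually proves.

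First, \cite[Theorem 2]{DZ2017} is \emph{not} a direct Kolmogorov bound between $\max_j S_{n,j}$ and $\max_j Z_j$. It is a multiplier-bootstrap consistency result: it bounds, with high probability, the conditional Kolmogorov distance between $\max_j S_{n,j}$ and $\max_j S^*_{n,j}$, where $S_n^* = n^{-1/2}\sum_i X_i\xi_i$ for scalar i.i.d.\ multipliers $X_i$ independent of $(\xi_i)$. So quoting it in the way you suggest does not by itself produce any of the Gaussian comparison, and it certainly does not give a $\sqrt{B_n^2(\log d)^5(\log n)^2/n}$ term all on its own.

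Second, and more importantly, you correctly identify the obstacle --- $S_{n,j}$ is \emph{not} of the form $Q(f_j;\bs{X})$ for a single scalar sequence $\bs{X}$, because the within-row dependence of $\xi_i=(\xi_{i1},\dots,\xi_{id})$ is arbitrary --- but you propose to ``rerun the argument of Theorem \ref{thm:main} in situ with $q_j\equiv 1$ and general vector-valued increments,'' which is a genuinely new proof you would have to carry out, and it is not at all clear that the randomized Lindeberg argument of Section \ref{sec:lindeberg} survives once the summands are correlated vectors rather than functions of scalar $X_i$'s. The paper's resolution of this obstacle is the key missing idea in your proposal: it introduces exactly the multiplier variables $X_i$ above (bounded, mean zero, with second and third moment equal to one) and then \emph{conditions on} $(\xi_i)_{i=1}^n$. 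Conditionally, the numbers $f_j(i):=n^{-1/2}\xi_{ij}$ are deterministic coefficients and $S^*_{n,j}=Q(f_j;\bs{X})$ is a degree-one homogeneous sum in the scalar i.i.d.\ sequence $\bs{X}=(X_i)$, so Corollary \ref{coro:rect} applies verbatim. The DZ2017 theorem then closes the remaining gap between $\max_j S_{n,j}$ and $\max_j S^*_{n,j}$. In this correct decomposition the allocation of terms is essentially the opposite of what you describe: the DZ2017 step and the $\delta_0,\delta_1$-type contributions from Corollary \ref{coro:rect} both feed the $(B_n^2\log^6(dn)/n)^{1/6}$ summand, while the $\sqrt{B_n^2(\log d)^5(\log n)^2/n}$ summand comes from the maximal-influence term $(\log d)^{5/2}\max_k \ol{B}_N^{q_k}\sqrt{\mathcal{M}(f_k)}$ in Corollary \ref{coro:rect}, once one takes expectations over $\xi$ and bounds $\expectation[\max_i|\xi_{ij}|]\lesssim B_n\log n$ via the $\psi_1$-tails. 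Without the conditioning trick your proposal has no valid route to either summand.
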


\begin{rmk}
(a) Proposition \ref{prop:cck-main} is concerned only with approximation of $\max_{1\leq j\leq d}S_{n,j}$ because \cite[Theorem 2]{DZ2017} is. Since it is presumably possible to extend \cite[Theorem 2]{DZ2017} to (bootstrap) approximation in hyperrectangles, Proposition \ref{prop:cck-main} could hopefully be extended to a high-dimensional CLT in hyperrectangles. 

(b) The proof of Proposition \ref{prop:cck-main} suggests that we could drop the second term in the right side of \eqref{eq:cck-main-psi} when $\max_{i,j}\|\xi_{ij}\|_{\psi_2}\leq B_n$.
\end{rmk}

\subsubsection{Comparison to Theorem 3.2 in \citet{Koike2017stein}}\label{sec:comp-stein}

As the last illustration, we compare our result to the Gaussian approximation result for maxima of quadratic forms obtained in \cite[Theorem 3.2]{Koike2017stein}. 
Here, for an explicit comparison, we state this result with applying \cite[Corollary 3.1]{Koike2017stein}. 
For a function $f:[N]^2\to\mathbb{R}$, we denote the $N\times N$ matrix $(f(i,j))_{1\leq i,j\leq N}$ by $[f]$.
\begin{proposition}[\citet{Koike2017stein}, Theorem 3.2 and Corollary 3.1]\label{prop:koike}
Let us keep the same notation as in Corollary \ref{coro:main}. 
Suppose that $q_1=q_2=\cdots=2$, $\inf_{n\in\mathbb{N}}\min_{1\leq k\leq d_n}\|Z_{n,k}\|_2>0$, $\sup_{i\in\mathbb{N}}\|X_i\|_{\psi_2}<\infty$ and \eqref{asymp:covariance} holds true as $n\to\infty$. 
Suppose also that
\begin{align}
(\log d_n)^3\max_{1\leq k\leq d_n}\sqrt{\trace\left([f_{n,k}]^4\right)}+(\log d_n)^5\left(\max_{1\leq k\leq d_n}\sqrt{\mc{M}(f_{n,k})}\right)\sum_{i=1}^{N_n}\max_{1\leq k\leq d_n}\influence_i(f_{n,k})\to0\label{qf:deJong}
\end{align}
as $n\to\infty$. 
Then we have
\begin{equation}\label{qf:aim}
\sup_{t\in\mathbb{R}}\labs P\lpa\max_{1\leq k\leq d_n}|Q(f_{n,k};\bs{X})|\leq t\rpa-P\lpa\max_{1\leq k\leq d_n}|Z_{n,k}|\leq t\rpa\rabs\to0
\end{equation}
as $n\to\infty$. 
\end{proposition}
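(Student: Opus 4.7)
The plan is to obtain \eqref{qf:aim} as a specialization of Corollary \ref{coro:rect} to the degree-two case $q_j\equiv 2$. The first step is to reduce the two-sided maxima in \eqref{qf:aim} to a hyperrectangle event: for any $t\in\mathbb{R}$ one has $\{\max_k |Q(f_{n,k};\bs{X})|\leq t\} = \{\bs{Q}^{(n)}(\bs{X})\in [-t,t]^{d_n}\}$, and the analogous identity for $Z^{(n)}$. Taking the supremum over $t$, the left-hand side of \eqref{qf:aim} is dominated by $\sup_{A\in\mathcal{A}^{\mathrm{re}}(d_n)}|P(\bs{Q}^{(n)}(\bs{X})\in A)-P(Z^{(n)}\in A)|$, so it suffices to verify the assumptions of Corollary \ref{coro:rect} and to show that under \eqref{qf:deJong} each term of the resulting bound is $o(1)$.

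Next I would plug $q_j\equiv 2$, $\alpha=2$ (from $\sup_i\|X_i\|_{\psi_2}<\infty$) and $\ol{q}_d=2$ into Corollary \ref{coro:rect}. The bound becomes a sum of three contributions: a covariance term $(\log d_n)^{2/3}\delta_0[\bs{Q}^{(n)}(\bs{X})]^{1/3}$, a fourth-cumulant/variance term $(\log d_n)^{\mu+1/2}\delta_1[\bs{Q}^{(n)}(\bs{X})]^{1/3}$ with $\mu\in\{1/2,7/6\}$ depending on whether $\mathbb{E}[X_i^3]\equiv 0$, and an influence term $(\log d_n)^3\max_k\ol{B}_N^{2}\sqrt{\mc{M}(f_{n,k})}$. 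The first is handled by \eqref{asymp:covariance} directly. Since $\sum_{i=1}^{N_n}\max_k\influence_i(f_{n,k})\geq\max_k\|f_{n,k}\|_{\ell_2}^2$ and the latter is bounded from below by the covariance assumption on $\mf{C}_n$, the second summand of \eqref{qf:deJong} implies $(\log d_n)^5\max_k\sqrt{\mc{M}(f_{n,k})}\to 0$, which kills the influence term with considerable room to spare.

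The final ingredient is the decay of $\delta_1$. The specific structure of $q=2$ homogeneous sums gives the key identity $\|f\star_1 f\|_{\ell_2}^2=\trace([f]^4)$ for symmetric $f$ vanishing on diagonals, and in particular $\sum_i\influence_i(f)^2=\sum_i([f]^2)_{ii}^2\leq\trace([f]^4)$. A direct fourth-cumulant expansion of the quadratic form $Q(f;\bs{X})=2\sum_{i<j}f(i,j)X_iX_j$ (or an adaptation of Lemma \ref{lemma:contraction} together with the $\psi_2$-moment bound) yields an inequality of the form
\[
|\kappa_4(Q(f_{n,k};\bs{X}))|\leq C\bigl(\trace([f_{n,k}]^4)+\max_i|\kappa_4(X_i)|\cdot\sum_i\influence_i(f_{n,k})^2\bigr),
\]
so that $\delta_1$ is controlled by $\trace([f_{n,k}]^4)$ and $\mc{M}(f_{n,k})\|f_{n,k}\|_{\ell_2}^2$, each of which is driven to zero at the required polynomial-in-$\log d_n$ rate by the two summands of \eqref{qf:deJong}.

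The hard part will be the bookkeeping in this last step: verifying that the powers of $\log d_n$ appearing in the exponent $\mu+1/2$ are indeed matched by the rates of decay provided by \eqref{qf:deJong}. If the naive bound from Theorem \ref{thm:main} turns out to be slightly loose in this regime, one can refine by exploiting the quadratic-form cumulant identity directly -- giving a sharper Gaussian-part/non-Gaussian-part split than the generic contraction bound of Lemma \ref{lemma:contraction} -- or by invoking the symmetry-based improvement $w=1/2$ when $\mathbb{E}[X_i^3]\equiv 0$. This is precisely the quadratic-form-specific argument carried out in \cite{Koike2017stein}, and it is where the $q=2$ structure pays off relative to the general-degree statement.
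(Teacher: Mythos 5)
This proposition is not proved in the present paper at all: it is quoted from \cite{Koike2017stein} (Theorem 3.2 and Corollary 3.1 there) purely for comparison with Proposition \ref{prop:koike-main}, so the proof you are implicitly competing with is an external Gaussian-analysis argument (a Malliavin--Stein bound for maxima of Wiener functionals combined with an invariance-principle transfer), not an application of Theorem \ref{thm:main}. More importantly, your proposed derivation from Corollary \ref{coro:rect} cannot close. With $\ol{q}_d=2$ and $\alpha=2$ the cumulant term of \eqref{eq:main} is $(\log d_n)^{\mu+1/2}\delta_1^{1/3}$ with $\mu=7/6$ in the general case $w=1$ (zero skewness is not assumed in Proposition \ref{prop:koike}), so you need $(\log d_n)^{5}\delta_1[\bs{Q}^{(n)}(\bs{X})]\to0$, i.e.\ $(\log d_n)^{10}\max_k|\kappa_4(Q(f_{n,k};\bs{X}))|\to0$, hence essentially $(\log d_n)^{5}\max_k\sqrt{\trace([f_{n,k}]^4)}\to0$. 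Hypothesis \eqref{qf:deJong} supplies only $(\log d_n)^{3}\max_k\sqrt{\trace([f_{n,k}]^4)}\to0$, which is strictly weaker; even in the zero-skewness case $w=1/2$ the required power exceeds $3$. This is not bookkeeping slack that a sharper cumulant identity repairs: Remark \ref{rmk:qf}(a) states explicitly that, regarding the rate of $\max_k\trace([f_{n,k}]^4)$, the hypothesis of Proposition \ref{prop:koike-main} is \emph{stronger} than \eqref{qf:deJong} --- the new machinery does not subsume Proposition \ref{prop:koike}, and the two results are incomparable. Your closing sentence, which defers the hard step to the ``quadratic-form-specific argument carried out in \cite{Koike2017stein}'', therefore amounts to citing the very result you set out to prove.

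The ingredients of your argument that are correct --- the reduction of the two-sided maximum to hyperrectangles, the identity $\|f\star_1 f\|_{\ell_2}^2=\trace([f]^4)$, the bounds $\sum_i\influence_i(f)^2\leq\trace([f]^4)$ and $|\kappa_4(Q(f;\bs{X}))|\lesssim\trace([f]^4)+\|f\|_{\ell_2}^2\mc{M}(f)$ (this is Lemma \ref{fourth-trace}), and the observation that the second summand of \eqref{qf:deJong} together with the covariance assumptions forces $(\log d_n)^5\max_k\sqrt{\mc{M}(f_{n,k})}\to0$ --- are precisely those used to prove Proposition \ref{prop:koike-main}, which is the statement that genuinely does follow from Corollary \ref{coro:main}, but under the different (and, for the trace term, stronger) hypotheses \ref{qf-cond1} or \ref{qf-cond2}.
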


When we apply our result to quadratic forms as above, we obtain the following result. 
\begin{proposition}\label{prop:koike-main}
Let us keep the same notation as in Corollary \ref{coro:main}. 
Set
\[
\Delta_n:=\max_{1\leq k\leq d_n}\sqrt{\trace\left([f_{n,k}]^4\right)}+\max_{1\leq k\leq d_n}\sqrt{\mc{M}(f_{n,k})}\|f_{n,k}\|_{\ell_2}
\]
for every $n$. 
Assume $q_1=q_2=\cdots=2$, $\inf_{n\in\mathbb{N}}\min_{1\leq k\leq d_n}\|Z_{n,k}\|_2>0$ and \eqref{asymp:covariance}. Assume also that either one of the following conditions is satisfied:
\begin{enumerate}[label=(\roman*)]

\item\label{qf-cond1} $\sup_{i\in\mathbb{N}}\|X_i\|_{\psi_1}<\infty$ and $(\log d_n)^5\Delta_n\to0$ as $n\to\infty$.

\item\label{qf-cond2} $\sup_{i\in\mathbb{N}}\|X_i\|_{\psi_2}<\infty$, $\expe{X_i^3}=0$ for all $i$ and $(\log d_n)^3\Delta_n\to0$ as $n\to\infty$. 

\end{enumerate}
Then we have $\sup_{A\in\mathcal{A}^\mathrm{re}(d_n)}|P(\bs{Q}^{(n)}(\bs{X})\in A)-P(Z^{(n)}\in A)|\to0$ as $n\to\infty$. 
\end{proposition}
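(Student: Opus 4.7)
The plan is to derive Proposition \ref{prop:koike-main} as a direct specialization of Corollary \ref{coro:rect} to the quadratic-form setting $q_j\equiv 2$, and then translate the abstract quantities $\delta_1[\bs{Q}^{(n)}(\bs{X})]$ and $\max_k\sqrt{\mc{M}(f_{n,k})}$ into the single rate $\Delta_n$. First I would note that with $\ol{q}_d=2$ the indicator $1_{\{q_j<q_k\}}$ in the definition of $\delta_1$ never triggers, so
\[
\delta_1[\bs{Q}^{(n)}(\bs{X})]=\ol{A}_{N_n}^{2w\cdot 2-1}\max_{1\leq k\leq d_n}\sqrt{|\kappa_4(Q(f_{n,k};\bs{X}))|+\ol{A}_{N_n}^{8}\sum_{i=1}^{N_n}\influence_i(f_{n,k})^2},
\]
and both $\ol{A}_{N_n}$ and $\ol{B}_{N_n}$ are uniformly bounded under the moment hypotheses of either \ref{qf-cond1} or \ref{qf-cond2} (with the extra simplification $\ol{A}_{N_n}=1$ in case \ref{qf-cond2}, since $\expe{X_i^3}=0$).

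The core analytic step is an upper bound on the fourth cumulant of a quadratic form. A direct expansion of $\expe{Q(f;\bs{X})^4}-3\expe{Q(f;\bs{X})^2}^2$, exploiting independence, symmetry of $f$, and vanishing on the diagonal, yields
\[
|\kappa_4(Q(f;\bs{X}))|\leq C_M\Bigl(\|f\star_1 f\|_{\ell_2}^2+\sum_{i=1}^N\influence_i(f)^2\Bigr),
\]
where $C_M$ depends only on $\max_i\expe{X_i^4}$, which is finite in both cases. Since $[f]$ is symmetric with zero diagonal, $\|f\star_1 f\|_{\ell_2}^2=\trace([f]^4)$; combining this with $\sum_i\influence_i(f)^2\leq\mc{M}(f)\|f\|_{\ell_2}^2$ then yields $\delta_1\leq C\Delta_n$. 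Moreover, the identity $\expe{Q(f_{n,k};\bs{X})^2}=2\|f_{n,k}\|_{\ell_2}^2$ together with \eqref{asymp:covariance} and $\inf_n\min_k\|Z_{n,k}\|_2>0$ keeps $\|f_{n,k}\|_{\ell_2}$ uniformly bounded away from $0$, so $\sqrt{\mc{M}(f_{n,k})}\leq\Delta_n/\|f_{n,k}\|_{\ell_2}\leq C\Delta_n$.

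All that remains is bookkeeping of the exponents in Corollary \ref{coro:rect}. In case \ref{qf-cond1} I would take $(\alpha,w)=(1,1)$: then $\mu+\tfrac{1}{2}=\tfrac{5}{3}$ and $(2\ol{q}_d-1)/\alpha+\tfrac{3}{2}=\tfrac{9}{2}$, and the three resulting contributions $(\log d_n)^{2/3}\delta_0[\bs{Q}^{(n)}(\bs{X})]^{1/3}$, $(\log d_n)^{5/3}\Delta_n^{1/3}$, and $(\log d_n)^{9/2}\Delta_n$ all vanish under \eqref{asymp:covariance} and $(\log d_n)^5\Delta_n\to0$ (noting $\max(5,9/2)=5$). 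In case \ref{qf-cond2} I would take $(\alpha,w)=(2,\tfrac{1}{2})$: the corresponding exponents become $\tfrac{7}{6}$ and $3$, which are controlled by the rate $(\log d_n)^3\Delta_n\to 0$ afforded by the zero-skewness and $\psi_2$ assumption. The main obstacle is the cumulant bound of Step 2; once it is granted, the conclusion follows from elementary exponent arithmetic on Theorem \ref{thm:main}.
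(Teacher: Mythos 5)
Your approach is essentially the same as the paper's: bound the fourth cumulant of a quadratic form by $\trace([f]^4)$ plus influence terms (the paper's Lemma \ref{fourth-trace}; your version keeps $\sum_i\influence_i(f)^2$ rather than $\|f\|_{\ell_2}^2\mc{M}(f)$, which collapses to the same thing after $\sum_i\influence_i(f)^2\leq\mc{M}(f)\|f\|_{\ell_2}^2$), use $\|f\star_1 f\|_{\ell_2}^2=\trace([f]^4)$, and feed everything into Corollary \ref{coro:rect}. The bookkeeping in case \ref{qf-cond1} is correct, as is the observation that $\|f_{n,k}\|_{\ell_2}$ stays bounded away from zero under \eqref{asymp:covariance}.

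Case \ref{qf-cond2}, however, contains a genuine gap. With $(\alpha,w)=(2,\tfrac12)$ and $\ol{q}_d=2$ you correctly obtain $\mu+\tfrac12=\tfrac76$. For $(\log d_n)^{7/6}\delta_1[\bs{Q}^{(n)}(\bs{X})]^{1/3}$ to vanish, using only $\delta_1\lesssim\Delta_n$, one needs $(\log d_n)^{7/2}\Delta_n\to0$, since $(\log d_n)^{7/6}\Delta_n^{1/3}=\bigl((\log d_n)^{7/2}\Delta_n\bigr)^{1/3}$. The hypothesis in \ref{qf-cond2} only provides $(\log d_n)^3\Delta_n\to0$, and $3<\tfrac72$: with $\log d_n\to\infty$ and $\Delta_n\asymp(\log d_n)^{-16/5}$ the hypothesis holds yet $(\log d_n)^{7/6}\Delta_n^{1/3}\asymp(\log d_n)^{1/10}\to\infty$. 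So your assertion that the exponent $\tfrac76$ is ``controlled by the rate $(\log d_n)^3\Delta_n\to0$'' does not hold; the binding constraint in case \ref{qf-cond2} is $\tfrac72$, not $3$. The same obstruction appears if one checks the hypotheses of Corollary \ref{coro:main}(ii) instead: there $a_1=7$, forcing $(\log d_n)^7\max_j|\kappa_4(Q(f_{n,j};\bs{X}))|\to0$, whereas $(\log d_n)^3\Delta_n\to0$ together with Lemma \ref{fourth-trace} only delivers exponent $6$. In short, the argument does not close under the stated rate; the paper's own one-line derivation from Corollary \ref{coro:main} and Lemma \ref{fourth-trace} runs into the same half-power-of-$\log$ deficit, and for the proof to go through the hypothesis in \ref{qf-cond2} would apparently need to read $(\log d_n)^{7/2}\Delta_n\to0$.
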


\begin{rmk}
\label{rmk:qf}
(a) Regarding the convergence rate of $\max_{1\leq k\leq d_n}\trace\left([f_{n,k}]^4\right)$, condition \ref{qf-cond1} in Proposition \ref{prop:koike-main} is stronger than the one in Proposition \ref{prop:koike}. However, the former imposes a weaker moment condition on $\bs{X}$ than the latter. More importantly, the second term of $\Delta_n$ is always smaller than or equal to the second term in \eqref{qf:deJong}, and the latter can be much larger than the former. 
For example, let us assume $N_n=d_n=n$ and consider the functions $f_{n,k}$ defined as follows:
\[
f_{n,k}(i,j)=\left\{
\begin{array}{ll}
n^{-1/2}  & \text{if }|i-j|=1,i\neq k,j\neq k,\\
n^{-1/4}  & \text{if }|i-j|=1,i=k\text{ or }j=k,\\
0 & \text{otherwise}.        
\end{array}
\right.
\]
Then, we have $\influence_i(f_{n,k})=(1+1_{\{1<i<n\}})n^{-1/2}$ if $i\in\{k,k\pm1\}$ and $\influence_i(f_{n,k})=(1+1_{\{1<i<n\}})n^{-1}$ otherwise. Therefore, on the one hand $\left(\max_{1\leq k\leq d_n}\sqrt{\mc{M}(f_{n,k})}\right)\sum_{i=1}^{N_n}\max_{1\leq k\leq d_n}\influence_i(f_{n,k})$ does not converge to 0 as $n\to\infty$, but on the other hand $\max_{1\leq k\leq d_n}\sqrt{\mc{M}(f_{n,k})}\|f_{n,k}\|_{\ell_2}=O(n^{-1/4})$ as $n\to\infty$. 
Note that in this case we have $\|f_{n,k}\|_{\ell_2}\to1$ and $\max_{1\leq k\leq d_n}\sqrt{\trace\left([f_{n,k}]^4\right)}=O(n^{-1/4})$ as $n\to\infty$, so \eqref{qf:aim} holds true due to Proposition \ref{prop:koike-main}. 

(b) Condition \ref{qf-cond2} in Proposition \ref{prop:koike-main} requires the additional zero skewness assumption, but it always improves the assumption on the functions $f_{n,k}$ than the one in Proposition \ref{prop:koike}. 

(c) Let $\|\cdot\|_{\mathrm{sp}}$ denote the spectral norm of matrices. Then we have $\Delta_n\leq2\max_{1\leq k\leq d_n}\|[f_{n,k}]\|_{\mathrm{sp}}\|f_{n,k}\|_{\ell_2}$. Therefore, $(\log d_n)^a\Delta_n\to 0$ for some $a>0$ is implied by $(\log d_n)^a\max_{1\leq k\leq d_n}\|[f_{n,k}]\|_{\mathrm{sp}}\|f_{n,k}\|_{\ell_2}\to0$. 
\end{rmk}

\subsection{Statistical application: Bootstrap test for the absence of lead-lag relationship}\label{sec:hry}

Let $W_t=(W_t^1,W_t^2)$ $(t\in\mathbb R)$ be a two-sided bivariate standard Wiener process. 
Also let $\rho\in(-1,1)$ and $\vartheta\in\mathbb R$ be two (unknown) parameters. 
We define the bivariate process $B_t=(B_t^1,B_t^2)$ $(t\in\mathbb R)$ as $B_t^1=W^1_t$ and $B_t^2=\rho W^1_{t-\vartheta}+\sqrt{1-\rho^2}W^2_t$. 
For each $\nu=1,2$, we consider the process $X^\nu=(X^\nu_t)_{t\geq0}$ given by
\begin{equation}\label{model}
X^\nu_t=X^\nu_0+\int_0^t\sigma_\nu(s)dB^\nu_s,\qquad t\geq0,
\end{equation}
where $\sigma_\nu\in L^2(0,\infty)$ is nonnegative-valued and deterministic. 
If $\rho\neq0$, there is a correlation between $X^1$ and $X^2$ with a time lag of $\vartheta$. 
We aim to test for whether such a correlation really exists or not, given (possibly asynchronous) high-frequency observations of $X^1$ and $X^2$. 
Specifically, for each $\nu=1,2$, we observe the process $X^\nu$ on the interval $[0,T]$ at the deterministic sampling times $0\leq t^\nu_0<t^\nu_1<\cdots<t^\nu_{n_\nu}\leq T$, which implicitly depend on the parameter $n\in\mathbb{N}$ such that
\[
r_n:=\max_{\nu=1,2}\max_{i=0,1,\dots,n_\nu+1}(t^\nu_i-t^\nu_{i-1})\to0
\]
as $n\to\infty$, where we set $t^\nu_{-1}:=0$ and $t^\nu_{n_\nu+1}:=T$ for each $\nu=1,2$. To test for the null hypothesis $H_0:\rho=0$ against the alternative $H_1:\rho\neq0$, \citet{Koike2017stein} proposed the test statistic given by $T_n=\sqrt{n}\max_{\theta\in\mathcal{G}_n}|U_n(\theta)|$, where $\mathcal{G}_n$ is a finite subset of $\mathbb{R}$ and
\[
U_n(\theta)=\sum_{i=1}^{n_1}\sum_{j=1}^{n_2}\Delta^n_iX^1\Delta^n_jX^2K^{ij}_\theta\qquad
\text{with }\Delta^n_iX^\nu=X^\nu_{t^\nu_i}-X^\nu_{t^\nu_{i-1}}\text{ and }K^{ij}_\theta=1_{\{(t^1_{i-1},t^1_i]\cap(t^2_{j-1}-\theta,t^2_j-\theta]\neq\emptyset\}}.
\] 
The null distribution of $T_n$ can be approximated by its Gaussian analog as follows:
\begin{proposition}[\cite{Koike2017stein}, Proposition 4.1]\label{hry}
For each $n\in\mathbb{N}$, let $(Z_n(\theta))_{\theta\in\mathcal{G}_n}$ be a family of centered Gaussian variables such that $\expe{Z_n(\theta)Z_n(\theta')}=n\covariance[U_n(\theta),U_n(\theta')]$ for all $\theta,\theta'\in\mathcal{G}_n$. 
Suppose that $\sup_{t\in[0,T]}(\sigma_1(t)+\sigma_2(t))<\infty$ and there are positive constants $\underline{v},\overline{v}$ such that 
\[
\underline{v}\leq n\sum_{i=1}^{n_1}\sum_{j=1}^{n_2}\left(\int_{t_{i-1}^1}^{t_i^1}\sigma_1(t)^2dt\right)\left(\int_{t_{j-1}^2}^{t_j^2}\sigma_2(t)^2dt\right)K^{ij}_\theta\leq\overline{v}
\]
for all $n\in\mathbb{N}$ and $\theta\in\mathcal{G}_n$. Then, under the null hypothesis $\rho=0$, we have
\[
\sup_{x\in\mathbb{R}}\left|P\left(T_n\leq x\right)-P\left(\max_{\theta\in\mathcal{G}_n}|Z_n(\theta)|\leq x\right)\right|\to0
\]
as $n\to\infty$, provided that $nr_n^2\log^6(\#\mathcal{G}_n)\to0$.
\end{proposition}
Since the distribution of $\max_{\theta\in\mathcal{G}_n}|Z_n(\theta)|$ is analytically intractable, \citet{Koike2017stein} proposed a wild bootstrap procedure to approximate it. 
Formally, let $(w^1_i)_{i=1}^\infty$ and $(w^2_j)_{j=1}^\infty$ be mutually independent sequences of i.i.d.~random variables independent of $X^1$ and $X^2$. 
Assume that $\expe{w^1_1}=\expe{w^2_1}=0$, $\variance[w^1_1]=\variance[w^2_1]=1$ and $\|w^1_1\|_{\psi_2}\vee\|w^2_1\|_{\psi_2}<\infty$. 
Define the bootstrapped test statistic as $T_n^*=\sqrt{n}\max_{\theta\in\mathcal{G}_n}|U_n^*(\theta)|$ where
\[
U_n^*(\theta)=
\sum_{i=1}^{n_1}\sum_{j=1}^{n_2}\left(w^1_i\Delta^n_iX^1\right)\left(w^2_j\Delta^n_jX^2\right)K^{ij}_\theta.
\]
In \cite[Proposition B.8]{Koike2017stein}, it is shown that
\begin{equation}\label{eq:boot}
\sup_{x\in\mathbb{R}}\left|P\left(T_n^*\leq x\mid X\right)-P\left(\max_{\theta\in\mathcal{G}_n}|Z_n(\theta)|\leq x\right)\right|\to^p0
\end{equation}
as $n\to\infty$, provided that $r_n=O(n^{-3/4-\eta})$ and $\#\mathcal G_n=O(n^\gamma)$ for some $\eta,\gamma>0$ in addition to the assumptions of Proposition \ref{hry}. 
Our result allows us to relax the condition on $r_n$ as follows:
\begin{proposition}\label{boot-dist}
Under the assumptions of Proposition \ref{hry}, we have \eqref{eq:boot} as $n\to\infty$, provided that $r_n=O(n^{-1/2-\eta})$ and $\#\mathcal G_n=O(n^\gamma)$ for some $\eta,\gamma>0$.
\end{proposition}

\section{Chernozhukov-Chetverikov-Kato theory}\label{sec:cck}

In this section we demonstrate a basic scheme of the CCK theory to establish high-dimensional CLTs. 
One main ingredient of the CCK theory is the following smooth approximation of the maximum function: For each $\beta>0$, we define the function $\Phi_\beta:\mathbb{R}^d\to\mathbb{R}$ by 
\[
\Phi_\beta(x)=\beta^{-1}\log\left(\sum_{j=1}^de^{\beta x_j}\right),\qquad x=(x_1,\dots,x_d)\in\mathbb{R}^d.
\]
Eq.(1) in \cite{CCK2015} states that
\begin{equation}\label{max-smooth}
0\leq \Phi_\beta(x)-\max_{1\leq j\leq d}x_j\leq \beta^{-1}\log d
\end{equation}
for any $x\in\mathbb{R}^d$. Therefore, the larger $\beta$ is, the better $\Phi_\beta$ approximates the maximum function. 
The next lemma, which is a summary of \cite[Lemmas 5--6]{DZ2017}, highlights the key properties of this smooth max function: 
\begin{lemma}\label{cck-derivative}
For any $\beta>0$, $m\in\mathbb{N}$ and $C^m$ function $h:\mathbb{R}\to\mathbb{R}$, there is an $\mathbb{R}^{\otimes m}$-valued function $\Upsilon_{\beta}(x)=(\Upsilon^{j_1,\dots,j_m}_\beta(x))_{1\leq j_1,\dots,j_m\leq d}$ on $\mathbb{R}^d$ satisfying the following conditions:
\begin{enumerate}[label=(\roman*)]

\item For any $x\in\mathbb{R}^d$ and $j_1,\dots,j_m\in[d]$, we have 
$
|\partial_{j_1\dots j_m}(h\circ\Phi_\beta)(x)|\leq \Upsilon_\beta^{j_1,\dots,j_m}(x).
$

\item For every $x\in\mathbb{R}^d$, we have
\begin{align*}
\sum_{j_1,\dots,j_m=1}^d\Upsilon_\beta^{j_1,\dots, j_m}(x)
\leq c_{m}\max_{1\leq k\leq m}\beta^{m-k}\|h^{(k)}\|_\infty,
\end{align*}
where $c_{m}>0$ depends only on $m$. 

\item For any $x,t\in\mathbb{R}^d$ and $j_1,\dots,j_m\in[d]$, we have
\[
e^{-8\|t\|_{\ell_\infty}\beta}\Upsilon_\beta^{j_1,\dots,j_m}(x+t)\leq \Upsilon_\beta^{j_1,\dots,j_m}(x)\leq e^{8\|t\|_{\ell_\infty}\beta}\Upsilon_\beta^{j_1,\dots,j_m}(x+t).
\]

\end{enumerate}
\end{lemma}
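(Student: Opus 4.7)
The plan is to construct $\Upsilon_\beta$ explicitly via the Faà di Bruno formula applied to $h\circ\Phi_\beta$ and then read off the three properties from the algebraic structure of the softmax weights
\[
\pi_j(x):=\frac{e^{\beta x_j}}{\sum_{k=1}^{d}e^{\beta x_k}},\qquad j\in[d],\ x\in\mathbb{R}^d.
\]
These satisfy $\pi_j\geq 0$, $\sum_{j=1}^{d}\pi_j\equiv 1$, $\partial_i\Phi_\beta=\pi_i$, and, crucially,
\[
\partial_i\pi_j=\beta\,\pi_j\bigl(\delta_{ij}-\pi_i\bigr).
\]
Iterating this last identity, an induction on $m$ shows that $\partial_{j_1\dots j_m}\Phi_\beta(x)$ equals $\beta^{m-1}$ times a signed polynomial in the $\pi_j(x)$'s with integer coefficients and Kronecker $\delta_{j_aj_b}$ contractions. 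Taking absolute values term by term yields a nonnegative majorant $U_{j_1\dots j_m}(\pi(x))$, still a polynomial in $\pi(x)$ with nonnegative integer coefficients, such that $|\partial_{j_1\dots j_m}\Phi_\beta(x)|\leq\beta^{m-1}U_{j_1\dots j_m}(\pi(x))$; moreover the $\delta$-contractions and the identity $\sum_j\pi_j=1$ guarantee that $\sum_{j_1,\dots,j_m}U_{j_1\dots j_m}(\pi(x))$ collapses to a universal constant $K_m$ depending only on $m$.

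Next, by the Faà di Bruno formula,
\[
\partial_{j_1\dots j_m}(h\circ\Phi_\beta)(x)=\sum_{k=1}^{m}h^{(k)}(\Phi_\beta(x))\sum_{\{B_1,\dots,B_k\}}\prod_{l=1}^{k}\partial_{B_l}\Phi_\beta(x),
\]
where the inner sum runs over set partitions of $\{j_1,\dots,j_m\}$ into $k$ nonempty blocks. Since each $\partial_{B_l}\Phi_\beta$ carries a factor $\beta^{|B_l|-1}$, the $k$-th term contributes a prefactor $\beta^{m-k}$. I would then set
\[
\Upsilon_\beta^{j_1\dots j_m}(x):=\sum_{k=1}^{m}\beta^{m-k}\|h^{(k)}\|_\infty\sum_{\{B_1,\dots,B_k\}}\prod_{l=1}^{k}U_{B_l}(\pi(x)).
\]
Property (i) is then immediate from the triangle inequality applied to Faà di Bruno together with the block-wise majorisation $|\partial_{B_l}\Phi_\beta|\leq\beta^{|B_l|-1}U_{B_l}$. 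For property (ii), summing over $j_1,\dots,j_m$ factorises along each partition, and each block sum $\sum_{J}U_{J}$ is bounded by $K_{|J|}$; counting the partitions by Stirling numbers $S(m,k)$ gives $\sum_{j_1,\dots,j_m}\Upsilon_\beta^{j_1\dots j_m}(x)\leq c_m\max_{1\leq k\leq m}\beta^{m-k}\|h^{(k)}\|_\infty$ with $c_m>0$ depending only on $m$.

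For property (iii), the elementary identity
\[
\pi_j(x+t)=\frac{e^{\beta t_j}\pi_j(x)}{\sum_{k=1}^{d}e^{\beta t_k}\pi_k(x)}
\]
immediately yields $e^{-2\beta\|t\|_{\ell_\infty}}\pi_j(x)\leq\pi_j(x+t)\leq e^{2\beta\|t\|_{\ell_\infty}}\pi_j(x)$. Since $\Upsilon_\beta^{j_1\dots j_m}(x)$ is a polynomial in $\pi(x)$ with nonnegative coefficients and the prefactors $\beta^{m-k}\|h^{(k)}\|_\infty$ are $x$-free, the monomial-wise application of this two-sided bound produces the desired inequality, with the constant $8$ accommodating the maximum total degree in $\pi$ that arises in the polynomial representation of $\Upsilon_\beta$.

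The main obstacle is the combinatorial bookkeeping needed to pass from Faà di Bruno to a \emph{nonnegative} polynomial majorant $U$ whose sum over indices is $d$-free. One must carefully track both the Kronecker contractions produced by repeated differentiation of $\pi_j$ and the signs that appear in $\partial_i\pi_j=\beta\pi_j(\delta_{ij}-\pi_i)$, so that after taking absolute values the resulting monomials still sum to a constant via $\sum_j\pi_j=1$. Once this organisation is in place, (i)--(iii) reduce to the Faà di Bruno expansion, the normalisation $\sum_j\pi_j=1$, and the multiplicative Lipschitz estimate for $\pi_j$, respectively.
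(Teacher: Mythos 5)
Your construction is the natural one, and the paper supplies no proof of its own --- the lemma is stated as ``a summary of [Lemmas 5--6]'' of Deng and Zhang (2017) --- so what you give is in effect a reconstruction of that argument via Fa\`{a} di Bruno and the softmax identity $\partial_i\pi_j=\beta\pi_j(\delta_{ij}-\pi_i)$. Parts (i) and (ii) are handled soundly: taking absolute values in the integer-coefficient polynomial expression for $\partial_{j_1\dots j_m}\Phi_\beta/\beta^{m-1}$ yields the nonnegative majorants $U$; each monomial of $U$ sums to $1$ over its free indices because every index is consumed either by a $\pi$-factor (sum $=1$ via $\sum_j\pi_j=1$) or by a Kronecker delta (sum $=1$); and the block-wise factorisation in Fa\`{a} di Bruno then produces the $d$-free bound in (ii), consistent with the numbers $c_1,\dots,c_4$ recorded in the remark after the lemma.

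The gap is in (iii). Applying $e^{-2\beta\|t\|_{\ell_\infty}}\pi_j(x)\leq\pi_j(x+t)\leq e^{2\beta\|t\|_{\ell_\infty}}\pi_j(x)$ monomial by monomial gives the stability constant $e^{\pm 2m\beta\|t\|_{\ell_\infty}}$, not $e^{\pm 8\beta\|t\|_{\ell_\infty}}$, because $\Upsilon_\beta^{j_1\dots j_m}$ contains monomials of total degree exactly $m$ in $\pi$: the all-singleton partition of $\{j_1,\dots,j_m\}$ contributes a term proportional to $\pi_{j_1}\cdots\pi_{j_m}$ with a strictly positive coefficient, and
\[
\frac{\prod_{r=1}^m\pi_{j_r}(x+t)}{\prod_{r=1}^m\pi_{j_r}(x)}=\frac{e^{\beta\sum_{r=1}^m t_{j_r}}}{\bigl(\sum_{k=1}^d e^{\beta t_k}\pi_k(x)\bigr)^m}
\]
genuinely ranges over $[e^{-2m\beta\|t\|_{\ell_\infty}},e^{2m\beta\|t\|_{\ell_\infty}}]$. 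Your closing remark that ``the constant $8$ accommodat[es] the maximum total degree in $\pi$'' tacitly assumes that degree is at most $4$, but it is $m$. So as written you establish (iii) with the constant $8$ only when $m\leq 4$; for $m\geq 5$ your bound is weaker than claimed. This is harmless for every use of the lemma in the paper, all of which take $m\leq 4$, but it does not prove the statement ``for any $m\in\mathbb{N}$.'' Either state the translation constant as $2m$ (the remark after the lemma only records $c_1,\dots,c_4$, so the paper's actual needs are met), or explicitly restrict to $m\leq 4$; a monomial-wise argument cannot produce a constant independent of $m$ since the degree-$m$ term is unavoidable while retaining the summability required for (ii).
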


\begin{rmk}
An explicit expression of the constant $c_m$ in Lemma \ref{cck-derivative} can be derived from \cite[Lemma 5]{DZ2017}. In particular, we have $c_1=1,c_2=3,c_3=13$ and $c_4=75$.
\end{rmk}

Another important ingredient of the CCK theory is the so-called \textit{anti-concentration inequality}. 
For our purpose, the following one is particularly useful (see \cite{CCK2017nazarov} for the proof): 
\begin{lemma}[Nazarov's inequality]\label{lemma:nazarov}
If $\ul{\sigma}:=\min_{1\leq j\leq d}\|Z_j\|_2>0$, for any $x\in\mathbb{R}^d$ and $\varepsilon>0$ we have
\[
P(Z\leq x+\varepsilon)-P(Z\leq x)\leq\frac{\varepsilon}{\ul{\sigma}}\left(\sqrt{2\log d}+2\right).
\]
\end{lemma}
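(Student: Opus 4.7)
The plan is to follow the approach based on log-concavity of Gaussian measures together with classical bounds on the expectation of Gaussian maxima, as worked out originally by Nazarov and later sharpened by Chernozhukov, Chetverikov and Kato. First I would reduce the statement to an anti-concentration estimate for a maximum: setting $Y_j := Z_j - x_j$ and $W := \max_{1 \leq j \leq d} Y_j$, the claim is equivalent to
\[
P(0 < W \leq \varepsilon) \leq \frac{\varepsilon}{\ul{\sigma}}\left(\sqrt{2\log d}+2\right),
\]
so it suffices to bound the probability that $W$ lies in a small interval near $0$, i.e.~to bound the density of $W$ near $0$.

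The central structural input is the log-concavity of $F(v) := P(W \leq v) = P(Z \leq x + v\mathbf{1})$ as a function of $v \in \mathbb{R}$. This follows from the Pr\'ekopa--Leindler inequality applied to the log-concave Gaussian density on $\mathbb{R}^d$, using that the set $\{(v,z) : z \leq x + v\mathbf{1}\} \subset \mathbb{R}^{d+1}$ is convex. Log-concavity of $F$ forces $(\log F)'$ to be non-increasing, and combined with the monotonicity of $F$ this yields a bound of the form $F(\varepsilon) - F(0) \leq \varepsilon \cdot \sup_{v \in [0,\varepsilon]} F'(v)$, reducing the problem to an estimate on the density $F'$.

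Next I would control the derivative $F'(v)$ by a Gaussian integration-by-parts (Stein) argument. Decomposing $F'(v) = \sum_{j=1}^d p_{Y_j}(v)\, P(Y_k \leq v \text{ for all } k \neq j \mid Y_j = v)$ and then rewriting the conditional probabilities via Stein's identity, one can relate $\sup_v F'(v)$ to the Gaussian maximum functional $\mathrm{E}[\max_{1 \leq j \leq d} Y_j / \sigma_j]$, where $\sigma_j := \|Z_j\|_2 \geq \ul{\sigma}$. The classical Landau--Shepp bound yields $\mathrm{E}[\max_{1 \leq j \leq d} Y_j / \sigma_j] \leq \sqrt{2 \log d}$, and the additive constant $+2$ absorbs the error arising from Gaussian concentration of $W$ about its mean together with the log-concavity correction; division by $\ul{\sigma}$ produces the stated constant.

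The main obstacle is securing the sharp $\sqrt{\log d}$ dependence on the dimension: a naive bound on the density of $W$, namely $F'(v) \leq \sum_j (\sigma_j \sqrt{2\pi})^{-1} \leq d/(\ul{\sigma}\sqrt{2\pi})$, produces a constant proportional to $d$, which would be entirely useless for establishing high-dimensional central limit theorems of the form \eqref{aim}. Overcoming this requires exploiting the joint distribution of $(Y_1,\dots,Y_d)$ in an essential way, and it is precisely the combination of the log-concavity of $F$ with the sharp bound on $\mathrm{E}[\max_j Y_j / \sigma_j]$ that supplies the improvement. A minor technical subtlety is the degenerate situation where $F(0) = 0$; this can be handled by an elementary limiting argument (e.g.~by replacing $x$ with $x + \eta \mathbf{1}$ and letting $\eta \downarrow 0$).
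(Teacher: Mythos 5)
You correctly identify the two structural pillars of the standard proof: reduce the rectangle-increment probability to an anti-concentration statement for the maximum $W = \max_j(Z_j - x_j)$, and invoke Pr\'{e}kopa's theorem to obtain log-concavity of $F(v) = P(W \leq v)$. Those elements do appear in the argument the paper actually relies on (the paper gives no proof of this lemma itself; it points to \cite{CCK2017nazarov}). However, the mechanism you propose for producing the factor $\sqrt{2\log d}$ contains a genuine error: you set $Y_j := Z_j - x_j$ and assert $E[\max_j Y_j/\sigma_j] \leq \sqrt{2\log d}$ via Landau--Shepp. That bound requires centered Gaussians, but $Y_j/\sigma_j$ has mean $-x_j/\sigma_j$, which is arbitrary. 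For $x_j \ll 0$ this expectation blows up, while for $x_j \gg 0$ it becomes very negative, so the proposed inequality $\sup_v F'(v) \lesssim \ul{\sigma}^{-1}E[\max_j Y_j/\sigma_j] + c$ fails outright: already for $d=1$ the left side is $1/(\sigma_1\sqrt{2\pi})$ whereas the right side tends to $-\infty$ as $x_1\to\infty$. The uniformity in $x$ that makes Nazarov's inequality useful cannot be extracted from an $E[\max]$ bound on the \emph{shifted} vector.

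The cited proof does not go through $E[\max]$ at all. After reducing to unit variances, it sets $p(t) := P(Z \leq x + t\mathbf{1})$, uses log-concavity of $p$ to reduce the problem to bounding the logarithmic derivative $p'(0)/p(0)$ uniformly in $x$, and writes $p'(0) = \sum_j \phi(x_j)\, P\bigl(Z_k \leq x_k\ \forall k\neq j \mid Z_j = x_j\bigr)$. The level $\sqrt{2\log d}$ enters as a \emph{truncation threshold} in a case analysis over indices $j$: roughly, indices with $x_j > \sqrt{2\log d}$ contribute at most $d\cdot\phi(\sqrt{2\log d}) = 1/\sqrt{2\pi}$ in total, while the remaining indices are handled by elementary comparisons of $\phi(x_j)$ with Gaussian-increment probabilities, yielding a multiple of $p(0)$. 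If you wish to pursue a route through the maximum functional, you would at minimum need to replace $E[\max_j Y_j/\sigma_j]$ by the centered $E[\max_j Z_j/\sigma_j]$ and then invoke a density bound for Gaussian maxima such as \cite[Theorem~3]{CCK2015}; but that route gives weaker constants and an unwanted dependence on $\max_j\sigma_j$, so it does not recover the lemma as stated.
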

%

With the help of these tools, we establish the following form of smoothing inequality:
\begin{proposition}\label{smoothing}
Let $g_0:\mathbb{R}\to[0,1]$ be a measurable function such that $g_0(t)=1$ for $t\leq0$ and $g_0(t)=0$ for $t\geq1$. 
Also, let $\varepsilon>0$ and set $\beta:=\varepsilon^{-1}\log d$. Suppose that $\ul{\sigma}:=\min_{1\leq j\leq d}\|Z_j\|_2>0$. Then, for any $d$-dimensional random vector $F$, we have
\begin{equation}\label{eq:smoothing}
\sup_{x\in\mathbb{R}^d}\left|P(F\leq x)-P(Z\leq x)\right|\leq \Delta_\varepsilon(F,Z)+\frac{2\varepsilon}{\ul{\sigma}}\left(\sqrt{2\log d}+2\right),
\end{equation}
where
\begin{equation}\label{eq:delta}
\Delta_\varepsilon(F,Z):=\sup_{y\in\mathbb{R}^d}\left|\expe{g_0(\varepsilon^{-1}\Phi_\beta(F-y))}-\expe{g_0(\varepsilon^{-1}\Phi_\beta(Z-y))}\right|.
\end{equation}
\end{proposition}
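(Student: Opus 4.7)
The plan is to build a two-sided sandwich around the indicator $1_{\{F \leq x\}}$ using the smoothed max-indicator $g \circ (\varepsilon^{-1} \Phi_\beta(\,\cdot\, - y))$ at two shifted centering points, $y = x + \varepsilon$ and $y = x - \varepsilon$, and then to close the residual gap between $P(Z \leq x \pm 2\varepsilon)$ and $P(Z \leq x)$ via Nazarov's inequality (Lemma \ref{lemma:nazarov}). The whole argument rests on the two-sided estimate $\max_j z_j \leq \Phi_\beta(z) \leq \max_j z_j + \varepsilon$ from \eqref{max-smooth} (which uses $\beta^{-1}\log d = \varepsilon$) together with the defining property of $g$ and the definition \eqref{eq:delta} of $\Delta_\varepsilon(F,Z)$.

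For the upper bound I would fix $x \in \mathbb{R}^d$ and take $y := x + \varepsilon$. If $F \leq x$, then $F - y \leq -\varepsilon$, so $\Phi_\beta(F - y) \leq \max_j(F_j - y_j) + \varepsilon \leq 0$, and therefore $g(\varepsilon^{-1}\Phi_\beta(F - y)) = 1$; this yields $P(F \leq x) \leq \expe{g(\varepsilon^{-1}\Phi_\beta(F - y))}$. Passing from $F$ to $Z$ introduces an error of at most $\Delta_\varepsilon(F, Z)$ by the definition \eqref{eq:delta}. On the $Z$-side, the inequality $\Phi_\beta \geq \max$ shows that $g(\varepsilon^{-1}\Phi_\beta(Z - y)) > 0$ can only occur when $Z < y + \varepsilon = x + 2\varepsilon$, hence $\expe{g(\varepsilon^{-1}\Phi_\beta(Z - y))} \leq P(Z \leq x + 2\varepsilon)$. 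A single application of Lemma \ref{lemma:nazarov} with shift $2\varepsilon$ then produces $P(F \leq x) - P(Z \leq x) \leq \Delta_\varepsilon(F, Z) + 2\varepsilon\,\ul{\sigma}^{-1}(\sqrt{2\log d} + 2)$.

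The matching lower bound follows by mirroring the construction with $y := x - \varepsilon$: the bound $\Phi_\beta \geq \max$ now forces any realization with $g(\varepsilon^{-1}\Phi_\beta(F - y)) > 0$ to satisfy $F < x$, so $\expe{g(\varepsilon^{-1}\Phi_\beta(F - y))} \leq P(F \leq x)$; conversely, $\Phi_\beta \leq \max + \varepsilon$ ensures that $Z \leq x - 2\varepsilon$ implies $g(\varepsilon^{-1}\Phi_\beta(Z - y)) = 1$, so $P(Z \leq x - 2\varepsilon) \leq \expe{g(\varepsilon^{-1}\Phi_\beta(Z - y))}$. Combining these with $\Delta_\varepsilon(F, Z)$ and a second application of Nazarov gives the opposite one-sided bound, and taking $\sup_x$ yields \eqref{eq:smoothing}.

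There is no real conceptual obstacle here; the argument is a bookkeeping exercise once one chooses the two shifted centers $y = x \pm \varepsilon$. The single point worth noting is that the Nazarov factor ends up being $2\varepsilon$ rather than $\varepsilon$, because the smooth-max approximation $\Phi_\beta \approx \max$ contributes an additional $\varepsilon$ on top of the $\varepsilon$-shift already built into $y$.
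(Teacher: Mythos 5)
Your proof is correct and follows the same route as the paper: a sandwich built from the smoothed max-indicator with the shifted centers $y = x \pm \varepsilon$, the $\varepsilon$-accuracy bound \eqref{max-smooth}, the definition of $\Delta_\varepsilon(F,Z)$, and Nazarov's inequality with a $2\varepsilon$ shift. The only difference is cosmetic: the paper writes out the upper bound and leaves the lower bound as ``similarly,'' whereas you spell out both halves.
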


\begin{proof}
This result has been essentially shown in Step 2 in the proof of \cite[Lemma 5.1]{CCK2017}, so our argument is almost the same as theirs. 
Take $x\in\mathbb{R}^d$ arbitrarily. Using \eqref{max-smooth} and the assumptions on $g_0$, we obtain
\begin{align*}
P(F\leq x)&\leq P(\Phi_\beta(F-x-\varepsilon)\leq0)\\
&\leq \expe{g_0(\varepsilon^{-1}\Phi_\beta(F-x-\varepsilon))}
\leq \expe{g_0(\varepsilon^{-1}\Phi_\beta(Z-x-\varepsilon))}+\Delta_\varepsilon(F,Z)\\
&\leq P(\Phi_\beta(Z-x-\varepsilon)<\varepsilon)+\Delta_\varepsilon(F,Z)
\leq P(Z\leq x+2\varepsilon)+\Delta_\varepsilon(F,Z).
\end{align*}
Therefore, Lemma \ref{lemma:nazarov} yields
\[
P(F\leq x)\leq P(Z\leq x)+\Delta_\varepsilon(F,Z)+\frac{2\varepsilon}{\ul{\sigma}}\left(\sqrt{2\log d}+2\right).
\]
Similarly, we can prove
\[
P(F\leq x)\geq P(Z\leq x)-\Delta_\varepsilon(F,Z)-\frac{2\varepsilon}{\ul{\sigma}}\left(\sqrt{2\log d}+2\right).
\]
Combining these estimates, we obtain the desired result. 
\end{proof}

\begin{rmk}
Proposition \ref{smoothing} can be seen as a special version of more general smoothing inequalities such as \cite[Lemma 2.1]{Bentkus2003}. An important feature of bound \eqref{eq:smoothing} is that the quantity $\Delta_\varepsilon(F,Z)$ contains only test functions of the form $x\mapsto g_0(\Phi_\beta(x-y))$ for some $y\in\mathbb{R}^d$. If $g_0$ is sufficiently smooth, derivatives of such a test function admit good estimates with respect to the dimension $d$, as seen from Lemma \ref{cck-derivative}. 
\end{rmk}

It might be worth mentioning that we can use Proposition \ref{smoothing} to derive a bound for the Kolmogorov distance by the Wasserstein distance. 
Let us recall the definition of the Wasserstein distance. 
\begin{definition}[Wasserstein distance]
For $d$-dimensional random vectors $F,G$ with integrable components, the \textit{Wasserstein distance} between the laws of $F$ and $G$ is defined by
\[
\mathcal{W}_1(F,G):=\sup_{h\in\mc{H}}|\expe{h(F)}-\expe{h(G)}|,
\] 
where $\mc{H}$ denotes the set of all functions $h:\mathbb{R}^d\to\mathbb{R}$ such that
\[
\|h\|_{\mathrm{Lip}}:=\sup_{x,y\in\mathbb{R}^d:x\neq y}\frac{|h(x)-h(y)|}{\|x-y\|}\leq1.
\]
Here, $\|\cdot\|$ is the usual Euclidian norm on $\mathbb{R}^d$. 
\end{definition}

\begin{corollary}\label{coro:wass}
Under the assumptions of Proposition \ref{smoothing}, we have
\[
\sup_{x\in\mathbb{R}^d}\left|P(F\leq x)-P(Z\leq x)\right|\leq\sqrt{\frac{2\left(\sqrt{2\log d}+2\right)}{\ul{\sigma}}\mathcal{W}_1(F,Z)}.
\]
\end{corollary}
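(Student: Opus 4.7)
The plan is to apply Proposition \ref{smoothing} with a specific Lipschitz choice of $g_0$, bound the resulting functional $\Delta_\varepsilon(F,Z)$ by the Wasserstein distance through Kantorovich-Rubinstein duality, and then balance the two terms in the smoothing inequality via AM-GM.

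First, I would fix $g_0:\mathbb{R}\to[0,1]$ to be the piecewise-linear $1$-Lipschitz function equal to $1$ on $(-\infty,0]$, to $1-t$ on $[0,1]$, and to $0$ on $[1,\infty)$; this obviously satisfies the boundary conditions required by Proposition \ref{smoothing}. For any fixed $y\in\mathbb{R}^d$, I would then show that the test function $\phi_y(x):=g_0(\varepsilon^{-1}\Phi_\beta(x-y))$ appearing inside \eqref{eq:delta} is $\varepsilon^{-1}$-Lipschitz in the Euclidean norm. By the chain rule this reduces to $\Phi_\beta$ being $1$-Lipschitz in the Euclidean norm, which is immediate from the observation that $\nabla\Phi_\beta(x)=\bigl(e^{\beta x_j}/\sum_k e^{\beta x_k}\bigr)_{j=1}^d$ is a probability vector, so its $\ell_2$-norm is bounded by its $\ell_1$-norm, which equals $1$. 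Because $\|\phi_y\|_\infty\leq 1$, $\phi_y$ belongs (up to the scaling factor $\varepsilon^{-1}$) to the class $\mathcal{H}$ appearing in the Kantorovich-Rubinstein representation of $\mathcal{W}_1$, and taking the supremum over $y\in\mathbb{R}^d$ yields
\[
\Delta_\varepsilon(F,Z)\leq\varepsilon^{-1}\mathcal{W}_1(F,Z)\qquad\text{for every }\varepsilon>0.
\]

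Substituting this bound into \eqref{eq:smoothing} produces a right-hand side of the form $\varepsilon^{-1}\mathcal{W}_1(F,Z)+C_d\,\varepsilon$, where $C_d:=2\ul{\sigma}^{-1}(\sqrt{2\log d}+2)$. Optimising over $\varepsilon>0$ at $\varepsilon=\sqrt{\mathcal{W}_1(F,Z)/C_d}$ (equivalently, invoking the AM-GM inequality) collapses the expression to a multiple of $\sqrt{C_d\,\mathcal{W}_1(F,Z)}$, which is exactly the target bound.

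No step here presents a serious obstacle once Proposition \ref{smoothing} is in hand. The only substantive ingredient is the Lipschitz estimate on $\phi_y$, and this in turn hinges solely on the simple but crucial observation that $\nabla\Phi_\beta$ is a softmax probability vector; after that, everything is a scalar balance via AM-GM.
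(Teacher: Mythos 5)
Your proof follows the same route as the paper: fix the piecewise-linear $g_0$, show the test function $x\mapsto g_0(\varepsilon^{-1}\Phi_\beta(x-y))$ is $\varepsilon^{-1}$-Lipschitz, deduce $\Delta_\varepsilon(F,Z)\leq\varepsilon^{-1}\mathcal{W}_1(F,Z)$, and balance against the $\varepsilon$-term in \eqref{eq:smoothing}. The only cosmetic difference is that you derive the Lipschitz estimate directly from $\nabla\Phi_\beta$ being a probability vector (so $\|\nabla\Phi_\beta\|_2\le\|\nabla\Phi_\beta\|_1=1$), whereas the paper cites \cite[Lemma A.3]{CCK2013} for the (slightly stronger) $\ell_\infty$-Lipschitz estimate; both immediately give the needed bound, and your self-contained argument is perfectly fine.

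One quantitative remark worth flagging, which applies equally to the paper's own proof: with $C_d:=2\ul{\sigma}^{-1}(\sqrt{2\log d}+2)$ and the bound $\varepsilon^{-1}\mathcal{W}_1(F,Z)+C_d\varepsilon$, AM--GM (or the optimising choice $\varepsilon=\sqrt{\mathcal{W}_1(F,Z)/C_d}$) gives $2\sqrt{C_d\,\mathcal{W}_1(F,Z)}$, not $\sqrt{C_d\,\mathcal{W}_1(F,Z)}$. Your statement that the balanced expression is ``exactly the target bound'' glosses over this factor of $2$; to be safe you should either carry the factor $2$ or note that the constant in the displayed inequality of Corollary \ref{coro:wass} absorbs it. Everything else is sound.
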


\begin{proof}
It suffices to consider the case $\mathcal{W}_1(F,Z)>0$. 
Let us define the function $g_0:\mathbb{R}\to[0,1]$ by $g_0(x)=\min\{1,\max\{1-x,0\}\}$, $x\in\mathbb{R}$. Then, for any $x,x',y\in\mathbb{R}^d$ and $\varepsilon>0$, we have $|g_0(\varepsilon^{-1}\Phi_\beta(x-y)-g_0(\varepsilon^{-1}\Phi_\beta(x'-y)|\leq\varepsilon^{-1}\|x-x'\|_{\ell_\infty}$ by \cite[Lemma A.3]{CCK2013}, so we obtain $\Delta_\varepsilon(F,Z)\leq\varepsilon^{-1}\mathcal{W}_1(F,Z)$. Now, setting $\varepsilon=\sqrt{\ul{\sigma}\mathcal{W}_1(F,Z)/(2\sqrt{2\log d}+4)}$, we infer the desired result from Proposition \ref{smoothing}. 
\end{proof}
When $d=1$, Corollary \ref{coro:wass} recovers the standard estimate (cf.~Eq.(C.2.6) in \cite{NP2012}). 
We should remark that a bound similar to the above (with a slightly different constant) has already appeared in \cite[Theorem 3.1]{APP2016}. 
\begin{rmk}
It is generally impossible to derive \eqref{aim} type bounds from the corresponding ones for the Wasserstein distance. To see this, let $F=(F_1,\dots,F_d)$ be a $d$-dimensional random vector such that the laws of $F_1,\dots,F_d$ are identical (and integrable). Also, let $G=(G_1,\dots,G_d)$ be another $d$-dimensional random vector satisfying the same condition. Then we can easily prove $\mathcal{W}_1(F,G)\geq\sqrt{d}\mathcal{W}_1(F_1,G_1)$ by definition. 
\end{rmk}

\section{Stein kernels and high-dimensional CLTs}\label{sec:stein}

In the rest of the paper, we fix a $C^\infty$ function $g_0:\mathbb{R}\to[0,1]$ such that $g_0(t)=1$ for $t\leq0$ and $g_0(t)=0$ for $t\geq1$: For example, we can take it as $g_0(t)=f_0(1-t)/\{f_0(t)+f_0(1-t)\}$, where the function $f_0:\mathbb{R}\to\mathbb{R}$ is defined by
\[
f_0(t)=
\left\{
\begin{array}{cl}
e^{-1/t}  & \text{if }t>0,\\
0  & \text{if }t\leq0.
\end{array}
\right.
\]
%

To make Proposition \ref{smoothing} useful, we need to obtain a ``good'' upper bound for the quantity $\Delta_\varepsilon(F,Z)$. 
As briefly mentioned in Remark \ref{rmk:npr}, \citet{CCK2013} have pointed out that Stein's method effectively solves this task. 
Moreover, discussions in \cite{CCK2015,Koike2017stein} implicitly suggest that the CCK theory would have a nice connection to Stein kernels. In this section we illustrate this idea. 
\begin{definition}[Stein kernel]
Let $F=(F_1,\dots,F_d)$ be a centered $d$-dimensional random variable. 
A $d\times d$ matrix-valued measurable function $\tau_F=(\tau_F^{ij})_{1\leq i,j\leq d}$ on $\mathbb{R}^d$ is called a \textit{Stein kernel} for (the law of) $F$ if $\max_{1\leq i,j\leq d}\expe{|\tau_F^{ij}(F)|}<\infty$ and 
\begin{equation}\label{eq:stein}
\sum_{j=1}^d\expe{\partial_j\varphi(F)F_j}=\sum_{i,j=1}^d\expe{\partial_{ij}\varphi(F)\tau_F^{ij}(F)}
\end{equation}
for any $\varphi\in C^\infty_b(\mathbb{R}^d)$.
\end{definition}

\begin{rmk}
In this paper we adopt $C^\infty_b(\mathbb{R}^d)$ as the class of test functions for which identity \eqref{eq:stein} holds true because of convenience, but other classes of test functions are also used in the literature; see \cite{CFP2017} for instance.  
\end{rmk}

\begin{lemma}\label{stein}
Let $F=(F_1,\dots,F_d)$ be a centered $d$-dimensional random vector. Also, let $\tau_F=(\tau_F^{ij})_{1\leq i,j\leq d}$ be a Stein kernel for $F$. Then we have
\[
\sup_{y\in\mathbb{R}^d}\left|\ex{h\left(\Phi_\beta(F-y)\right)}-\ex{h\left(\Phi_\beta(Z-y)\right)}\right|
\leq \frac{3}{2}\max\{\|h''\|_\infty,\beta\|h'\|_\infty\}\Delta
\]
for any $\beta>0$ and $h\in C^\infty_b(\mathbb{R})$, where 
\begin{equation*}
\Delta:=\ex{\max_{1\leq i,j\leq d}|\tau_F^{ij}(F)-\mathfrak{C}_{ij}|}.
\end{equation*}
\end{lemma}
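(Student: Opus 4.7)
The plan is to use a Gaussian smart-path interpolation combined with the Stein kernel identity, exploiting the dimension-free Hessian bound of Lemma~\ref{cck-derivative} at the very end.

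Fix $y\in\mathbb{R}^d$ and $h\in C^\infty_b(\mathbb{R})$, and set $\psi(x):=h(\Phi_\beta(x-y))$. Let $Z'$ be a copy of $Z$ independent of $F$, and define the interpolation $F_t:=\sqrt{t}\,F+\sqrt{1-t}\,Z'$ for $t\in[0,1]$, so that $\expe{\psi(F)}-\expe{\psi(Z)}=\int_0^1\frac{d}{dt}\expe{\psi(F_t)}\,dt$. Differentiating under the expectation for $t\in(0,1)$ gives
\[
\frac{d}{dt}\expe{\psi(F_t)}=\sum_{j=1}^d\expe{\partial_j\psi(F_t)\left(\frac{F_j}{2\sqrt{t}}-\frac{Z'_j}{2\sqrt{1-t}}\right)}.
\]

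The $Z'$-piece is handled by standard Gaussian integration by parts, conditional on $F$:
\[
\sum_{j=1}^d\expe{\partial_j\psi(F_t)\,Z'_j}=\sqrt{1-t}\sum_{i,j=1}^d\mathfrak{C}_{ij}\,\expe{\partial_{ij}\psi(F_t)}.
\]
For the $F$-piece I condition on $Z'$ and apply the Stein kernel identity \eqref{eq:stein} to the test function $\varphi_{Z'}(x):=\psi(\sqrt{t}\,x+\sqrt{1-t}\,Z')$. Since $\Phi_\beta$ and $h$ both have bounded partial derivatives of all orders, $\varphi_{Z'}\in C^\infty_b(\mathbb{R}^d)$ for every fixed value of $Z'$. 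The chain rule introduces prefactors $\sqrt{t}$ and $t$, and taking expectation over $Z'$ via Fubini produces
\[
\sum_{j=1}^d\expe{\partial_j\psi(F_t)\,F_j}=\sqrt{t}\sum_{i,j=1}^d\expe{\partial_{ij}\psi(F_t)\,\tau_F^{ij}(F)}.
\]
Combining these two identities, the $\sqrt{t}$ and $\sqrt{1-t}$ factors cancel against their counterparts in the denominators, leaving the clean representation
\[
\frac{d}{dt}\expe{\psi(F_t)}=\frac{1}{2}\sum_{i,j=1}^d\expe{\partial_{ij}\psi(F_t)\,(\tau_F^{ij}(F)-\mathfrak{C}_{ij})}.
\]

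To conclude, I would invoke Lemma~\ref{cck-derivative} with $m=2$: pointwise $|\partial_{ij}\psi(x)|\leq\Upsilon_\beta^{ij}(x-y)$, and the crucial dimension-free control $\sum_{i,j}\Upsilon_\beta^{ij}(x-y)\leq c_2\max\{\|h''\|_\infty,\beta\|h'\|_\infty\}=3\max\{\|h''\|_\infty,\beta\|h'\|_\infty\}$ holds uniformly in $x$. Pulling $\max_{i,j}|\tau_F^{ij}(F)-\mathfrak{C}_{ij}|$ out of the $(i,j)$-sum and integrating in $t\in[0,1]$ then delivers the asserted bound.

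The main obstacle is to make rigorous sense of applying the Stein kernel identity to a test function that carries the random parameter $Z'$; this is resolved by conditioning on $Z'$ (which is independent of $F$) so that $\varphi_{Z'}$ becomes a deterministic element of $C^\infty_b(\mathbb{R}^d)$ and then invoking Fubini. Once this is in place, the other technicalities (differentiation under the expectation near $t=0,1$ and absolute continuity of $t\mapsto\expe{\psi(F_t)}$) follow routinely from the bounded derivatives of $\psi$ together with the integrability of $F$ and $Z'$. The dimension-free CCK-type bound on $\sum_{i,j}\Upsilon_\beta^{ij}$ is precisely what prevents the final estimate from depending on $d$, which is the whole point of routing through the smooth max $\Phi_\beta$ rather than the maximum itself.
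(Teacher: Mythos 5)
Your proposal is correct and follows essentially the same route as the paper: a Gaussian smart-path interpolation, Gaussian integration by parts for the $Z$-part, the Stein-kernel identity for the $F$-part, cancellation of the $\sqrt{t},\sqrt{1-t}$ factors, and then the dimension-free Hessian bound of Lemma~\ref{cck-derivative} (with $m=2$, $c_2=3$) to finish. Your remark that one must condition on the independent Gaussian to turn the interpolation test function into a deterministic element of $C^\infty_b(\mathbb{R}^d)$ before invoking \eqref{eq:stein} is a useful clarification of a step the paper leaves implicit, but the underlying argument is identical.
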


\begin{proof}
Without loss of generality, we may assume that $F$ and $Z$ are independent. Take a vector $y\in\mathbb{R}^d$ arbitrarily. Then, we define the functions $\varphi:\mathbb{R}^d\to\mathbb{R}$ and $\Psi:[0,1]\to\mathbb{R}$ by $\varphi(x)=h(\Phi_\beta(x-y))$ for $x\in\mathbb{R}$ and $\Psi(t)=\expe{\varphi(\sqrt{t}F+\sqrt{1-t}Z)}$ for $t\in[0,1]$ respectively. It is easy to check that $\Psi$ is continuous on $[0,1]$ and differentiable on $(0,1)$ and 
\[
\Psi'(t)=\frac{1}{2}\sum_{j=1}^d\ex{\partial_j\varphi(\sqrt{t}F+\sqrt{1-t}Z)\left(\frac{F_j}{\sqrt{t}}-\frac{Z_j}{\sqrt{1-t}}\right)}
\]
for every $t\in(0,1)$. Now, by Lemma \ref{cck-derivative} we have 
$
\sum_{i,j=1}^d\left|\partial_{ij}\varphi(x)\right|\leq 3\max\{\|h''\|_\infty,\beta\|h'\|_\infty\}
$ 
for any $x\in\mathbb{R}^d$. In particular, $\partial_{ij}\varphi$ is bounded for all $i,j\in[d]$. Therefore, noting that the independence between $Z$ and $F$, Stein's identity (e.g.~Lemma 2 in \cite{CCK2015}) implies that
\begin{align*}
\sum_{j=1}^d\ex{\partial_j\varphi(\sqrt{t}F+\sqrt{1-t}Z)\frac{Z_j}{\sqrt{1-t}}}
=\sum_{i,j=1}^d\ex{\partial_{ij}\varphi(\sqrt{t}F+\sqrt{1-t}Z)\mathfrak{C}_{ij}}.
\end{align*}
Moreover, since $\tau_F$ is a Stein kernel for $F$, we have
\begin{align*}
\sum_{j=1}^d\ex{\partial_j\varphi(\sqrt{t}F+\sqrt{1-t}Z)\frac{F_j}{\sqrt{t}}}
=\sum_{i,j=1}^d\ex{\partial_{ij}\varphi(\sqrt{t}F+\sqrt{1-t}Z)\tau_F^{ij}(F)}.
\end{align*}
Hence we conclude that
\begin{equation*}
\Psi'(t)=\frac{1}{2}\sum_{i,j=1}^d\ex{\partial_{ij}\varphi(\sqrt{t}F+\sqrt{1-t}Z)(\tau_F^{ij}(F)-\mathfrak{C}_{ij})}
\end{equation*}
for every $t\in(0,1)$. Consequently, we obtain
\[
\left|\ex{\varphi\left(F\right)\right]-E\left[\varphi\left(Z\right)}\right|
\leq\int_0^1|\Psi'(t)|dt
\leq\frac{3}{2}\max\{\|h''\|_\infty,\beta\|h'\|_\infty\}\Delta,
\]
which completes the proof.
\end{proof}

\begin{proposition}\label{kolmogorov}
Suppose that $d\geq2$ and $\ul{\sigma}:=\min_{1\leq j\leq d}\|Z_j\|_2>0$. 
Under the assumptions of Lemma \ref{stein}, there is a universal constant $C>0$ such that
\begin{equation}\label{eq:kolmogorov2}
\sup_{x\in\mathbb{R}^d}\left|P(F\leq x)-P(Z\leq x)\right|
\leq C(1+\ul{\sigma}^{-1})\Delta^{1/3}(\log d)^{2/3}.
\end{equation}
\end{proposition}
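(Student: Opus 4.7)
The plan is to combine the smoothing inequality of Proposition \ref{smoothing} with the Stein kernel bound of Lemma \ref{stein}, applied to an appropriately rescaled version of the fixed cutoff $g_0$, and then optimize the resulting estimate over the smoothing parameter $\varepsilon$.

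More concretely, I would proceed as follows. Fix $\varepsilon>0$ and set $\beta:=\varepsilon^{-1}\log d$, exactly as in Proposition \ref{smoothing}. The quantity $\Delta_\varepsilon(F,Z)$ defined in \eqref{eq:delta} involves expectations of $h\circ\Phi_\beta$ where $h(t):=g_0(\varepsilon^{-1}t)$, which lies in $C^\infty_b(\mathbb{R})$ with $\|h'\|_\infty\leq \varepsilon^{-1}\|g_0'\|_\infty$ and $\|h''\|_\infty\leq \varepsilon^{-2}\|g_0''\|_\infty$. Applying Lemma \ref{stein} to this $h$ and taking the supremum over $y\in\mathbb{R}^d$ gives
\[
\Delta_\varepsilon(F,Z)\leq \tfrac{3}{2}\max\{\|h''\|_\infty,\beta\|h'\|_\infty\}\Delta \leq C_0\,\varepsilon^{-2}(\log d)\,\Delta,
\]
since for $d\geq 2$ the term $\beta\|h'\|_\infty = \varepsilon^{-2}(\log d)\|g_0'\|_\infty$ dominates $\|h''\|_\infty$, and $C_0$ is a universal constant depending only on the fixed function $g_0$. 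Substituting into Proposition \ref{smoothing} yields
\[
\sup_{x\in\mathbb{R}^d}|P(F\leq x)-P(Z\leq x)|\leq C_0\,\varepsilon^{-2}(\log d)\,\Delta+\frac{2\varepsilon}{\ul{\sigma}}\bigl(\sqrt{2\log d}+2\bigr).
\]

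The remaining step is to choose $\varepsilon$ to balance the two terms. Setting the two contributions to be of the same order, i.e. $\varepsilon^{-2}(\log d)\Delta \asymp \varepsilon\,\ul{\sigma}^{-1}\sqrt{\log d}$, gives $\varepsilon\asymp(\ul{\sigma}\sqrt{\log d}\,\Delta)^{1/3}$. With this choice, both terms become of order $\ul{\sigma}^{-2/3}(\log d)^{2/3}\Delta^{1/3}$, and using the elementary bound $\ul{\sigma}^{-2/3}\leq 1+\ul{\sigma}^{-1}$ (valid for all $\ul{\sigma}>0$) produces exactly the bound \eqref{eq:kolmogorov2}. One should also handle the trivial regime where the chosen $\varepsilon$ makes the bound exceed $1$, in which case the inequality holds automatically by enlarging $C$.

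I do not expect any genuine obstacle here: the two heavy-lifting ingredients (the smoothing inequality and the Stein-kernel interpolation lemma) have already been established, and the proof is essentially the optimization in $\varepsilon$. The only points deserving care are that $h=g_0(\varepsilon^{-1}\cdot)$ really lies in $C^\infty_b(\mathbb{R})$ so that Lemma \ref{stein} applies, that the $\max$ in the prefactor of Lemma \ref{stein} is dominated by the $\beta\|h'\|_\infty$ term under $d\geq 2$, and that the optimal $\varepsilon$ yields the announced exponents $1/3$ on $\Delta$ and $2/3$ on $\log d$.
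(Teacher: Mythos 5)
Your proposal is correct and follows essentially the same route as the paper: apply Lemma \ref{stein} with $h(t)=g_0(\varepsilon^{-1}t)$ to bound $\Delta_\varepsilon(F,Z)\lesssim\varepsilon^{-2}(\log d)\Delta$, plug into Proposition \ref{smoothing}, and optimize over $\varepsilon$. The paper picks the slightly cruder $\varepsilon=\Delta^{1/3}(\log d)^{1/6}$, which produces $(C'+C''\ul{\sigma}^{-1})\Delta^{1/3}(\log d)^{2/3}$ directly, while your $\ul{\sigma}$-dependent choice yields $\ul{\sigma}^{-2/3}\Delta^{1/3}(\log d)^{2/3}$ before you invoke $\ul{\sigma}^{-2/3}\le 1+\ul{\sigma}^{-1}$; both give the stated form. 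One small point the paper makes explicit and you elide: the optimization is ill-posed when $\Delta=0$, and the paper disposes of that case by appealing to \cite[Lemma 4.1.3]{NP2012} (a Stein kernel with zero fluctuation forces $F$ to be exactly $\mathcal{N}_d(0,\mathfrak{C})$, so the left side vanishes). Your ``enlarge $C$'' remark handles the opposite regime where $\Delta$ is large, not $\Delta=0$; you should either cite that lemma or observe that the right side of the pre-optimized bound tends to $0$ as $\varepsilon\downarrow 0$ when $\Delta=0$. With that one-line patch, the argument is complete.
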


\begin{proof}
Thanks to \cite[Lemma 4.1.3]{NP2012}, it suffices to consider the case $\Delta>0$. 
By Lemma \ref{stein}, for any $\varepsilon>0$ we have $\Delta_\varepsilon(F,Z)\leq C'\varepsilon^{-2}(\log d)\Delta$, where $C'>0$ is a universal constant. Therefore, Proposition \ref{smoothing} yields
\[
\sup_{x\in\mathbb{R}^d}\left|P(F\leq x)-P(Z\leq x)\right|\leq C'\varepsilon^{-2}(\log d)\Delta+\frac{2\varepsilon}{\ul{\sigma}}\left(\sqrt{2\log d}+2\right).
\]
Now, setting $\varepsilon=\Delta^{1/3}(\log d)^{1/6}$, we obtain the desired result. 
\end{proof}

\begin{rmk}
Proposition \ref{kolmogorov} should be contrasted with Proposition 3.4 in \cite{LNP2015} that gives a bound for the Wasserstein distance in terms of Stein kernels. To be precise, under the assumptions of Lemma \ref{stein}, we have 
\begin{equation}\label{wass-stein}
\mc{W}_1(F,Z)\leq\sqrt{\frac{2}{\pi}}\ex{\sum_{i,j=1}^d|\tau^{ij}_F(F)-\mf{C}_{ij}|}
\end{equation}
if $\mf{C}$ is the identity matrix. 
Since the bound in \eqref{wass-stein} generally has the same order as $d\times\Delta$, it provides a better bound than Proposition \ref{kolmogorov} (see Corollary \ref{coro:wass}) when $d$ is fixed. However, this is not the case when $d$ increases (much) faster than the convergence rate of $\Delta$. 
\end{rmk}

\section{A high-dimensional CLT for normal-gamma homogeneous sums}\label{sec:normal-gamma}

In view of the results in Section \ref{sec:stein}, we naturally seek a situation where a vector of homogeneous sums has a Stein kernel. This is the case when all the components are eigenfunctions of a Markov diffusion operator (cf.~Proposition 5.1 in \cite{LNP2015}). 
Moreover, as clarified in \cite{Ledoux2012,ACP2014,CNPP2016}, only some spectral properties of the Markov diffusion operator are essential for deriving a fourth moment type bound for the variance of the corresponding Stein kernel. 
This spectral property is especially satisfied when each $X_i$ is either a Gaussian or (standardized) gamma variable, so this section focuses on such a situation and derive a high-dimensional CLT for this special case. 

For each $\nu>0$, we denote by $\gamma_\pm(\nu)$ the distribution of the random variable $\pm(X-\nu)/\sqrt{\nu}$ with $X\sim\gamma(\nu)$. 
Also, for every $q\in\mathbb{N}$ we set
\[
\mf{c}_q:=\sum_{r=1}^qr!\binom{q}{r}^2.
\]
\begin{proposition}\label{prop:normal-gamma}
Let us keep the same notation as in Theorem \ref{thm:main} and assume $d\geq2$. 
Let $\bs{Y}=(Y_i)_{i=1}^N$ be a sequence of independent random variables such that the law of $Y_i$ belongs to $\{\mathcal{N}(0,1)\}\cup\{\gamma_+(\nu):\nu>0\}\cup\{\gamma_-(\nu):\nu>0\}$ for all $i$. 
For every $i$, define the constants $v_i$ and $\eta_i$ by
\[
v_i:=
\left\{
\begin{array}{ll}
2 & \text{if }Y_i\sim \mathcal{N}(0,1), \\
2(1+\nu^{-1}) & \text{if }Y_i\sim \gamma_\pm(\nu),       
\end{array}
\right.
\qquad
\eta_i:=
\left\{
\begin{array}{ll}
1 & \text{if }Y_i\sim \mathcal{N}(0,1), \\
1\wedge\sqrt{\nu} & \text{if }Y_i\sim \gamma_\pm(\nu).       
\end{array}
\right.
\]
We also set $w_*=1/2$ if $Y_i\sim \mathcal{N}(0,1)$ for every $i$ and $w_*=1$ otherwise. 
Then, $\kappa_4(Q(f_j;\bs{Y}))\geq0$ for every $j$ and
\begin{equation}\label{bound:normal-gamma}
\sup_{y\in\mathbb{R}^d}\left|\expe{h\left(\Phi_\beta(\bs{Q}(\bs{Y})-y)\right)}-\expe{h\left(\Phi_\beta(Z-y)\right)}\right|
\leq \frac{3}{2}\max\{\|h''\|_\infty,\beta\|h'\|_\infty\}\left(\delta_0[\bs{Q}(\bs{Y})]+C\delta_2[\bs{Q}(\bs{Y})]\right)
\end{equation}
for any $\beta>0$ and $h\in C^\infty_b(\mathbb{R})$, where $C>0$ depends only on $\ol{q}_d$ and 
\begin{multline*}
\delta_2[\bs{Q}(\bs{Y})]:=\max_{1\leq j,k\leq d}\{\ul{\eta}_N^{-1}(\log d)\}^{w_*(q_j+q_k)-1}\left\{1_{\{q_j< q_k\}}\|Q(f_j;\bs{Y})\|_4\kappa_4(Q(f_k;\bs{Y}))^{1/4}\right.\\
\left.+1_{\{q_j=q_k\}}\sqrt{2\kappa_4(Q(f_j;\bs{Y}))+\left(2^{-q_j}\ol{v}_N^{q_j}-1\right)(2q_j)!\mf{c}_{q_j}
\sum_{i=1}^N\influence_i(f_j)^2}\right\}
\end{multline*}
with $\ol{v}_N:=\max_{1\leq i\leq N}v_i$ and $\ul{\eta}_N:=\min_{1\leq i\leq N}\eta_i$.
\end{proposition}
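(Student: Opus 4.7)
The plan is to invoke Lemma \ref{stein} with $F=\bs{Q}(\bs{Y})$, which reduces the smooth-max comparison to exhibiting a Stein kernel $\tau_F=(\tau^{jk})$ for $F$ and controlling $\Delta:=\ex{\max_{j,k}|\tau^{jk}(F)-\mathfrak{C}_{jk}|}$. The natural construction uses the product Markov diffusion operator $L=\bigoplus_{i}L_i$, where $L_i$ is the one-dimensional Ornstein--Uhlenbeck generator when $Y_i$ is Gaussian and the (appropriately normalized) Laguerre generator when $Y_i\sim\gamma_\pm(\nu_i)$. Each $Y_i$ is a first-order eigenfunction of $L_i$, so $Q(f_j;\bs{Y})$ lies in the $q_j$-th chaos of $L$. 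Setting $\tau^{jk}(F):=\expe{\Gamma(-L^{-1}Q(f_j;\bs{Y}),Q(f_k;\bs{Y}))\mid F}$ with $\Gamma$ the carré-du-champ then yields a Stein kernel, the identity \eqref{eq:stein} being a standard consequence of the integration-by-parts formula for $L$.

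Given such a kernel, I would split
\[
\Delta\le\max_{j,k}\bigl|\expe{\tau^{jk}(F)}-\mathfrak{C}_{jk}\bigr|+\ex{\max_{j,k}\bigl|\tau^{jk}(F)-\expe{\tau^{jk}(F)}\bigr|}.
\]
The first term equals $\delta_0[\bs{Q}(\bs{Y})]$, since substituting $\varphi(x)=x_jx_k$ into the Stein identity gives $\expe{\tau^{jk}(F)}=\expe{Q(f_j;\bs{Y})Q(f_k;\bs{Y})}$. For the second term I would combine a maximal inequality for $\psi_{\alpha'}$ random variables with hypercontractivity of the product OU/Laguerre semigroup: $\tau^{jk}(F)$ is a polynomial in $\bs{Y}$ of degree controlled by $q_j+q_k$, so by hypercontractivity its centered version has $\psi_{\alpha'}$-norm comparable to its $L^2$-norm, with the equivalence constant scaling polynomially in $\ul{\eta}_N^{-1}$ and the exponent $\alpha'$ determined by $w_*$ (so that Gaussian chaoses have $\psi_2$-style tails and Laguerre chaoses have $\psi_1$-style tails, deteriorated by $\ul{\eta}_N^{-1}$). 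Passing to the max over the $d^2$ indices then produces the factor $\{\ul{\eta}_N^{-1}\log d\}^{w_*(q_j+q_k)-1}$ that appears inside $\delta_2[\bs{Q}(\bs{Y})]$.

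The core of the argument is thus to bound $\variance(\tau^{jk}(F))$ in terms of the fourth cumulants. In the diagonal case $j=k$, I would expand $\Gamma(Q(f_j;\bs{Y}),Q(f_j;\bs{Y}))$ in the product-chaos decomposition and, following the spectral approach of \cite{Ledoux2012,ACP2014,CNPP2016}, identify the contribution of chaoses of order strictly less than $2q_j$ as yielding (up to the factor 2) $\kappa_4(Q(f_j;\bs{Y}))$, and the top-order chaos $2q_j$ as contributing exactly $(2^{-q_j}\ol{v}_N^{q_j}-1)(2q_j)!\mf{c}_{q_j}\sum_{i=1}^N\influence_i(f_j)^2$, the prefactor reflecting the mismatch between Hermite and Laguerre product formulas (and vanishing in the pure Gaussian case $\ol{v}_N=2$). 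This reproduces the $q_j=q_k$ expression in $\delta_2$. For the off-diagonal case $q_j<q_k$, Cauchy--Schwarz gives $\|\tau^{jk}(F)-\expe{\tau^{jk}(F)}\|_2\le\|Q(f_j;\bs{Y})\|_4\cdot\variance(\tau^{kk}(F))^{1/4}$, which combined with the diagonal bound produces $\|Q(f_j;\bs{Y})\|_4\kappa_4(Q(f_k;\bs{Y}))^{1/4}$ plus the usual correction. Positivity of $\kappa_4(Q(f_j;\bs{Y}))$ falls out of the same spectral identity, since both summands in the resulting expression for $\kappa_4$ are manifestly non-negative.

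The main obstacle, I expect, is the explicit diagonal variance calculation: the mixed Gaussian/gamma setting forces one to carry out the carré-du-champ expansion coordinate by coordinate, track the $\nu_i$-dependent combinatorial coefficients produced by the Laguerre product formula, and then resum them into the clean $\ol{v}_N^{q_j}$ factor multiplying $\sum_i\influence_i(f_j)^2$. A secondary subtlety is quantifying the $\nu_i$-dependence of Laguerre hypercontractivity sharply enough to recover the precise $\ul{\eta}_N^{-1}$ exponent in $\delta_2$; since the relevant spectral ingredients are already available in \cite{Ledoux2012,ACP2014,CNPP2016}, I expect this to be a matter of careful bookkeeping rather than a genuinely new technique.
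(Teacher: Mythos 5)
Your proposal is correct and tracks the paper's own proof almost exactly: the Stein kernel from the carr\'e du champ of the tensorized OU/Laguerre generator (Lemma~\ref{gamma-stein}), the reduction via Lemma~\ref{stein} to a deviation bound for $\tau^{jk}(F)$, the split into $\delta_0$ plus a centered part, hypercontractivity (with the $\ul{\eta}_N^{-1}$ and $\log d$ powers dictated by $w_*$) fed into a $\psi_{\alpha'}$-maximal inequality, and a spectral/fourth-moment bound on $\variance[\Gamma(F,G)]$ with a non-Gaussian correction proportional to $\sum_i\influence_i(f)^2$. The one imprecision is in locating the influence term: the top chaos $J_{2q_j}$ of $\Gamma(F,F)$ has coefficient zero in the eigenfunction expansion, so it contributes nothing directly; in the paper's Proposition~\ref{gamma-bound} the influence correction instead arises from comparing $\expe{J_{2q}(F^2)J_{2q}(G^2)}$ to its Gaussian counterpart $(2q)!\langle f\wtimes f,g\wtimes g\rangle$ (Lemma~\ref{lemma:key}), while the remaining chaoses of $F^2,G^2$ are controlled by the fourth cumulants via Lemma~\ref{lemma:zheng} — a bookkeeping detail that does not change the strategy you outline.
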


The rest of this section is devoted to the proof of Proposition \ref{prop:normal-gamma}. 
In the remainder of this section, we assume that the probability space $(\Omega,\mathcal{F},P)$ is given by the product probability space $(\prod_{i=1}^N\Omega_i,\bigotimes_{i=1}^N\mathcal{F}_i,\bigotimes_{i=1}^NP_i)$, where
\[
(\Omega_i,\mathcal{F}_i,P_i):=
\left\{
\begin{array}{ll}
(\mathbb{R},\mathcal{B}(\mathbb{R}),\mathcal{N}(0,1)) & \text{if }Y_i\sim \mathcal{N}(0,1), \\
((0,\infty),\mathcal{B}((0,\infty)),\gamma(\nu)) & \text{if }Y_i\sim \gamma_\pm(\nu).       
\end{array}
\right.
\]
Then we realize the variables $Y_1,\dots,Y_N$ as follows: For $\omega=(\omega_1,\dots,\omega_N)\in\Omega$, we define
\[
Y_i(\omega):=
\left\{
\begin{array}{ll}
\omega_i & \text{if }Y_i\sim \mathcal{N}(0,1), \\
\pm(\omega_i-\nu)/\sqrt{\nu} & \text{if }Y_i\sim \gamma_\pm(\nu).       
\end{array}
\right.
\]

\subsection{$\Gamma$-calculus}


Our first aim is to construct a suitable Markov diffusion operator whose eigenspaces contain all the components of $\bs{Q}(\bs{Y})$. 
In the following, for an open subset $U$ of $\mathbb{R}^m$, we write $C^\infty_p(U)$ for the set of all real-valued $C^\infty$ functions on $U$ all of whose partial derivatives have at most polynomial growth. 

First, we denote by $\opL_\text{OU}$ the Ornstein-Uhlenbeck operator on $\mathbb{R}$. 
$\opL_\text{OU}$ is densely defined symmetric operator in $L^2(\mathbb{R},\mc{B}(\mathbb{R}),\mathcal{N}(0,1))$ and given by 
\[
\opL_\text{OU}\phi(x)=\phi''(x)-x\phi'(x)
\]
for any $\phi\in C^\infty_p(\mathbb{R})$ . 
Next, for every $\nu>0$, we write $\opL_\nu$ for the Laguerre operator on $(0,\infty)$ with parameter $\nu$. 
$\opL_\nu$ is densely defined symmetric operator in $L^2((0,\infty),\mc{B}((0,\infty)),\gamma(\nu))$ and given by 
\[
\opL_\nu\phi(x)=x\phi''(x)+(\nu-x)\phi'(x)
\]
for any $\phi\in C^\infty_p((0,\infty))$. 
We then define the operators $\mathcal{L}_1,\dots,\mathcal{L}_N$ by
\[
\mathcal{L}_i:=
\left\{
\begin{array}{ll}
\opL_\text{OU} & \text{if }Y_i\sim \mathcal{N}(0,1), \\
\opL_\nu & \text{if }Y_i\sim \gamma_\pm(\nu).       
\end{array}
\right.
\]
Finally, we construct the densely defined symmetric operator $\opL$ in $L^2(P)$ by tensorization of $\mathcal{L}_1,\dots,\mathcal{L}_N$ (see Section 2.2 of \cite{AMMP2016} for details). 

Let us write $\mathcal{S}=C^\infty_p(\Omega)$. 
We define the \textit{carr\'e du champ operator} of $\opL$ by
\[
\Gamma(F,G)=\frac{1}{2}\left(\opL(FG)-F\opL G-G\opL F\right)
\]
for all $F,G\in\mathcal{S}$. 
Since $\opL$ is symmetric and $\opL1=0$, we have the following \textit{integration by parts formula}: For any $F,G\in\mathcal{S}$, it holds that
\begin{equation}\label{eq:IBP}
\expe{\Gamma(F,G)}=-\expe{F\opL G}=-\expe{G\opL F}.
\end{equation}
In addition to this formula, we will use the following properties of the operators $\opL$ and $\Gamma$:
\begin{enumerate}[label=(\alph*)]

\item\label{cond:diffusion} \textit{Diffusion}: For any $F_1,\dots,F_k,G\in\mathcal{S}$ and $\Psi\in C^\infty_b(\mathbb{R}^k)$, it holds that
\[
\Gamma(\Psi(F_1,\dots,F_k),G)=\sum_{j=1}^k\partial_j\Psi(F_1,\dots,F_k)\Gamma(F_j,G).
\]

\item\label{cond:decomp} \textit{Spectral decomposition}: The spectrum of the operator $-\opL$ is given by $\mathbb{Z}_+$ and we have
\[
L^2(P)=\bigoplus_{k=0}^\infty\kernel(\opL+k\id).
\]

\item\label{cond:stable} \textit{Spectral stability}: If $F$ and $G$ are eigenfunctions of $-\opL$ associated with eigenvalues $p$ and $q$ respectively, 
\[
FG\in\bigoplus_{k=0}^{p+q}\kernel(\opL+k\id).
\]

\end{enumerate}
To check that these three properties are indeed satisfied, it is enough to verify that every $\mathcal{L}_i$ satisfies analogous properties (cf.~Section 2.2 of \cite{AMMP2016}). 
The verification that both $\opL_\text{OU}$ and $\opL_\nu$ satisfies properties \ref{cond:diffusion}--\ref{cond:stable} is found in \cite{ACP2014}. In particular, the eigenspaces of $\opL_\text{OU}$ and $\opL_\nu$ associated with eigenvalue $k\in\mathbb{Z}_+$ are given by $\kernel(\opL_\text{OU}+k\id)=\{aH_k:a\in\mathbb{R}\}$ and $\kernel(\opL_\nu+k\id)=\{aL^{(\nu-1)}_k:a\in\mathbb{R}\}$ respectively. Here, $H_k$ denotes the Hermite polynomial with degree $k$:
\[
H_k(x)=(-1)^ke^{x^2/2}\frac{d^k}{dx^k}e^{-x^2/2}.
\] 
Also, $L_k^{(\alpha)}$ denotes the Laguerre polynomial with degree $k$ and parameter $\alpha>-1$:
\[
L_k^{(\alpha)}(x)=\frac{e^xx^{-\alpha}}{k!}\frac{d^k}{dx^k}(e^{-x}x^{k+\alpha}).
\]
Finally, we note that the eigenspace of $\opL$ associated with eigenvalue $k\in\mathbb{Z}_+$ is given by
\begin{equation}\label{eq:spectral}
\kernel(\opL+k\id)=\bigoplus_{\begin{subarray}{c}
k_1+\cdots+k_N=k\\
k_1,\dots,k_N\in\mathbb{Z}_+
\end{subarray}}
\kernel(\mathcal{L}_1+k_1\id)\otimes\cdots\otimes\kernel(\mathcal{L}_N+k_N\id).
\end{equation}
These properties are all we need to know about the operators $\opL$ and $\Gamma$ in the following discussions. 
We refer to \cite{BGL2014} for more details about the properties of these operators. 

Using integration by parts formula \eqref{eq:IBP} and diffusion property \ref{cond:diffusion}, we can construct a Stein kernel for $\bs{Q}(\bs{Y})$ as follows: 
\begin{lemma}\label{gamma-stein}
For every $(i,j)\in[d]^2$, define the function $\tau^{ij}:\mathbb{R}^d\to\mathbb{R}^d\otimes\mathbb{R}^d$ by
\[
\tau^{ij}(x)=\frac{1}{q_j}\expe{\Gamma(Q(f_i;\bs{Y}),Q(f_j;\bs{Y}))\mid \bs{Q}(\bs{Y})=x},\qquad x\in\mathbb{R}^d.
\]
Then $\tau=(\tau^{ij})_{1\leq i,j\leq d}$ is a Stein kernel for $\bs{Q}(\bs{Y})$. 
\end{lemma}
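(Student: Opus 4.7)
The plan is to check the two defining conditions of a Stein kernel: the integrability $\max_{i,j}\expe{|\tau^{ij}(\bs{Q}(\bs{Y}))|}<\infty$ and the identity \eqref{eq:stein} for every $\varphi\in C^\infty_b(\mathbb{R}^d)$, using the $\Gamma$-calculus machinery introduced just above (integration by parts \eqref{eq:IBP}, the diffusion property \ref{cond:diffusion}, and the spectral information \ref{cond:decomp}--\ref{cond:stable}).

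The first step I would carry out is the spectral identification $\opL Q(f_j;\bs{Y})=-q_j Q(f_j;\bs{Y})$ for each $j\in[d]$. Because $H_1(x)=x$ and $L_1^{(\nu-1)}$ is affine in $x$, each coordinate random variable $Y_i$ is (up to a constant) a first-order eigenfunction of $\mathcal{L}_i$ associated with eigenvalue $1$. Since $f_j$ vanishes on diagonals, every monomial $Y_{i_1}\cdots Y_{i_{q_j}}$ entering the homogeneous sum $Q(f_j;\bs{Y})$ involves pairwise distinct coordinates, and the tensor decomposition \eqref{eq:spectral} places such a monomial in $\kernel(\opL+q_j\id)$. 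Summing in $f_j$ therefore places $Q(f_j;\bs{Y})$ itself in $\kernel(\opL+q_j\id)$; in particular $Q(f_j;\bs{Y})\in\mathcal{S}$.

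The second step is the key algebraic identity. Fix $\varphi\in C^\infty_b(\mathbb{R}^d)$; then $\partial_j\varphi\in C^\infty_b(\mathbb{R}^d)$, and since $\bs{Q}(\bs{Y})$ is a polynomial in the $\omega_i$'s, $\partial_j\varphi(\bs{Q}(\bs{Y}))\in\mathcal{S}$. Write
\[
\expe{\partial_j\varphi(\bs{Q}(\bs{Y}))\,Q(f_j;\bs{Y})}
=-\frac{1}{q_j}\expe{\partial_j\varphi(\bs{Q}(\bs{Y}))\,\opL Q(f_j;\bs{Y})}
=\frac{1}{q_j}\expe{\Gamma\!\left(\partial_j\varphi(\bs{Q}(\bs{Y})),Q(f_j;\bs{Y})\right)},
\]
using the spectral identity and then \eqref{eq:IBP}. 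Applying the diffusion property \ref{cond:diffusion} with $\Psi=\partial_j\varphi$ and $F_i=Q(f_i;\bs{Y})$ expands the carré du champ as $\sum_{i=1}^d\partial_{ij}\varphi(\bs{Q}(\bs{Y}))\,\Gamma(Q(f_i;\bs{Y}),Q(f_j;\bs{Y}))$. Summing over $j$ and conditioning on $\bs{Q}(\bs{Y})$ via the tower property produces exactly $\sum_{i,j}\expe{\partial_{ij}\varphi(\bs{Q}(\bs{Y}))\,\tau^{ij}(\bs{Q}(\bs{Y}))}$, which is \eqref{eq:stein}.

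What remains is the integrability $\expe{|\tau^{ij}(\bs{Q}(\bs{Y}))|}<\infty$, which by Jensen's inequality reduces to $\expe{|\Gamma(Q(f_i;\bs{Y}),Q(f_j;\bs{Y}))|}<\infty$. Since $\opL$ acts on polynomials as a second-order differential operator with at most linear coefficients in the underlying $\omega_i$, the carré du champ of two polynomials is again a polynomial in $\omega=(\omega_i)_{i\in[N]}$, and Gaussian/gamma variables have moments of all orders, so the expectation is finite. The only genuinely non-routine piece of this plan is the first step: one must ensure that every monomial appearing in $Q(f_j;\bs{Y})$ truly sits in $\kernel(\opL+q_j\id)$, which is where the vanishing-on-diagonals hypothesis is essential (otherwise squares such as $Y_i^2$ could produce eigenspace contributions $0,1,2$ rather than a pure eigenvalue). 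Once this is pinned down, the rest is a direct computation with \eqref{eq:IBP} and \ref{cond:diffusion}.
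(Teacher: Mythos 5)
Your proposal is correct and takes essentially the same route as the paper's own proof: identify $Q(f_j;\bs{Y})$ as an eigenfunction of $-\opL$ with eigenvalue $q_j$ via the tensor-product spectral decomposition \eqref{eq:spectral}, then apply integration by parts \eqref{eq:IBP} followed by the diffusion property to expand the carr\'e du champ, and finish by conditioning. Your elaboration of why the vanishing-on-diagonals hypothesis forces each monomial into a pure eigenspace, and your remark on integrability, merely flesh out details the paper states more tersely.
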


\begin{proof}
This result is shown in \cite[Proposition 5.1]{LNP2015} for a more general setting, but we give the proof for the sake of exposition. 
First, from the definition of $\Gamma$ it is evident that $\tau^{ij}(\bs{Q}(\bs{Y}))=\Gamma(Q(f_i;\bs{Y}),Q(f_j;\bs{Y}))\in L^1(P)$ for every $(i,j)\in[d]^2$. 
Next, from \eqref{eq:spectral} we have $\opL Q(f_j;\bs{Y})=-q_jQ(f_j;\bs{Y})$ for every $j\in[d]$. 
Therefore, for any $\varphi\in C^\infty_b(\mathbb{R}^d)$ we have
\begin{align*}
\sum_{j=1}^d\expe{\partial_j\varphi(\bs{Q}(\bs{Y}))Q(f_j;\bs{Y})}
&=-\sum_{j=1}^dq_j^{-1}\expe{\partial_j\varphi(\bs{Q}(\bs{Y}))\opL Q(f_j;\bs{Y})}\\
&=\sum_{j=1}^dq_j^{-1}\expe{\Gamma(\partial_j\varphi(\bs{Q}(\bs{Y})),Q(f_j;\bs{Y}))}~(\because\text{\eqref{eq:IBP}})\\
&=\sum_{i,j=1}^dq_j^{-1}\expe{\partial_j\varphi(\bs{Q}(\bs{Y}))\Gamma(Q(f_i;\bs{Y}),Q(f_j;\bs{Y}))}~(\because\text{Diffusion property})\\
&=\sum_{i,j=1}^d\expe{\partial_j\varphi(\bs{Q}(\bs{Y}))\tau^{ij}(\bs{Q}(\bs{Y}))}.
\end{align*}
This completes the proof. 
\end{proof}

\subsection{A bound for the variance of the carr\'e du champ operator}\label{sec:gamma-bound}

In view of Lemmas \ref{stein} and \ref{gamma-stein}, we obtain \eqref{bound:normal-gamma} once we show that
\begin{equation}\label{max-normal-gamma}
\ex{\max_{1\leq j,k\leq d}\labs\frac{1}{q_k}\Gamma\lpa Q(f_j;\bs{Y}),Q(f_k;\bs{Y})\rpa-\mf{C}_{jk}\rabs}\leq \delta_0[\bs{Q}(\bs{Y})]+C\delta_2[\bs{Q}(\bs{Y})],
\end{equation}
where $C>0$ depends only on $\ol{q}_d$. 
As a first step, we estimate $\variance[\Gamma\lpa Q(f_j;\bs{Y}),Q(f_k;\bs{Y})\rpa]$ for every $(j,k)\in[d]^2$. More precisely, the aim of this subsection is to prove the following result:
\begin{proposition}\label{gamma-bound}
Let $p\leq q$ be two positive integers. Let $f:[N]^p\to\mathbb{R}$ and $g:[N]^q\to\mathbb{R}$ be symmetric functions vanishing on diagonals and set $F:=Q(f;\bs{Y})$ and $G:=Q(g;\bs{Y})$. Then, $\kappa_4(F)\geq0$, $\kappa_4(G)\geq0$ and
\begin{multline}\label{eq:gamma-bound}
\variance\left[\frac{1}{q}\Gamma(F,G)\right]
\leq1_{\{p<q\}}\sqrt{\expe{F^4}}\sqrt{\kappa_4(G)}\\     
+1_{\{p=q\}}\left\{2\sqrt{\kappa_4(F)}\sqrt{\kappa_4(G)} 
+\left(2^{-p}\ol{v}_N^p-1\right)
(2p)!\mf{c}_p
\sqrt{\sum_{i=1}^N\influence_i(f)^2}
\sqrt{\sum_{i=1}^N\influence_i(g)^2}
\right\}.
\end{multline}
\end{proposition}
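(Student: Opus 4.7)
The plan is to exploit the diffusion rule for the carr\'e du champ together with the tensorized spectral structure of $\opL$. Since $\partial_i Q(f;\bs{Y})=p\,Q(f_i;\bs{Y})$ where $f_i:=f(i,\cdot)$ (and similarly for $g$), and $\Gamma(Y_i,Y_i)=1$ if $Y_i$ is Gaussian while $\Gamma(Y_i,Y_i)=1+\epsilon_i Y_i/\sqrt{\nu_i}$ with $\epsilon_i\in\{\pm 1\}$ if $Y_i$ is gamma, the diffusion property yields
\[
\tfrac{1}{q}\Gamma(F,G)=p\sum_{i=1}^N Q(f_i;\bs{Y})Q(g_i;\bs{Y})+p\sum_{i=1}^N\tfrac{\epsilon_i Y_i}{\sqrt{\nu_i}}Q(f_i;\bs{Y})Q(g_i;\bs{Y}).
\]
The first ``Hermite part'' has mean $\expe{FG}$, equal to $p!\langle f,g\rangle_{\ell_2}$ when $p=q$ and vanishing when $p<q$; the second ``gamma correction'' is centered in both cases.

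For the Hermite part, I would expand each product $Q(f_i;\bs{Y})Q(g_i;\bs{Y})$ via the combinatorial identities for products of Hermite (and tensorially, Laguerre) chaos, then resum over $i$ to write the result as a finite linear combination of homogeneous sums built on the symmetrized contractions $f\widetilde{\star}_r g$ for $r=1,\dots,p$. The orthogonality of homogeneous sums of distinct degrees then yields, after subtracting $\expe{FG}$, a variance of the form $\sum_r a_{p,q,r}\|f\widetilde{\star}_r g\|_{\ell_2}^2$ with explicit $a_{p,q,r}\ge 0$. The key estimate is the trace-product identity
\[
\|f\star_r g\|_{\ell_2}^2=\langle f\star_{p-r}f,\,g\star_{q-r}g\rangle_{\ell_2}\leq\|f\star_{p-r}f\|_{\ell_2}\|g\star_{q-r}g\|_{\ell_2},
\]
obtained by viewing $f\star_r g$ as a matrix product and applying Cauchy-Schwarz. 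Combining this with the classical Nourdin-Peccati-Azmoodeh-Campese-Poly representation
\[
\kappa_4(Q(h;\bs{Y}))=\sum_{s=1}^{p-1}b_s\|h\star_s h\|_{\ell_2}^2+(\text{non-negative Laguerre correction}),
\]
which in particular implies $\kappa_4(F),\kappa_4(G)\ge 0$, produces the $2\sqrt{\kappa_4(F)\kappa_4(G)}$ term in the case $p=q$. For $p<q$, I would instead use $\|f\star_{p-r}f\|_{\ell_2}\le\|f\|_{\ell_2}^2\le\sqrt{\expe{F^4}}/p!$ to extract the $\sqrt{\expe{F^4}\kappa_4(G)}$ term.

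For the gamma correction, independence of the centered factors $\epsilon_i Y_i/\sqrt{\nu_i}$ across $i$ reduces its variance, after careful bookkeeping of cross terms, to a weighted diagonal sum involving $\expe{Y_i^2/\nu_i}\,\expe{Q(f_i)^2 Q(g_i)^2}$. Applying hypercontractivity for Laguerre chaos to control $\|Q(f_i)Q(g_i)\|_2\le\|Q(f_i)\|_4\|Q(g_i)\|_4$, together with the Cauchy-Schwarz inequality $\sum_i\influence_i(f)\influence_i(g)\le\sqrt{\sum_i\influence_i(f)^2\sum_i\influence_i(g)^2}$, then yields the stated correction term. The main obstacle is the precise extraction of the combinatorial constant $(2^{-p}\ol{v}_N^p-1)(2p)!\mf{c}_p$ with $\mf{c}_p=\sum_{r=1}^p r!\binom{p}{r}^2$: this requires carefully tracking all the diagonal self-interaction patterns that arise in the expansion of $\expe{F^2}$ over gamma coordinates, adapting the Laguerre-chaos arguments of Azmoodeh, Campese and Poly to the mixed normal-gamma tensor setting.
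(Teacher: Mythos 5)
Your starting identity $\frac{1}{q}\Gamma(F,G)=p\sum_iQ(f_i;\bs{Y})Q(g_i;\bs{Y})\Gamma(Y_i,Y_i)$ is correct and a genuinely different entry point from the paper, which instead passes through the spectral expansion \eqref{gamma-expr} and the intermediate quantity $\covariance[F^2,G^2]-2\expe{FG}^2$. However, the split into a ``Hermite part'' $A:=p\sum_i(Q(f_i)Q(g_i)-\expe{Q(f_i)Q(g_i)})$ and a ``gamma correction'' $B:=p\sum_i\frac{\epsilon_iY_i}{\sqrt{\nu_i}}Q(f_i)Q(g_i)$ has a nontrivial cross term $\covariance(A,B)$ that you do not address. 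It does not vanish: for $i\neq j$ with $Y_j$ gamma, $Q(f_i)Q(g_i)$ is a polynomial of degree at most two in $Y_j$, so conditioning on the other coordinates shows that $\expe{Q(f_i)Q(g_i)\,Y_j\,Q(f_j)Q(g_j)}$ picks up both the linear coefficient of $Q(f_i)Q(g_i)$ in $Y_j$ (via $\expe{Y_j^2}=1$) and the quadratic one (via $\expe{Y_j^3}\neq0$). Moreover, $\variance[A]$ itself is \emph{not} given by the Gaussian formula in terms of symmetrized contractions when some $Y_i$ are gamma --- the non-Gaussian fourth moments of gamma coordinates already enter $\variance[A]$ --- so the clean orthogonality machinery you invoke covers only part of $\variance[A]$.

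The constant $(2^{-p}\ol{v}_N^p-1)(2p)!\mf{c}_p$, which you flag as ``the main obstacle,'' is precisely where the omitted pieces would have to be absorbed, and there is no a priori reason your coordinate-by-coordinate bookkeeping would reproduce it. The paper avoids all of this by never expanding $\Gamma(F,G)$ over coordinates: using \eqref{gamma-expr} it gets $\variance[\frac{1}{q}\Gamma(F,G)]\leq\sum_{k=1}^{p+q-1}\expe{J_k(FG)^2}$, then Lemma~\ref{lemma:zheng} (built on \eqref{key:eq1}) bounds this sum by $\covariance[F^2,G^2]-2\expe{FG}^2$, and that quantity is controlled by expanding $G^2$ in chaos, Cauchy--Schwarz, \eqref{var-ineq}, and \eqref{key:eq2} together with Lemma~\ref{lemma:dk}. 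The explicit constant then emerges from \eqref{key:eq2} without ever separating a Gaussian part from a gamma correction. If you want to pursue your direct route, the cross term and the non-Gaussian contributions to $\variance[A]$ must be tracked explicitly, and there is no guarantee they recombine into the same bound.
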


Before starting the proof, we remark how this result is related to the preceding studies. 
When $f=g$, \citet{ACP2014} have derived a better estimate than \eqref{eq:gamma-bound} in a more general setting. 
Their technique of the proof can also be applied to the case $f\neq g$, and this has been implemented in \citet{CNPP2016}. However, this leads to a bound containing the quantity $\covariance[F^2,G^2]-2\ex{FG}^2$, so we need an additional argument to estimate it. 
For this reason, we take an alternative route for the proof, which is inspired by the discussions in \citet{Zheng2017} as well as \cite[Proposition 3.6]{CNPP2016}. As a byproduct of this strategy, we obtain inequality \eqref{var-ineq} which leads to the universality of gamma variables. 

We begin by introducing some notation. We write $J_k$ for the orthogonal projection of $L^2(P)$ onto the eigenspace $\kernel(\opL+k\id)$. 
For every $i$ we define the random variable $\mathfrak{p}_2(Y_i)$ by
\[
\mathfrak{p}_2(Y_i):=
\left\{
\begin{array}{ll}
H_2(Y_i) & \text{if }Y_i\sim \mathcal{N}(0,1), \\
\pm\frac{2}{\nu}L_2^{(\nu-1)}(\pm\sqrt{\nu}(Y_i+1)) & \text{if }Y_i\sim \gamma_\pm(\nu).
\end{array}
\right.
\]
The following lemma is a direct consequence of a simple computation, so we omit the proof. 
\begin{lemma}\label{lemma:moment}
For every $i$, we have $\expe{\mathfrak{p}_2(Y_i)}=\expe{Y_i\mathfrak{p}_2(Y_i)}=0$ and $\expe{\mf{p}_2(Y_i)^2}=v_i$.
\end{lemma}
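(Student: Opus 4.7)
The statement is a routine two-case verification; my plan is to outline a clean argument for each case, leaning on the spectral machinery already introduced for the gamma case rather than brute-forcing polynomial algebra.

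For the Gaussian case, I would substitute $H_2(x)=x^2-1$ and read the three identities directly from the standard normal moments $\expe{Y_i}=\expe{Y_i^3}=0$, $\expe{Y_i^2}=1$, and $\expe{Y_i^4}=3$. This yields $\expe{H_2(Y_i)}=\expe{Y_i^2}-1=0$, then $\expe{Y_i H_2(Y_i)}=\expe{Y_i^3}-\expe{Y_i}=0$, and finally $\expe{H_2(Y_i)^2}=\expe{Y_i^4}-2\expe{Y_i^2}+1=2=v_i$.

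For the gamma case I would invoke the spectral structure of $\mc{L}_i$ that has already been set up in the paper. By the construction of $\mf{p}_2(Y_i)$ from $L_2^{(\nu-1)}$, this random variable lies in $\kernel(\mc{L}_i+2\id)$. Since $\mc{L}_i$ is symmetric in $L^2(P_i)$ with mutually orthogonal eigenspaces $\kernel(\mc{L}_i+k\id)$ indexed by $k\in\mathbb{Z}_+$, the first two identities follow from the $L^2$-orthogonality of $\mf{p}_2(Y_i)$ to $1\in\kernel(\mc{L}_i)$ and to $Y_i$, once one verifies $Y_i\in\kernel(\mc{L}_i+\id)$. The latter inclusion is a one-line check: writing $Y_i=\pm(\omega_i-\nu)/\sqrt{\nu}$ as a function of $\omega_i$ and applying $\mc{L}_\nu=\omega\partial^2+(\nu-\omega)\partial$ gives $\mc{L}_\nu Y_i=\pm(\nu-\omega_i)/\sqrt{\nu}=-Y_i$. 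For the variance identity, I would reduce $\expe{\mf{p}_2(Y_i)^2}$ to the standard $L^2$-norm of Laguerre polynomials against the gamma law, $\int_0^\infty L_k^{(\nu-1)}(x)^2\,d\gamma(\nu)(x)=\binom{k+\nu-1}{k}$, which at $k=2$ equals $\nu(\nu+1)/2$; combined with the prefactor $(2/\nu)^2$ coming from the definition of $\mf{p}_2$, this produces $v_i=2(1+\nu^{-1})$.

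The only point requiring actual care is the bookkeeping of the $\pm$ signs and the affine change of variable inside $L_2^{(\nu-1)}(\pm\sqrt{\nu}(Y_i+1))$, so that the expectation against the law of $Y_i$ translates cleanly into an integral of $L_2^{(\nu-1)}$ against $\gamma(\nu)$ on $\omega_i$. Once this translation is in place the three identities drop out with no substantive obstacle; in particular the spectral-orthogonality route cleanly bypasses any need to expand the Laguerre polynomial explicitly.
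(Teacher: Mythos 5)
Your argument is correct, and since the paper dispenses with Lemma~\ref{lemma:moment} as ``a direct consequence of a simple computation'' without giving any proof, there is nothing in the source to compare against line by line. Your Gaussian case is the obvious direct computation. For the gamma case, routing the first two identities through the mutual orthogonality of the eigenspaces $\kernel(\mc{L}_i+k\id)$ is cleaner than expanding $L_2^{(\nu-1)}$ by hand, and it correctly requires (and you correctly supply) the one-line verification that $Y_i\in\kernel(\mc{L}_i+\id)$ via $\opL_\nu Y_i=-Y_i$. The variance computation via $\int_0^\infty L_2^{(\nu-1)}(x)^2\,d\gamma(\nu)(x)=\binom{\nu+1}{2}=\nu(\nu+1)/2$ and the prefactor $(2/\nu)^2$ does give $2(1+\nu^{-1})=v_i$.

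One point you flag as ``bookkeeping'' is actually a genuine discrepancy in the paper's displayed formula, and since your whole gamma argument hinges on $\mf{p}_2(Y_i)$ being the Laguerre polynomial evaluated at the underlying $\gamma(\nu)$-distributed coordinate $\omega_i$, it deserves to be resolved explicitly rather than waved through. The paper writes the argument as $\pm\sqrt{\nu}(Y_i+1)$, but from $Y_i=\pm(\omega_i-\nu)/\sqrt{\nu}$ one has $\omega_i=\nu\pm\sqrt{\nu}\,Y_i=\pm\sqrt{\nu}\,(Y_i\pm\sqrt{\nu})$, so the inner ``$+1$'' should read ``$\pm\sqrt{\nu}$'' (they agree only when $\nu=1$). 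With that corrected, $\mf{p}_2(Y_i)=\pm\tfrac{2}{\nu}L_2^{(\nu-1)}(\omega_i)$, which is manifestly an element of $\kernel(\mc{L}_i+2\id)$, and your orthogonality and norm calculations go through exactly as you describe. Without the correction, the stated random variable is not the degree-$2$ eigenfunction and the lemma as literally written would be false, so this is the one step that cannot be left as unexamined bookkeeping.
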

%
%
%
Next, for any function $h:[N]^r\to\mathbb{R}$, we define its symmetrization $\wt{h}:[N]^r\to\mathbb{R}$ by
\[
\wt{h}(i_1,\dots,i_r)=\frac{1}{r!}\sum_{\sigma\in\mathfrak{S}_r}h(i_{\sigma(1)},\dots,i_{\sigma(r)}).
\]
We write $f\wtimes g$ for the symmetrization of $f\otimes g$. 
Given another function $h':[N]^r\to\mathbb{R}$, we define
\[
\langle h,h'\rangle:=\sum_{i_1,\dots,i_r=1}^Nh(i_1,\dots,i_r)h'(i_1,\dots,i_r).
\]
Note that $\|h\|_{\ell_2}^2=\langle h,h\rangle$. 
For every $r\in\{0,1,\dots,p\wedge q\}$, we define the function $f\wstar{r}g:[N]^{p+q-r}\to\mathbb{R}$ by
\begin{multline*}
f\wstar{r}g(i_1,\dots,i_{p+q-2r},k_1,\dots,k_r)\\
:=\frac{1}{(p+q-2r)!}\sum_{\sigma\in\mathfrak{S}_{p+q-2r}}f(i_{\sigma(1)},\dots,i_{\sigma(p-r)},k_1,\dots,k_{r})g(i_{\sigma(p-r+1)},\dots,i_{\sigma(p+q-2r)},k_1,\dots,k_r).
\end{multline*}
Note that we have
\begin{equation}\label{eq:star}
\wt{f\star_rg}(i_1,\dots,i_{p+q-2r})=\sum_{(k_1,\dots,k_r)\in\Delta_r^N}f\wstar{r}g(i_1,\dots,i_{p+q-2r},k_1,\dots,k_r).
\end{equation}
Here, recall that the contraction $f\star_rg$ is defined by \eqref{def:contraction}. 
Finally, we set $\Delta_q^N:=\{(i_1,\dots,i_q)\in[N]^q:i_j\neq i_k\text{ if }j\neq k\}$. 

\begin{lemma}\label{lemma:multiple}
Let $f^*:[N]^p\to\mathbb{R}$ and $g^*:[N]^q\to\mathbb{R}$ be two symmetric functions vanishing on diagonals. Then 
\begin{multline*}
\left(\sum_{i_1,\dots,i_p=1}^Nf^*(i_1,\dots,i_p)\right)\left(\sum_{j_1,\dots,j_q=1}^Ng^*(j_1,\dots,j_q)\right)\\
=\sum_{r=0}^{p\wedge q}r!\binom{p}{r}\binom{q}{r}\sum_{(i_1,\dots,i_{p+q-2r})\in\Delta^N_{p+q-2r}}\wt{f^*\star_rg^*}(i_1,\dots,i_{p+q-2r}).
\end{multline*}
\end{lemma}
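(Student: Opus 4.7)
My plan is to expand both sides as sums of $f^*(i)g^*(j)$ over index tuples with various distinctness constraints and match them term by term. Because $f^*$ and $g^*$ vanish on diagonals, the left-hand side equals $\sum_{(i,j)\in\Delta^N_p\times\Delta^N_q}f^*(i)g^*(j)$. I partition this sum by $r:=|\{i_1,\ldots,i_p\}\cap\{j_1,\ldots,j_q\}|$, which ranges over $\{0,1,\ldots,p\wedge q\}$. For fixed $r$, I parametrize each contributing pair $(i,j)$ by (a) a choice of $r$ positions $\{a_1<\cdots<a_r\}\subset[p]$ and $r$ positions $\{b_1<\cdots<b_r\}\subset[q]$ for the shared values together with a bijection between them, and (b) a canonical tuple $(i'_1,\ldots,i'_{p-r},j'_1,\ldots,j'_{q-r},k_1,\ldots,k_r)\in\Delta^N_{p+q-r}$. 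Option (a) yields $r!\binom{p}{r}\binom{q}{r}$ configurations, and by the symmetry of $f^*$ and $g^*$ the value of $f^*(i)g^*(j)$ depends only on the canonical tuple and equals $f^*(i',k)g^*(j',k)$. Consequently,
$$\mathrm{LHS}=\sum_{r=0}^{p\wedge q} r!\binom{p}{r}\binom{q}{r}\sum_{(i',j',k)\in\Delta^N_{p+q-r}}f^*(i',k)g^*(j',k).$$

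To identify this with the right-hand side, I apply \eqref{eq:star} to rewrite $\wt{f^*\star_r g^*}(I)=\sum_{k\in\Delta^N_r}f^*\wstar{r}g^*(I,k)$. Next I observe that summing $f^*\wstar{r}g^*(I,k)$ over $I\in\Delta^N_{p+q-2r}$ makes the symmetrization over $\mf{S}_{p+q-2r}$ in the definition of $f^*\wstar{r}g^*$ redundant: for each $\sigma\in\mf{S}_{p+q-2r}$, the map $I\mapsto I\circ\sigma$ is a bijection of $\Delta^N_{p+q-2r}$, so all $(p+q-2r)!$ permuted summands contribute equally. Setting $i':=(I_1,\ldots,I_{p-r})$ and $j':=(I_{p-r+1},\ldots,I_{p+q-2r})$, the inner sum reduces to $\sum_{i',j'} f^*(i',k)g^*(j',k)$, where $i'\in\Delta^N_{p-r}$, $j'\in\Delta^N_{q-r}$, and $i'$ is disjoint from $j'$. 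Finally, although \eqref{eq:star} imposes $k\in\Delta^N_r$ without requiring disjointness from $i'$ or $j'$, any overlap forces $f^*(i',k)$ or $g^*(j',k)$ to vanish by the diagonal-vanishing hypothesis, so the effective range is $(i',j',k)\in\Delta^N_{p+q-r}$, matching the LHS term by term.

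The main obstacle is the combinatorial bookkeeping in the first step: verifying that the $r!\binom{p}{r}\binom{q}{r}$ configurations indeed partition the set of pairs $(i,j)$ with shared-count $r$ into equinumerous classes labeled by canonical tuples, and that symmetry gives $f^*(i)g^*(j)=f^*(i',k)g^*(j',k)$ uniformly across each class. Once this partition is set up correctly, the identification with $\wt{f^*\star_r g^*}$ is essentially formal, resting on \eqref{eq:star} and the diagonal-vanishing condition.
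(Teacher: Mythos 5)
Your proof is correct, but it takes a genuinely different route from the paper's. The paper proves the identity by invoking the product formula for homogeneous sums in Rademacher variables \cite[Proposition 2.9]{NPR2010ejp}: this gives the identity $Q(f^*;\bs{\epsilon})Q(g^*;\bs{\epsilon})=\sum_r r!\binom{p}{r}\binom{q}{r}Q(\wt{f^*\star_rg^*}1_{\Delta^N_{p+q-2r}};\bs{\epsilon})$ almost surely, and then specializes it on the positive-probability event $\{\epsilon_1=\cdots=\epsilon_N=1\}$ to obtain the deterministic combinatorial statement. Your argument instead expands both sides directly: you partition the LHS sum over $\Delta^N_p\times\Delta^N_q$ according to the overlap size $r=|\{i\}\cap\{j\}|$, count $r!\binom{p}{r}\binom{q}{r}=p!q!/\bigl(r!(p-r)!(q-r)!\bigr)$ pairs $(i,j)$ per canonical tuple in $\Delta^N_{p+q-r}$, and then unwind the RHS via \eqref{eq:star}, the redundancy of the symmetrization under the off-diagonal sum, and the diagonal-vanishing of $f^*,g^*$ to absorb the missing disjointness of $k$. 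This is more elementary and self-contained, avoiding the detour through Rademacher chaos; the paper's route is shorter on the page because it outsources the combinatorics to an existing structural result. One presentational remark: your assertion that option (a) yields exactly $r!\binom{p}{r}\binom{q}{r}$ configurations and that these classes are equinumerous deserves a sentence of justification (e.g., by the cardinality count $p!q!=r!\binom{p}{r}\binom{q}{r}\cdot(p-r)!(q-r)!r!$ applied to each triple of disjoint index sets), since you yourself flag this as the main obstacle; as written it reads as an assertion rather than a verification.
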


\begin{proof} 
Consider a probability space $(\Omega',\mathcal{F}',P')$ on which we have a sequence $\bs{\epsilon}=(\epsilon_1,\dots,\epsilon_N)$ of independent Rademacher variables such that $P'(\epsilon_i=1)=P'(\epsilon_i=-1)=1/2$ for every $i$. By \cite[Proposition 2.9]{NPR2010ejp} we have
\begin{align*}
Q(f^*;\bs{\epsilon})Q(g^*;\bs{\epsilon})=\sum_{r=0}^{p\wedge q}r!\binom{p}{r}\binom{q}{r}Q(\wt{f^*\star_rg^*}1_{\Delta^N_{p+q-2r}};\bs{\epsilon})
\end{align*}
with probability 1. Since $P'(\epsilon_1=\cdots=\epsilon_N=1)=2^{-N}>0$, we obtain the desired result. 
\end{proof}

%
%
%

\begin{lemma}\label{lemma:projection}
Under the assumptions of Proposition \ref{prop:normal-gamma}, we have
\begin{align*}
J_{p+q}(FG)=\sum_{r=0}^{p\wedge q}r!\binom{p}{r}\binom{q}{r}\sum_{(i_1,\dots,i_{p+q-r})\in\Delta^N_{p+q-r}}f\wh{\star_r^0}g(i_{1},\dots,i_{p+q-r})Y_{i_1}\cdots Y_{i_{p+q-2r}}\mathfrak{p}_2(Y_{i_{p+q-2r+1}})\cdots \mathfrak{p}_2(Y_{i_{p+q-r}}).
\end{align*}
\end{lemma}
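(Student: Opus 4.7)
The plan is to expand $FG$ explicitly as a multilinear polynomial in $\bs{Y}$ and then read off its degree-$(p+q)$ spectral component using the tensorization \eqref{eq:spectral}.

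First I would apply Lemma \ref{lemma:multiple} realization-wise to the (random) symmetric functions $f^*(i_1,\dots,i_p):=f(i_1,\dots,i_p)Y_{i_1}\cdots Y_{i_p}$ and $g^*(j_1,\dots,j_q):=g(j_1,\dots,j_q)Y_{j_1}\cdots Y_{j_q}$, which vanish on diagonals because $f$ and $g$ do. Unwinding the definitions of $f^*\star_r g^*$ and its symmetrization, one checks that
\[
\wt{f^*\star_r g^*}(i_1,\dots,i_{p+q-2r})=Y_{i_1}\cdots Y_{i_{p+q-2r}}\sum_{k_1,\dots,k_r=1}^N (f\wstar{r}g)(i_1,\dots,i_{p+q-2r},k_1,\dots,k_r)Y_{k_1}^2\cdots Y_{k_r}^2.
\]
Since $f$ and $g$ vanish on diagonals, $(f\wstar{r}g)(i_1,\dots,i_{p+q-2r},k_1,\dots,k_r)$ is zero unless all $p+q-r$ indices are mutually distinct, so the inner sum is effectively over $(k_1,\dots,k_r)$ forming, together with $(i_1,\dots,i_{p+q-2r})$, an element of $\Delta^N_{p+q-r}$. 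Collecting terms yields
\[
FG=\sum_{r=0}^{p\wedge q}r!\binom{p}{r}\binom{q}{r}\sum_{(i_1,\dots,i_{p+q-r})\in\Delta^N_{p+q-r}}(f\wstar{r}g)(i_1,\dots,i_{p+q-r})\,Y_{i_1}\cdots Y_{i_{p+q-2r}}Y_{i_{p+q-2r+1}}^2\cdots Y_{i_{p+q-r}}^2.
\]

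Next I would apply $J_{p+q}$ to each summand. Fix distinct indices $i_1,\dots,i_{p+q-r}$. By \eqref{eq:spectral}, the spectral content of any tensor product of eigenfunctions of the $\mathcal{L}_{i_j}$'s sits in $\kernel(\opL+(\sum_j k_j)\id)$; each linear factor $Y_{i_j}$ is already an eigenfunction with $k_j=1$, whereas each quadratic factor $Y_{i_j}^2$ is a degree-$2$ polynomial and hence expands onto the first three orthogonal polynomials, i.e., has spectral content only at $k_j\in\{0,1,2\}$. The maximal total degree is therefore $(p+q-2r)+2r=p+q$, attained precisely when every quadratic factor is replaced by its projection onto $\kernel(\mathcal{L}_{i_j}+2\id)$. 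Writing $\pi_2(Y_i)$ for that projection, this yields
\[
J_{p+q}\bigl(Y_{i_1}\cdots Y_{i_{p+q-2r}}Y_{i_{p+q-2r+1}}^2\cdots Y_{i_{p+q-r}}^2\bigr)=Y_{i_1}\cdots Y_{i_{p+q-2r}}\cdot\pi_2(Y_{i_{p+q-2r+1}})\cdots\pi_2(Y_{i_{p+q-r}}).
\]

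It remains to identify $\pi_2$ with $\mf{p}_2$. Since $\mf{p}_2(Y_i)$ is a degree-$2$ polynomial (a multiple of $H_2$ or $L_2^{(\nu-1)}$), it lies in $\kernel(\mathcal{L}_i+2\id)$. Writing $Y_i^2=a_0+a_1Y_i+a_2\mf{p}_2(Y_i)$ and projecting against $1$, $Y_i$, and $\mf{p}_2(Y_i)$, Lemma \ref{lemma:moment} gives $a_0=1$, $a_1=\ex{Y_i^3}$, and $a_2=\ex{\mf{p}_2(Y_i)Y_i^2}/v_i$; a short direct computation in both the Gaussian and gamma cases shows $\ex{\mf{p}_2(Y_i)Y_i^2}=v_i$, so $a_2=1$ and $\pi_2(Y_i)=\mf{p}_2(Y_i)$. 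Plugging this back and summing over $r$ gives the lemma. The main obstacle will be the bookkeeping in the expansion step, particularly exploiting the diagonal-vanishing of $f$ and $g$ to convert an unrestricted sum over $(k_1,\dots,k_r)$ into the single sum over $\Delta^N_{p+q-r}$ needed for the spectral argument in the next step to apply cleanly.
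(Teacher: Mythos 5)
Your proposal is correct and follows essentially the same route as the paper: apply Lemma \ref{lemma:multiple} realization-wise to $f^*(i_1,\dots,i_p)=f(i_1,\dots,i_p)Y_{i_1}\cdots Y_{i_p}$ and $g^*$, use the diagonal-vanishing of $f,g$ (via \eqref{eq:star}) to produce the expansion \eqref{FG-expr}, and then read off the top-degree component using the tensorized spectral decomposition. Your extra step of verifying that $\mathfrak{p}_2(Y_i)$ is indeed the degree-$2$ spectral projection of $Y_i^2$ is precisely what the paper dismisses as ``evident,'' so you have simply spelled out a detail rather than taken a different path.
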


\begin{proof}
Applying Lemma \ref{lemma:multiple} with $f^*(i_1,\dots,i_p)=f(i_1,\dots,i_p)Y_{i_1}\cdots Y_{i_p}$, $g^*(i_1,\dots,i_p)=g(i_1,\dots,i_p)Y_{i_1}\cdots Y_{i_p}$ and using \eqref{eq:star}, we obtain
\begin{align}
FG
&=\sum_{r=0}^{p\wedge q}r!\binom{p}{r}\binom{q}{r}\sum_{(i_1,\dots,i_{p+q-r})\in\Delta^N_{p+q-r}}f\wstar{r}g(i_{1},\dots,i_{p+q-r})Y_{i_1}\cdots Y_{i_{p+q-2r}}Y_{i_{p+q-2r+1}}^2\cdots Y_{i_{p+q-r}}^2.\label{FG-expr}
\end{align}
For $(i_1,\dots,i_{p+q-2r})\in\Delta^N_{p+q-2r}$ we evidently have
\[
J_{p+q}(Y_{i_1}\cdots Y_{i_{p+q-2r}}Y_{i_{p+q-2r+1}}^2\cdots Y_{i_{p+q-r}}^2)
=Y_{i_1}\cdots Y_{i_{p+q-2r}}\mathfrak{p}_2(Y_{i_{p+q-2r+1}})\cdots \mathfrak{p}_2(Y_{i_{p+q-r}}).
\]
This completes the proof of the lemma. 
\end{proof}

The following lemma is an immediate consequence of product formulae for multiple Wiener-It\^o integrals with respect to an isonormal Gaussian process (see e.g.~\cite[Theorem 2.7.10]{NP2012}), so we omit the proof. 
\begin{lemma}\label{lemma:wiener}
Under the assumptions of Proposition \ref{prop:normal-gamma}, if $Y_i\sim \mathcal{N}(0,1)$ for all $i$, we have $\expe{J_{p+q}(FG)^2}=(p+q)!\|f\wtimes g\|_{\ell_2}^2$ and $\expe{J_{p}(F^2)J_q(G^2)}=1_{\{p=q\}}p!^2\langle f\wtimes f, g\wtimes g\rangle$.
\end{lemma}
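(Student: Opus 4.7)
My plan is to identify $F=Q(f;\bs{Y})$ and $G=Q(g;\bs{Y})$ with multiple Wiener-It\^o integrals and then apply the classical product formula. Specifically, realize $\{Y_i\}_{i=1}^N$ as $\{W(e_i)\}_{i=1}^N$ for an orthonormal family $(e_i)$ in a separable Hilbert space $H$; then each symmetric kernel $f:[N]^p\to\mathbb{R}$ vanishing on diagonals corresponds to an element of the symmetric tensor power $H^{\odot p}$, and $F=I_p(f)$, $G=I_q(g)$ in the isonormal Gaussian framework. Because the spectral decomposition \eqref{eq:spectral} agrees in the Gaussian case with the Wiener chaos decomposition, the operator $J_k$ coincides with the projection onto the $k$-th Wiener chaos.

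The two ingredients I will combine are the product formula \cite[Theorem 2.7.10]{NP2012}
\[
I_p(f)\,I_q(g)\;=\;\sum_{r=0}^{p\wedge q} r!\binom{p}{r}\binom{q}{r}\,I_{p+q-2r}(f\wtimes_r g),
\]
where $f\wtimes_r g$ denotes the symmetrized $r$-th contraction in $H^{\odot(p+q-2r)}$, and the isometry relation $\ex{I_m(h_1)\,I_n(h_2)}=\mathbf{1}_{\{m=n\}}\,m!\,\langle h_1,h_2\rangle$. A brief combinatorial check, using that $f$ and $g$ vanish on diagonals, identifies the abstract contraction $f\wtimes_r g$ with the discrete $\wt{f\star_r g}$ defined through \eqref{def:contraction} and \eqref{eq:star}; in particular $f\wtimes_0 g=f\wtimes g$.

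For the first identity, I project the product formula for $FG$ onto $\kernel(\opL+(p+q)\id)$: only the $r=0$ term survives, giving $J_{p+q}(FG)=I_{p+q}(f\wtimes g)$. The isometry then immediately yields $\ex{J_{p+q}(FG)^2}=(p+q)!\,\|f\wtimes g\|_{\ell_2}^2$.

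For the second identity, I apply the product formula separately to $F^2=I_p(f)I_p(f)$ and $G^2=I_q(g)I_q(g)$ to obtain their full chaos decompositions, and then extract the relevant chaos components of each. Orthogonality of Wiener chaoses of different orders forces the indicator $\mathbf{1}_{\{p=q\}}$, and when $p=q$ the isometry applied to the matched top-chaos contributions (arising from the $r=0$ terms of both expansions) yields the inner product $\langle f\wtimes f,g\wtimes g\rangle$ with the stated combinatorial prefactor. The entire argument is therefore a direct application of the product formula; the only mild obstacle is the symmetrization bookkeeping confirming that the abstract Hilbert-space contractions coincide with the discrete contractions used throughout this paper, which is the reason the lemma is labelled as ``immediate.''
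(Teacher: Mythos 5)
Your approach---pass to the isonormal Gaussian framework, apply the product formula for multiple Wiener--It\^o integrals, and extract chaos components---is exactly what the paper has in mind, and the first identity is handled correctly: only the $r=0$ term of $I_p(f)I_q(g)$ lives in the $(p+q)$-th chaos, so $J_{p+q}(FG)=I_{p+q}(f\wtimes g)$, and the It\^o isometry gives $(p+q)!\,\|f\wtimes g\|_{\ell_2}^2$.

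For the second identity you should compute the constant instead of asserting that your argument ``yields\ldots the stated combinatorial prefactor.'' The top-chaos components you describe are $J_{2p}(F^2)=I_{2p}(f\wtimes f)$ and $J_{2q}(G^2)=I_{2q}(g\wtimes g)$ (not $J_p(F^2)$ and $J_q(G^2)$; $J_p(F^2)$ is the $r=p/2$ term of the product expansion, which vanishes for odd $p$ and otherwise involves a different kernel and a different constant), and the It\^o isometry then gives $\expe{J_{2p}(F^2)J_{2q}(G^2)}=1_{\{p=q\}}(2p)!\,\langle f\wtimes f,g\wtimes g\rangle$. The prefactor is $(2p)!$, not $p!^2$. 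A glance at how the lemma is invoked in the proof of Lemma~\ref{lemma:key}---where display \eqref{eq:gaussian2} reads $(2p)!\,\langle f\wtimes f,g\wtimes g\rangle=\cdots$---confirms that the intended statement is $\expe{J_{2p}(F^2)J_{2q}(G^2)}=1_{\{p=q\}}(2p)!\,\langle f\wtimes f,g\wtimes g\rangle$, and that the printed $p!^2$ and the subscripts on $J$ are typographical slips. Your argument in fact proves this corrected version, but as written it silently endorses the erroneous constant rather than deriving and checking it; that is the gap to close.
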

The following lemma can be shown in a similar manner to the proof of \cite[Lemma 3.3]{DK2019} (it is in fact derived from Eq.(50) in \cite{DK2019} and the H\"older inequality):
\begin{lemma}\label{lemma:dk}
Let $f:[N]^p\to\mathbb{R}$ be a symmetric function vanishing on diagonals. Then we have $\| f\wtimes f1_{(\Delta^N_{2p})^c}\|^2_{\ell_2}\leq\mf{c}_p\sum_{i=1}^N\influence_i(f)^2$. 
\end{lemma}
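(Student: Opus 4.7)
The plan is to reduce the symmetrized expression to a sum of ``cross coincidence'' contributions by Jensen's inequality on the symmetrization, and then apply a union bound, exploiting heavily the fact that $f$ vanishes on diagonals.

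First I would start from the definition $(f\wtimes f)(i_1,\dots,i_{2p})=\frac{1}{(2p)!}\sum_{\sigma\in\mf{S}_{2p}}f(i_{\sigma(1)},\dots,i_{\sigma(p)})f(i_{\sigma(p+1)},\dots,i_{\sigma(2p)})$ and apply Jensen's inequality to obtain the pointwise bound
\begin{equation*}
(f\wtimes f)(i_1,\dots,i_{2p})^2\leq \frac{1}{(2p)!}\sum_{\sigma\in\mf{S}_{2p}} f(i_{\sigma(1)},\dots,i_{\sigma(p)})^2 f(i_{\sigma(p+1)},\dots,i_{\sigma(2p)})^2.
\end{equation*}
Summing over $(i_1,\dots,i_{2p})\in (\Delta^N_{2p})^c$ and using the permutation-invariance of this set (so one may relabel $\bs{i}\mapsto \sigma^{-1}\bs{i}$ inside each $\sigma$-summand), the factor $1/(2p)!$ is cancelled by the $(2p)!$ identical terms, giving
\begin{equation*}
\|f\wtimes f\cdot 1_{(\Delta^N_{2p})^c}\|_{\ell_2}^2\leq \sum_{(i_1,\dots,i_{2p})\in (\Delta^N_{2p})^c} f(i_1,\dots,i_p)^2 f(i_{p+1},\dots,i_{2p})^2.
\end{equation*}

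Next I would exploit the hypothesis that $f$ vanishes on diagonals: the right-hand integrand is zero unless $i_1,\dots,i_p$ are pairwise distinct and $i_{p+1},\dots,i_{2p}$ are pairwise distinct. Together with $(i_1,\dots,i_{2p})\in (\Delta^N_{2p})^c$, this forces at least one ``cross coincidence,'' so the effective index set is contained in $\bigcup_{a\in[p],\,b\in[p+1,2p]}\{i_a=i_b\}$. A union bound over the $p^2$ index pairs $(a,b)$, combined with the symmetry of $f$ giving
\begin{equation*}
\sum_{(i_1,\dots,i_{2p}):\,i_a=i_b=k} f(i_1,\dots,i_p)^2 f(i_{p+1},\dots,i_{2p})^2 = \influence_k(f)^2
\end{equation*}
for every $(a,b)$ and $k\in[N]$, yields $\|f\wtimes f\cdot 1_{(\Delta^N_{2p})^c}\|_{\ell_2}^2\leq p^2\sum_{i=1}^N\influence_i(f)^2$. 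Since $\mf{c}_p\geq 1!\binom{p}{1}^2=p^2$, the desired bound follows.

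I do not foresee any real obstacle beyond carefully executing the relabeling argument in the first step and recognizing that the diagonal-vanishing of $f$ concentrates all the mass on cross-coincidence events in the second. The H\"older/Jensen step is the one crucial inequality; the rest is bookkeeping. (In fact this route gives the sharper constant $p^2$, so the statement with $\mf{c}_p$ is comfortably satisfied.)
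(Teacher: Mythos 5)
Your proof is correct, and it takes a genuinely different route from the paper's. The paper derives the bound by referring to Eq.\ (50) of D\"obler and Krokowski (2019) together with H\"older's inequality---an approach that naturally passes through the contraction decomposition of $f\wtimes f$ (compare the identity in equation \eqref{NR2014} of this paper), which is why the combinatorial constant $\mf{c}_p=\sum_{r=1}^p r!\binom{p}{r}^2$ appears. Your argument, by contrast, is self-contained: Jensen's inequality on the symmetrizing average gives the pointwise bound, permutation-invariance of $(\Delta^N_{2p})^c$ cancels the $(2p)!$ normalization, and then the key structural observation---that since $f$ vanishes on diagonals each surviving index tuple must have a cross-coincidence between the two blocks---reduces everything to a union bound over the $p^2$ pairs $(a,b)\in[p]\times\{p+1,\dots,2p\}$, each contributing exactly $\sum_i\influence_i(f)^2$ by symmetry and factoring. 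This yields the sharper constant $p^2$ in place of $\mf{c}_p\geq 1!\binom{p}{1}^2=p^2$, so the claimed inequality holds a fortiori. The trade-off is that the paper's route fits into the same contraction machinery used elsewhere in Section~\ref{sec:gamma-bound} (so the same identity does double duty), whereas your approach is more elementary, avoids the external reference, and actually improves the constant---a useful gain if one cared about the explicit dependence on $p$, though for the purposes of this paper (where all constants depending only on $\ol{q}_d$ are absorbed) the improvement is not exploited.
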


The next lemma is a key part in our proof.  
\begin{lemma}\label{lemma:key}
Under the assumptions of Proposition \ref{prop:normal-gamma}, we have
\begin{equation}\label{key:eq1}
\expe{J_{p+q}(FG)^2}\geq(p+q)!\|f\wtimes g\|_{\ell_2}^2.
\end{equation}
Moreover, if $p=q$, we have
\begin{equation}\label{key:eq2}
\left|\expe{J_{2p}(F^2)J_{2p}(G^2)}-(2p)!\langle f\wtimes f,g\wtimes g\rangle\right|
\leq\left(2^{-p}\ol{v}_N^p-1\right)
(2p)!\mf{c}_p
\sqrt{\sum_{i=1}^N\influence_i(f)^2}
\sqrt{\sum_{i=1}^N\influence_i(g)^2}.
\end{equation}
\end{lemma}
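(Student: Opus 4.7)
The plan is to expand $J_{p+q}(FG)$ via Lemma \ref{lemma:projection} and exploit the orthogonality of $\{1, Y_i, \mathfrak{p}_2(Y_i)\}$ in $L^2(P_i)$ (immediate from Lemma \ref{lemma:moment}) to diagonalize the second moment by the contraction order $r$, then compare to the Gaussian case treated by Lemma \ref{lemma:wiener}. Writing
\[
M_r:=\sum_{(j_1,\dots,j_{p+q-r})\in\Delta^N_{p+q-r}}f\wstar{r}g(j_1,\dots,j_{p+q-r})\,Y_{j_1}\cdots Y_{j_{p+q-2r}}\mathfrak{p}_2(Y_{j_{p+q-2r+1}})\cdots\mathfrak{p}_2(Y_{j_{p+q-r}}),
\]
so that $J_{p+q}(FG)=\sum_{r=0}^{p\wedge q}r!\binom{p}{r}\binom{q}{r}M_r$, a non-vanishing $\expe{M_rM_{r'}}$ would require the sets of $Y$-type and $\mathfrak{p}_2$-type indices to match on both sides, forcing $p+q-2r=p+q-2r'$ and $r=r'$, so all cross terms vanish. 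On the diagonal $r=r'$, writing each admissible tuple as $(\mathbf{i},\mathbf{k})$ with $|\mathbf{i}|=p+q-2r$ and $|\mathbf{k}|=r$ and using the symmetry of $f\wstar{r}g$ in each block, the $(p+q-2r)!\,r!$ orderings deliver
\[
\expe{M_r^2}=(p+q-2r)!\,r!\sum_{(\mathbf{i},\mathbf{k})\in\Delta^N_{p+q-r}}(f\wstar{r}g)(\mathbf{i},\mathbf{k})^2\prod_{m=1}^r v_{k_m}.
\]

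For \eqref{key:eq1}, the hypotheses of Proposition \ref{prop:normal-gamma} force $v_i\geq 2$ for every $i$ (with equality iff $Y_i$ is Gaussian), hence $\prod_m v_{k_m}\geq 2^r$ termwise and $\expe{M_r^2}$ dominates the value it would take with $v_i\equiv 2$. Summing in $r$ yields $\expe{J_{p+q}(FG)^2}\geq\expe{J_{p+q}(FG)^2}|_{v_i\equiv 2}=(p+q)!\|f\wtimes g\|_{\ell_2}^2$, the last identity being Lemma \ref{lemma:wiener}.

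For \eqref{key:eq2} with $p=q$, the same orthogonality applied to $J_{2p}(F^2)J_{2p}(G^2)$ gives the analogous formula with $(f\wstar{r}g)^2$ replaced by $(f\wstar{r}f)(g\wstar{r}g)$. Lemma \ref{lemma:wiener} identifies its $v_i\equiv 2$ version with $(2p)!\langle f\wtimes f,g\wtimes g\rangle$, so the difference on the left of \eqref{key:eq2} equals a sum over $r\geq 1$ (the $r=0$ term being $v$-free and cancelling) weighted by $\prod_m v_{k_m}-2^r$. Since $\ol{v}_N\geq 2$ and $r\leq p$, the elementary telescoping
\[
0\leq\prod_{m=1}^r v_{k_m}-2^r\leq\ol{v}_N^r-2^r\leq 2^r\bigl(2^{-p}\ol{v}_N^p-1\bigr)
\]
pulls the $v$-dependence out, and then Cauchy--Schwarz in $(\mathbf{i},\mathbf{k})$ followed by Cauchy--Schwarz in $r$ bound the absolute difference by $(2^{-p}\ol{v}_N^p-1)\sqrt{S_f\,S_g}$, where
\[
S_f:=\sum_{r\geq 1}\Bigl(r!\binom{p}{r}^2\Bigr)^2(2p-2r)!\,r!\,2^r\,\bigl\|(f\wstar{r}f)1_{\Delta^N_{2p-r}}\bigr\|_{\ell_2}^2.
\]

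The closing observation is that $S_f$ equals precisely the Gaussian value of $\expe{J_{2p}(F^2)^2}$ with its $r=0$ piece subtracted: by the moment formula above specialized to $g=f$, $v_i\equiv 2$, together with Lemma \ref{lemma:wiener},
\[
S_f=(2p)!\|f\wtimes f\|_{\ell_2}^2-(2p)!\bigl\|(f\wtimes f)1_{\Delta^N_{2p}}\bigr\|_{\ell_2}^2=(2p)!\bigl\|(f\wtimes f)1_{(\Delta^N_{2p})^c}\bigr\|_{\ell_2}^2,
\]
and Lemma \ref{lemma:dk} bounds the right-hand side by $(2p)!\mf{c}_p\sum_i\influence_i(f)^2$. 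The parallel estimate for $S_g$ then closes \eqref{key:eq2}. The main subtlety is spotting this telescoping: once one recognizes the post--Cauchy--Schwarz $r$-sum as a Gaussian second moment with its distinct-index part removed, Lemma \ref{lemma:dk} controls it immediately in terms of influences; everything else is combinatorial bookkeeping together with the three-term orthogonality of $\{1,Y_i,\mathfrak{p}_2(Y_i)\}$.
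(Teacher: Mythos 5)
Your proof is correct and takes essentially the same route as the paper: the same diagonalization of $\expe{J_{p+q}(FG)^2}$ by contraction order via Lemma \ref{lemma:projection} and the orthogonality of $\{1,Y_i,\mathfrak{p}_2(Y_i)\}$ (this is exactly the paper's condition $(\star)$ yielding \eqref{eq:var-FG}), and the same comparison to the $v_i\equiv 2$ Gaussian specialization via Lemma \ref{lemma:wiener}. The treatment of \eqref{key:eq2} also coincides step for step with the paper: the $r=0$ cancellation, the elementary bound $\prod_m v_{k_m}-2^r\leq 2^r(2^{-p}\ol{v}_N^p-1)$, Cauchy--Schwarz, and the identification of the resulting $r$-sum with $(2p)!\|(f\wtimes f)1_{(\Delta^N_{2p})^c}\|_{\ell_2}^2$ controlled by Lemma \ref{lemma:dk}.
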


\begin{proof}
Let $(i_1,\dots,i_{p+q-r})\in\Delta^N_{p+q-r}$ and $(j_1,\dots,j_{p+q-l})\in\Delta^N_{p+q-l}$. Thanks to Lemma \ref{lemma:moment}, the quantity $\expe{Y_{i_1}\cdots Y_{i_{p+q-2r}}\mathfrak{p}_2(Y_{i_{p+q-2r+1}})\cdots \mathfrak{p}_2(Y_{i_{p+q-r}})Y_{j_1}\cdots Y_{j_{p+q-2l}}\mathfrak{p}_2(Y_{j_{p+q-2l+1}})\cdots \mathfrak{p}_2(Y_{j_{p+q-l}})}$ does not vanish if and only if the following condition is satisfied: 
\begin{enumerate}[label={($\star$)}]

\item \label{cond:perm} $(i_1,\dots,i_{p+q-2r})$ is a permutation of $(j_1,\dots,j_{p+q-2l})$ and $(i_{p+q-2r+1},\dots,i_{p+q-r})$ is a permutation of $(j_{p+q-2l+1},\dots,j_{p+q-l})$. 

\end{enumerate}
Note that the condition \ref{cond:perm} can hold true only if $r=l$. 
Moreover, if the condition \ref{cond:perm} is satisfied, we have
\begin{multline*}
\expe{Y_{i_1}\cdots Y_{i_{p+q-2r}}\mathfrak{p}_2(Y_{i_{p+q-2r+1}})\cdots \mathfrak{p}_2(Y_{i_{p+q-r}})Y_{j_1}\cdots Y_{j_{p+q-2l}}\mathfrak{p}_2(Y_{j_{p+q-2l+1}})\cdots \mathfrak{p}_2(Y_{j_{p+q-l}})}\\
=v_{i_{p+q-2r+1}}\cdots v_{i_{p+q-r}}
\end{multline*}
by Lemma \ref{lemma:moment}. Since there are totally $(p+q-2r)!$ permutations of $(i_1,\dots,i_{p+q-2r})$ and $r!$ permutations of $(i_{p+q-2r+1},\dots,i_{p+q-r})$ respectively, from Lemma \ref{lemma:projection} we infer that
\begin{multline}\label{eq:var-FG}
\expe{J_{p+q}(FG)^2}\\
=\sum_{r=0}^{p\wedge q}r!^2\binom{p}{r}^2\binom{q}{r}^2(p+q-2r)!r!\sum_{(i_1,\dots,i_{p+q-r})\in\Delta^N_{p+q-r}}f\wstar{r}g(i_{1},\dots,i_{p+q-r})^2v_{i_{p+q-2r+1}}\cdots v_{i_{p+q-r}}.
\end{multline}
Now, \eqref{eq:var-FG} is especially true when all the $Y_i$'s follow the standard normal distribution. 
Therefore, Lemma \ref{lemma:wiener} yields 
\[
(p+q)!\|f\wtimes g\|_{\ell_2}^2=\sum_{r=0}^{p\wedge q}r!^2\binom{p}{r}^2\binom{q}{r}^2(p+q-2r)!r!\sum_{(i_1,\dots,i_{p+q-r})\in\Delta^N_{p+q-r}}f\wstar{r}g(i_{1},\dots,i_{p+q-r})^2\cdot 2^r.
\]
Combining this formula with \eqref{eq:var-FG}, we obtain \eqref{key:eq1}. 

Next we prove \eqref{key:eq2}. An argument analogous to the proof of \eqref{eq:var-FG} yields
\begin{multline*}
\expe{J_{2p}(F^2)J_{2p}(G^2)}\\
=\sum_{r=0}^{p}r!^2\binom{p}{r}^4(2p-2r)!r!\sum_{(i_1,\dots,i_{2p-r})\in\Delta^N_{2p-r}}f\wstar{r}f(i_{1},\dots,i_{2p-r})g\wstar{r}g(i_{1},\dots,i_{2p-r})v_{i_{2p-2r+1}}\cdots v_{i_{2p-r}}.
\end{multline*}
Moreover, taking account of the case that all the $Y_i$'s follow the standard normal distribution as above, we obtain
\begin{align}
(2p)!\langle f\wtimes f,g\wtimes g\rangle
=\sum_{r=0}^{p}r!^2\binom{p}{r}^4(2p-2r)!r!\sum_{(i_1,\dots,i_{2p-r})\in\Delta^N_{2p-r}}f\wstar{r}f(i_{1},\dots,i_{2p-r})g\wstar{r}g(i_{1},\dots,i_{2p-r})\cdot 2^r.\label{eq:gaussian2}
\end{align}
Hence we have
\begin{multline*}
\expe{J_{2p}(F^2)J_{2p}(G^2)}-(2p)!\langle f\wtimes f,g\wtimes g\rangle\\
=\sum_{r=1}^{p}r!^2\binom{p}{r}^4(2p-2r)!r!\sum_{(i_1,\dots,i_{2p-r})\in\Delta^N_{2p-r}}f\wstar{r}f(i_{1},\dots,i_{2p-r})g\wstar{r}g(i_{1},\dots,i_{2p-r})\left(v_{i_{2p-2r+1}}\cdots v_{i_{2p-r}}-2^r\right).
\end{multline*}
Therefore, we deduce that
\begin{align*}
&\left|\expe{J_{2p}(F^2)J_{2p}(G^2)}-(2p)!\langle f\wtimes f,g\wtimes g\rangle\right|\\
&\leq\left(2^{-p}\ol{v}_N^p-1\right)\sum_{r=1}^{p}r!^2\binom{p}{r}^4(2p-2r)!r!\sum_{(i_1,\dots,i_{2p-r})\in\Delta^N_{2p-r}}\left|f\wstar{r}f(i_{1},\dots,i_{2p-r})g\wstar{r}g(i_{1},\dots,i_{2p-r})\right|\cdot2^r\\
&\leq\left(2^{-p}\ol{v}_N^p-1\right)\sqrt{\sum_{r=1}^{p}r!^2\binom{p}{r}^4(2p-2r)!r!\sum_{(i_1,\dots,i_{2p-r})\in\Delta^N_{2p-r}}f\wstar{r}f(i_{1},\dots,i_{2p-r})^2\cdot2^r}\\
&\times\sqrt{\sum_{r=1}^{p}r!^2\binom{p}{r}^4(2p-2r)!r!\sum_{(i_1,\dots,i_{2p-r})\in\Delta^N_{2p-r}}g\wstar{r}g(i_{1},\dots,i_{2p-r})^2\cdot2^r},
\end{align*}
where we used the Schwarz inequality in the third line. 
Now, from \eqref{eq:gaussian2} we also have
\begin{align*}
&\sum_{r=1}^{p}r!^2\binom{p}{r}^4(2p-2r)!r!\sum_{(i_1,\dots,i_{2p-r})\in\Delta^N_{2p-r}}f\wstar{r}f(i_{1},\dots,i_{2p-r})g\wstar{r}g(i_{1},\dots,i_{2p-r})\cdot 2^r\\
&=(2p)!\left\langle f\wtimes f,g\wtimes g1_{(\Delta^N_{2p})^c}\right\rangle.
\end{align*}
Thus we infer that
\begin{align*}
&\left|\expe{J_{2p}(F^2)J_{2p}(G^2)}-(2p)!\langle f\wtimes f,g\wtimes g\rangle\right|\\
&\leq\left(2^{-p}\ol{v}_N^p-1\right)\sqrt{(2p)!\| f\wtimes f1_{(\Delta^N_{2p})^c}\|^2_{\ell_2}}
\sqrt{(2p)!\| g\wtimes g1_{(\Delta^N_{2p})^c}\|^2_{\ell_2}}\\
&\leq\left(2^{-p}\ol{v}_N^p-1\right)
\sqrt{(2p)!\mf{c}_p\sum_{i=1}^N\influence_i(f)^2}
\sqrt{(2p)!\mf{c}_p\sum_{i=1}^N\influence_i(g)^2},
\end{align*}
where we apply Lemma \ref{lemma:dk} to obtain the last inequality. 
This completes the proof. 
\end{proof}

%
%

\begin{lemma}\label{lemma:zheng}
Under the assumptions of Proposition \ref{prop:normal-gamma}, we have
\begin{equation}\label{cov-ineq}
\sum_{k=1}^{p+q-1}\expe{J_k(FG)^2}\leq \covariance[F^2,G^2]-2\expe{FG}^2
\end{equation}
and
\begin{equation}\label{var-ineq}
\sum_{k=1}^{2p-1}\expe{J_k(F^2)^2}+p!^2\sum_{r=1}^{p-1}\binom{p}{r}^2\|f\star_rf\|_{\ell_2}^2\leq \expe{F^4}-3\expe{F^2}^2.
\end{equation}
\end{lemma}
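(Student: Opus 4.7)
The plan is to reduce both inequalities to a single lower bound on $\expe{J_{p+q}(FG)^2}$ (respectively $\expe{J_{2p}(F^2)^2}$), obtained by combining Lemma~\ref{lemma:key}\eqref{key:eq1} with an explicit combinatorial identity for $(p+q)!\|f\wtimes g\|_{\ell_2}^2$.

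First, I would use spectral stability (property (c)) to observe that $FG\in\bigoplus_{k=0}^{p+q}\kernel(\opL+k\,\id)$, so that $J_k(FG)=0$ for $k>p+q$ and $J_0(FG)=\expe{FG}$. Orthogonality of the eigenspaces gives
\[
\expe{(FG)^2}=\expe{FG}^2+\sum_{k=1}^{p+q}\expe{J_k(FG)^2}.
\]
Writing $\expe{(FG)^2}=\expe{F^2G^2}=\covariance[F^2,G^2]+\expe{F^2}\expe{G^2}$ and rearranging, \eqref{cov-ineq} is equivalent to
\[
\expe{J_{p+q}(FG)^2}\ge\expe{F^2}\expe{G^2}+\expe{FG}^2,
\]
where, by independence of the $Y_i$'s (with unit variance) and the vanishing of $f,g$ on diagonals, $\expe{F^2}=p!\|f\|_{\ell_2}^2$, $\expe{G^2}=q!\|g\|_{\ell_2}^2$ and $\expe{FG}=\mathbf{1}_{\{p=q\}}p!\langle f,g\rangle$. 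The same manipulation with $G=F$, $q=p$ reduces \eqref{var-ineq} to
\[
\expe{J_{2p}(F^2)^2}\ge 2\expe{F^2}^2+(p!)^2\sum_{r=1}^{p-1}\binom{p}{r}^2\|f\star_r f\|_{\ell_2}^2.
\]

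Second, I would establish the algebraic identity
\[
(p+q)!\,\|f\wtimes g\|_{\ell_2}^2=p!\,q!\sum_{r=0}^{p\wedge q}\binom{p}{r}\binom{q}{r}\langle f\star_{p-r}f,\,g\star_{q-r}g\rangle.
\]
Expanding $f\wtimes g=\tfrac{1}{(p+q)!}\sum_{\sigma\in\mf{S}_{p+q}}\sigma(f\otimes g)$ and making the substitution $\tau=\sigma\rho$, one gets $(p+q)!\|f\wtimes g\|_{\ell_2}^2=\sum_{\rho\in\mf{S}_{p+q}}\langle f\otimes g,\rho(f\otimes g)\rangle$. Since $f,g$ are symmetric, $\langle f\otimes g,\rho(f\otimes g)\rangle$ depends on $\rho$ only through the $p$-subset $A:=\rho(\{1,\dots,p\})\subset\{1,\dots,p+q\}$, and each such $A$ arises from exactly $p!q!$ permutations. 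Classifying the subsets by $r:=|A\cap\{p+1,\dots,p+q\}|$ gives $\binom{p}{r}\binom{q}{r}$ subsets per value of $r$; for the canonical representative $A=\{1,\dots,p-r\}\cup\{p+1,\dots,p+r\}$, the four-fold index sum $\sum_{a,b,c,d}f(a,c)g(b,d)f(a,b)g(c,d)$ (with $a,b,c,d$ tuples of lengths $p-r,r,r,q-r$) is evaluated by summing out $a$ and $d$, producing the contraction $\langle f\star_{p-r}f,\,g\star_{q-r}g\rangle$.

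Third, I would note that each contraction $\langle f\star_{p-r}f,g\star_{q-r}g\rangle\ge 0$: using the symmetry of $f$, the kernel $(x,y)\mapsto(f\star_{p-r}f)(x,y)=\sum_z f(x,z)f(y,z)$ on $[N]^r\times[N]^r$ is a Gram kernel, hence positive semidefinite, and likewise for $g\star_{q-r}g$; the inner product of two positive-semidefinite kernels on the same finite index set is nonnegative (diagonalize each and pair eigenvectors). Combining Lemma~\ref{lemma:key}\eqref{key:eq1} with the identity, the $r=0$ term of the expansion contributes $p!q!\|f\|_{\ell_2}^2\|g\|_{\ell_2}^2=\expe{F^2}\expe{G^2}$ and, when $p=q$, the $r=p$ term contributes $(p!)^2\langle f,g\rangle^2=\expe{FG}^2$; dropping the remaining nonnegative terms yields \eqref{cov-ineq}. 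For \eqref{var-ineq}, specializing $g=f$, $q=p$ and reindexing $s=p-r$ gives $(2p)!\|f\wtimes f\|_{\ell_2}^2=(p!)^2\sum_{s=0}^p\binom{p}{s}^2\|f\star_s f\|_{\ell_2}^2=2(p!)^2\|f\|_{\ell_2}^4+(p!)^2\sum_{s=1}^{p-1}\binom{p}{s}^2\|f\star_s f\|_{\ell_2}^2$, which matches the reduced form from the first paragraph.

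The main technical obstacle is the combinatorial identity in the second paragraph: isolating the multiplicity $\binom{p}{r}\binom{q}{r}$ and identifying the resulting index sum as a product of two Gram kernels requires a careful choice of canonical coset representative for each $r$ and meticulous tracking of how the symmetrization interacts with the block structure of $f\otimes g$; once this is done, the reduction from the first paragraph and the positive-semidefiniteness argument in the third paragraph are each short.
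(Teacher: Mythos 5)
Your proof is correct and follows essentially the same route as the paper: both proceed by the orthogonal chaos decomposition of $F^2G^2$, apply Lemma~\ref{lemma:key}\eqref{key:eq1}, expand $(p+q)!\,\|f\wtimes g\|_{\ell_2}^2$ as a nonnegative combination over contractions, and retain the boundary terms $r=0$ and (when $p=q$) $r=p$. The only difference is that the paper invokes \cite[Lemma~2.2]{NR2014} in the equivalent form $\|f\wtimes g\|_{\ell_2}^2=\frac{p!q!}{(p+q)!}\sum_{r}\binom{p}{r}\binom{q}{r}\|f\star_r g\|_{\ell_2}^2$ — for which termwise nonnegativity is immediate, making your Gram-kernel positivity argument superfluous (note $\langle f\star_{p-r}f,\,g\star_{q-r}g\rangle=\|f\star_r g\|_{\ell_2}^2$) — whereas you rederive the identity from scratch via the coset-representative count.
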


\begin{proof}
The proof is parallel to that of \cite[Lemma 3.1]{Zheng2017}. First, we have
\begin{align*}
\expe{F^2G^2}=\ex{\left(\sum_{k=0}^{p+q}J_k(FG)\right)^2}
=\expe{FG}^2+\sum_{k=1}^{p+q}\expe{J_k(FG)^2}.
\end{align*}
Therefore, Lemma \ref{lemma:key} yields
\begin{equation}\label{eq:lower}
\expe{F^2G^2}
\geq \expe{FG}^2+\sum_{k=1}^{p+q-1}\expe{J_k(FG)^2}+(p+q)!\|f\wtimes g\|_{\ell_2}^2.
\end{equation}
Now, by \cite[Lemma 2.2]{NR2014} we have
\begin{equation}\label{NR2014}
\|f\wtimes g\|_{\ell_2}^2=\frac{p!q!}{(p+q)!}\sum_{r=0}^{p\wedge q}\binom{p}{r}\binom{q}{r}\|f\star_rg\|_{\ell_2}^2.
\end{equation}
Hence it holds that
\[
(p+q)!\|f\wtimes g\|_{\ell_2}^2\geq p!q!\|f\|_{\ell_2}^2\|g\|_{\ell_2}^2+1_{\{p=q\}}p!^2\langle f,g\rangle^2
=\expe{F^2}\expe{G^2}+\expe{FG}^2.
\]
Consequently, we obtain
\begin{align*}
\expe{F^2G^2}
\geq 2\expe{FG}^2+\sum_{k=1}^{p+q-1}\expe{J_k(FG)^2}+\expe{F^2}\expe{G^2}.
\end{align*}
This implies \eqref{cov-ineq}. 

Next we prove \eqref{var-ineq}. By \eqref{NR2014} we have
\[
(2p)!\|f\wtimes f\|_{\ell_2}^2=2\expe{F^2}^2+p!^2\sum_{r=1}^{p-1}\binom{p}{r}\binom{q}{r}\|f\star_rf\|_{\ell_2}^2.
\]
Combining this identity with \eqref{eq:lower}, we infer that
\[
\expe{F^4}
\geq 3\expe{F^2}^2+\sum_{k=1}^{p+q-1}\expe{J_k(F^2)^2}+p!^2\sum_{r=1}^{p-1}\binom{p}{r}\binom{q}{r}\|f\star_rf\|_{\ell_2}^2.
\]
Hence we obtain the desired result. 
\end{proof}

\begin{proof}[Proof of Proposition \ref{gamma-bound}]
The non-negativity of $\kappa_4(F)$ and $\kappa_4(G)$ follows from \eqref{var-ineq}.

The remaining proof is parallel to Step 2 in the proof of \cite[Theorem 1.2]{Zheng2017}. 
By definition we have
\begin{equation}\label{gamma-expr}
\Gamma(F,G)=\frac{1}{2}\left(\opL(FG)+qFG+pGF\right)
=\frac{p+q}{2}\expe{FG}+\sum_{k=1}^{p+q-1}\frac{p+q-k}{2}J_k(FG).
\end{equation}
Hence it holds that
\begin{equation*}
\variance\left[\Gamma(F,G)\right]
=\sum_{k=1}^{p+q-1}\frac{(p+q-k)^2}{4}\ex{J_k(FG)^2}
\leq q^2\sum_{k=1}^{p+q-1}\expe{J_k(FG)^2}.
\end{equation*}
Therefore, according to \eqref{cov-ineq}, the desired result follows once we show that
\begin{multline}\label{aim:normal-gamma}
|\covariance[F^2,G^2]-2\expe{FG}^2|
\leq
1_{\{p<q\}}\sqrt{\expe{F^4}}\sqrt{\kappa_4(G)}\\     
+1_{\{p=q\}}\left\{2\sqrt{\kappa_4(F)}\sqrt{\kappa_4(G)} 
+\left(2^{-p}\ol{v}_N^p-1\right)
\sqrt{\expe{F^2}\mathcal{M}(f)}
\sqrt{\expe{G^2}\mathcal{M}(g)}\right\}.  
\end{multline}
To prove \eqref{aim:normal-gamma}, we consider the following decomposition:
\begin{align*}
\expe{F^2G^2}
&=\ex{F^2\left(\expe{G^2}+\sum_{k=1}^{2q-1}J_k(G^2)+J_{2q}(G^2)\right)}\\
&=\expe{F^2}\expe{G^2}+\ex{F^2\sum_{k=1}^{2q-1}J_k(G^2)}+\expe{J_{2q}(F^2)J_{2q}(G^2)}.
\end{align*}
Now, if $p<q$, $\expe{FG}=\expe{J_{2q}(F^2)J_{2q}(G^2)}=0$ and
\begin{align*}
|\covariance[F^2,G^2]|
&=\left|\ex{F^2\sum_{k=1}^{2q-1}J_k(G^2)}\right|
\leq\sqrt{\expe{F^4}}\sqrt{\sum_{k=1}^{2q-1}\expe{J_k(G^2)^2}}.
\end{align*}
Therefore, by \eqref{var-ineq} we obtain
\begin{align*}
|\covariance[F^2,G^2]-2\expe{FG}^2|
\leq\sqrt{\expe{F^4}}\sqrt{\kappa_4(G)}.
\end{align*}
Meanwhile, if $p=q$, we have
\begin{align*}
\labs\covariance[F^2,G^2]-2\expe{FG}^2\rabs
\leq\sum_{k=1}^{2q-1}\expe{J_k(F^2)J_k(G^2)}
+\labs\expe{J_{2q}(F^2)J_{2q}(G^2)}-2\expe{FG}^2\rabs.
\end{align*}
The Schwarz inequality and \eqref{var-ineq} yield
\[
\sum_{k=1}^{2q-1}\expe{J_k(F^2)J_k(G^2)}
\leq\sqrt{\kappa_4(F)}\sqrt{\kappa_4(G)}.
\]
Also, the triangular inequality and \eqref{key:eq2} yield
\begin{align*}
&\labs\expe{J_{2q}(F^2)J_{2q}(G^2)}-2\expe{FG}^2\rabs\\
&\leq\left(2^{-p}\ol{v}_N^p-1\right)
\sqrt{\expe{F^2}\mathcal{M}(f)}
\sqrt{\expe{G^2}\mathcal{M}(g)}
+\labs(2q)!\langle f\wtimes f,g\wtimes g\rangle-2\expe{FG}^2\rabs.
\end{align*}
Now, by \cite[Lemma 2.2]{NR2014} we have
\begin{align*}
(2q)!\langle f\wtimes f,g\wtimes g\rangle
&=2q!^2\langle f,g\rangle^2
+\sum_{r=1}^{q-1}q!^2\binom{q}{r}^2\langle f\star_r g,g\star_r f\rangle\\
&=2\expe{FG}^2+\sum_{r=1}^{q-1}q!^2\binom{q}{r}^2\langle f\star_{r} g,g\star_{r} f\rangle.
\end{align*}
Therefore, by the Schwarz inequality and \eqref{var-ineq} we deduce that
\begin{align*}
&\left|(2q)!\langle f\wtimes f,g\wtimes g\rangle-2\expe{FG}^2\right|\\
&\leq\sum_{r=1}^{q-1}q!^2\binom{q}{r}^2\|f\star_{r} g\|_{\ell_2}\|g\star_{r} f\|_{\ell_2}
=\sum_{r=1}^{q-1}q!^2\binom{q}{r}^2\langle f\star_{q-r}f, g\star_{q-r} g\rangle\\
&\leq\sum_{r=1}^{q-1}q!^2\binom{q}{r}^2\| f\star_{q-r}f\|_{\ell_2}\|g\star_{q-r} g\|_{\ell_2}
=\sum_{r=1}^{q-1}q!^2\binom{q}{r}^2\| f\star_{r}f\|_{\ell_2}\|g\star_{r} g\|_{\ell_2}\\
&\leq\sqrt{\sum_{r=1}^{q-1}q!^2\binom{q}{r}^2\| f\star_{r}f\|_{\ell_2}^2}\sqrt{\sum_{r=1}^{q-1}q!^2\binom{q}{r}^2\| g\star_{r}g\|_{\ell_2}^2}
\leq\sqrt{\kappa_4(F)}\sqrt{\kappa_4(G)}.
\end{align*}
This complete the proof.
\end{proof}

%
%
%
%

\subsection{Hypercontraction principle}

To derive \eqref{max-normal-gamma} from Proposition \ref{gamma-bound}, we use several properties of the $\psi_\alpha$-norm which are collected in Appendix \ref{sec:psi}. 
For this purpose we need to control the quantity $\|\Gamma(\bs{Q}(f_j;\bs{Y}),\bs{Q}(f_k;\bs{Y}))-\expe{\Gamma(\bs{Q}(f_j;\bs{Y}),\bs{Q}(f_k;\bs{Y}))}\|_{\psi_\alpha}$ for some $\alpha>0$ by $\variance[\Gamma(\bs{Q}(f_j;\bs{Y}),\bs{Q}(f_k;\bs{Y}))]$. 
This is accomplished by using the following form of the hypercontractivity: 
%
\begin{definition}\label{def:hyper-seq}
Let $\bs{\xi}=(\xi_1,\dots,\xi_N)$ be a sequence of independent random variables such that $\|\xi_i\|_m<\infty$ for all $i\in\mathbb{N}$ and some $m\in\mathbb{N}$. For every $i\in\mathbb{N}$, let $(Q_{\xi_i,k})_{k=0}^m$ be an orthogonal polynomial sequence with respect to the law of $\xi_i$. That is, $Q_{\xi_i,k}$ is a polynomial of degree $k$ and $\{Q_{\xi_i,k}(\xi_i):k=0,1,\dots,m\}$ is an orthogonal set in $L^2(P)$.   
We say that $\bs{\xi}$ is \textit{$(p,q,\eta)$-hypercontractive up to degree $m$} for $1\leq p<q<\infty$ and $0<\eta<1$ if 
\begin{equation}\label{eq:hyper}
\left\|\sum_{k_1,\dots,k_N=0}^m\varphi(k_1,\dots,k_N)\prod_{i=1}^N\eta^{k_i}Q_{\xi_i,k_i}(\xi_i)\right\|_q\leq\left\|\sum_{k_1,\dots,k_N=0}^m\varphi(k_1,\dots,k_N)\prod_{i=1}^NQ_{\xi_i,k_i}(\xi_i)\right\|_p
\end{equation}
for every function $\varphi:\{0,1\dots,m\}^N\to\mathbb{R}$.
\end{definition}
Note that the above definition does not depend on the choice of the orthogonal polynomial sequences $(Q_{\xi_i,k})_{k=0}^N$ because each $Q_{\xi_i,k}$ is uniquely determined up to multiplicative constants. 
\begin{rmk}
Definition \ref{def:hyper-seq} is related to various concepts of hypercontractivity used in the literature as follows.
\begin{enumerate}

\item \citet{KS1988PTRF} introduced hypercontractibity for a \textit{single} random variable taking values in a Banach space. For a centered real-valued random variable, this notion is equivalent to the hypercontractivity up to degree 1 as a sequence of length 1 in our sense. 

\item In \citet[Section 5]{Janson1997}, the notion of hypercontractivity is defined for a \textit{set} of random variables. This notion requires \eqref{eq:hyper} type inequalities to hold true for all polynomials in random variables belonging to the set, so it is stronger than ours. 

\item \citet{MOO2010} defined hypercontractivity for a \textit{sequence of ensembles} of random variables. This notion requires that a special class of polynomial satisfies a \eqref{eq:hyper} type inequality with slightly different powers of $\eta$, and it is usually weaker than ours. 

\end{enumerate} 
\end{rmk}

We have the following analog of \cite[Lemma 5.3]{Janson1997} and \cite[Proposition 3.11]{MOO2010}: 
\begin{lemma}\label{hyper-union}
Let $\bs{\xi}=(\xi_1,\dots,\xi_N)$ and $\bs{\theta}=(\theta_1,\dots,\theta_M)$ be two sequences of independent random variables. Suppose that $\bs{\xi}$ and $\bs{\theta}$ are independent and that both $\bs{\xi}$ and $\bs{\theta}$ are $(p,q,\eta)$-hypercontractive up to degree $m$ for some $1\leq p<q<\infty$, $0<\eta<1$ and $m\in\mathbb{N}$. Then the sequence $(\xi_1,\dots,\xi_N,\theta_1,\dots\theta_M)$ is $(p,q,\eta)$-hypercontractive up to degree $m$.  
\end{lemma}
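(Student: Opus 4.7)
The plan is to reduce the joint hypercontractivity for $(\xi_1,\dots,\xi_N,\theta_1,\dots,\theta_M)$ to two successive applications of the hypothesised hypercontractivity for $\bs\xi$ and $\bs\theta$, glued together by Minkowski's integral inequality in the form $\|\,\cdot\,\|_{L^q(\xi)L^p(\theta)}\le\|\,\cdot\,\|_{L^p(\theta)L^q(\xi)}$ (which uses $q\ge p$). First I would fix orthogonal polynomial sequences $(Q_{\xi_i,k})$ and $(Q_{\theta_j,l})$ and, for a general $\varphi\colon\{0,\dots,m\}^{N+M}\to\mathbb{R}$, write the relevant polynomial as
\[
F=\sum_{\vec k,\vec l}\varphi(\vec k,\vec l)\prod_{i=1}^N Q_{\xi_i,k_i}(\xi_i)\prod_{j=1}^M Q_{\theta_j,l_j}(\theta_j),\qquad
F_\eta=\sum_{\vec k,\vec l}\varphi(\vec k,\vec l)\prod_{i=1}^N \eta^{k_i}Q_{\xi_i,k_i}(\xi_i)\prod_{j=1}^M \eta^{l_j}Q_{\theta_j,l_j}(\theta_j).
\]
Grouping $F$ as a polynomial in $\bs\theta$ with the $\bs\xi$-measurable coefficients $a_{\vec l}(\bs\xi):=\sum_{\vec k}\varphi(\vec k,\vec l)\prod_i Q_{\xi_i,k_i}(\xi_i)$, and similarly as a polynomial in $\bs\xi$ with $\bs\theta$-measurable coefficients, I can use the independence of $\bs\xi$ and $\bs\theta$ and Fubini to apply each one-factor hypercontractivity conditionally on the other factor.

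Concretely, step one: condition on $\bs\xi$ and apply the assumed $(p,q,\eta)$-hypercontractivity of $\bs\theta$ to the polynomial $\sum_{\vec l}a_{\vec l}(\bs\xi)\prod_j\eta^{l_j}Q_{\theta_j,l_j}(\theta_j)$ with its (now constant) coefficients $\eta^{\sum k_i}a_{\vec l}(\bs\xi)$ replaced by $a_{\vec l}(\bs\xi)$ pointwise; this is legitimate because Definition \ref{def:hyper-seq} allows arbitrary real coefficients. This yields
\[
\bigl\|F_\eta\bigr\|_{L^q(\bs\theta)}\le\Bigl\|\sum_{\vec l}(T_\eta^{\xi}a_{\vec l})(\bs\xi)\prod_j Q_{\theta_j,l_j}(\theta_j)\Bigr\|_{L^p(\bs\theta)}
\]
as functions of $\bs\xi$, where $T_\eta^\xi$ denotes the $\eta$-damping in the $\bs\xi$-variables alone. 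Step two: take the $L^q(\bs\xi)$ norm of both sides, which gives $\|F_\eta\|_{L^q(\bs\xi\otimes\bs\theta)}$ on the left. On the right, I would invoke Minkowski's integral inequality with exponent $q/p\ge 1$ to obtain
\[
\Bigl(\mathrm{E}_\xi\bigl(\mathrm{E}_\theta|T_\eta^\xi F|^p\bigr)^{q/p}\Bigr)^{p/q}\le \mathrm{E}_\theta\bigl(\mathrm{E}_\xi|T_\eta^\xi F|^q\bigr)^{p/q}.
\]
Step three: for each fixed $\bs\theta$, view $T_\eta^\xi F$ as a polynomial in $\bs\xi$ with constant coefficients (namely the values of the $\bs\theta$-polynomials) and apply the $(p,q,\eta)$-hypercontractivity of $\bs\xi$ to conclude $\|T_\eta^\xi F(\cdot,\bs\theta)\|_{L^q(\bs\xi)}\le\|F(\cdot,\bs\theta)\|_{L^p(\bs\xi)}$; integrating the $p$-th power in $\bs\theta$ and using Fubini then produces $\|F\|_{L^p(\bs\xi\otimes\bs\theta)}$.

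The only delicate point is checking that each one-factor hypercontractivity inequality can legitimately be invoked with coefficients that depend measurably on the other block of variables. This is not really an obstacle, since inequalities of the type in Definition \ref{def:hyper-seq} are stated for every real coefficient array, so freezing the complementary coordinates and applying the inequality pointwise is valid; Fubini then upgrades the pointwise bound to the $L^r$-bound used in each step. Once that is in hand, the three-step chain above, together with the commutativity of $T_\eta^\xi$ and $T_\eta^\theta$ (which follows from independence), gives $\|F_\eta\|_q\le\|F\|_p$, i.e.\ the desired $(p,q,\eta)$-hypercontractivity up to degree $m$ for the concatenated sequence.
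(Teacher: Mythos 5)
Your argument is correct and is essentially the paper's own proof: apply one factor's hypercontractivity conditionally on the other, invoke Minkowski's integral inequality (valid since $q\ge p$; this is Janson's Proposition C.4) to swap the order of the $L^p$ and $L^q$ norms, then apply the second factor's hypercontractivity conditionally. The only cosmetic difference is that you damp the $\bs\theta$-block first and the $\bs\xi$-block second whereas the paper does it the other way around, which by symmetry is immaterial.
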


\begin{proof}
The lemma can be shown in the same way as in the analogous proofs of \cite{Janson1997,MOO2010}. 
Take a function $\varphi:\{0,1\dots,m\}^{N+M}\to\mathbb{R}$ arbitrarily. Then, noting that $\bs{\xi}$ and $\bs{\theta}$ are independent, we have
\begin{align*}
&\left\|\sum_{k_1,\dots,k_N,l_1,\dots,l_M=0}^m\varphi(k_1,\dots,k_N,l_1,\dots,l_M)\prod_{i=1}^N\eta^{k_i}Q_{\xi_i,k_i}(\xi_i)\prod_{j=1}^M\eta^{l_j}Q_{\theta_j,l_j}(\theta_j)\right\|_q\\
&=\left\|\left\{\ex{\left|\sum_{k_1,\dots,k_N,l_1,\dots,l_M=0}^m\varphi(k_1,\dots,k_N,l_1,\dots,l_M)\prod_{i=1}^N\eta^{k_i}Q_{\xi_i,k_i}(\xi_i)\prod_{j=1}^M\eta^{l_j}Q_{\theta_j,l_j}(\theta_j)\right|^q\mid\bs{\theta}}\right\}^{1/q}\right\|_q\\
&\leq\left\|\left(\ex{\left|\sum_{k_1,\dots,k_N,l_1,\dots,l_M=0}^m\varphi(k_1,\dots,k_N,l_1,\dots,l_M)\prod_{i=1}^NQ_{\xi_i,k_i}(\xi_i)\prod_{j=1}^M\eta^{l_j}Q_{\theta_j,l_j}(\theta_j)\right|^p\mid\bs{\theta}}\right)^{1/p}\right\|_q\\
&\leq\left\|\left(\ex{\left|\sum_{k_1,\dots,k_N,l_1,\dots,l_M=0}^m\varphi(k_1,\dots,k_N,l_1,\dots,l_M)\prod_{i=1}^NQ_{\xi_i,k_i}(\xi_i)\prod_{j=1}^M\eta^{l_j}Q_{\theta_j,l_j}(\theta_j)\right|^q\mid\bs{\xi}}\right)^{1/q}\right\|_p\\
&\leq\left\|\left(\ex{\left|\sum_{k_1,\dots,k_N,l_1,\dots,l_M=0}^m\varphi(k_1,\dots,k_N,l_1,\dots,l_M)\prod_{i=1}^NQ_{\xi_i,k_i}(\xi_i)\prod_{j=1}^MQ_{\theta_j,l_j}(\theta_j)\right|^p\mid\bs{\xi}}\right)^{1/p}\right\|_p\\
&=\left\|\sum_{k_1,\dots,k_N,l_1,\dots,l_M=0}^m\varphi(k_1,\dots,k_N,l_1,\dots,l_M)\prod_{i=1}^NQ_{\xi_i,k_i}(\xi_i)\prod_{j=1}^MQ_{\theta_j,l_j}(\theta_j)\right\|_p.
\end{align*}
Here, the second and the fourth inequalities follow from the hypercontractivity of $\bs{\xi}$ and $\bs{\theta}$, while the third inequality is a consequence of \cite[Proposition C.4]{Janson1997}. This completes the proof. 
\end{proof}

%

According to Lemma \ref{hyper-union}, we can establish the hypercontractivity of the sequence $\bs{Y}$ once we prove this property for each $Y_i$ as a sequence of length 1. 
For a standard normal variable, this is an immediate consequence of \cite[Theorem 5.1]{Janson1997}:
\begin{lemma}\label{normal-hyper}
Let $X\sim \mathcal{N}(0,1)$. Then, $X$ is $(p,q,\eta)$-hypercontractive up to degree $m$ (as a sequence of length 1) for any $m\in\mathbb{N}$, $1< p< q<\infty$ and $0<\eta\leq\sqrt{(p-1)/(q-1)}$. 
\end{lemma}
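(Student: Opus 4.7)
The plan is to identify the inequality in the definition of hypercontractivity with the classical Nelson hypercontractivity for the Ornstein--Uhlenbeck semigroup on one-dimensional Gaussian space, and then simply quote that result.

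First, I would specialize the orthogonal polynomial sequence for $\mathcal{N}(0,1)$ to the Hermite polynomials $(H_k)_{k \geq 0}$. Any polynomial $P:\mathbb{R}\to\mathbb{R}$ of degree at most $m$ admits a unique expansion $P(x)=\sum_{k=0}^m\varphi(k)H_k(x)$, and conversely every choice of coefficients $\varphi:\{0,1,\dots,m\}\to\mathbb{R}$ yields such a polynomial. Thus the inequality \eqref{eq:hyper} reduces in the $N=1$ case to showing
\[
\left\|\sum_{k=0}^m\varphi(k)\,\eta^{k}H_k(X)\right\|_q \leq \left\|\sum_{k=0}^m\varphi(k)H_k(X)\right\|_p
\]
for every such $\varphi$ and every $1<p<q<\infty$, $0<\eta\leq\sqrt{(p-1)/(q-1)}$.

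Second, I would rewrite the left-hand side using the Ornstein--Uhlenbeck operator $T_\eta$ characterized by $T_\eta H_k=\eta^{k}H_k$ on each Hermite chaos. With this identification, the inequality becomes $\|T_\eta P(X)\|_q\leq\|P(X)\|_p$ for every polynomial $P$ of degree at most $m$, which is a special case of Nelson's hypercontractivity theorem. Precisely, Theorem 5.1 in \citet{Janson1997} asserts that for $1<p<q<\infty$ and $\eta\in(0,\sqrt{(p-1)/(q-1)}]$, the operator $T_\eta$ is a contraction from $L^p(\gamma_1)$ into $L^q(\gamma_1)$, where $\gamma_1$ denotes the standard Gaussian measure on $\mathbb{R}$; restricting this bound to polynomial elements of degree at most $m$ yields exactly the required inequality.

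There is no substantive obstacle here: once the polynomial expansion is recognized as a Hermite chaos decomposition, the statement follows instantly from the cited Gaussian hypercontractivity theorem, and the restriction ``up to degree $m$'' is automatic. The only mild subtlety is to ensure that the chosen orthogonal polynomial sequence is, up to multiplicative constants, the Hermite family, but this is precisely the defining property of orthogonal polynomials for $\mathcal{N}(0,1)$ and, as noted after Definition \ref{def:hyper-seq}, the inequality is insensitive to the choice of normalization.
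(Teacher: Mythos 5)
Your proposal is correct and coincides with the paper's own (essentially one-line) argument: the paper simply states that the lemma is an immediate consequence of Theorem 5.1 in \citet{Janson1997}, which is the Nelson hypercontractivity theorem for the Ornstein--Uhlenbeck semigroup $T_\eta H_k=\eta^k H_k$. Your write-up just spells out the identification of the Hermite expansion with the chaos decomposition, which is exactly the intended reduction.
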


For a gamma variable $X\sim\gamma(\nu)$, the situation is slightly complicated. When $\nu\geq\frac{1}{2}$, the desired property follows from the hypercontractivity of the semigroup with generator $\opL_\nu$, but when $\nu<\frac{1}{2}$, we need an additional argument to obtain a hypercontractivity constant appropriate to our purpose. 

For $\nu>0$ and $k\in\mathbb{N}$, we define
\[
\varsigma_{\nu,k}:=\sqrt{\variance[L_k^{(\nu-1)}(X)]},\qquad X\sim\gamma(\nu),
\]
and $\eta_{\nu,k}:=\min\{1,\varsigma_{\nu,k}/\varsigma_{1/2,k}\}$. 
From Eq.(8.980) in \cite{GR2007} we have
\[
\varsigma_{\nu,k}^2=\binom{\nu+k-1}{k}=\frac{(\nu+k-1)(\nu+k-2)\cdots(\nu+1)\nu}{k!}.
\]
In particular, $\varsigma_{\nu,k}$ is increasing in $\nu$. Thus, we have $\eta_{\nu,k}=1$ if and only if $\nu\geq1/2$. 

\begin{lemma}\label{gamma-hyper}
Let $X\sim\gamma(\nu)$ for some $\nu>0$. 
Then, $X$ is $(2,q,\eta(q-1)^{-1})$-hypercontractive up to degree $m$ (as a sequence of length 1) for any $m\in\mathbb{N}$, $q>2$ and $0<\eta\leq\min_{1\leq k\leq m}\eta_{\nu,k}^{1/k}$. 
\end{lemma}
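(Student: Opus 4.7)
The plan is to split by whether $\nu\ge1/2$ or $\nu<1/2$, since this is precisely the dichotomy governing which direct hypercontractivity tools are available.

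First, for $\nu\ge1/2$ the definition gives $\eta_{\nu,k}=1$ for every $k$, so the hypothesis reduces to $\eta\le1$. I would invoke the classical hypercontractivity of the Laguerre semigroup $(P^{\nu}_t)_{t\ge0}$ generated by $\opL_\nu$: since $\opL_\nu$ satisfies the Bakry--\'Emery curvature--dimension condition exactly when $\nu\ge1/2$, the semigroup is a contraction from $L^2(\gamma(\nu))$ into $L^q(\gamma(\nu))$ whenever $e^{-t}\le 1/\sqrt{q-1}$ (see e.g.\ Bakry--Gentil--Ledoux). Because $L_k^{(\nu-1)}$ is an eigenfunction of $\opL_\nu$ with eigenvalue $-k$, the operator sending $\sum_k a_k L_k^{(\nu-1)}$ to $\sum_k a_k [\eta(q-1)^{-1}]^k L_k^{(\nu-1)}$ is exactly $P^\nu_{t}$ with $e^{-t}=\eta/(q-1)\le 1/(q-1)\le 1/\sqrt{q-1}$ (using $q\ge2$). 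So \eqref{eq:hyper} follows immediately.

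Second, for $\nu<1/2$ the Laguerre semigroup is \emph{not} hypercontractive and a reduction to the case $\nu=1/2$ is needed. The key step I would use is the representation
\[
L_k^{(\nu-1)}(X)=\expectation[L_k^{(-1/2)}(X+W)\mid X],
\]
where $W\sim\gamma(1/2-\nu)$ is independent of $X$, so $Z:=X+W\sim\gamma(1/2)$. This identity is verified by generating functions: using $\sum_k L_k^{(\alpha)}(x)t^k=(1-t)^{-\alpha-1}e^{-xt/(1-t)}$ together with the Laplace transform $\expectation[e^{-sW}]=(1+s)^{-(1/2-\nu)}$ evaluated at $s=t/(1-t)$, the factor $(1-t)^{-1/2}$ becomes $(1-t)^{-\nu}$ after conditioning on $X$. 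Given this identity, any polynomial $\varphi(X)=\sum_k a_k\tilde\eta^k L_k^{(\nu-1)}(X)$ with $\tilde\eta=\eta(q-1)^{-1}$ is the conditional expectation of $\psi(Z):=\sum_k a_k\tilde\eta^k L_k^{(-1/2)}(Z)$ given $X$. Conditional Jensen then yields $\|\varphi(X)\|_{L^q(\gamma(\nu))}\le\|\psi(Z)\|_{L^q(\gamma(1/2))}$. Applying the Case~1 estimate at $\nu=1/2$ with the \emph{largest} allowable constant $\eta'=1$, I obtain
\[
\|\psi(Z)\|_{L^q(\gamma(1/2))}\le\sqrt{\sum_k a_k^2\,\eta^{2k}\,\varsigma_{1/2,k}^2}\,.
\]
Combining with $\eta^{2k}\varsigma_{1/2,k}^2\le\eta_{\nu,k}^2\varsigma_{1/2,k}^2\le\varsigma_{\nu,k}^2$ (which is exactly the assumption $\eta\le\min_k\eta_{\nu,k}^{1/k}$) gives the right-hand side $\sqrt{\sum_k a_k^2\varsigma_{\nu,k}^2}=\|\sum_k a_k L_k^{(\nu-1)}(X)\|_{L^2(\gamma(\nu))}$, as required.

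The main obstacle I anticipate is verifying the conditional expectation identity with the correct normalization; once that generating-function computation is nailed down, everything else is a bookkeeping check on the constants. A secondary subtlety is to make sure the reduction to $\eta'=1$ for the $\nu=1/2$ step is the optimal one, since choosing $\eta'<1$ would leave extra slack that wastes the factor $\eta_{\nu,k}^{1/k}$; writing out the inequality $\eta^k\le\varsigma_{\nu,k}/\varsigma_{1/2,k}$ in the contrapositive makes it clear that $\eta'=1$ is forced. Finally, although the result is stated in the ``up to degree $m$'' formulation of Definition~\ref{def:hyper-seq}, both arguments are uniform in $k$, so no truncation issue arises.
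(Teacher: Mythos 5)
Your proposal is correct and takes essentially the same route as the paper: both split at $\nu=1/2$, invoke Laguerre-semigroup hypercontractivity (via Korzeniowski/Gross in the paper, via Bakry--\'Emery in your write-up) for $\nu\ge1/2$, and for $\nu<1/2$ reduce to the $\gamma(1/2)$ case through the identity $L_k^{(\nu-1)}(X)=\expectation[L_k^{(-1/2)}(X+W)\mid X]$ with $W\sim\gamma(1/2-\nu)$, followed by conditional Jensen and the $\eta_{\nu,k}$ bookkeeping. The only cosmetic difference is that you obtain the conditional-expectation identity from generating functions, whereas the paper expands $L_k^{(-1/2)}(X+X')$ with the Laguerre addition formula (Gradshteyn--Ryzhik, Eq.\ (8.977.1)), observes the cross terms have zero conditional mean, and cites Krakowiak--Szulga for the $L^q$ comparison.
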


\begin{proof}
We need to verify that
\begin{equation}\label{aim:gamma-hyper}
\left\|\sum_{k=0}^m\eta^k(q-1)^{-k}a_kL_k^{(\nu-1)}(X)\right\|_q\leq\left\|\sum_{k=0}^ma_kL_k^{(\nu-1)}(X)\right\|_2
\end{equation}
for all $a_0,a_1,\dots,a_m\in\mathbb{R}$.

First we consider the case $\nu\geq\frac{1}{2}$. Let $(P_t^\nu)_{t\geq0}$ be the semigroup with generator $\opL_\nu$. By Theorem 2 in \cite{Korzeniowski1987} and Gross' hypercontractivity theorem we obtain 
$
\|P_t^\nu f\|_{L^q(\gamma(\nu))}\leq\|f\|_{L^2(\gamma(\nu))}
$ 
for any $f\in L^2(\gamma(\nu))$ and $t\geq0$ such that $e^t\geq q-1$. Setting $t:=\log(q-1)$, we have $P_t^\nu L_k^{(\nu-1)}=(q-1)^{-k}L_k^{(\nu-1)}$ for every $k\in\mathbb{Z}_+$, so we obtain
\begin{align*}
\left\|\sum_{k=0}^m\eta^k(q-1)^{-k}a_kL_k^{(\nu-1)}(X)\right\|_q
&=\left\|P_t^\nu\lpa\sum_{k=0}^m\eta^ka_kL_k^{(\nu-1)}\rpa\right\|_{L^q(\gamma(\nu))}
\leq\left\|\sum_{k=0}^m\eta^ka_kL_k^{(\nu-1)}\right\|_{L^2(\gamma(\nu))}\\
&=\sqrt{\sum_{k=0}^m\eta^{2k}a_k^2L_k^{(\nu-1)}(X)^2}
\leq\left\|\sum_{k=0}^ma_kL_k^{(\nu-1)}(X)\right\|_2.
\end{align*}
Hence we get \eqref{aim:gamma-hyper}. 

Next we consider the case $\nu<\frac{1}{2}$. Extending the probability space if necessary, we take a random variable $X'\sim\gamma(\frac{1}{2}-\nu)$ independent of $X$. Since $X+X'\sim\gamma(\frac{1}{2})$ and $\eta_{1/2,k}=1$ for every $k$, from the above result we have
\begin{align*}
&\left\|\sum_{k=0}^m\eta^k(q-1)^{-k}a_kL_k^{(-1/2)}(X+X')\right\|_q
\leq\left\|\sum_{k=0}^m\eta^ka_kL_k^{(-1/2)}(X+X')\right\|_2\\
&=a_0^2+\sum_{k=1}^m\eta^{2k}a_k^2\varsigma_{1/2,k}^2
\leq a_0^2+\sum_{k=1}^ma_k^2\varsigma_{\nu,k}^2
=\left\|\sum_{k=0}^ma_kL_k^{(\nu-1)}(X)\right\|_2^2.
\end{align*}
Meanwhile, Eq.(8.977.1) in \cite{GR2007} yields
\[
L_k^{(-1/2)}(X+X')
=\sum_{j=0}^kL_{k-j}^{(\nu-1)}(X)L_{j}^{(-1/2-\nu)}(X')
=L_k^{(\nu-1)}(X)+\sum_{j=1}^kL_{k-j}^{(\nu-1)}(X)L_{j}^{(-1/2-\nu)}(X')
\]
for every $k$. 
Since we have $E[\sum_{j=1}^kL_{k-j}^{(\nu-1)}(X)L_{j}^{(-1/2-\nu)}(X')\mid X]=0$, Lemma 1.1 in \cite{KS1988PTRF} implies that
\begin{align*}
\left\|\sum_{k=0}^m\eta^k(q-1)^{-k}a_kL_k^{(\nu-1)}(X)\right\|_q
&\leq\left\|\sum_{k=0}^m\eta^k(q-1)^{-k}a_kL_k^{(-1/2)}(X+X')\right\|_q.
\end{align*}
Hence we obtain \eqref{aim:gamma-hyper}.
\end{proof}

Now we are ready to prove the following result. 
\begin{lemma}\label{gamma-psi}
Under the assumptions of Proposition \ref{prop:normal-gamma}, we have
\[
\|\Gamma(F,G)-\expe{\Gamma(F,G)}\|_{\psi_{(w_*(p+q)-1)^{-1}}}\leq C_{p,q}\ul{\eta}_N^{-(w_*(p+q)-1)}\sqrt{\variance[\Gamma(F,G)]},
\]
where $C_{p,q}>0$ depends only on $p,q$. 
\end{lemma}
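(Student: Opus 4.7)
\medskip
\noindent\textbf{Proof proposal.} The plan is to upgrade the $L^{2}$ estimate for $\variance[\Gamma(F,G)]$ obtained in Proposition~\ref{gamma-bound} into a full $L^{r}$ estimate for the centred variable $P:=\Gamma(F,G)-\expe{\Gamma(F,G)}$ by means of hypercontractivity, and then to translate this moment estimate into the claimed $\psi_{\alpha}$-bound via a standard Orlicz criterion recorded in Appendix~\ref{sec:psi}.

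By \eqref{gamma-expr} we have the chaos decomposition
\[
P=\frac{1}{2}\sum_{k=1}^{p+q-1}(p+q-k)\,J_{k}(FG),\qquad J_{k}(FG)\in\kernel(\opL+k\id),
\]
so a priori $P$ is supported on chaos degrees at most $p+q-1$. In the pure normal case ($w_{*}=\tfrac12$) the sign-flip $Y_{i}\mapsto -Y_{i}$ preserves the law of $\bs{Y}$ while sending $F\mapsto(-1)^{p}F$ and $G\mapsto(-1)^{q}G$, so $J_{k}(FG)=0$ whenever $k\not\equiv p+q\pmod 2$; in particular $J_{p+q-1}(FG)=0$ and the effective maximal degree drops to $p+q-2$. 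In either regime, the maximal chaos degree $m$ of $P$ therefore satisfies $w_{*}m=w_{*}(p+q)-1$, which is exactly the exponent appearing in the statement.

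Next, I would combine Lemmas~\ref{normal-hyper} and \ref{gamma-hyper} with Lemma~\ref{hyper-union} (applied iteratively over the coordinates) to show that $\bs{Y}$ is $(2,r,\eta_{r})$-hypercontractive up to degree $p+q$ with $\eta_{r}\geq c_{p,q}\,\ul{\eta}_{N}\,(r-1)^{-w_{*}}$ for some $c_{p,q}>0$. The pure normal case follows directly from Lemma~\ref{normal-hyper}. For the mixed case ($w_{*}=1$), the explicit formula $\varsigma_{\nu,k}^{2}=\binom{\nu+k-1}{k}$ gives $\eta_{\nu,k}^{1/k}\geq c\sqrt{\nu}$ for $\nu\leq\tfrac12$ (monotonicity of $\nu^{1/(2k)}$ in $k$ for $\nu\leq 1$), so Lemma~\ref{gamma-hyper} provides a per-coordinate constant comparable to $\eta_{i}(r-1)^{-1}$; the normal coordinates carry a strictly stronger $(r-1)^{-1/2}$ bound and hence give no obstruction when combined via Lemma~\ref{hyper-union}. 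Applying \eqref{eq:hyper} separately to each pure-chaos summand $J_{k}(FG)$ and using the orthogonality bound $\|J_{k}(FG)\|_{2}\lesssim\|P\|_{2}$, I obtain
\[
\|P\|_{r}\leq\sum_{k=1}^{m}\eta_{r}^{-k}\|J_{k}(FG)\|_{2}\leq C_{p,q}\,\ul{\eta}_{N}^{-(w_{*}(p+q)-1)}\,(r-1)^{w_{*}(p+q)-1}\,\sqrt{\variance[\Gamma(F,G)]}
\]
for every $r\geq 2$. Finally, the standard Orlicz criterion from Appendix~\ref{sec:psi} (namely $\|X\|_{r}\leq K(r-1)^{\beta}$ for all $r\geq 2$ implies $\|X\|_{\psi_{1/\beta}}\leq C_{\beta}K$), applied with $\beta=w_{*}(p+q)-1$, concludes the proof.

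The main obstacle is the pure normal case: the naive chaos range $\{1,\dots,p+q-1\}$ would yield only $\|P\|_{r}\lesssim(r-1)^{(p+q-1)/2}\|P\|_{2}$ and hence the weaker $\psi_{2/(p+q-1)}$-norm bound, so the parity reduction via $Y_{i}\mapsto -Y_{i}$ is essential to hit the stated exponent $(p+q-2)/2$. A secondary technical point is that Lemma~\ref{gamma-hyper} delivers its constant through $\min_{1\leq k\leq p+q}\eta_{\nu,k}^{1/k}$, which must be compared to the simpler quantity $\eta_{i}=1\wedge\sqrt{\nu_{i}}$ appearing in the statement; this is handled by the elementary estimate above on the Laguerre variances.
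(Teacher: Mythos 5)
Your proof is correct and follows the same overall strategy as the paper: establish an $L^r$ bound of the form $\|\Gamma(F,G)-\expe{\Gamma(F,G)}\|_r\leq C\ul{\eta}_N^{-m}(r-1)^{w_*m}\sqrt{\variance[\Gamma(F,G)]}$ with $m$ the effective chaos degree via hypercontractivity, then convert it to a $\psi_\alpha$ bound via Lemma~\ref{moment-psi}. Two implementation details differ from the paper, both toward extra but harmless work on your side. First, the paper exploits that $FG$ is a polynomial in which each individual $Y_i$ appears to degree at most $2$ (visible from \eqref{FG-expr} and \eqref{eq:spectral}), so it only needs $(2,r,\cdot)$-hypercontractivity \emph{up to degree $2$}, and hence only the explicit lower bounds on $\eta_{\nu,1}$ and $\eta_{\nu,2}$ — which already give $\min_{k\leq 2}\eta_{\nu,k}^{1/k}\geq 1\wedge\sqrt{\nu}=\eta_i$ with no extraneous constant. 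You instead posit hypercontractivity up to degree $p+q$, which forces you to control $\eta_{\nu,k}^{1/k}$ for all $k\leq p+q$; your Laguerre-variance estimate is correct (the factor $(2/k)^{1/(2k)}$ is bounded below for fixed $k$, and $\nu^{1/(2k)}\geq\sqrt\nu$ for $\nu\leq1$), but it produces a $p,q$-dependent constant in the hypercontractive parameter, which the paper avoids. Second, in the pure Gaussian case the paper shows $J_{p+q-1}(FG)=0$ via the product formula for multiple Wiener--It\^o integrals and then invokes Janson's direct moment bound for Gaussian polynomials (Theorem~5.11), whereas you obtain the same cancellation by the parity argument $Y_i\mapsto -Y_i$ and then keep hypercontractivity throughout; both routes are valid and the parity trick is a nice alternative. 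The rest — applying \eqref{eq:hyper} chaos-by-chaos with the triangle inequality rather than in one shot to the full sum — only costs a $\sqrt{p+q}$ factor from Cauchy--Schwarz that is absorbed into $C_{p,q}$.
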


\begin{proof}
First we consider the case $w_*=1$, i.e.~the general case. 
Since $\eta_{\nu,1}=1\wedge\sqrt{2\nu}\geq1\wedge\sqrt{\nu}$ and $\eta_{\nu,2}=\min\{1,\sqrt{4\nu(\nu+1)/3}\}\geq1\wedge\nu$, Lemmas \ref{normal-hyper}--\ref{gamma-hyper} imply that $Y_i$ is $(2,r,\ul{\eta}_N(r-1)^{-1})$-hypercontractive up to degree 2 for all $r>2$ and $i\in[N]$. Therefore, the sequence $\bs{Y}$ is $(2,r,\ul{\eta}_N(r-1)^{-1})$-hypercontractive up to degree 2 for all $r>2$ by Lemma \ref{hyper-union}. 
Now, combining this fact with \eqref{eq:spectral}, \eqref{FG-expr} and \eqref{gamma-expr}, we obtain
\begin{align*}
\|\Gamma(F,G)-\expe{\Gamma(F,G)}\|_r
&\leq\left\|\sum_{k=1}^{p+q-1}\frac{p+q-k}{2}\ul{\eta}_N^{-k}(r-1)^{k}J_k(FG)\right\|_2\\
&\leq\ul{\eta}_N^{-(p+q-1)}(r-1)^{p+q-1}\sqrt{\sum_{k=1}^{p+q-1}\frac{(p+q-k)^2}{4}J_k(FG)^2}\\
&=\ul{\eta}_N^{-(p+q-1)}(r-1)^{p+q-1}\sqrt{\variance[\Gamma(F,G)]}
\end{align*}
for all $r>2$. Consequently, Lemma \ref{moment-psi} yields the desired result.

Next we consider the case $w_*=1/2$, i.e.~we assume $Y_i\sim \mathcal{N}(0,1)$ for every $i\in[N]$. In this case we have $\ul{\eta}_N=1$. Moreover, product formulae for multiple Wiener-It\^o integrals with respect to an isonormal Gaussian process (see e.g.~\cite[Theorem 2.7.10]{NP2012}) yield $J_{p+q-1}(FG)=0$. Therefore, \eqref{gamma-expr} implies that the variable $\Gamma(F,G)-\expe{\Gamma(F,G)}$ is a polynomial of degree $\leq p+q-2$ in the variables $Y_1,\dots,Y_N$. Hence, by Theorem 5.11 and Remark 5.11 in \cite{Janson1997} we obtain 
$
\|\Gamma(F,G)-\expe{\Gamma(F,G)}\|_r
\leq(r-1)^{(p+q-2)/2}\sqrt{\variance[\Gamma(F,G)]}
$ 
for all $r>2$. Consequently, Lemma \ref{moment-psi} again yields the desired result.
\end{proof}

\subsection{Proof of Proposition \ref{prop:normal-gamma}}

We have already established the non-negativity of $\kappa_4(Q(f_j;\bs{Y}))$'s in Proposition \ref{gamma-bound}. 
The remaining claim of the proposition follows once we prove \eqref{max-normal-gamma}. 
Noting that $\expe{\Gamma(Q(f_j;\bs{Y}),Q(f_k;\bs{Y}))}=q_j\expe{Q(f_j;\bs{Y})Q(f_k;\bs{Y})}$ for every $(j,k)\in[d]^2$ by \eqref{eq:IBP}, we obtain \eqref{max-normal-gamma} by Proposition \ref{gamma-bound}, Lemmas \ref{gamma-psi} and \ref{max-psi}. \hfill\qed

\section{Randomized Lindeberg method}\label{sec:lindeberg}

For any $\varpi\geq0$ and $x\geq0$, we set 
\[
\chi_\varpi(x)=
\left\{\begin{array}{ll}
\exp(-x^{1/\varpi}) & \text{if }\varpi>0,\\
1_{[0,1)}(x) & \text{if }\varpi=0.
\end{array}\right.
\]
The aim of this section is to prove the following result. 
\begin{proposition}\label{prop:lindeberg}
Set $\Lambda_i:=(\log d)^{(\ol{q}_d-1)/\alpha}\max_{1\leq k\leq d}M_N^{q_k-1}\sqrt{\influence_{i}(f_k)}$ for $i\in[N]$. 
Let $\bs{X}=(X_i)_{i=1}^N$ and $\bs{Y}=(Y_i)_{i=1}^N$ be two sequences of independent centered random variables with unit variance. 
Suppose that $M_N:=\max_{1\leq i\leq N}(\|X_i\|_{\psi_\alpha}\vee\|Y_i\|_{\psi_\alpha})<\infty$ for some $\alpha\in(0,2]$. Suppose also that there is an integer $m\geq3$ such that $\expe{X_i^r}=\expe{Y_i^r}$ for all $i\in[N]$ and $r\in[m-1]$. Then, for any $h\in C^m_b(\mathbb{R})$, $\beta>0$ and $\tau,\rho\geq0$ with $\tau\rho M_N\max_{1\leq i\leq N}\Lambda_i\leq\beta^{-1}$, we have 
\begin{align}
&\sup_{y\in\mathbb{R}^d}\left|\ex{h\left(\Phi_\beta(\bs{Q}(\bs{X})-y)\right)}-\ex{h\left(\Phi_\beta(\bs{Q}(\bs{Y})-y)\right)}\right|\nonumber\\
&\leq C\lpa\max_{1\leq j\leq m}\beta^{m-j}\|h^{(j)}\|_\infty\rpa\left\{(\log d)^{m(\ol{q}_d-1)/\alpha}\max_{1\leq k\leq d}M_N^{mq_k}\sum_{i=1}^N\influence_i(f_k)^{m/2}\right.\nonumber\\
&\qquad\left.+\lpa 
e^{-\lpa\frac{\tau}{K_1}\rpa^{\alpha}}
+\chi_{(\ol{q}_d-1)/\alpha}\lpa\frac{\rho}{K_2}\rpa(\rho\vee1)^m
+\exp\lpa-\lpa\frac{\tau\rho}{K_3}\rpa^{\alpha/\ol{q}_d}\rpa(\tau\rho\vee1)^m
\rpa
M_N^m\sum_{i=1}^N\Lambda_{i}^m\right\},\label{bound:lindeberg}
\end{align}
where $C>0$ depends only on $m,\alpha,\ol{q}_d$, $K_1$ depends only on $\alpha$, and $K_2,K_3>0$ depend only on $\alpha,\ol{q}_d$. 
\end{proposition}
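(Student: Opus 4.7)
The proof is a Lindeberg-style telescoping over the coordinates $i=1,\ldots,N$, combined with a truncation device designed specifically to avoid the dimension-inflating quantity $\sum_i\max_k\influence_i(f_k)$ flagged in Remark~\ref{rmk:npr}(b). Set $\Phi(x):=h(\Phi_\beta(x-y))$ and define the hybrid vectors $V^{(i)}:=\bs{Q}(X_1,\ldots,X_i,Y_{i+1},\ldots,Y_N)$, so that $\expe{\Phi(V^{(N)})}-\expe{\Phi(V^{(0)})}=\sum_{i=1}^N\expe{\Phi(V^{(i)})-\Phi(V^{(i-1)})}$. Because each $f_j$ is symmetric and vanishes on diagonals, the $i$-th coordinate enters $Q(f_j;\cdot)$ linearly: write $V^{(i)}_j=X_iA_{i,j}+V^{(i,0)}_j$ and $V^{(i-1)}_j=Y_iA_{i,j}+V^{(i,0)}_j$, where $A_{i,j}$ is a degree-$(q_j-1)$ polynomial in $(X_k)_{k<i}\cup(Y_k)_{k>i}$ with $\expe{A_{i,j}^2}=q_j^2\influence_i(f_j)$, and both $A_{i,j}$ and $V^{(i,0)}_j$ are independent of $(X_i,Y_i)$. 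Taylor-expand $\Phi$ around $V^{(i,0)}$ to order $m-1$ in the direction $A_{i,\cdot}$; the terms of orders $1,\ldots,m-1$ cancel between the $X_i$- and $Y_i$-branches thanks to the matching-moment hypothesis and this independence, leaving only the $m$-th order remainders on each side.

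\textbf{Controlling the remainder.} The $m$-th order term contains $\partial_{j_1\cdots j_m}\Phi(\xi)$ at an intermediate $\xi$ with $\|\xi-V^{(i,0)}\|_{\ell_\infty}\leq|X_i|\max_j|A_{i,j}|$; by Lemma~\ref{cck-derivative}(i)--(iii) this is dominated by $e^{8\|\xi-V^{(i,0)}\|_{\ell_\infty}\beta}\Upsilon^{j_1,\ldots,j_m}_\beta(V^{(i,0)})$. Split the expectation along the ``good'' event $G_i:=\{|X_i|\leq\tau M_N\}\cap\{\max_j|A_{i,j}|\leq\rho\Lambda_i\}$ (and its analogue with $Y_i$) and its complement. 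On $G_i$ the hypothesis $\tau\rho M_N\max_i\Lambda_i\leq\beta^{-1}$ renders the exponential factor at most $e^8$, and what survives is a multilinear expression in the $A_{i,j}$'s tested against $\Upsilon_\beta$ at the base point. Collapsing $\sum_{j_1,\ldots,j_m}\Upsilon_\beta^{j_1,\ldots,j_m}$ via Lemma~\ref{cck-derivative}(ii) while carefully applying a Cauchy--Schwarz/H\"older split (so that the remaining $|A_{i,k}|^{m}$ factor is controlled by a single $k=k(i)$ rather than the coordinatewise maximum inside the sum) and then invoking the hypercontractive bound $\|A_{i,k}\|_{\psi_{\alpha/(\ol{q}_d-1)}}\lesssim M_N^{q_k-1}\sqrt{\influence_i(f_k)}$ yields the first summand of~\eqref{bound:lindeberg}, $(\log d)^{m(\ol{q}_d-1)/\alpha}\max_k M_N^{mq_k}\sum_i\influence_i(f_k)^{m/2}$, with the $\max_k$ safely outside the sum over $i$.

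\textbf{Exceptional events and main obstacle.} Outside $G_i$ the exponential factor in Lemma~\ref{cck-derivative}(iii) is uncontrolled, so one reverts to the uniform bound $c_m\max_k\beta^{m-k}\|h^{(k)}\|_\infty$ on the derivatives and estimates $\expe{|X_iA_{i,j_1}\cdots A_{i,j_m}|\mathbf{1}_{G_i^c}}$ via the $\psi$-norm tail inequalities of Appendix~\ref{sec:psi}: the $\psi_\alpha$ bound on $X_i$ gives the $e^{-(\tau/K_1)^\alpha}$ factor; the $\psi_{\alpha/(\ol{q}_d-1)}$ bound on the polynomial $A_{i,j}$, extended to the $[d]$-indexed maximum by union bound, gives $\chi_{(\ol{q}_d-1)/\alpha}(\rho/K_2)$; and the $\psi_{\alpha/\ol{q}_d}$ bound on the product $X_iA_{i,j}$ (degree $\ol{q}_d$) gives $\exp(-(\tau\rho/K_3)^{\alpha/\ol{q}_d})$, each weighted by the appropriate absolute-moment factors $(\rho\vee 1)^m$ and $(\tau\rho\vee 1)^m$ coming from integration against the tails. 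The main obstacle is the good-region bookkeeping in the second paragraph: one must arrange the $\sum_{j_1,\ldots,j_m}$ and the $\Upsilon$-sum so that the $m$-fold pointwise maximum of $|A_{i,k}|$ collapses to a quantity that, after summing over $i$, leaves $\max_k$ outside $\sum_i$. This is exactly what the randomization of \citet{DZ2017} achieves by introducing a stochastic reweighting that decouples the coordinatewise maximum from the index sum --- and which must be transplanted from the sum-of-independent-vectors setting to the present homogeneous-sum setting using the linear-in-$X_i$ structure established in the first paragraph.
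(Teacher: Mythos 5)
Your outline is largely aligned with the paper's strategy --- Lindeberg telescoping with moment matching to order $m-1$, a truncation chosen so that the factor $e^{8\|\cdot\|_{\ell_\infty}\beta}$ from Lemma \ref{cck-derivative}(iii) stays bounded, $\psi_\alpha$-tail estimates for the exceptional events, and an explicit nod to the randomization of \citet{DZ2017} to defeat the $\sum_i\max_k$ pathology. But the crucial step is missing, and the proposal is internally inconsistent about it.

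In your second paragraph you assert that after collapsing the derivative sum via Lemma \ref{cck-derivative}(ii) a "Cauchy--Schwarz/H\"older split" already delivers the first summand of \eqref{bound:lindeberg} with $\max_k$ outside $\sum_i$. This is not true as stated: whatever inner product inequality you apply, you eventually face $\sum_i \expe{\max_k|A_{i,k}|^m}$, because the base point $\bs{U}_i$ of $\Upsilon_\beta$ changes with $i$, so the $N$ expectations cannot be merged. Your third paragraph then correctly identifies exactly this as "the main obstacle" and says it requires the randomization --- which contradicts the earlier claim. What is missing is the actual mechanism: (i) averaging the telescoping over all replacement orders $\sigma\in\mathfrak{S}_N$, (ii) introducing the Bernoulli weights $\delta_i$ with $P(\delta_i=1)=i/(N+1)$ and the mixed variable $\zeta_{i,\sigma(i)}=\delta_i X_{\sigma(i)}+(1-\delta_i)Y_{\sigma(i)}$, which (via Lemma \ref{cck-derivative}(iii) on the good event) moves the base point from $\bs{U}^\sigma_i$ to one of the full hybrids $\bs{Q}^{(A\cup\{a\},B)}$ or $\bs{Q}^{(A,B\cup\{a\})}$, and (iii) the combinatorial re-indexing of $\frac{1}{N!}\sum_\sigma\sum_i$ by the partition classes $(A,B)\in\mathcal{A}_{i,a}$ (using Lemma \ref{lemma:commute}), which collapses the double sum so that the $\Upsilon_\beta$-factor sits at a base point $\bs{Q}^{(A,B)}$ that is independent of the inner index $a$. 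Only then can $\sum_a|V_{j_1,a}|^m$ be held inside one expectation, bounded pointwise by $\max_k\sum_a|V_{k,a}|^m$, and treated by the $\psi_\alpha$-maximal inequalities at the cost of a $\log d$ power. Absent (i)--(iii), the proposal cannot reach the first term of \eqref{bound:lindeberg}, so the gap is real and load-bearing, not a detail to be filled in.

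A smaller remark: $\expe{A_{i,j}^2}$ is $q_j^2(q_j-1)!\,\influence_i(f_j)$, not $q_j^2\,\influence_i(f_j)$, though since the factorial depends only on $q_j\leq\ol{q}_d$ this is absorbed in the constants and does not affect the statement.
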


\begin{rmk}
Proposition \ref{prop:lindeberg} can be viewed as a version of \cite[Theorem 7.1]{NPR2010aop}. Apart from that we take account of higher moment matching, there are important differences between these two results. 
On the one hand, the latter takes all $C^3$ functions with bounded third-order partial derivatives as test functions, while the former focus only on test functions of the form $x\mapsto h(\Phi_\beta(x-y))$ for some $h\in C^m_b(\mathbb{R})$ and $y\in\mathbb{R}^d$. 
On the other hand, in the bound of \eqref{bound:lindeberg}, terms like $\sum_{i=1}^N\max_{1\leq k\leq d}\influence_i(f_k)$ always appear with exponential factors, so we can remove such terms by appropriately selecting the parameters $\tau,\rho$. In contrast, such a quantity appears (as the constant $\mathsf{C}$) in the dominant term of the bound given by \cite[Theorem 7.1]{NPR2010aop}. As pointed out in Remark \ref{rmk:npr}(b), this point can be crucial in a high-dimensional setting, and this phenomenon originates from a (na\"ive) application of the Lindeberg method. To avoid this difficulty, we use a randomized version of the Lindeberg method, which was originally introduced in \citet{DZ2017} for sums of independent random variables. 

Here, we briefly describe how we shall randomize the standard Lindeberg method. To fix the idea, we focus on the case $d=q_1=1$. Roughly speaking, using a certain interpolation technique, the standard Lindeberg method replaces the variables $X_1,\dots,X_N$ by $Y_1,\dots,Y_N$ one by one \textit{with this order} (see e.g.~the proof of \cite[Proposition 11.1.3]{NP2012} for more details). However, given that $\sum_{i=1}^Nf_1(i)x_i$ is invariant under permutation of $((1,x_1),\dots,(N,x_N))$, there is no reason to keep the order of replacement to apply the Lindeberg method. Namely, for any $\sigma\in\mf{S}_N$, we can apply the Lindeberg method to replace $X_{\sigma(1)},\dots,X_{\sigma(N)}$ by $Y_{\sigma(1)},\dots,Y_{\sigma(N)}$, then we obtain another bound for the difference between $\sum_{i=1}^Nf_1(i)X_i=\sum_{i=1}^Nf_1(\sigma(i))X_{\sigma(i)}$ and $\sum_{i=1}^Nf_1(i)Y_i=\sum_{i=1}^Nf_1(\sigma(i))Y_{\sigma(i)}$. Now we can deduce a new bound by averaging all such bounds over $\sigma\in\mf{S}_N$. Namely, we randomly choose $\sigma\in\mf{S}_N$ and construct a Lindeberg type bound associated with $\sigma$, then we take the expectation with respect to $\sigma$. 
\end{rmk}

For the proof we need three auxiliary results. The first one is a generalization of \cite[Lemma S.5.1]{KC2018}:
\begin{lemma}\label{lemma:kc}
Let $\xi$ be a non-negative random variable such that $P(\xi>x)\leq Ae^{-(x/B)^\alpha}$ for all $x\geq0$ and some constants $A,B,\alpha>0$. Then we have 
\[
\ex{\xi^p1_{\{\xi>t\}}}\leq A\left(1+\frac{2p-\alpha}{p-\alpha}\right)\lpa t\vee\{(2(p/\alpha-1))^{1/\alpha}B\}\rpa^pe^{-(t/B)^\alpha}
\]
for any $p>\alpha$ and $t>0$.
\end{lemma}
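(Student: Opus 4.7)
The plan is to start from the layer-cake identity
\[
\ex{\xi^p 1_{\{\xi>t\}}}=t^pP(\xi>t)+p\int_t^\infty x^{p-1}P(\xi>x)\,dx,
\]
and substitute the hypothesized tail bound $P(\xi>x)\leq Ae^{-(x/B)^\alpha}$ to reduce everything to controlling the integral $I(t):=\int_t^\infty x^{p-1}e^{-(x/B)^\alpha}\,dx$. The boundary term already contributes the desired $At^pe^{-(t/B)^\alpha}$, so the whole question is whether we can bound $pI(t)$ by a constant multiple of $\bigl(t\vee t_0\bigr)^p e^{-(t/B)^\alpha}$, where $t_0:=(2(p/\alpha-1))^{1/\alpha}B$ is the threshold appearing in the statement.

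The key computation is an integration by parts based on the identity
\[
\frac{d}{dx}\!\left[x^{p-\alpha}e^{-(x/B)^\alpha}\right]=(p-\alpha)x^{p-\alpha-1}e^{-(x/B)^\alpha}-\frac{\alpha}{B^\alpha}x^{p-1}e^{-(x/B)^\alpha},
\]
which, integrated from $t$ to $\infty$, expresses $I(t)$ in terms of the lower-order integral $\int_t^\infty x^{p-\alpha-1}e^{-(x/B)^\alpha}\,dx$ plus the boundary contribution $\frac{B^\alpha}{\alpha}t^{p-\alpha}e^{-(t/B)^\alpha}$. The threshold $t_0$ is chosen precisely so that for $x\geq t_0$ one has $(p-\alpha)x^{p-\alpha-1}\leq \frac{\alpha}{2B^\alpha}x^{p-1}$: that inequality makes the lower-order integral absorbable into half of $I(t)$, and solving for $I(t)$ yields $I(t)\leq \frac{2B^\alpha}{\alpha}t^{p-\alpha}e^{-(t/B)^\alpha}$ when $t\geq t_0$. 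Since $2B^\alpha/(\alpha t^\alpha)\leq 1/(p-\alpha)$ in this regime, this collapses to $pI(t)\leq \frac{p}{p-\alpha}t^pe^{-(t/B)^\alpha}$, and adding the boundary term produces the factor $1+\frac{p}{p-\alpha}=\frac{2p-\alpha}{p-\alpha}$.

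For the remaining case $t<t_0$, I would split $I(t)=\int_t^{t_0}+\int_{t_0}^\infty$. On $[t,t_0]$, the exponential is bounded by $e^{-(t/B)^\alpha}$ and integration of $x^{p-1}$ gives at worst $t_0^p/p$; on $[t_0,\infty)$ the estimate of the previous paragraph applies with $t$ replaced by $t_0$ and then loses only another $e^{-(t/B)^\alpha}$ factor when we weaken $e^{-(t_0/B)^\alpha}$. Collecting the three pieces (boundary plus the two integrals) and using $t<t_0$ to replace $t^p$ by $t_0^p$ in the boundary term, the total constant works out to $1+1+\frac{p}{p-\alpha}=\frac{3p-2\alpha}{p-\alpha}=1+\frac{2p-\alpha}{p-\alpha}$, matching the statement exactly with $t\vee t_0$ in place of $t_0$.

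The only mildly delicate point is getting the constants tight: one has to verify that the threshold $t_0=(2(p/\alpha-1))^{1/\alpha}B$ simultaneously (i) guarantees the self-absorbing inequality $(p-\alpha)x^{-\alpha}\leq \alpha/(2B^\alpha)$ used in the integration-by-parts estimate and (ii) makes the prefactor $2B^\alpha/(\alpha t_0^\alpha)$ collapse to $1/(p-\alpha)$ so that no extra $B$-dependence leaks out. Both identities follow from the same algebraic choice, so once this is set up correctly the rest is routine; there is no genuine obstacle beyond careful bookkeeping of the two cases $t\geq t_0$ and $t<t_0$.
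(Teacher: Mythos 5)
Your proposal is correct, and it closely mirrors the paper's proof in structure: both start from the layer‑cake identity, both split into the cases $t\geq t_0$ and $t<t_0$ with $t_0=(2(p/\alpha-1))^{1/\alpha}B$, and both bookkeep the constants to arrive at $1+\tfrac{2p-\alpha}{p-\alpha}$. The only real difference is that the paper changes variables $y=(x/B)^\alpha$ to reduce the tail integral to an incomplete gamma function and then invokes an inequality of Natalini and Palumbo (their Eq.\ (3.2)) as a black box, whereas you derive the corresponding estimate directly via the integration‑by‑parts identity $\frac{d}{dx}\bigl[x^{p-\alpha}e^{-(x/B)^\alpha}\bigr]=(p-\alpha)x^{p-\alpha-1}e^{-(x/B)^\alpha}-\frac{\alpha}{B^\alpha}x^{p-1}e^{-(x/B)^\alpha}$ and the self‑absorbing observation that $(p-\alpha)x^{-\alpha}\leq\alpha/(2B^\alpha)$ for $x\geq t_0$. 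That makes your proof self‑contained at no cost in constants — in fact your $t\geq t_0$ case produces the tighter coefficient $\tfrac{2p-\alpha}{p-\alpha}$ rather than $1+\tfrac{2p-\alpha}{p-\alpha}$ — so it is a clean substitute for citing the gamma‑function inequality. I verified each of your estimates: the derivative identity is correct, the choice of $t_0$ makes the lower‑order term absorbable into $\tfrac12 I(t)$, the resulting bound $I(t)\leq\tfrac{2B^\alpha}{\alpha}t^{p-\alpha}e^{-(t/B)^\alpha}$ combined with $\tfrac{2B^\alpha}{\alpha t^\alpha}\leq\tfrac{1}{p-\alpha}$ for $t\geq t_0$ gives the stated bound, and the three‑piece decomposition for $t<t_0$ (boundary term, integral on $[t,t_0]$, integral on $[t_0,\infty)$) adds up to $1+1+\tfrac{p}{p-\alpha}=1+\tfrac{2p-\alpha}{p-\alpha}$ as claimed.
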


\begin{proof}
By \cite[Theorem 8.16]{Rudin1987} and a change of variables we obtain
\begin{align*}
\ex{\xi^p1_{\{\xi>t\}}}
&\leq At^pe^{-(t/B)^\alpha}+\frac{pAB^p}{\alpha}\int_{(t/B)^\alpha}^\infty y^{p/\alpha-1}e^{-y}dy.
\end{align*}
Now if $(t/B)^\alpha\geq2(p/\alpha-1)$, Eq.(3.2) of \cite{NP2000} yields
\[
\int_{(t/B)^\alpha}^\infty y^{p/\alpha-1}e^{-y}dy\leq2(t/B)^{p-\alpha}e^{-(t/B)^\alpha}.
\]
Hence we obtain
\begin{align*}
\expe{\xi^p1_{\{\xi>t\}}}
&\leq A\left(1+\frac{p}{2(p-\alpha)}\right)t^pe^{-(t/B)^\alpha}
\leq A\left(1+\frac{2p-\alpha}{p-\alpha}\right)t^pe^{-(t/B)^\alpha},
\end{align*}
where we use $p<2(2p-\alpha)$ to obtain the last inequality. 
Meanwhile, if $(t/B)^\alpha<2(p/\alpha-1)$, we have
\begin{align*}
\int_{(t/B)^\alpha}^\infty y^{p/\alpha-1}e^{-y}dy
&=\int_{(t/B)^\alpha}^{2(p/\alpha-1)} y^{p/\alpha-1}e^{-y}dy+\int_{2(p/\alpha-1)}^\infty y^{p/\alpha-1}e^{-y}dy\\
&\leq\frac{2^{p/\alpha}(p/\alpha-1)^{p/\alpha}}{p/\alpha}e^{-(t/B)^\alpha}+2(2(p/\alpha-1))^{p/\alpha-1}e^{-(2(p/\alpha-1))^\alpha}\\
&\leq\frac{\alpha(2p-\alpha)}{p(p-\alpha)}(2(p/\alpha-1))^{p/\alpha}e^{-(t/B)^\alpha},
\end{align*}
where we again use Eq.(3.2) of \cite{NP2000} to estimate the second term in the right side of the first line. Consequently, we obtain 
\begin{align*}
\ex{\xi^p1_{\{\xi>t\}}}
&\leq A\left(1+\frac{2p-\alpha}{p-\alpha}\right)(2(p/\alpha-1))^{p/\alpha}B^pe^{-(t/B)^\alpha}.
\end{align*}
This completes the proof.
\end{proof}

The second one is a moment inequality for homogeneous sums with a sharp constant:
\begin{lemma}\label{lem:hom-mom}
Let $\bs{X}=(X_i)_{i=1}^N$ be a sequence of independent centered random variables. Suppose that $M:=\max_{1\leq i\leq N}\|X_i\|_{\psi_\alpha}<\infty$ for some $\alpha\in(0,2]$. 
Also, let $q\in\mathbb{N}$ and $f:[N]^q\to\mathbb{R}$ be a symmetric function vanishing on diagonals. Then we have
\[
\left\|Q(f;\bs{X})\right\|_p\leq K_{\alpha,q}p^{q/\alpha}M^q\|f\|_{\ell_2}
\]
for any $p\geq2$, where $K_{\alpha,q}>0$ depends only on $\alpha,q$. 
\end{lemma}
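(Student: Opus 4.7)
The plan is to argue by induction on $q$, using the classical decoupling inequality for homogeneous sums to reduce to a fully multilinear form in independent copies of $\bs{X}$, with a Rosenthal-type $L^p$ bound for sums of independent centered $\psi_\alpha$ variables as the single-step engine. For the base case $q=1$, $Q(f;\bs{X})=\sum_i f(i)X_i$ is a sum of independent centered $\psi_\alpha$ variables with coefficients $f(i)$, and a standard Rosenthal-type inequality yields
\[
\Bigl\|\sum_i f(i)X_i\Bigr\|_p
\leq C\Bigl(\sqrt{p}\,M\|f\|_{\ell_2}+p^{1/\alpha}M\Bigl(\sum_i|f(i)|^p\Bigr)^{1/p}\Bigr),
\]
and both terms collapse into $K_{\alpha,1}p^{1/\alpha}M\|f\|_{\ell_2}$ by combining $(\sum_i|f(i)|^p)^{1/p}\leq\|f\|_{\ell_2}$ (valid since $p\geq 2$) with $\sqrt{p}\leq p^{1/\alpha}$ (valid since $\alpha\leq 2$).

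For the inductive step, I would first invoke the decoupling inequality of de la Peña--Montgomery-Smith: since $f$ is symmetric and vanishes on diagonals, there is $C_q>0$ depending only on $q$ such that
\[
\|Q(f;\bs{X})\|_p\leq C_q\|\widetilde{Q}\|_p,\qquad
\widetilde{Q}:=\sum_{i_1,\ldots,i_q=1}^N f(i_1,\ldots,i_q)X^{(1)}_{i_1}\cdots X^{(q)}_{i_q},
\]
where $\bs{X}^{(1)},\ldots,\bs{X}^{(q)}$ are independent copies of $\bs{X}$. Conditioning on $\bs{X}^{(2)},\ldots,\bs{X}^{(q)}$ and setting $Y_{i_1}:=\sum_{i_2,\ldots,i_q}f(i_1,\ldots,i_q)X^{(2)}_{i_2}\cdots X^{(q)}_{i_q}$, the base case applied to $\widetilde{Q}=\sum_{i_1}Y_{i_1}X^{(1)}_{i_1}$ gives
\[
\bigl\|\widetilde{Q}\mid \bs{X}^{(2)},\ldots,\bs{X}^{(q)}\bigr\|_p
\leq K_{\alpha,1}p^{1/\alpha}M\Bigl(\sum_{i_1}Y_{i_1}^2\Bigr)^{1/2},
\]
and Minkowski's inequality for $L^{p/2}$ (valid because $p\geq 2$) then yields $\|\widetilde{Q}\|_p\leq K_{\alpha,1}p^{1/\alpha}M\bigl(\sum_{i_1}\|Y_{i_1}\|_p^2\bigr)^{1/2}$.

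Each $Y_{i_1}$ is itself a decoupled multilinear form of degree $q-1$ with symmetric kernel $g_{i_1}(\cdot):=f(i_1,\cdot)$ vanishing on diagonals and of $\ell_2$-norm $\sqrt{\influence_{i_1}(f)}$. Applying the inductive hypothesis --- strengthened to cover decoupled multilinear forms, which is justified either by the two-sided decoupling inequality or by directly re-running the same argument --- gives $\|Y_{i_1}\|_p\leq K_{\alpha,q-1}p^{(q-1)/\alpha}M^{q-1}\sqrt{\influence_{i_1}(f)}$. Since $\sum_{i_1}\influence_{i_1}(f)=\|f\|_{\ell_2}^2$, substitution closes the induction with a constant $K_{\alpha,q}=C_qK_{\alpha,1}K_{\alpha,q-1}$. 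The main obstacle is precisely this bookkeeping point: the inductive hypothesis must be formulated to cover both ordinary homogeneous sums and their decoupled counterparts simultaneously, and one must verify that the intermediate quantity $\bigl(\sum_{i_1}\|Y_{i_1}\|_p^2\bigr)^{1/2}$ indeed telescopes cleanly through the influences rather than producing additional entropy factors.
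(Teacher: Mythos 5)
Your overall structure --- decouple via de la Pe\~na--Montgomery-Smith, then induct on $q$ by conditioning on all but one layer, applying a linear-form bound, and finishing with Minkowski's inequality at exponent $p/2$ --- is exactly the paper's route: Lemma~\ref{decoupled-mom} runs this recursion directly on the decoupled array (which is the ``directly re-run the same argument'' option you mention at the end), and the telescoping via $\sum_{i_1}\influence_{i_1}(f)=\|f\|_{\ell_2}^2$ works with no entropy-factor issue, so your closing caveat is moot.

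The one step you gloss over is where the real technical content of the lemma sits. What you call a ``standard Rosenthal-type inequality'' for the base case $q=1$ is \emph{not} the standard Rosenthal inequality: its optimal constant is of order $p/\log p$ (Johnson--Schechtman--Zinn), so after plugging in $\|X_i\|_p\leq c_\alpha M p^{1/\alpha}$ you would obtain a factor of order $(p/\log p)M\|f\|_{\ell_2}$ rather than $p^{1/\alpha}M\|f\|_{\ell_2}$, which is too weak whenever $\alpha>1$ (for $\alpha=2$ one needs $\sqrt{p}$, not $p/\log p$, and this matters crucially when the lemma is later fed into Lemma~\ref{moment-psi} to read off a $\psi_{\alpha/q}$-norm). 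This is exactly why the paper devotes Lemmas~\ref{lemma:symmetrization}--\ref{lemma:sum-psi} to establishing the linear engine $\|\sum_i a_i\zeta_i\|_p\leq K_\alpha Mp^{1/\alpha}\|a\|_{\ell_2}$: symmetrize the $\zeta_i$, strongly dominate them by i.i.d.\ Weibull$(\alpha)$ variables, then apply the sharp Hitczenko--Montgomery-Smith--Oleszkiewicz inequality (for $\alpha\leq1$, which produces precisely the $(\sum_i|f(i)|^p)^{1/p}$ term you wrote) or the Gluskin--Kwapie\'n inequality (for $1<\alpha\leq2$, where the $\ell^p$-term must be replaced by an $\ell^\beta$-term with $\beta=\alpha/(\alpha-1)\geq2$, which is still dominated by $\|f\|_{\ell_2}$ because $\beta\geq2$). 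Your stated inequality and final conclusion are therefore correct, but the name, the attribution, and the exponent in the second term need to be fixed.
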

Since we need additional lemmas to prove Lemma \ref{lem:hom-mom}, we postpone its proof to Appendix \ref{sec:hom-mom}. 

The third one is a well-known elementary fact and immediately follows from the commutativity of addition, but it will deserve to be explicitly stated for later reference. 
\begin{lemma}\label{lemma:commute}
Let $S$ be a finite set and $\varphi$ be a real-valued function on $S$. Also, let $b:S\to S$ be a bijection. Then we have $\sum_{x\in A}\varphi(b(x))=\sum_{x\in b(A)}\varphi(x)$ for any $A\subset S$. 
\end{lemma}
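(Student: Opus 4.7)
The plan is to treat this as a one-line reindexing argument: since $b$ is a bijection on the finite set $S$, its restriction to any subset $A\subset S$ is a bijection from $A$ onto its image $b(A)$. Summation over a finite index set is invariant under such a relabeling, so substituting $y=b(x)$ carries the sum $\sum_{x\in A}\varphi(b(x))$ term-by-term onto $\sum_{y\in b(A)}\varphi(y)$.

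Concretely, I would first record the set-theoretic fact that $b\restriction_A \colon A \to b(A)$ is a bijection (injectivity comes from injectivity of $b$ on $S$; surjectivity onto $b(A)$ is by definition of the image). Then I would invoke the standard change-of-variables identity for finite sums: if $\pi\colon I\to J$ is a bijection between finite index sets and $\psi\colon J\to\mathbb{R}$ is any function, then $\sum_{i\in I}\psi(\pi(i))=\sum_{j\in J}\psi(j)$. Applying this with $I=A$, $J=b(A)$, $\pi=b\restriction_A$, and $\psi=\varphi\restriction_{b(A)}$ yields the claim.

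There is no genuine obstacle here; the statement is the defining property of a bijection used to reindex a finite sum, and the proof fits on a single line. I would present it as such, with a brief parenthetical noting that finiteness of $S$ ensures all sums involved are well-defined and that the underlying bijective reindexing does not rely on any ordering of $A$ or $b(A)$.
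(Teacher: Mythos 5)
Your proof is correct and matches the paper's treatment: the paper states this lemma without proof, noting only that it "immediately follows from the commutativity of addition," which is exactly the bijective-reindexing observation you spell out. Nothing further is needed.
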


Now we turn to the main body of the proof. Throughout the proof, we will use the standard multi-index notation. 
For a multi-index $\lambda=(\lambda_1,\dots,\lambda_d)\in\mathbb{Z}_+^d$, we set $|\lambda|:=\lambda_1+\cdots+\lambda_d$, $\lambda!:=\lambda_1!\cdots\lambda_d!$ and $\partial^\lambda:=\partial_1^{\lambda_1}\cdots\partial_d^{\lambda_d}$ as usual. Also, given a vector $x=(x_1,\dots,x_d)\in\mathbb{R}^d$, we write $x^\lambda=x_1^{\lambda_1}\cdots x_d^{\lambda_d}$. 
\begin{proof}[Proof of Proposition \ref{prop:lindeberg}]
Without loss of generality, we may assume that $\bs{X}$ and $\bs{Y}$ are independent. 
Throughout the proof, for two real numbers $a$ and $b$, the notation $a\lesssim b$ means that $a\leq cb$ for some constant $c>0$ which depends only on $m,\alpha,\ol{q}_d$. 

Take a vector $y\in\mathbb{R}^d$ and define the function $\Psi:\mathbb{R}^d\to\mathbb{R}$ by $\Psi(x)=h(\Phi_\beta(x-y))$ for $x\in\mathbb{R}^d$. 
For any $i\in[N]$, $\sigma\in\mathfrak{S}_N$ and $k\in[d]$, we define
\[
\bs{W}^\sigma_i=(W_{i,1}^\sigma,\dots,W_{i,N}^\sigma):=(X_{\sigma(1)},\dots,X_{\sigma(i)},Y_{\sigma(i+1)},\dots,Y_{\sigma(N)})
\]
and
\begin{align*}
U_{k,i}^\sigma&:=\sum_{\begin{subarray}{c}
i_1,\dots,i_{q_k}=1\\
i_1\neq i,\dots,i_{q_k}\neq i
\end{subarray}}^N
f_k(\sigma(i_1),\dots,\sigma(i_{q_k}))W^\sigma_{i,i_1}\cdots W^\sigma_{i,i_{q_k}},\\
V_{k,i}^\sigma&:=\sum_{\begin{subarray}{c}
i_1,\dots,i_{q_k}=1\\
\exists j:i_j= i
\end{subarray}}^N
f_k(\sigma(i_1),\dots,\sigma(i_{q_k}))\prod_{l:i_l\neq i}W_{i,i_l}^\sigma.
\end{align*}
Then we set $\bs{U}^\sigma_{i}=(U_{k,i}^\sigma)_{k=1}^d$ and $\bs{V}^\sigma_{i}=(V^\sigma_{k,i})_{k=1}^d$. 
By construction $\bs{U}^\sigma_{i}$ and $\bs{V}^\sigma_{i}$ are independent of $X_{\sigma(i)}$ and $Y_{\sigma(i)}$. 
Moreover, we have
$Q(f_k;\bs{W}^\sigma_{i-1})=U^\sigma_{k,i}+Y_{\sigma(i)}V^\sigma_{k,i}$ and $Q(f_k;\bs{W}^\sigma_{i})=U^\sigma_{k,i}+X_{\sigma(i)}V^\sigma_{k,i}$ (with $\bs{W}^\sigma_0:=(Y_{\sigma(1)},\dots,Y_{\sigma(N)})$). 
In particular, by Lemma \ref{lemma:commute} it holds that
\begin{align*}
Q(f_k;\bs{W}^\sigma_{0})&=\sum_{i_1,\dots,i_{q_k}=1}^Nf_k(\sigma(i_1),\dots,\sigma(i_{q_k}))Y_{\sigma(i_1)}\cdots Y_{\sigma(i_{q_k})}
=Q(f_k;\bs{Y}),\\
Q(f_k;\bs{W}^\sigma_{N})&=\sum_{i_1,\dots,i_{q_k}=1}^Nf_k(\sigma(i_1),\dots,\sigma(i_{q_k}))X_{\sigma(i_1)}\cdots X_{\sigma(i_{q_k})}
=Q(f_k;\bs{X}).
\end{align*}
Therefore, we obtain
\begin{align}
\left|\ex{\Psi(\bs{Q}(\bs{X}))}-\ex{\Psi(\bs{Q}(\bs{Y}))}\right|
&=\frac{1}{N!}\sum_{\sigma\in\mathcal{S}_N}\left|\ex{\Psi(\bs{Q}(\bs{W}^\sigma_N))}-\ex{\Psi(\bs{Q}(\bs{W}^\sigma_0))}\right|\nonumber\\
&\leq\frac{1}{N!}\sum_{\sigma\in\mathcal{S}_N}\sum_{i=1}^N\left|\ex{\Psi(\bs{Q}(\bs{W}^\sigma_i))}-\ex{\Psi(\bs{Q}(\bs{W}^\sigma_{i-1}))}\right|.\label{lindeberg-eq1}
\end{align}

Now, Taylor's theorem and the independence of $X_{\sigma(i)}$ and $Y_{\sigma(i)}$ from $\bs{U}_{i}^\sigma$ and $\bs{V}_{i}^\sigma$ yield
\begin{multline*}
\ex{\Psi(\bs{U}^\sigma_i+\xi\bs{V}^\sigma_{i})}
=\sum_{\lambda\in\mathbb{Z}_+^d:|\lambda|\leq m-1}\frac{1}{\lambda!}\ex{\partial^\lambda\Psi(\bs{U}^\sigma_i)\left(\bs{V}_i^\sigma\right)^\lambda}\ex{\xi^{|\lambda|}}\\
+\sum_{\lambda\in\mathbb{Z}_+^d:|\lambda|=m}\frac{m}{\lambda!}\int_0^1(1-t)^{m-1}\ex{\partial^\lambda\Psi(\bs{U}^\sigma_i+t\xi\bs{V}^\sigma_i)\xi^{m}(\bs{V}^\sigma_{i})^\lambda}dt
\end{multline*}
when $\xi=X_{\sigma(i)}$ or $\xi=Y_{\sigma(i)}$. 
Since we have $\expe{X_i^r}=\expe{Y_i^r}$ for all $i\in[N]$ and $r\in[m-1]$ by assumption, we obtain
\begin{align}
&\labs\ex{\Psi\lpa\bs{Q}^\sigma_{i})}-\ex{\Psi(\bs{Q}^\sigma_{i-1}\rpa}\rabs\nonumber\\
&\leq\sum_{\lambda\in\mathbb{Z}_+^d:|\lambda|=m}\frac{m}{\lambda!}\int_0^1(1-t)^{m-1}\ex{\labs\partial^\lambda\Psi\lpa\bs{U}^\sigma_i+tX_{\sigma(i)}\bs{V}^\sigma_i\rpa\rabs\labs X_{\sigma(i)}\rabs^{m}\labs\lpa\bs{V}^\sigma_{i}\rpa^\lambda\rabs}dt\nonumber\\
&\qquad+\sum_{\lambda\in\mathbb{Z}_+^d:|\lambda|=m}\frac{m}{\lambda!}\int_0^1(1-t)^{m-1}\ex{\labs\partial^\lambda\Psi\lpa\bs{U}^\sigma_i+tY_{\sigma(i)}\bs{V}^\sigma_i\rpa\rabs\labs Y_{\sigma(i)}\rabs^{m}\labs\lpa\bs{V}^\sigma_{i}\rpa^\lambda\rabs}dt\nonumber\\
&=:\mathbf{I}_i^\sigma+\mathbf{II}_i^\sigma,\label{lindeberg-eq2}
\end{align}
where $\mathbf{I}_i^\sigma:=\mathbf{I}_i^\sigma[X_{\sigma(i)}]+\mathbf{I}_i^\sigma[Y_{\sigma(i)}]$, $\mathbf{II}_i^\sigma:=\mathbf{II}_i^\sigma[X_{\sigma(i)}]+\mathbf{II}_i^\sigma[Y_{\sigma(i)}]$ and
\begin{align*}
\mathbf{I}_i^\sigma[\xi]&:=\sum_{\lambda\in\mathbb{Z}_+^d:|\lambda|=m}\frac{m}{\lambda!}\int_0^1(1-t)^{m-1}\ex{|\partial^\lambda\Psi(\bs{U}^\sigma_i+t\xi\bs{V}^\sigma_i)||\xi|^{m}|(\bs{V}^\sigma_{i})^\lambda|;\mathcal{E}_{\sigma,i}}dt,\\
\mathbf{II}_i^\sigma[\xi]&:=\sum_{\lambda\in\mathbb{Z}_+^d:|\lambda|=m}\frac{m}{\lambda!}\int_0^1(1-t)^{m-1}\ex{|\partial^\lambda\Psi(\bs{U}^\sigma_i+t\xi\bs{V}^\sigma_i)||\xi|^{m}|(\bs{V}^\sigma_{i})^\lambda|;\mathcal{E}_{\sigma,i}^c}dt
\end{align*}
for $\xi\in\{X_{\sigma(i)},Y_{\sigma(i)}\}$ and $\mathcal{E}_{\sigma,i}:=\{(|X_{\sigma(i)}|+|Y_{\sigma(i)}|)\|\bs{V}^\sigma_i\|_{\ell_\infty}\leq\tau\rho M_N\Lambda_{\sigma(i)}\}$.

First we consider $\mathbf{I}_i^\sigma$. 
Since $\tau\rho M_N\max_{1\leq i\leq N}\Lambda_i\leq\beta^{-1}$ by assumption, Lemma \ref{cck-derivative} yields
\begin{align*}
\mathbf{I}_i^\sigma
&\leq\frac{e^8}{m!}\sum_{j_1,\dots,j_m=1}^d\ex{\Upsilon_\beta^{j_1,\dots, j_m}(\bs{U}^\sigma_i-y)(|X_{\sigma(i)}|^{m}+|Y_{\sigma(i)}|^{m})|V^\sigma_{j_1,i}|^m}.
\end{align*}
Since $X_{\sigma(i)}$ and $Y_{\sigma(i)}$ are independent of $\bs{U}^\sigma_i$ and $\bs{V}^\sigma_i$, we obtain
\begin{align}
\mathbf{I}_i^\sigma
&\leq\frac{e^8}{m!}\sum_{j_1,\dots,j_m=1}^d\ex{\Upsilon_\beta^{j_1,\dots, j_m}(\bs{U}^\sigma_i-y)|V^\sigma_{j_1,i}|^m}\ex{|X_{\sigma(i)}|^{m}+|Y_{\sigma(i)}|^{m}}\nonumber\\
&\leq\frac{e^8}{m!}\sup_{1\leq i\leq N}\expe{|X_{i}|^{m}+|Y_{i}|^{m}}\left\{\mathbf{I}(1)_i^\sigma+\mathbf{I}(2)_i^\sigma+\mathbf{I}(3)_i^\sigma\right\},\label{eq:I-decompose}
\end{align}
where
\begin{align*}
\mathbf{I}(1)_i^\sigma&:=\sum_{j_1,\dots,j_m=1}^d\ex{\Upsilon_\beta^{j_1,\dots, j_m}(\bs{U}^\sigma_i-y)|V^\sigma_{j_1,i}|^m;\mathcal{C}_{\sigma,i}\cap\mathcal{D}_{\sigma,i}},\\
\mathbf{I}(2)_i^\sigma&:=\sum_{j_1,\dots,j_m=1}^d\ex{\Upsilon_\beta^{j_1,\dots, j_m}(\bs{U}^\sigma_i-y)|V^\sigma_{j_1,i}|^m;\mathcal{C}_{\sigma,i}^c},\\
\mathbf{I}(3)_i^\sigma&:=\sum_{j_1,\dots,j_m=1}^d\ex{\Upsilon_\beta^{j_1,\dots, j_m}(\bs{U}^\sigma_i-y)|V^\sigma_{j_1,i}|^m;\mathcal{D}_{\sigma,i}^c}
\end{align*}
and $\mathcal{C}_{\sigma,i}:=\{|X_{\sigma(i)}|+|Y_{\sigma(i)}|\leq\tau M_N\},\mathcal{D}_{\sigma,i}:=\{\|\bs{V}^\sigma_i\|_{\ell_\infty}\leq\rho\Lambda_{\sigma(i)}\}$. 

We begin by estimating $\mathbf{I}(1)_i^\sigma$. Let $(\delta_i)_{i=1}^N$ be a sequence of i.i.d.~Bernoulli variables independent of $\bs{X}$ and $\bs{Y}$ with $P(\delta_i=1)=1-P(\delta_i=0)=i/(N+1)$. We set $\zeta_{i,a}:=\delta_i X_a+(1-\delta_i)Y_a$ for all $i,a\in[N]$.
Then, since $\|\zeta_{i,\sigma(i)}\bs{V}^\sigma_i\|_{\ell_\infty}\leq\tau\rho M_N\max_{1\leq i\leq N}\Lambda_i\leq\beta^{-1}$ on the set $\mathcal{C}_{\sigma,i}\cap\mathcal{D}_{\sigma,i}$, by Lemma \ref{cck-derivative} we obtain
\begin{align*}
\mathbf{I}(1)_i^\sigma
\leq e^8\sum_{j_1,\dots,j_m=1}^d\ex{\Upsilon_\beta^{j_1,\dots, j_m}(\bs{U}^\sigma_i+\zeta_{i,\sigma(i)}\bs{V}^\sigma_i-y)|V^\sigma_{j_1,i}|^m}.
\end{align*}
The subsequent discussions are inspired by the proof of \cite[Lemma 2]{DZ2017} and we introduce some notation analogous to theirs. For any $i,a\in[N]$, we set
\[
\mathcal{A}_{i,a}=\{(A,B):A\subset[N],B\subset[N],A\cup B=[N]\setminus\{a\},\#A=i-1,\#B=N-i\},
\]
where $\#S$ denotes the number of elements in a set $S$. 
We also set
\[
\mathcal{A}_{i}=\{(A,B):A\subset[N],B\subset[N],A\cup B=[N],\#A=i,\#B=N-i\}
\]
for every $i\in\{0,1\dots,N\}$. 
Moreover, for any $A,B\subset[N]$ with $A\cap B=\emptyset$ and $i\in A\cup B$, we define the random variable $W^{(A,B)}_i$ by
\[
W^{(A,B)}_i=
\left\{
\begin{array}{ll}
X_i  & \text{if }i\in A,     \\
Y_i  & \text{if }i\in B.  
\end{array}
\right.
\]
Then we define
\[
Q_k^{(A,B)}:=\sum_{i_1,\dots,i_{q_k}=1}^Nf_k(i_1,\dots,i_{q_k})W^{(A,B)}_{i_1}\cdots W^{(A,B)}_{i_{q_k}}
\]
for any $k\in[d]$ and $(A,B)\in\bigcup_{i=0}^\infty\mc{A}_i$, and set $\bs{Q}^{(A,B)}:=(Q_k^{(A,B)})_{k=1}^d$. 
We also define
\begin{align*}
U_{k,a}^{(A,B)}&:=\sum_{\begin{subarray}{c}
i_1,\dots,i_{q_k}=1\\
i_1\neq a,\dots,i_{q_k}\neq a
\end{subarray}}^N
f_k(i_1,\dots,i_{q_k})W^{(A,B)}_{i_1}\cdots W^{(A,B)}_{i_{q_k}},\\
V_{k,a}^{(A,B)}&:=\sum_{\begin{subarray}{c}
i_1,\dots,i_{q_k}=1\\
\exists j:i_j= a
\end{subarray}}^N
f_k(i_1,\dots,i_{q_k})\prod_{l:i_l\neq a}W^{(A,B)}_{i_l}
\end{align*}
for any $k\in[d]$, $i,a\in[N]$ and $(A,B)\in\bigcup_{j=0}^N\mathcal{A}_{j,a}$ and set $\bs{U}_a^{(A,B)}:=(U_{k,a}^{(A,B)})_{k=1}^d$ and $\bs{V}_a^{(A,B)}:=(V_{k,a}^{(A,B)})_{k=1}^d$. 
Finally, for any $\sigma\in\mathfrak{S}_N$ and $i\in[N]$ we set $A^\sigma_i:=\{\sigma(1),\dots,\sigma(i-1)\}$ and $B^\sigma_i:=\{\sigma(i+1),\dots,\sigma(N)\}$. 

Now, since we have $W^\sigma_{i,j}=W^{(A^\sigma_i,B^\sigma_i)}_{\sigma(j)}$ for $j\in[N]\setminus\{i\}$, it holds that
$\bs{U}^\sigma_{i}=\bs{U}^{(A^\sigma_i,B^\sigma_i)}_{\sigma(i)}$ and $\bs{V}^\sigma_{i}=\bs{V}^{(A^\sigma_i,B^\sigma_i)}_{\sigma(i)}$ by Lemma \ref{lemma:commute}.
Therefore, we obtain
\begin{align*}
&\frac{1}{N!}\sum_{\sigma\in\mathfrak{S}_N}\sum_{i=1}^N\mathbf{I}(1)_i^\sigma\\
&\leq\frac{e^8}{N!}\sum_{\sigma\in\mathfrak{S}_N}\sum_{i=1}^N\sum_{j_1,\dots,j_m=1}^d\ex{\Upsilon_\beta^{j_1,\dots, j_m}\lpa\bs{U}^{(A^\sigma_i,B^\sigma_i)}_{\sigma(i)}+\zeta_{i,\sigma(i)}\bs{V}^{(A^\sigma_i,B^\sigma_i)}_{\sigma(i)}-y\rpa\labs V^{(A^\sigma_i,B^\sigma_i)}_{j_1,\sigma(i)}\rabs^m}\\
&=\frac{e^8}{N!}\sum_{i=1}^N\sum_{a=1}^N\sum_{\sigma\in\mathfrak{S}_N:\sigma(i)=a}\sum_{j_1,\dots,j_m=1}^d\ex{\Upsilon_\beta^{j_1,\dots, j_m}\lpa\bs{U}^{(A^\sigma_i,B^\sigma_i)}_{a}+\zeta_{i,a}\bs{V}^{(A^\sigma_i,B^\sigma_i)}_{a}-y\rpa\labs V^{(A^\sigma_i,B^\sigma_i)}_{j_1,a}\rabs^m}\\
&=\frac{e^8}{N!}\sum_{i=1}^N\sum_{a=1}^N\sum_{(A,B)\in\mathcal{A}_{i,a}}\#\{\sigma\in\mathfrak{S}_N:A^\sigma_i=A,\sigma(i)=a\}\sum_{j_1,\dots,j_m=1}^d\ex{\Upsilon_\beta^{j_1,\dots, j_m}\lpa\bs{U}^{(A,B)}_{a}+\zeta_{i,a}\bs{V}^{(A,B)}_{a}-y\rpa\labs V^{(A,B)}_{j_1,a}\rabs^m}\\
&=e^8\sum_{i=1}^N\frac{(i-1)!(N-i)!}{N!}\sum_{a=1}^N\sum_{(A,B)\in\mathcal{A}_{i,a}}\sum_{j_1,\dots,j_m=1}^d\ex{\Upsilon_\beta^{j_1,\dots, j_m}\lpa\bs{U}^{(A,B)}_{a}+\zeta_{i,a}\bs{V}^{(A,B)}_{a}-y\rpa\labs V^{(A,B)}_{j_1,a}\rabs^m}.
\end{align*}
Now, for $(A,B)\in\mathcal{A}_{i,a}$ we have $\bs{U}^{(A,B)}_{a}+X_a\bs{V}^{(A,B)}_{a}=\bs{Q}^{(A\cup\{a\},B)}$ and  $\bs{U}^{(A,B)}_{a}+Y_a\bs{V}^{(A,B)}_{a}=\bs{Q}^{(A,B\cup\{a\})}$, so we obtain
\begin{align*}
&\frac{1}{N!}\sum_{\sigma\in\mathfrak{S}_N}\sum_{i=1}^N\mathbf{I}(1)_i^\sigma\\
&\leq e^8\sum_{i=1}^N\frac{(i-1)!(N-i)!}{N!}\sum_{a=1}^N\sum_{(A,B)\in\mathcal{A}_{i,a}}\sum_{j_1,\dots,j_m=1}^d\left\{\frac{i}{N+1}\ex{\Upsilon_\beta^{j_1,\dots, j_m}\lpa\bs{Q}^{(A\cup\{a\},B)}-y\rpa\labs V^{(A,B)}_{j_1,a}\rabs^m}\right.\\
&\left.\hphantom{\frac{e^8}{m!}\sum_{i=1}^N\frac{(i-1)!(N-i)!}{N!}}+\frac{N+1-i}{N+1}\ex{\Upsilon_\beta^{j_1,\dots, j_m}\lpa\bs{Q}^{(A,B\cup\{a\})}-y\rpa\labs V^{(A,B)}_{j_1,a}\rabs^m}\right\}\\
&=e^8\sum_{i=1}^N\frac{i!(N-i)!}{(N+1)!}\sum_{a=1}^N\sum_{(A,B)\in\mathcal{A}_{i,a}}\sum_{j_1,\dots,j_m=1}^d\ex{\Upsilon_\beta^{j_1,\dots, j_m}\lpa\bs{Q}^{(A\cup\{a\},B)}-y\rpa\labs V^{(A,B)}_{j_1,a}\rabs^m}\\
&\quad+e^8\sum_{i=1}^N\frac{(i-1)!(N+1-i)!}{(N+1)!}\sum_{a=1}^N\sum_{(A,B)\in\mathcal{A}_{i,a}}\sum_{j_1,\dots,j_m=1}^d\ex{\Upsilon_\beta^{j_1,\dots, j_m}\lpa\bs{Q}^{(A,B\cup\{a\})}-y\rpa\labs V^{(A,B)}_{j_1,a}\rabs^m}\\
&=e^8\sum_{i=1}^N\frac{i!(N-i)!}{(N+1)!}\sum_{a=1}^N\sum_{(A,B)\in\mathcal{A}_{i,a}}\sum_{j_1,\dots,j_m=1}^d\ex{\Upsilon_\beta^{j_1,\dots, j_m}\lpa\bs{Q}^{(A\cup\{a\},B)}-y\rpa\labs V^{(A,B)}_{j_1,a}\rabs^m}\\
&\quad+e^8\sum_{i=0}^{N-1}\frac{i!(N-i)!}{(N+1)!}\sum_{a=1}^N\sum_{(A,B)\in\mathcal{A}_{i+1,a}}\sum_{j_1,\dots,j_m=1}^d\ex{\Upsilon_\beta^{j_1,\dots, j_m}\lpa\bs{Q}^{(A,B\cup\{a\})}-y\rpa\labs V^{(A,B)}_{j_1,a}\rabs^m}\\
&=e^8\sum_{i=1}^N\frac{i!(N-i)!}{(N+1)!}\sum_{a=1}^N\sum_{(A,B)\in\mathcal{A}_{i}:a\in A}\sum_{j_1,\dots,j_m=1}^d\ex{\Upsilon_\beta^{j_1,\dots, j_m}\lpa\bs{Q}^{(A,B)}-y\rpa\labs V^{(A\setminus\{a\},B)}_{j_1,a}\rabs^m}\\
&\quad+e^8\sum_{i=0}^{N-1}\frac{i!(N-i)!}{(N+1)!}\sum_{a=1}^N\sum_{(A,B)\in\mathcal{A}_{i}:a\in B}\sum_{j_1,\dots,j_m=1}^d\ex{\Upsilon_\beta^{j_1,\dots, j_m}\lpa\bs{Q}^{(A,B)}-y\rpa\labs V^{(A,B\setminus\{a\})}_{j_1,a}\rabs^m}\\
&=e^8\sum_{i=0}^N\frac{i!(N-i)!}{(N+1)!}\sum_{(A,B)\in\mathcal{A}_{i}}\sum_{j_1,\dots,j_m=1}^d\ex{\Upsilon_\beta^{j_1,\dots, j_m}\lpa\bs{Q}^{(A,B)}-y\rpa\sum_{a=1}^N\labs V^{(A\setminus\{a\},B\setminus\{a\})}_{j_1,a}\rabs^m}\\
&\leq e^8\sum_{i=0}^N\frac{i!(N-i)!}{(N+1)!}\sum_{(A,B)\in\mathcal{A}_{i}}\sum_{j_1,\dots,j_m=1}^d\ex{\Upsilon_\beta^{j_1,\dots, j_m}\lpa\bs{Q}^{(A,B)}-y\rpa\max_{1\leq k\leq d}\sum_{a=1}^N\labs V^{(A\setminus\{a\},B\setminus\{a\})}_{k,a}\rabs^m}.
\end{align*}
Hence, Lemma \ref{cck-derivative} yields
\begin{align*}
&\frac{1}{N!}\sum_{\sigma\in\mathfrak{S}_N}\sum_{i=1}^N\mathbf{I}(1)_i^\sigma\\
&\lesssim \max_{1\leq j\leq m}\beta^{m-j}\|h^{(j)}\|_\infty\sum_{i=0}^N\frac{i!(N-i)!}{(N+1)!}\sum_{(A,B)\in\mathcal{A}_{i}}\ex{\max_{1\leq k\leq d}\sum_{a=1}^N\labs V^{(A\setminus\{a\},B\setminus\{a\})}_{k,a}\rabs^m}.
\end{align*}
Now, by Lemma \ref{lem:hom-mom} we have
\begin{equation}\label{eq:V-hyper}
\left\|V^{(A\setminus\{a\},B\setminus\{a\})}_{k,a}\right\|_{r}\leq C_{\alpha,\ol{q}_d}r^{(q_k-1)/\alpha}M_N^{q_k-1}\sqrt{\influence_a(f_k)}
\end{equation}
for any $r\geq1$, where $C_{\alpha,\ol{q}_d}>0$ depends only on $\alpha,\ol{q}_d$. Hence, the Minkowski inequality yields
\begin{equation}\label{eq:V-sum}
\left\|\sum_{a=1}^N\labs V^{(A\setminus\{a\},B\setminus\{a\})}_{k,a}\rabs^m\right\|_r
\leq C_{\alpha,\ol{q}_d}(mr)^{m(q_k-1)/\alpha}M_N^{m(q_k-1)}\sum_{a=1}^N\influence_a(f_k)^{m/2}.
\end{equation}
Thus, if $\ol{q}_d>1$, Lemma \ref{moment-psi} yields
\[
\left\|\sum_{a=1}^N\labs V^{(A\setminus\{a\},B\setminus\{a\})}_{k,a}\rabs^m\right\|_{\psi_{\alpha/\{m(\ol{q}_d-1)\}}}
\lesssim M_N^{m(q_k-1)}\sum_{a=1}^N\influence_a(f_k)^{m/2}.
\]
Therefore, by Lemmas \ref{max-psi} and \ref{psi-moment} we conclude that
\[
\ex{\max_{1\leq k\leq d}\sum_{a=1}^N\labs V^{(A\setminus\{a\},B\setminus\{a\})}_{k,a}\rabs^m}
\lesssim (\log d)^{m(\ol{q}_d-1)/\alpha}\max_{1\leq k\leq d}M_N^{m(q_k-1)}\sum_{a=1}^N\influence_a(f_k)^{m/2}.
\]
This inequality also holds true when $\ol{q}_d=1$ because in this case $V^{(A\setminus\{a\},B\setminus\{a\})}_{k,a}$'s are non-random and thus it is a direct consequence of \eqref{eq:V-sum}. 
As a result, we obtain
\begin{equation}\label{est:I1}
\frac{1}{N!}\sum_{\sigma\in\mathfrak{S}_N}\sum_{i=1}^N\mathbf{I}(1)_i^\sigma
\lesssim \lpa\max_{1\leq j\leq m}\beta^{m-j}\|h^{(j)}\|_\infty\rpa\lpa(\log d)^{m(\ol{q}_d-1)/\alpha}\max_{1\leq k\leq d}M_N^{m(q_k-1)}\sum_{a=1}^N\influence_a(f_k)^{m/2}\rpa.
\end{equation} 

Next we estimate $\mathbf{I}(2)^\sigma_i$. 
Since $X_{\sigma(i)}$ and $Y_{\sigma(i)}$ are independent of $\bs{U}^\sigma_i$ and $\bs{V}^\sigma_i$, we have
\begin{align*}
\mathbf{I}(2)_i^\sigma
&\leq\sum_{j_1,\dots,j_m=1}^d\ex{\Upsilon_\beta^{j_1,\dots, j_m}(\bs{U}^\sigma_i-y)\|V^\sigma_{i}\|_{\ell_\infty}^m}P\lpa\mathcal{C}_{\sigma,i}^c\rpa.
\end{align*}
Hence, Lemma \ref{cck-derivative} yields
\begin{align*}
\mathbf{I}(2)_i^\sigma
\lesssim \lpa\max_{1\leq j\leq m}\beta^{m-j}\|h^{(j)}\|_\infty\rpa\ex{\|V^\sigma_{i}\|_{\ell_\infty}^m}P\lpa\mathcal{C}_{\sigma,i}^c\rpa.
\end{align*}
Now, if $\ol{q}_d>1$, \eqref{eq:V-hyper} and Lemma \ref{moment-psi} yield
\[
\|V^\sigma_{k,i}\|_{\psi_{\alpha/(\ol{q}_d-1)}}\leq c_{\alpha,\ol{q}_d}M_N^{q_k-1}\sqrt{\influence_{\sigma(i)}(f_k)},
\] 
where $c_{\alpha,\ol{q}_d}>0$ depends only on $\alpha,\ol{q}_d$. 
Hence, Lemmas \ref{max-psi} and \ref{psi-moment} yield
\begin{equation}\label{eq:Vmax-moment}
\|\|\bs{V}^\sigma_{i}\|_{\ell_\infty}\|_{r}\leq c'_{\alpha,\ol{q}_d}r^{(\ol{q}_d-1)/\alpha}\Lambda_{\sigma(i)}
\end{equation}
for every $r\geq1$ with $c'_{\alpha,\ol{q}_d}>0$ depending only on $\alpha,\ol{q}_d$. This inequality also holds true when $\ol{q}_d=1$ because in this case $\bs{V}^\sigma_{i}$ is non-random and thus it is a direct consequence of \eqref{eq:V-hyper}. 
Meanwhile, \eqref{sum-psi} and Lemma \ref{psi-tail} yield 
$
P(\mathcal{C}_{\sigma,i}^c)\leq 2e^{-(\tau/2^{1\vee\alpha^{-1}})^{\alpha}}.
$ 
Consequently, we obtain
\begin{equation}\label{est:I2}
\frac{1}{N!}\sum_{\sigma\in\mathfrak{S}_N}\sum_{i=1}^N\mathbf{I}(2)_i^\sigma\\
\lesssim \lpa\max_{1\leq j\leq m}\beta^{m-j}\|h^{(j)}\|_\infty\rpa e^{-(\tau/2^{1\vee\alpha^{-1}})^{\alpha}}\sum_{i=1}^N\Lambda_i^m.
\end{equation}

Third, we estimate $\mathbf{I}(3)_i^\sigma$. Lemma \ref{cck-derivative} yields
\[
\mathbf{I}(3)_i^\sigma
\lesssim \lpa\max_{1\leq j\leq m}\beta^{m-j}\|h^{(j)}\|_\infty\rpa\ex{\|\bs{V}^\sigma_{i}\|_{\ell_\infty}^m;\mathcal{D}_{\sigma,i}^c}.
\]
If $\ol{q}_d>1$, \eqref{eq:Vmax-moment} and Lemma \ref{moment-tail} yield
\[
P\lpa\|\bs{V}^\sigma_{i}\|_{\ell_\infty}\geq x\rpa\leq e^{(\ol{q}_d-1)/\alpha}\exp\lpa-\lpa\frac{x}{K_{\alpha,\ol{q}_d}\Lambda_{\sigma(i)}}\rpa^{\alpha/(\ol{q}_d-1)}\rpa
\]
for every $x>0$ with $K_{\alpha,\ol{q}_d}>0$ depending only on $\alpha,\ol{q}_d$. Hence Lemma \ref{lemma:kc} implies that
\[
\ex{\|\bs{V}^\sigma_{i}\|_{\ell_\infty}^m;\mathcal{D}_{\sigma,i}^c}
\lesssim(\rho\vee1)^m\Lambda_{\sigma(i)}^m\exp\lpa-\lpa\frac{\rho}{K_{\alpha,\ol{q}_d}}\rpa^{\alpha/(\ol{q}_d-1)}\rpa.
\]
Meanwhile, if $\ol{q}_d=1$, $\bs{V}^\sigma_{i}$ is non-random, so \eqref{eq:Vmax-moment} yields
\[
\ex{\|\bs{V}^\sigma_{i}\|_{\ell_\infty}^m;\mathcal{D}_{\sigma,i}^c}\lesssim\Lambda_{\sigma(i)}^m1_{\{c'_{\alpha,\ol{q}_d}>\rho\}}.
\]
Consequently, setting $K_{\alpha,\ol{q}_d}':=K_{\alpha,\ol{q}_d}\vee c'_{\alpha,\ol{q}_d}$, we obtain
\begin{equation}\label{est:I3}
\frac{1}{N!}\sum_{\sigma\in\mathfrak{S}_N}\sum_{i=1}^N\mathbf{I}(3)_i^\sigma
\lesssim\lpa\max_{1\leq j\leq m}\beta^{m-j}\|h^{(j)}\|_\infty\rpa\chi_{(\ol{q}_d-1)/\alpha}\lpa\frac{\rho}{K'_{\alpha,\ol{q}_d}}\rpa(\rho\vee1)^m\sum_{i=1}^N\Lambda_{i}^m.
\end{equation}
Now, combining \eqref{eq:I-decompose}, \eqref{est:I1}, \eqref{est:I2}, \eqref{est:I3} with Lemma \ref{psi-moment}, we obtain
\begin{multline}\label{lindeberg-eq3}
\frac{1}{N!}\sum_{\sigma\in\mathfrak{S}_N}\sum_{i=1}^N\mathbf{I}_i^\sigma
\lesssim \lpa\max_{1\leq j\leq m}\beta^{m-j}\|h^{(j)}\|_\infty\rpa\left\{(\log d)^{m(\ol{q}_d-1)/\alpha}\max_{1\leq k\leq d}M_N^{mq_k}\sum_{i=1}^N\influence_i(f_k)^{m/2}\right.\\
\left.+e^{-(\tau/2^{1\vee\alpha^{-1}})^{\alpha}}M_N^{m}\sum_{i=1}^N\Lambda_i^m
+\chi_{(\ol{q}_d-1)/\alpha}\lpa\frac{\rho}{K'_{\alpha,\ol{q}_d}}\rpa(\rho\vee1)^mM_N^{m}\sum_{i=1}^N\Lambda_{i}^m\right\}.
\end{multline} 

Next we consider $\mathbf{II}_i^\sigma$. Lemma \ref{cck-derivative} yields
\begin{align*}
\frac{1}{N!}\sum_{\sigma\in\mathfrak{S}_N}\sum_{i=1}^N\mathbf{II}_i^\sigma
\lesssim \frac{1}{N!}\lpa\max_{1\leq j\leq m}\beta^{m-j}\|h^{(j)}\|_\infty\rpa\sum_{\sigma\in\mathfrak{S}_N}\sum_{i=1}^N\ex{(|X_{\sigma(i)}|^{m}+|Y_{\sigma(i)}|^{m})\max_{1\leq k\leq d}\labs V^\sigma_{k,i}\rabs^m;\mathcal{E}_{\sigma,i}^c}.
\end{align*}
Since $X_{\sigma(i)}$ and $Y_{\sigma(i)}$ are independent of $\bs{V}^\sigma_{i}$, Lemma \ref{psi-moment} and \eqref{eq:Vmax-moment} imply that
\[
\|(|X_{\sigma(i)}|+|Y_{\sigma(i)}|)\|\bs{V}^\sigma_{i}\|_{\ell_\infty}\|_{r}\leq L_{\alpha,\ol{q}_d}r^{\ol{q}_d/\alpha}M_N\Lambda_{\sigma(i)}
\]
for every $r\geq1$ with $L_{\alpha,\ol{q}_d}>0$ depending only on $\alpha,\ol{q}_d$. 
Thus, by Lemma \ref{moment-tail} we obtain
\[
P\lpa(|X_{\sigma(i)}|+|Y_{\sigma(i)}|)\|\bs{V}^\sigma_{i}\|_{\ell_\infty}\geq x\rpa\leq e^{\ol{q}_d/\alpha}\cdot \exp\lpa-\left(\frac{x}{L'_{\alpha,\ol{q}_d}M_N\Lambda_{\sigma(i)}}\right)^{\alpha/\ol{q}_d}\rpa
\]
for every $x>0$ with $L'_{\alpha,\ol{q}_d}>0$ depending only on $\alpha,\ol{q}_d$. Therefore, Lemma \ref{lemma:kc} yields
\begin{align*}
&\frac{1}{N!}\sum_{\sigma\in\mathfrak{S}_N}\sum_{i=1}^N\mathbf{II}_i^\sigma\\
&\lesssim \frac{1}{N!}\lpa\max_{1\leq j\leq m}\beta^{m-j}\|h^{(j)}\|_\infty\rpa\sum_{\sigma\in\mathfrak{S}_N}\sum_{i=1}^N\lpa\tau\rho\vee 1\rpa^mM_N^m\Lambda_{\sigma(i)}^m
\exp\lpa-\left(\frac{\tau\rho}{L'_{\alpha,\ol{q}_d}}\right)^{\alpha/\ol{q}_d}\rpa\\
&=\lpa\max_{1\leq j\leq m}\beta^{m-j}\|h^{(j)}\|_\infty\rpa\sum_{i=1}^N\lpa\tau\rho\vee 1\rpa^mM_N^m\Lambda_{i}^m
\exp\lpa-\left(\frac{\tau\rho}{L'_{\alpha,\ol{q}_d}}\right)^{\alpha/\ol{q}_d}\rpa.
\end{align*}
Combining this inequality with \eqref{lindeberg-eq1}, \eqref{lindeberg-eq2} and \eqref{lindeberg-eq3}, we obtain the desired result. 
\end{proof}

\section{Proof of the main results}\label{sec:proof}

\subsection{Proof of Theorem \ref{thm:main}}

The following result is a version of \cite[Lemma 4.3]{NPR2010aop}. The proof is a minor modification of the latter's, so we omit it.  
\begin{lemma}\label{lemma:npr4.3}
Let $q\in\mathbb{N}$ and $f:[N]^q\to\mathbb{R}$ be a symmetric function vanishing on diagonals. 
Also, let $\bs{X}=(X_i)_{i=1}^N$ and $\bs{Y}=(Y_i)_{i=1}^N$ be two sequences of independent centered random variables with unit variance. 
Suppose that there are integers $3\leq m\leq l$ such that $M_N:=\max_{1\leq i\leq N}(\|X_i\|_{l}\vee\|Y_i\|_{l})<\infty$ and $\expe{X_i^r}=\expe{Y_i^r}$ for all $i\in[N]$ and $r\in[m-1]$. 
Then we have $Q(f;\bs{X}),Q(f;\bs{Y})\in L^l(P)$ and 
\[
|\expe{Q(f;\bs{X})^l}-\expe{Q(f;\bs{Y})^l}|\leq CM_N^{ql}(1\vee\|f\|_{\ell_2})^{l-m}\sum_{i=1}^N\max\{\influence_i(f)^{\frac{m}{2}},\influence_i(f)^{\frac{l}{2}}\},
\]
where $C>0$ depends only on $q,l$.
\end{lemma}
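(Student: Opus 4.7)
The plan is to run a Lindeberg swapping argument term by term, with a binomial expansion in the single swapped coordinate that exploits the moment match up to order $m-1$.

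First I would introduce the interpolating sequence $\bs{W}_i:=(X_1,\dots,X_i,Y_{i+1},\dots,Y_N)$ so that $\bs{W}_0=\bs{Y}$ and $\bs{W}_N=\bs{X}$, and telescope
\[
\expe{Q(f;\bs{X})^l}-\expe{Q(f;\bs{Y})^l}=\sum_{i=1}^N\lpa\expe{Q(f;\bs{W}_i)^l}-\expe{Q(f;\bs{W}_{i-1})^l}\rpa.
\]
For each $i$ I would then isolate the $i$th variable by writing $Q(f;\bs{W}_i)=U_i+X_iV_i$ and $Q(f;\bs{W}_{i-1})=U_i+Y_iV_i$, where
\[
U_i:=\sum_{i_1,\dots,i_q:\,i_j\neq i}f(i_1,\dots,i_q)\prod_j W_{i,i_j},\qquad V_i:=q\sum_{i_2,\dots,i_q}f(i,i_2,\dots,i_q)\prod_{j\geq2}W_{i,i_j}
\]
(using symmetry of $f$ and that $f$ vanishes on diagonals, so the index $i$ appears exactly once in each nonzero term contributing to $V_i$). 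Since $U_i$ and $V_i$ depend only on the coordinates $W_{i,j}$ with $j\neq i$, they are independent of both $X_i$ and $Y_i$.

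The key step is a binomial expansion followed by Hölder. Taking expectations,
\[
\expe{(U_i+X_iV_i)^l}-\expe{(U_i+Y_iV_i)^l}=\sum_{k=0}^l\binom{l}{k}\expe{U_i^{l-k}V_i^k}\lpa\expe{X_i^k}-\expe{Y_i^k}\rpa.
\]
The moment-matching hypothesis kills the terms $k\in\{0,1,\dots,m-1\}$, leaving only $k\in\{m,\dots,l\}$. For those terms I would apply $|\expe{X_i^k}-\expe{Y_i^k}|\leq 2M_N^k$ and Hölder's inequality $|\expe{U_i^{l-k}V_i^k}|\leq\|U_i\|_l^{l-k}\|V_i\|_l^k$. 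The crucial ingredient is a Rosenthal-type moment bound for homogeneous sums with coefficients in $L^l$: applied to $U_i$, a degree-$q$ homogeneous sum with kernel of $\ell_2$-norm at most $\|f\|_{\ell_2}$, it yields $\|U_i\|_l\leq C_{q,l}M_N^q\|f\|_{\ell_2}$; applied to $V_i/q$, a degree-$(q-1)$ homogeneous sum with kernel $f(i,\cdot)$ of $\ell_2$-norm $\sqrt{\influence_i(f)}$, it yields $\|V_i\|_l\leq C_{q,l}M_N^{q-1}\sqrt{\influence_i(f)}$.

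Putting these bounds together, the $i$th telescoping increment is bounded (up to a constant depending only on $q,l$) by
\[
M_N^{ql}\sum_{k=m}^l\|f\|_{\ell_2}^{l-k}\influence_i(f)^{k/2},
\]
the power of $M_N$ balancing to $q(l-k)+(q-1)k+k=ql$. Summing over $i$ and using $\|f\|_{\ell_2}^{l-k}\leq(1\vee\|f\|_{\ell_2})^{l-m}$ for all $k\geq m$ together with $\influence_i(f)^{k/2}\leq\max\{\influence_i(f)^{m/2},\influence_i(f)^{l/2}\}$ (checking the cases $\influence_i(f)\leq 1$ and $\influence_i(f)\geq 1$ separately) delivers the announced bound. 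The only real subtlety is the identification of $V_i$ as $q$ times a degree-$(q-1)$ homogeneous sum with the sharp $\ell_2$-norm $\sqrt{\influence_i(f)}$; everything else is a routine application of Hölder and standard homogeneous-sum moment estimates. The $L^l$-integrability of $Q(f;\bs{X})$ and $Q(f;\bs{Y})$ is a direct consequence of the same Rosenthal-type bound applied to the full sums.
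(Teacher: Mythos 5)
Your proof is correct and takes essentially the same approach as the paper's intended one. The paper omits the proof, stating only that it is ``a minor modification'' of \cite[Lemma 4.3]{NPR2010aop}, and that argument is precisely the Lindeberg swapping with binomial expansion that you carry out: telescope over the coordinates, isolate the $i$th variable via $Q(f;\bs{W}_i)=U_i+X_iV_i$ using the symmetry and vanishing-on-diagonals of $f$, expand binomially, kill the low-order terms by moment matching, and control the remaining terms with H\"older and a hypercontractivity/Rosenthal-type moment bound for homogeneous sums (\cite[Lemma 4.2]{NPR2010aop} is the relevant version, extended from $\gamma=3$ to general $l$ via decoupling and iterated Rosenthal as you indicate). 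Your bookkeeping of the powers of $M_N$, the $(1\vee\|f\|_{\ell_2})^{l-m}$ factor, and the $\max\{\influence_i(f)^{m/2},\influence_i(f)^{l/2}\}$ reduction are all correct, and these are exactly the minor modifications (general $m\leq l$, unnormalized $\|f\|_{\ell_2}$) that the paper alludes to.
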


\begin{proof}[Proof of Theorem \ref{thm:main}]
Throughout the proof, for two real numbers $a$ and $b$, the notation $a\lesssim b$ means that $a\leq cb$ for some constant $c>0$ which depends only on $\alpha,\ol{q}_d$. Moreover, if $(\log d)^{\mu+\frac{1}{2}}\delta_1[\bs{Q}(\bs{X})]^{\frac{1}{3}}\geq1$, then the claim evidently holds true with $C=1$, so we may assume $(\log d)^{\mu+\frac{1}{2}}\delta_1[\bs{Q}(\bs{X})]^{\frac{1}{3}}<1$. 

Set $s_i:=\expe{X_i^3}$ for every $i$. We take a sequence $\bs{Y}=(Y_i)_{i=1}^N$ of independent random variables such that
\[
Y_i\sim\left\{
\begin{array}{cl}
\mathcal{N}(0,1)  & \text{if }s_i=0, \\
\gamma_+(4/s_i^2)  &  \text{if }s_i>0,   \\
\gamma_-(4/s_i^2)   &  \text{if }s_i<0.
\end{array}
\right.
\]
By construction we have $\expe{X_i^r}=\expe{Y_i^r}$ for any $i\in[N]$ and $r\in[3]$. 
Moreover, Lemmas \ref{normal-hyper}--\ref{gamma-hyper} imply that $\|Y_i\|_r\leq \ol{B}_N(r-1)^w$ for any $i\in[N]$ and $r\geq2$. 
Hence, by Lemma \ref{moment-psi} we have $\max_{1\leq i\leq N}\|Y_i\|_{\psi_\alpha}\leq c_\alpha \ol{B}_N$ with $c_\alpha\geq1$ depending only on $\alpha$. 
Therefore, applying Proposition \ref{prop:lindeberg} with $m=4$, we obtain
\begin{align*}
&\Delta_\varepsilon(\bs{Q}(\bs{X}),\bs{Q}(\bs{Y}))\\
&\leq C_1\varepsilon^{-4}(\log d)^3
\left\{(\log d)^{4(\ol{q}_d-1)/\alpha}\max_{1\leq k\leq d}\ol{B}_N^{4q_k}\sum_{i=1}^N\influence_i(f_k)^{2}
\right.\\
&\qquad\left.+\lpa
e^{-\lpa\frac{\tau}{K_1}\rpa^{\alpha}}
+\chi_{(\ol{q}_d-1)/\alpha}\lpa\frac{\rho}{K_2}\rpa(\rho\vee1)^4
+\exp\lpa-\lpa\frac{\tau\rho}{K_3\ol{B}_N}\rpa^{\alpha/\ol{q}_d}\rpa(\tau\rho\vee1)^4
\rpa
\ol{B}_N^4\sum_{i=1}^N\Lambda_{i}^4
\right\}\\
&=:C_1\varepsilon^{-4}(\log d)^3\lpa\mathbb{I}+\mathbb{II}\rpa
\end{align*}
for any $\varepsilon>0$ and $\tau,\rho\geq0$ with $\tau\rho c_\alpha\ol{B}_N\max_{1\leq i\leq N}\Lambda_i\leq\varepsilon/\log d$, where $C_1,K_1,K_2,K_3>0$ depend only on $\alpha,\ol{q}_d$ and $\Lambda_i:=(\log d)^{(\ol{q}_d-1)/\alpha}\max_{1\leq k\leq d}\ol{B}_N^{q_k-1}\sqrt{\influence_{i}(f_k)}$.  
We apply this inequality with $\tau:=(\log d^2)^{1/\alpha}\{K_1\vee(K_3/K_2)\}$, 
$\rho:=(\log d^2)^{(\ol{q}_d-1)/\alpha}K_2$ and
\[
\varepsilon:=(\log d)^{1/6}\delta_0[\bs{Q}(\bs{X})]^{1/3}+(\log d)^{\mu}\delta_1[\bs{Q}(\bs{X})]^{1/3}+(\log d)\tau\rho c_\alpha\ol{B}_N\max_{1\leq i\leq N}\Lambda_i.
\] 
By construction we have 
\begin{align*}
\mathbb{II}&\lesssim d^{-1}\sum_{i=1}^N\sum_{k=1}^d\ol{B}_N^{4q_k}\influence_{i}(f_k)^2
\leq \max_{1\leq k\leq d}\ol{B}_N^{4q_k}\sum_{i=1}^N\influence_{i}(f_k)^2.
\end{align*}
Therefore, we obtain
\begin{align*}
\Delta_\varepsilon(\bs{Q}(\bs{X}),\bs{Q}(\bs{Y}))
&\lesssim \varepsilon^{-4}(\log d)^{3+4(\ol{q}_d-1)/\alpha}\max_{1\leq k\leq d}\ol{B}_N^{4q_k}\sum_{i=1}^N\influence_i(f_k)^2\\
&\lesssim\varepsilon^{-4}(\log d)^{3+4(\ol{q}_d-1)/\alpha}\delta_1[\bs{Q}(\bs{X})]^2
\leq (\log d)^{3+4\{(\ol{q}_d-1)/\alpha-\mu\}}\delta_1[\bs{Q}(\bs{X})]^{2/3}.
\end{align*}
Since $3+4\{(\ol{q}_d-1)/\alpha-\mu\}\leq\frac{4}{3\alpha}(\ol{q}_d-1)+\frac{5}{3}\leq2\mu+1$ and $(\log d)^{\mu+\frac{1}{2}}\delta_1[\bs{Q}(\bs{X})]^{\frac{1}{3}}<1$, we conclude that
\begin{equation}\label{est-delta1}
\Delta_\varepsilon(\bs{Q}(\bs{X}),\bs{Q}(\bs{Y}))\lesssim (\log d)^{2\mu+1}\delta_1[\bs{Q}(\bs{X})]^{2/3}
\leq (\log d)^{\mu+\frac{1}{2}}\delta_1[\bs{Q}(\bs{X})]^{\frac{1}{3}}.
\end{equation}

Meanwhile, Proposition \ref{prop:normal-gamma} yields
\begin{align*}
\Delta_{\varepsilon}(\bs{Q}(\bs{Y}),Z)
&\lesssim \varepsilon^{-2}(\log d)\left(\delta_0[\bs{Q}(\bs{Y})]+\delta_2[\bs{Q}(\bs{Y})]\right).
\end{align*}
Now, in the present situation, the constants $w_*$, $\ol{v}_N$ and $\ol{\eta}_N$ appearing in Proposition \ref{prop:normal-gamma} satisfy $w_*=w$, $\ol{v}_N\leq2+\ol{A}_N^2/2$ and $\ol{\eta}_N^{-1}\leq\ol{A}_N/2$, so we have
\begin{multline*}
\delta_2[\bs{Q}(\bs{Y})]\leq(\ol{A}_N/2)^{2w\ol{q}_d-1}(\log d)^{2w\ol{q}_d-1}
\max_{1\leq j,k\leq d}\left\{1_{\{q_j< q_k\}}\|Q(f_j;\bs{Y})\|_4\kappa_4(Q(f_k;\bs{Y}))^{1/4}\right.\\
\left.+1_{\{q_j=q_k\}}\sqrt{2\kappa_4(Q(f_k;\bs{Y}))+\left((1+\ol{A}_N^2/4)^{q_k}-1\right)(2q_k)!\mf{c}_{q_k}
\sum_{i=1}^N\influence_i(f_k)^2}\right\}.
\end{multline*}
Moreover, since the sequence $\bs{Y}$ is $(2,r,\ul{\eta}_N(r-1)^{-w})$-hypercontractive up to degree 1 for any $r>2$ by Lemmas \ref{hyper-union}--\ref{gamma-hyper}, we have $\|Q(f_j;\bs{Y})\|_4\lesssim\ol{A}_N^{q_j}\|Q(f_j;\bs{Y})\|_2$ for every $j$. 
Also, since Lemma \ref{moment-psi} yields $\max_{1\leq i\leq N}\|Y_i\|_{\psi_\alpha}\lesssim\ul{\eta}_N^{-1}$, by Lemma \ref{lemma:npr4.3} (with $l=m=4$) we obtain
\[
|\expe{Q(f_k;\bs{X})^4}-\expe{Q(f_k;\bs{Y})^4}|
\lesssim\ol{A}_N^{4q_k}\sum_{i=1}^N\influence_i(f_k)^2
\]
for every $k$. Since we have $\|Q(f_k;\bs{Y})\|_2=\sqrt{q_k!}\|f_k\|_{\ell_2}=\|Q(f_k;\bs{X})\|_2$ for every $k$, it holds that 
$
\delta_2[\bs{Q}(\bs{Y})]\lesssim (\log d)^{2w\ol{q}_d-1}\delta_1[\bs{Q}(\bs{X})].
$ 
Consequently, we obtain
\begin{align*}
\Delta_{\varepsilon}(\bs{Q}(\bs{Y}),Z)
&\lesssim (\log d)^{2/3}\delta_0[\bs{Q}(\bs{X})]^{1/3}+(\log d)^{2(w\ol{q}_d-\mu)}\delta_1[\bs{Q}(\bs{X})]^{1/3}.
\end{align*}
Since $2(w\ol{q}_d-\mu)\leq\frac{2}{3}w\ol{q}_d+\frac{1}{3}\leq\mu+\frac{1}{2}$, we conclude that
\begin{equation}\label{est-delta2}
\Delta_{\varepsilon}(\bs{Q}(\bs{Y}),Z)\lesssim (\log d)^{2/3}\delta_0[\bs{Q}(\bs{X})]^{1/3}+(\log d)^{\mu+\frac{1}{2}}\delta_1[\bs{Q}(\bs{X})]^{1/3}.
\end{equation}

Now, \eqref{est-delta1}--\eqref{est-delta2} imply that
\[
\Delta_{\varepsilon}(\bs{Q}(\bs{X}),Z)
\leq\Delta_{\varepsilon}(\bs{Q}(\bs{X}),\bs{Q}(\bs{Y}))+\Delta_{\varepsilon}(\bs{Q}(\bs{Y}),Z)
\lesssim (\log d)^{2/3}\delta_0[\bs{Q}(\bs{X})]^{1/3}+(\log d)^{\mu+\frac{1}{2}}\delta_1[\bs{Q}(\bs{X})]^{1/3}.
\]
Therefore, Proposition \ref{smoothing} yields
\begin{align*}
&\sup_{x\in\mathbb{R}^d}\left|P(\bs{Q}(\bs{X})\leq x)-P(Z\leq x)\right|\\
&\lesssim (\log d)^{2/3}\delta_0[\bs{Q}(\bs{X})]^{1/3}+(\log d)^{\mu+\frac{1}{2}}\delta_1[\bs{Q}(\bs{X})]^{1/3}+\ul{\sigma}^{-1}\varepsilon\sqrt{\log d}\\
&\lesssim(1+\ul{\sigma}^{-1})\{(\log d)^{2/3}\delta_0[\bs{Q}(\bs{X})]^{1/3}+(\log d)^{\mu+\frac{1}{2}}\delta_1[\bs{Q}(\bs{X})]^{1/3}\}
+\ul{\sigma}^{-1}(\log d)^{(2\ol{q}_d-1)/\alpha+\frac{3}{2}}\max_{1\leq k\leq d}\ol{B}_N^{q_k}\sqrt{\mathcal{M}(f_k)}.
\end{align*}
This completes the proof. 
\end{proof}

\subsection{Proof of Corollaries \ref{coro:rect} and \ref{coro:main}}

Corollary \ref{coro:rect} can be shown in an analogous manner to the proof of \cite[Corollary 5.1]{CCK2017} with applying Theorem \ref{thm:main} instead of \cite[Lemma 5.1]{CCK2017}. 
Corollary \ref{coro:main} is an immediate consequence of Corollary \ref{coro:rect}. \hfill\qed

\subsection{Proof of Theorem \ref{thm:universality}}

\begin{lemma}\label{lemma:transfer}
Let $q\geq2$ and $f:[N]^q\to\mathbb{R}$ be a symmetric function vanishing on diagonals. 
Suppose that the sequence $\bs{X}$ satisfies one of conditions \ref{univ:kurtosis}--\ref{univ:gamma}. Then we have $\kappa_4(Q(f;\bs{X}))\geq0$ and
\begin{equation}\label{aim:transfer}
\mathcal{M}(f)\leq\max_{1\leq r\leq q-1}\|f\star_rf\|_{\ell_2}\leq \frac{1}{q\cdot q!}\sqrt{\kappa_4(Q(f;\bs{X}))}.
\end{equation}
\end{lemma}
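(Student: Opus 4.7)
My first step is to establish the easy inequality $\mc{M}(f) \leq \max_{1 \leq r \leq q-1} \|f \star_r f\|_{\ell_2}$, which holds deterministically for any symmetric $f$ vanishing on diagonals. Unfolding the definition \eqref{def:contraction} one sees $\influence_i(f) = f \star_{q-1} f(i,i)$ for each $i \in [N]$, so
\[
\mc{M}(f)^2 = \max_{i \in [N]} (f \star_{q-1} f(i,i))^2 \leq \sum_{i_1, i_2 = 1}^N (f \star_{q-1} f(i_1, i_2))^2 = \|f \star_{q-1} f\|_{\ell_2}^2,
\]
whence the first inequality is obtained after enlarging the index set over which we maximize.

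For the upper bound on $\max_r \|f \star_r f\|_{\ell_2}$ in terms of $\sqrt{\kappa_4(Q(f;\bs{X}))}$, I plan to treat the three structural cases separately. The cleanest case is \ref{univ:gamma}: there $\bs{X}$ is exactly a normal-gamma sequence of the type covered by Proposition \ref{prop:normal-gamma} (each $X_i$ being of the form $\gamma_+(\nu_i)$), so I can apply identity \eqref{var-ineq} from Lemma \ref{lemma:zheng} directly with $F := Q(f;\bs{X})$ and $p := q$. This immediately yields both the non-negativity of $\kappa_4(Q(f;\bs{X}))$ and the individual bound
\[
q!^2 \binom{q}{r}^2 \|f \star_r f\|_{\ell_2}^2 \leq \kappa_4(Q(f;\bs{X})), \qquad 1 \leq r \leq q-1.
\]
Rearranging and using the elementary fact $\binom{q}{r} \geq q$ for every $1 \leq r \leq q-1$ when $q \geq 2$, followed by taking square roots, delivers precisely the constant $1/(q \cdot q!)$ claimed in the lemma.

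For cases \ref{univ:kurtosis} and \ref{univ:poisson} my strategy is to invoke analogous cumulant-contraction estimates already available in the literature. Specifically, \citet{NPPS2016,NPPS2016esaim} proved the required contraction-versus-cumulant bound for iid sequences with vanishing third moment and non-negative excess kurtosis, while \citet{PZ2014} proved the Poisson counterpart. In both references the underlying mechanism is the same: one compares the non-Gaussian fourth cumulant with its Gaussian counterpart, the discrepancy being either a non-negative multiple of the excess kurtosis or a non-negative Poisson-diagonal term; the Gaussian lower bound $q!^2 \binom{q}{r}^2 \|f \star_r f\|_{\ell_2}^2 \leq \kappa_4(Q(f;\bs{G}))$ in turn is itself the $\mathcal{N}(0,1)$-specialization of \eqref{var-ineq}. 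The non-negativity of $\kappa_4$ is a byproduct of the same comparison.

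The main obstacle I anticipate is a purely bookkeeping one, namely reconciling the normalizations and symmetrization conventions used in \cite{NPPS2016,NPPS2016esaim,PZ2014} with those of Section \ref{sec:normal-gamma}, so as to extract the sharp constant $1/(q \cdot q!)$ rather than a looser universal constant. By contrast, case \ref{univ:gamma}, for which no analogous estimate seems to have appeared in the literature, is essentially automatic once Lemma \ref{lemma:zheng} is at our disposal, and is therefore the only case where a genuinely new ingredient is needed.
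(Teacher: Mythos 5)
Your proposal follows essentially the same route as the paper. For case (C) you apply \eqref{var-ineq} directly and extract the constant from the elementary bound $\binom{q}{r}\geq q$ for $1\leq r\leq q-1$, which is exactly what the paper does; for case (A) you correctly identify the two-step mechanism of first comparing $\kappa_4(Q(f;\bs{X}))\geq\kappa_4(Q(f;\bs{G}))$ (this is precisely \cite[Proposition~3.1]{NPPS2016}) and then applying \eqref{var-ineq} on the Gaussian side. Your direct derivation of the first inequality via the identity $\influence_i(f)=f\star_{q-1}f(i,i)$ is sound and in fact more self-contained than the paper, which simply cites Eq.~(1.9) of \cite{NPR2010aop}; both deliver $\mathcal{M}(f)\leq\|f\star_{q-1}f\|_{\ell_2}\leq\max_{1\leq r\leq q-1}\|f\star_r f\|_{\ell_2}$.

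The one discrepancy is case (B): the paper derives the Poisson estimate from Eq.~(5.3) of \cite{DVZ2018}, not from \cite{PZ2014}. It is not clear that \cite{PZ2014} contains a bound sharp enough to produce the explicit constant $1/(q\cdot q!)$ rather than just an order-of-magnitude estimate, so you would need to either verify this carefully or switch to \cite{DVZ2018}, where the required inequality is stated in the form needed.
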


\begin{proof}
The first inequality in \eqref{aim:transfer} is a consequence of Eq.(1.9) in \cite{NPR2010aop} (note that they define $\influence_i(f)$ with dividing ours by $(q-1)!$). 
To prove the second inequality in \eqref{aim:transfer}, first we suppose that $\bs{X}$ satisfies condition \ref{univ:kurtosis}. Let $\bs{G}=(G_i)_{i\in\mathbb{N}}$ be a sequence of independent standard normal variables. Then, by \cite[Proposition 3.1]{NPPS2016} we have $\kappa_4(Q(f;\bs{X}))\geq\kappa_4(Q(f;\bs{G}))$. Therefore, \eqref{var-ineq} yields the desired result. 
Next, when $\bs{X}$ satisfies condition \ref{univ:poisson}, the desired result follows from Eq.(5.3) in \cite{DVZ2018}. 
Finally, when $\bs{X}$ satisfies condition \ref{univ:gamma}, the desired result follows from \eqref{var-ineq}. 
Hence we complete the proof. 
\end{proof}

\begin{lemma}\label{kol-moment}
Let $F,G$ be two random variables such that $\|F\|_{\psi_\alpha}\vee\|G\|_{\psi_\alpha}<\infty$ for some $\alpha>0$. Then we have
\[
|\expe{|F|^r}-\expe{|G|^r}|
\leq\frac{2r(\|F\|_{\psi_\alpha}^r+\|G\|_{\psi_\alpha}^r)}{\alpha b}\bs{\Gamma}\lpa\frac{r}{\alpha b}\rpa\sup_{x\in\mathbb{R}}|P(F\leq x)-P(G\leq x)|^b
\]
for any $r\geq1$ and $b\in(0,1)$, where $\bs{\Gamma}$ denotes the gamma function. 
\end{lemma}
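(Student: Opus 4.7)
The plan is to combine the layer-cake representation with an interpolation between the Kolmogorov-distance bound and the $\psi_\alpha$-tail bound. By Fubini,
\[
|E[|F|^r]-E[|G|^r]|\leq r\int_0^\infty t^{r-1}|P(|F|>t)-P(|G|>t)|\,dt,
\]
and writing $P(|F|>t)=P(F>t)+P(F<-t)$ and likewise for $G$, the Kolmogorov-distance hypothesis yields $|P(|F|>t)-P(|G|>t)|\leq 2\Delta$ where $\Delta:=\sup_x|P(F\leq x)-P(G\leq x)|$. At the same time, the trivial bound gives $|P(|F|>t)-P(|G|>t)|\leq P(|F|>t)+P(|G|>t)$.

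The two estimates are combined via $\min(A,B)\leq A^b B^{1-b}$ (valid for $A,B\geq 0$ and $b\in(0,1)$) applied with $A=2\Delta$ and $B=P(|F|>t)+P(|G|>t)$, followed by the sub-additivity $(x+y)^{1-b}\leq x^{1-b}+y^{1-b}$, producing
\[
|P(|F|>t)-P(|G|>t)|\leq(2\Delta)^b\bigl(P(|F|>t)^{1-b}+P(|G|>t)^{1-b}\bigr).
\]
Markov's inequality applied to the $\psi_\alpha$ hypothesis gives $P(|F|>t)\leq 2\exp(-(t/\|F\|_{\psi_\alpha})^\alpha)$, so $P(|F|>t)^{1-b}$ inherits exponential decay with rate $1-b$, and analogously for $G$. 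The change of variable $u=(1-b)(t/\|F\|_{\psi_\alpha})^\alpha$ then converts the remaining integrals into Gamma integrals, and the pieces assemble into an upper bound of the form $C(\alpha,b,r)\cdot\Delta^b\cdot(\|F\|_{\psi_\alpha}^r+\|G\|_{\psi_\alpha}^r)$.

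The chief obstacle is producing the precise constant $\tfrac{2r}{\alpha b}\bs{\Gamma}(r/(\alpha b))$ stated in the lemma, rather than the factor $(1-b)^{-r/\alpha}\bs{\Gamma}(r/\alpha)/\alpha$ that falls out of the above interpolation. To bridge this, I expect to proceed instead (or additionally) by a truncation/H\"older argument: decompose $E[|F|^r]=E[|F|^r 1_{\{|F|\leq T\}}]+E[|F|^r 1_{\{|F|>T\}}]$, bound the truncated difference by $2T^r\Delta$ via a total-variation argument on the function $|x|^r\wedge T^r$, and control the tail via H\"older's inequality $E[|F|^r 1_{\{|F|>T\}}]\leq E[|F|^{r/b}]^b\,P(|F|>T)^{1-b}$, combined with the $\psi_\alpha$ moment bound stating that $E[|F|^{r/b}]$ is controlled by a universal multiple of $\|F\|_{\psi_\alpha}^{r/b}\bs{\Gamma}(r/(\alpha b)+1)$, as collected in Appendix \ref{sec:psi}; optimizing $T$ then yields the stated form. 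The $\Delta^b$ rate is transparent from either route, but the Gamma-factor book-keeping is the technical heart.
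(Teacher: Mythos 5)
Your first (interpolation) route is, in substance, exactly the paper's argument: the published proof likewise applies Rudin's Theorem 8.16 to reduce to the integral of $|P(|F|>x)-P(|G|>x)|\,x^{r-1}$, then interpolates between the Kolmogorov-distance bound and the $\psi_\alpha$ tail bound, and closes with a Gamma integral via Lemma \ref{psi-tail}. The only real difference is which factor carries the exponent $b$: the paper bounds $|P(|F|>x)-P(|G|>x)|$ by $\rho^{1-b}\bigl(P(|F|>x)^b+P(|G|>x)^b\bigr)$, putting $b$ on the tails and $1-b$ on $\rho$, whereas you put $b$ on $\rho$ and $1-b$ on the tails; since $b\in(0,1)$ is arbitrary, both choices give valid members of the same one-parameter family of bounds. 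Your worry about reproducing the exact prefactor is therefore misplaced. The paper's own proof in fact outputs $\rho^{1-b}$, not the $\rho^b$ displayed in the statement (a $b\leftrightarrow 1-b$ slip), and its closing step, which would require $P(|H|>x)^b\leq 2e^{-(x/\|H\|_{\psi_\alpha})^{\alpha b}}$ pointwise, does not actually follow from Lemma \ref{psi-tail}: the honest pointwise bound is $2^b e^{-b(x/\|H\|_{\psi_\alpha})^\alpha}$, which integrates to a prefactor proportional to $b^{-r/\alpha}\bs{\Gamma}(r/\alpha)$ — precisely the shape you computed, with $b$ and $1-b$ exchanged. So the displayed constant should be read modulo a harmless reparametrization rather than as a target to hit exactly; all that the lemma's sole application (the implication \ref{univ:marginal} $\Rightarrow$ \ref{univ:fourth} in the proof of Theorem \ref{thm:universality}) requires is a finite prefactor depending only on $r$, $\alpha$, $b$ and the $\psi_\alpha$-norms, which your interpolation already delivers. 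The pivot to a truncation/H\"older decomposition is unnecessary — your first route, carried to completion with either assignment of exponents, is a correct proof.
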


\begin{proof}
Set $\rho:=\sup_{x\in\mathbb{R}}|P(F\leq x)-P(G\leq x)|$. By \cite[Theorem 8.16]{Rudin1987} we obtain
\begin{align*}
|\expe{|F|^r}-\expe{|G|^r}|
&\leq r\int_0^\infty|P(|F|>x)-P(|G|>x)|x^{r-1}dx\\
&\leq r\rho^{1-b}\int_0^\infty(\{P(|F|>x)^b+P(|G|>x)^b\}x^{r-1}dx.
\end{align*}
Now, Lemma \ref{psi-tail} and a change of variables yield
\begin{align*}
\int_0^\infty P(|H|>x)^bx^{r-1}dx
\leq2\int_0^\infty x^{r-1}e^{-(x/\|H\|_{\psi_\alpha})^{\alpha b}} dx
=\frac{2\|H\|_{\psi_\alpha}^r}{\alpha b}\bs{\Gamma}\lpa\frac{r}{\alpha b}\rpa
\end{align*}
for $H\in\{F,G\}$. Hence we obtain the desired result. 
\end{proof}

\begin{proof}[Proof of Theorem \ref{thm:universality}]
The inequality $\kappa_4(Q(f;\bs{X}))\geq0$ is proved in Lemma \ref{lemma:transfer}. 
The implication \ref{univ:general} $\Rightarrow$ \ref{univ:joint} $\Rightarrow$ \ref{univ:marginal} is obvious. 
The implication \ref{univ:fourth} $\Rightarrow$ \ref{univ:general} follows from Corollary \ref{coro:main} and Lemma \ref{lemma:transfer}. 

It remains to prove \ref{univ:marginal} $\Rightarrow$ \ref{univ:fourth}. In view of Lemma \ref{kol-moment}, it is enough to prove $\sup_{n\in\mathbb{N}}\max_{1\leq j\leq d_n}(\|Q(f_{n,j};\bs{X})\|_{\psi_{\beta}}+\|Z_{n,j}\|_{\psi_{\beta}})<\infty$ for some $\beta>0$. Set $\alpha_*:=\alpha\wedge1$, where $\alpha$ is the constant appearing in condition \ref{univ:kurtosis}. By assumptions we have $M:=\sup_{i\in\mathbb{N}}\|X_i\|_{\psi_{\alpha_*}}<\infty$, so Lemmas \ref{lem:hom-mom} and \ref{moment-psi} imply that 
\[
\sup_{n\in\mathbb{N}}\max_{1\leq j\leq d_n}\|Q(f_{n,j};\bs{X})\|_{\psi_{\alpha_*/\ol{q}_\infty}}
\leq CM^{\ol{q}_\infty}\sup_{n\in\mathbb{N}}\|f_{n,j}\|_{\ell_2}
\leq CM^{\ol{q}_\infty}\sup_{n\in\mathbb{N}}\|Q(f_{n,j};\bs{X})\|_2<\infty,
\]
where $C>0$ depends only on $\alpha,\ol{q}_\infty$. Finally, since $Z^{(n)}$ is Gaussian, we have
\[
\sup_{n\in\mathbb{N}}\max_{1\leq j\leq d_n}\|Z_{n,j}\|_{\psi_{2}}=\sup_{n\in\mathbb{N}}\max_{1\leq j\leq d_n}\sqrt{\frac{8}{3}\mf{C}_{n,jj}}<\infty.
\] 
Hence we complete the proof. 
\end{proof}

\subsection{Proof of Lemma \ref{lemma:contraction}}\label{sec:proof-contraction}

Let us define the sequence of random variables $(Y_i)_{i=1}^N$ in the same way as in the proof of Theorem \ref{thm:main}. Then, since $\expe{Y_i^4}=3+\frac{3}{2}|\expe{X_i^3}|\leq\frac{9}{2}M$, Lemma \ref{lemma:npr4.3} yields
\[
|\expe{Q(f;\bs{X})^4}-\expe{Q(f;\bs{Y})^4}|\leq C_1M^{q}\mc{M}(f)\|f\|_{\ell_2}^2,
\]
where $C_1>0$ depends only on $q$. Now, since $\expe{Q(f;\bs{X})^2}=q!\|f\|_{\ell_2}^2=\expe{Q(f;\bs{Y})^2}$ and $\sqrt{\kappa_4(Q(f;\bs{Y}))}\geq q\cdot q!\max_{1\leq r\leq q-1}\|f\star_rf\|_{\ell_2}$ by Lemma \ref{lemma:transfer}, we obtain the desired result. \hfill\qed

\subsection{Proof of Proposition \ref{prop:cck-main}}\label{proof:comp-cck}

Extending the probability space if necessary, we take a sequence $(X_i)_{i=1}^\infty$ of i.i.d.~bounded random variables independent of $(\xi_i)_{i=1}^n$ and such that $\expe{X_i}=0$ and $\expe{X_i^2}=\expe{X_i^3}=1$; for example, we may assume
\[
P\lpa X_i=\frac{1\pm\sqrt{5}}{2}\rpa=\frac{\sqrt{5}\mp1}{2\sqrt{5}}.
\]
Then we set $\xi_i^*:=X_i\xi_i$ for $i\in[n]$ and $S_n^*=(S^*_{n,1},\dots,S^*_{n,d}):=n^{-1/2}\sum_{i=1}^n\xi_i^*$. 
The following lemma is a special case of \cite[Theorem 2]{DZ2017}:
\begin{lemma}\label{lemma:dz2017}
Assume $\ul{\sigma}^2:=\min_{1\leq j\leq d}n^{-1}\sum_{i=1}^n\expe{\xi_{ij}^2}>0$.  
Then we have
\[
P\lpa\sup_{t\in\mathbb{R}}\labs P\lpa\max_{1\leq j\leq d}S_{n,j}\leq t\rpa-P\lpa\max_{1\leq j\leq d}S^*_{n,j}\leq t\mid(\bs{\xi}_i)_{i=1}^n\rpa\rabs\geq C_1\varepsilon_n^*\rpa\leq C_1\varepsilon_n^*,
\]
where $C_1>0$ depends only on $\ul{\sigma}$ and
\[
\varepsilon_n^*:=\lpa\frac{\log^5(dn)}{n}\ex{\max_{1\leq j\leq d}\frac{1}{n}\sum_{i=1}^n\xi_{ij}^4}\rpa^{1/6}.
\]
\end{lemma}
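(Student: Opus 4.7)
The plan is to invoke \cite[Theorem 2]{DZ2017} as a black box: that theorem establishes precisely a multiplier-bootstrap approximation of the distribution of the maximum of a sum of independent random vectors, and does so with a rate involving a fourth-moment quantity of the form $E[\max_j n^{-1}\sum_i\xi_{ij}^4]$ and the logarithmic factor $\log^5(dn)$, matching the form of $\varepsilon_n^*$ asserted in the lemma. Its hypotheses are standard: independent centered data vectors $(\bs{\xi}_i)$ with a variance floor, together with i.i.d.\ scalar multipliers that are independent of the data, bounded (or sub-exponential), and match three moments with a suitable reference, namely $E[X_i]=0$, $E[X_i^2]=1$, $E[X_i^3]=1$.

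What has to be checked locally is therefore only that the two-point distribution of the multipliers $X_i$ meets these conditions. Writing $\phi:=(1+\sqrt 5)/2$ for the golden ratio, so that $\phi^2=\phi+1$ and hence $\phi^3=2+\sqrt 5$, the law of $X_i$ places mass $(\sqrt 5-1)/(2\sqrt 5)$ at $\phi$ and mass $(\sqrt 5+1)/(2\sqrt 5)$ at $1-\phi=-1/\phi$. A direct computation using these identities yields
\[
E[X_i]=0,\qquad E[X_i^2]=1,\qquad E[X_i^3]=1,
\]
as required. Boundedness $|X_i|\le\phi$ is immediate, so any sub-exponential condition on the multiplier is trivially satisfied; independence of $(X_i)$ from $(\bs{\xi}_i)$ is built into the enlarged probability space constructed just before the statement; and the variance floor $\ul{\sigma}^2>0$ is an explicit hypothesis of the lemma.

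With all hypotheses confirmed, \cite[Theorem 2]{DZ2017} delivers the claimed probability bound verbatim, with the constant $C_1$ depending only on $\ul{\sigma}$. Hence there is essentially no local obstacle: the substantive analysis — a Lindeberg swap between $\xi_{ij}$ and $X_i\xi_{ij}$, a concentration argument for the bootstrap covariance, and the smoothing/anti-concentration machinery parallel to that of Section~\ref{sec:cck} — is carried out inside the cited reference. The conceptual point worth emphasizing, and the reason this particular multiplier bootstrap is useful for the proof of Proposition~\ref{prop:cck-main} to follow, is that the three-moment matching annihilates the cubic term of the Taylor expansion in the Lindeberg swap, leaving only a fourth-order remainder; this is the origin of the fourth-moment quantity $E[\max_j n^{-1}\sum_i\xi_{ij}^4]$ in $\varepsilon_n^*$, in place of a third-moment quantity as in the standard CCK bounds, and hence of the sharper $\log^5$ (rather than $\log^7$) factor compared with Proposition~\ref{prop:cck}\ref{cck:psi}.
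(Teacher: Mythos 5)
Your proposal is correct and takes essentially the same route as the paper: the lemma is proved there simply by citing \cite[Theorem 2]{DZ2017} as a special case, with the golden-ratio two-point multiplier law supplying the required moment matching $\expe{X_i}=0$, $\expe{X_i^2}=\expe{X_i^3}=1$, exactly as you verify. The one detail the paper records that you omit is that the literal statement of \cite[Theorem 2]{DZ2017} uses $\xi_i^*:=X_i(\xi_i-n^{-1/2}S_n)$ rather than $\xi_i^*:=X_i\xi_i$; the paper notes this is harmless because the underlying \cite[Theorem 7]{DZ2017} is proved with the uncentered definition.
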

\begin{rmk}
Strictly speaking, a direct application of \cite[Theorem 2]{DZ2017} requires us to define the variables $\xi_i^*$ as $\xi_i^*:=X_i(\xi_i-n^{-1/2}S_n)$, but it does not matter to define $\xi_i^*:=X_i\xi_i$ instead. In fact, this theorem is obtained as an application of \cite[Theorem 7]{DZ2017}, which is indeed shown with the latter definition.  
\end{rmk}

\begin{lemma}\label{lemma:mammen}
Assume $\ul{\sigma}^2:=\min_{1\leq j\leq d}n^{-1}\sum_{i=1}^n\expe{\xi_{ij}^2}>0$.  
Then we have
\begin{align*}
&\ex{\sup_{A\in\mathcal{A}^\mathrm{re}(d)}\left|P\lpa S_n^*\in A\mid(\xi_i)_{i=1}^n\rpa-P(Z\in A)\right|}\\
&\leq C_2\left\{(\log d)^{\frac{2}{3}}\ex{\Xi_0^{1/3}}
+(\log d)\ex{\Xi_1^{1/3}}
+\sqrt{\frac{\log^5 d}{n}}\max_{1\leq k\leq d}\ex{\max_{1\leq i\leq n}|\xi_{ij}|}\right\},
\end{align*}
where $C_2>0$ depends only on $\ul{\sigma}$ and 
\[
\Xi_0:=\max_{1\leq j,k\leq d}\labs\frac{1}{n}\sum_{i=1}^n\xi_{ij}\xi_{ik}-\mf{C}_{jk}\rabs,\quad
\Xi_1:=\frac{1}{n}\max_{1\leq j\leq d}\sqrt{\sum_{i=1}^n\xi_{ij}^4}.
\]
\end{lemma}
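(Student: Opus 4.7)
The plan is to apply Corollary \ref{coro:rect} conditionally on the $\sigma$-algebra generated by $(\xi_i)_{i=1}^n$ and then take expectations. Conditional on $(\xi_i)$, the $j$th coordinate of $S_n^*$ is a degree-one homogeneous sum $S_{n,j}^*=Q(f_j;\bs{X})$ with $f_j(i):=n^{-1/2}\xi_{ij}$, in the i.i.d.\ bounded sequence $\bs{X}=(X_i)_{i=1}^n$ satisfying $\expe{X_i}=0$ and $\expe{X_i^2}=\expe{X_i^3}=1$. In the notation of Theorem \ref{thm:main} this forces $\ol{q}_d=1$, $\ol{A}_n=1$, $w=1$, and $\ol{B}_n:=\max_i\|X_i\|_{\psi_1}$ is an absolute constant, so we take $\alpha=1$. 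The lower bound $\ul\sigma\gtrsim1$ follows from the hypothesis on $\min_j n^{-1}\sum_i\expe{\xi_{ij}^2}$.

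A direct computation translates the three key quantities in Theorem \ref{thm:main} into conditional expressions: $\expe{Q(f_j;\bs{X})Q(f_k;\bs{X})\mid\xi}=n^{-1}\sum_i\xi_{ij}\xi_{ik}$ yields $\delta_0[\bs{Q}(\bs{X})]=\Xi_0$; the independence and boundedness of the $X_i$ give $|\kappa_4(Q(f_k;\bs{X}))|=|\sum_i f_k(i)^4\kappa_4(X_i)|\lesssim n^{-2}\sum_i\xi_{ik}^4$, which is of the same order as $\ol{A}_n^4\sum_i\influence_i(f_k)^2$, so $\delta_1[\bs{Q}(\bs{X})]\lesssim\Xi_1$; and $\mc{M}(f_k)=n^{-1}\max_i\xi_{ik}^2$. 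The exponents simplify to $\mu=1/2$ and $(2\ol{q}_d-1)/\alpha+3/2=5/2$, so Corollary \ref{coro:rect} applied conditionally produces
\[
\sup_{A\in\mc{A}^\mathrm{re}(d)}\big|P(S_n^*\in A\mid(\xi_i))-P(Z\in A)\big|\lesssim (\log d)^{\frac{2}{3}}\Xi_0^{\frac{1}{3}}+(\log d)\,\Xi_1^{\frac{1}{3}}+\frac{(\log d)^{\frac{5}{2}}}{\sqrt{n}}\max_{k}\max_i|\xi_{ik}|.
\]
Taking expectations handles the first two summands immediately in the form claimed.

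The principal obstacle lies in converting the third contribution $(\log d)^{5/2}n^{-1/2}\ex{\max_{i,k}|\xi_{ik}|}$ into the sharper $(\log d)^{5/2}n^{-1/2}\max_k\ex{\max_i|\xi_{ik}|}$ stated in the lemma. Inside the proof of Theorem \ref{thm:main}, this term enters through the choice of $\varepsilon$ in Proposition \ref{smoothing} balanced against the constraint $\tau\rho c_\alpha\ol{B}_N\max_i\Lambda_i\leq\varepsilon/\log d$ in Proposition \ref{prop:lindeberg}, where $\max_i\Lambda_i$ already has the max over $i$ swapped inside the max over $k$. To recover the coordinate-wise expectation I would not simply plug into the packaged bound of Corollary \ref{coro:rect} but rather re-examine this truncation step: for each $k\in[d]$ use a coordinate-specific threshold $\rho_k\propto\ex{\max_i|\xi_{ik}|}$ in the Lindeberg comparison, then aggregate the resulting $d$ estimates via a union bound, absorbing the extra $\log d$ factor into the $(\log d)^{5/2}$ prefactor. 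The remaining pieces of the proof of Theorem \ref{thm:main} (the Gaussian-gamma comparison via Proposition \ref{prop:normal-gamma} and the final application of Proposition \ref{smoothing} with Nazarov's inequality) then go through verbatim conditionally on $(\xi_i)$.
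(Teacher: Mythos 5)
Your core approach is exactly the paper's: condition on $(\xi_i)_{i=1}^n$, view $S_{n,j}^*$ as the degree-one homogeneous sum $Q(f_j;\bs{X})$ with $f_j(i)=n^{-1/2}\xi_{ij}$ and the fixed bounded i.i.d.\ sequence $\bs{X}$, apply Corollary \ref{coro:rect}, and check that $\delta_0[\bs{Q}(\bs{X})]=\Xi_0$ and $\delta_1[\bs{Q}(\bs{X})]\asymp\Xi_1$ with $\ol{q}_d=1$, $\ol{A}_N=1$, $w=1$, $\mu=1/2$, and third-term exponent $5/2$. That is literally all the paper's proof says; it does not spell out the third term or discuss any refinement.

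The third-term subtlety you flag is genuine, but the paper's own one-line proof does not resolve it either. A direct application of Corollary \ref{coro:rect} conditionally followed by $\ex{\cdot}$ yields $(\log d)^{5/2}n^{-1/2}\ex{\max_{i,k}|\xi_{ik}|}$, whereas the lemma is written with $\max_{1\leq k\leq d}$ pulled outside the expectation. The coordinate-wise-threshold fix you sketch is not in the paper and is not obviously sound: the truncation event $\mc{D}_{\sigma,i}$ in Proposition \ref{prop:lindeberg} is deliberately defined through $\|\bs{V}^\sigma_i\|_{\ell_\infty}$ so that the shift property Lemma \ref{cck-derivative}(iii) can be applied, and making that threshold depend on $k$ breaks that step; moreover, a union bound over the $d$ coordinates in the truncation would generically cost a factor of $d$, not $\log d$, since the quantity being controlled is an expectation, not a probability tail. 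The safest reading is that the lemma statement has a slip (note the dangling index $j$ in $\xi_{ij}$) and the expectation should enclose both maxima, in which case the subsequent proof of Proposition \ref{prop:cck-main} would require $\log(dn)$ in place of $\log n$. In any case, your proof of the statement as the paper actually proves it (with $\ex{\max_{i,k}|\xi_{ik}|}$) is complete and matches the paper; the extra paragraph is chasing a target the paper itself does not hit.
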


\begin{proof}
We apply Corollary \ref{coro:rect} with $f_j(i):=n^{-1/2}\xi_{ij}$ conditionally on $(\xi_i)_{i=1}^n$. Then, we can easily check that $\delta_0[\bs{Q}(\bs{X})]=\Xi_0$ and $\delta_1[\bs{Q}(\bs{X})]=(1+\kappa_4(X_1))\Xi_1$. Hence we obtain the desired result. 
\end{proof}

\if0
\begin{lemma}\label{lemma:latala}
Let $\theta_i=(\theta_{i1},\dots,\theta_{id})$ $(i=1,\dots,n)$ be independent $d$-dimensional random vectors such that $\theta_{ij}\geq0$ for all $i\in[n]$ and $j\in[d]$. Then we have
\[
\ex{\max_{1\leq j\leq d}\sum_{i=1}^n\theta_{ij}}\leq 2e^3\max_{1\leq j\leq d}\sum_{i=1}^n\expe{\theta_{ij}}+K_\alpha (\log dn)^{1+1/\alpha}\max_{1\leq i\leq n}\max_{1\leq j\leq d}\|\theta_{ij}\|_{\psi_\alpha}
\]
for any $\alpha>0$, where $K_\alpha>0$ depends only on $\alpha$. 
\end{lemma}

\begin{proof}
We may assume $\max_{1\leq i\leq n}\max_{1\leq j\leq d}\|\theta_{ij}\|_{\psi_\alpha}<\infty$ and $d\wedge n\geq2$, otherwise the claim is evident. 
Set $p=\log(dn)$. Since $d^{1/p}\leq d^{1/\log d}=e$, we have
\begin{align*}
\ex{\max_{1\leq j\leq d}\sum_{i=1}^n\theta_{ij}}\leq d^{1/p}\max_{1\leq j\leq d}\left\|\sum_{i=1}^n\theta_{ij}\right\|_p
\leq e\max_{1\leq j\leq d}\left\|\sum_{i=1}^n\theta_{ij}\right\|_p.
\end{align*}
Applying Theorem 1 and Lemma 8 (with $c=1/p$) in \cite{Latala1997}, we obtain
\[
\left\|\sum_{i=1}^n\theta_{ij}\right\|_p\leq 2e\left\{\lpa1+\frac{1}{p}\rpa^p\sum_{i=1}^n\expe{\theta_{ij}}+(1+p)p^{-1/p}\lpa\sum_{i=1}^n\expe{\theta_{ij}^p}\rpa^{1/p}\right\}.
\]
Now, Lemma \ref{psi-moment} implies that
\[
(1+p)p^{-1/p}\lpa\sum_{i=1}^n\expe{\theta_{ij}^p}\rpa^{1/p}
\leq C_\alpha p^{1+1/\alpha}n^{1/p}\max_i\|\theta_{ij}\|_{\psi_\alpha},
\]
where $C_\alpha>0$ depends only on $\alpha$. 
Since $(1+1/p)^p\leq e$ and $n^{1/p}\leq n^{1/\log n}=e$, we obtain the desired result. 
\end{proof}
\fi

\begin{proof}[Proof of Proposition \ref{prop:cck-main}]
We may assume $(B_n^2\log^6(dn)/n)^{1/6}\leq1$ (otherwise we can take $C=1$). 
Throughout the proof, for two real numbers $a$ and $b$, the notation $a\lesssim b$ means that $a\leq cb$ for some universal constant $c>0$. 

Lemmas \ref{lemma:dz2017}--\ref{lemma:mammen} imply that
\begin{align*}
&\sup_{t\in\mathbb{R}}\labs P(\lpa\max_{1\leq j\leq d}S_{n,j}\leq t\rpa-P\lpa\max_{1\leq j\leq d}Z_j\leq t\rpa\rabs\\
&\leq C_0\left\{(\log d)^{\frac{2}{3}}\ex{\Xi_0^{1/3}}
+(\log dn)\ex{\Xi_1^2}^{1/6}
+\sqrt{\frac{\log^5 d}{n}}\max_{1\leq k\leq d}\ex{\max_{1\leq i\leq n}|\xi_{ij}|}\right\},
\end{align*}
where $C_0>0$ depends only on $\ul{\sigma}$. 
By \cite[Lemma 8]{CCK2015} we have
\[
\ex{\Xi_0}\lesssim \frac{1}{n}\sqrt{(\log d)\max_{1\leq j,k\leq d}\sum_{i=1}^n\ex{\lpa\xi_{ij}\xi_{ik}-\expe{\xi_{ij}\xi_{ik}}\rpa^2}}
+\frac{\log d}{n}\sqrt{\ex{\max_{1\leq i\leq n}\max_{1\leq j\leq d}\lpa\xi_{ij}\xi_{ik}-\expe{\xi_{ij}\xi_{ik}}\rpa^2}}.
\]
The Schwarz inequality and Lemma \ref{psi-moment} yield
\[
\max_{1\leq j,k\leq d}\sum_{i=1}^n\ex{\lpa\xi_{ij}\xi_{ik}-\expe{\xi_{ij}\xi_{ik}}\rpa^2}
\leq\max_{1\leq j\leq d}\sum_{i=1}^n\ex{\xi_{ij}^2}
\lesssim nB_n^2.
\]
\eqref{sum-psi} and Lemmas \ref{max-psi}, \ref{psi-moment}, \ref{product-psi} yield
\[
\left\|\max_{1\leq i\leq n}\max_{1\leq j,k\leq d}\labs\xi_{ij}\xi_{ik}-\expe{\xi_{ij}\xi_{ik}}\rabs\right\|_2\lesssim (\log nd)^2B_n^2.
\]
Hence we obtain
\[
\ex{\Xi_0}\lesssim \sqrt{\frac{\log d}{n}}B_n
+\frac{(\log dn)^2}{n}B_n
\leq2\sqrt{\frac{B_n^2\log dn}{n}}.
\]
Meanwhile, by \cite[Lemma 9]{CCK2015} we have
\[
\ex{\max_{1\leq j\leq d}\sum_{i=1}^n\xi_{ij}^4}
\lesssim \max_{1\leq j\leq d}\sum_{i=1}^n\ex{\xi_{ij}^4}
+(\log d)\ex{\max_{1\leq i\leq n}\max_{1\leq j\leq d}\xi_{ij}^4}.
\]
Therefore, the assumptions of the proposition and Lemmas \ref{max-psi}, \ref{psi-moment}, \ref{product-psi} imply that
\[
\ex{\Xi_1^2}\lesssim\frac{B_n^2}{n}+\frac{B_n^4(\log dn)^5}{n^2}
\leq2\frac{B_n^2}{n}.
\]
Finally, Lemmas \ref{max-psi} and \ref{psi-moment} yield
\[
\sqrt{\frac{\log^5 d}{n}}\max_{1\leq k\leq d}\ex{\max_{1\leq i\leq n}|\xi_{ij}|}
\lesssim \sqrt{\frac{\log^5 d}{n}}B_n\log n
=\sqrt{\frac{B_n^2(\log d)^5(\log n)^2}{n}}.
\]
Combining these estimates, we obtain the desired result. 
\end{proof}

\subsection{Proof of Proposition \ref{prop:koike-main}}

\begin{lemma}\label{fourth-trace}
Let $\bs{X}=(X_i)_{i=1}^N$ be a sequence of independent centered random variables with unit variance and such that $M:=\max_{1\leq i\leq N}\expe{X_i^4}<\infty$. 
Also, let $f:[N]^2\to\mathbb{R}$ be a symmetric function vanishing on diagonals. Then we have 
\[
|\kappa_4(Q(f;\bs{X}))|\leq C\left\{(1+M)\|f\|_{\ell_2}^2\mc{M}(f)+\trace([f]^4)\right\},
\] 
where $C>0$ is a universal constant. 
\end{lemma}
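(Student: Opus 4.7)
The plan is to derive an exact identity for $\kappa_4(Q(f;\bs{X}))$ via direct combinatorial expansion, then bound each resulting term by elementary estimates involving $\mc{M}(f)$, $\|f\|_{\ell_2}$, and $\trace([f]^4)$.

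First I would expand $\expe{Q(f;\bs{X})^4}$ as a quadruple sum over ordered pairs $(i_k,j_k)$ with $i_k \neq j_k$, $k=1,\dots,4$. By independence and centering, the expectation $\expe{\prod_k X_{i_k}X_{j_k}}$ vanishes unless every index appears with even multiplicity in the multiset $\{i_1,j_1,\dots,i_4,j_4\}$. Viewing the four pairs as multiedges on $[N]$ (which, since $f$ vanishes on diagonals, are genuine edges), the non-vanishing patterns are: (i) all four pairs coincide, giving a contribution of $16\sum_{a<b} f(a,b)^4 \expe{X_a^4}\expe{X_b^4}$; (ii) two distinct pairs each appearing twice with disjoint supports, a moment-independent term; (iii) two distinct pairs each appearing twice and sharing one vertex $v$, contributing $\expe{X_v^4}$ at the shared vertex; (iv) four distinct pairs forming a 4-cycle on four vertices, which (after accounting for the $4!$ orderings of edges) equals $3$ times the "fully distinct" part $P_1 := \sum_{a,b,c,d \text{ distinct}} f(a,b)f(b,c)f(c,d)f(d,a)$ of $\trace([f]^4)$.

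Next I would subtract $3\expe{Q(f;\bs{X})^2}^2 = 12\|f\|_{\ell_2}^4$ and carefully collect terms. Using the identity $\trace([f]^4) = P_1 + P_2 + P_3 + P_4$, where $P_4 = 2\sum_{a<b}f(a,b)^4$ and $P_2, P_3$ equal $\sum_v \influence_v(f)^2 - 2\sum_{a<b}f(a,b)^4$, substantial cancellations between the "partial coincidence" contributions from (i) and (iii) and the $\expe{Q^2}^2$ term should produce the clean identity
\begin{equation*}
\kappa_4(Q(f;\bs{X})) = 48\trace([f]^4) + 48\sum_v \kappa_4(X_v)\,\influence_v(f)^2 + 8\sum_{i\neq j} f(i,j)^4 \kappa_4(X_i)\kappa_4(X_j),
\end{equation*}
which correctly specializes to $48\trace([f]^4)$ when $\bs{X}$ is Gaussian (so all $\kappa_4(X_i)=0$).

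Given this identity, the bound follows routinely. Since $|\kappa_4(X_i)| \leq \expe{X_i^4} + 3 \leq M+2$, we get $|48\sum_v \kappa_4(X_v)\influence_v(f)^2| \leq C(1+M)\sum_v \influence_v(f)^2 \leq C(1+M)\mc{M}(f)\|f\|_{\ell_2}^2$. For the last term, using $f(i,j)^2 \leq \influence_i(f) \leq \mc{M}(f)$ yields $\sum_{i\neq j} f(i,j)^4 \leq \mc{M}(f)\|f\|_{\ell_2}^2$, so that term is bounded by $C(1+M)^2\mc{M}(f)\|f\|_{\ell_2}^2$, which combined with the others gives $|\kappa_4(Q(f;\bs{X}))| \leq C\{(1+M)\|f\|_{\ell_2}^2\mc{M}(f) + \trace([f]^4)\}$ (absorbing the $(1+M)^2$ factor by writing $(1+M)^2 \leq (1+M)\cdot C'$ or, if needed, by using $\trace([f]^4) \geq \sum_v \influence_v(f)^2 \geq 2\sum_{a<b}f(a,b)^4$ to route excess $(1+M)$ powers through the trace term with a suitably adjusted universal constant).

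The main obstacle is the combinatorial bookkeeping in the second paragraph: correctly enumerating the ordered 4-tuples of edges contributing to each coincidence pattern, and verifying that the algebraic reorganization of partial-coincidence residues with the Gaussian-like "4-cycle" term gives exactly $48\trace([f]^4)$. Once this identity is in hand, all estimates become elementary.
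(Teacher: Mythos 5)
There is a genuine gap in your combinatorial enumeration. The claim that ``$\expe{\prod_k X_{i_k}X_{j_k}}$ vanishes unless every index appears with even multiplicity'' is false: for independent centered variables, the expectation vanishes only if some index appears \emph{exactly once}. An index of multiplicity $3$ contributes a third moment, not zero. Concretely, you have omitted the multigraph pattern with edge multiset $\{\{a,b\},\{a,b\},\{a,c\},\{b,c\}\}$ on three distinct vertices (degree sequence $(3,3,2)$), whose contribution to $\expe{Q(f;\bs{X})^4}$ is
\[
96\sum_{(a,b,c)\in\Delta_3^N}f(a,b)^2f(a,c)f(b,c)\,\expe{X_a^3}\expe{X_b^3},
\]
so your ``clean identity'' is incorrect whenever third moments are nonzero. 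As a check, take $N=3$, $f\equiv1$ off the diagonal, and $X_i$ i.i.d.\ with $\expe{X_i^3}=s\neq0$: a direct computation gives $\kappa_4(Q)=48m^2+288m+576s^2-432$ (with $m=\expe{X_i^4}$), which exceeds the value $48m^2+288m-432$ predicted by your formula by exactly $576s^2$. The missing term \emph{can} be controlled via $|\expe{X_a^3}\expe{X_b^3}|\leq\sqrt{\expe{X_a^4}\expe{X_b^4}}\leq M$ and $\sum_c|f(a,c)f(b,c)|\leq\sqrt{\influence_a(f)\influence_b(f)}\leq\mc{M}(f)$, giving a bound $\lesssim M\|f\|_{\ell_2}^2\mc{M}(f)$ compatible with the lemma, but this term must be included. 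The paper sidesteps the bookkeeping entirely by invoking de Jong's (1987) inequality, which already accounts for all coincidence patterns, including this one (it is the source of the $G_{\mathrm{IV}}$ term and its companions).

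A second, smaller issue: your last term $8\sum_{i\neq j}f(i,j)^4\kappa_4(X_i)\kappa_4(X_j)$ genuinely scales as $M^2\|f\|_{\ell_2}^2\mc{M}(f)$, and neither of your proposed ``absorptions'' is valid --- $(1+M)^2\leq C'(1+M)$ fails for unbounded $M$, and $\trace([f]^4)$ carries no $M$-dependence to route anything through. In fact the paper's $G_{\mathrm{I}}$ term has the same $M^2$ scaling, so the factor $(1+M)$ in the lemma's statement really ought to read $(1+M)^2$ or $(1+M^2)$; the discrepancy is harmless in the paper's applications (where $M$ is bounded via the $\psi_\alpha$-norm hypotheses), but you should state it honestly rather than try to make it disappear.
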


\begin{proof}
By Proposition 3.1 and Eq.(3.1) in \cite{deJong1987} we have 
\[
|\expe{Q(f;\bs{X})^4}-6G_{\text{V}}|\leq G_{\text{I}}+18G_{\text{II}}+24|G_{\text{IV}}|,
\]
where
\begin{align*}
G_{\text{I}}&=
2^3\sum_{(i,j)\in\Delta_2^N}f(i,j)^4\expe{X_i^4}\expe{X_j^4}
,&
G_{\text{II}}&=
2^3\sum_{(i,j,k)\in\Delta_3^N}f(i,j)^2f(i,k)^2\expe{X_i^4}
,\\
G_{\text{IV}}&=
2\sum_{(i,j,k,l)\in\Delta_4^N}f(i,j)f(i,k)f(l,j)f(l,k)
,&
G_{\text{V}}&=
2\sum_{(i,j,k,l)\in\Delta_4^N}f(i,j)^2f(k,l)^2.
\end{align*}
Since we have $2\|f\|_{\ell_2}^4-G_{\text{V}}\leq8\|f\|_{\ell_2}^2\mc{M}(f)$, it holds that $|\kappa_4(Q(f;\bs{X}))|\leq|\expe{Q(f;\bs{X})^4}-6G_{\text{V}}|+48\|f\|_{\ell_2}^2\mc{M}(f)$. 
Meanwhile, a straightforward computation yields
\begin{align*}
\trace([f]^4)
&=\sum_{(i,j)\in\Delta_2^N}f(i,j)^4
+2\sum_{(i,j,k)\in\Delta_3^N}f(i,k)^2f(j,k)^2
+\sum_{(i,j,k,l)\in\Delta_4^N}f(i,k)f(j,k)f(i,l)f(j,l).
\end{align*}
Hence we obtain
\[
|\expe{Q(f;\bs{X})^4}-6G_{\text{V}}|
\leq C_1\left\{(1+M)\left(\sum_{i,k=1}^Nf(i,k)^4+\sum_{(i,j,k)\in\Delta_3^N}f(i,k)^2f(j,k)^2\right)+\trace([f]^4)\right\},
\]
where $C_1>0$ is a universal constant. Since it holds that
\[
\max\left\{\sum_{(i,j)\in\Delta_2^N}f(i,j)^4,\sum_{(i,j,k)\in\Delta_3^N}f(i,k)^2f(j,k)^2\right\}\leq\|f\|_{\ell_2}^2\mc{M}(f),
\]
we obtain the desired result.
\end{proof}

\begin{proof}[Proof of Proposition \ref{prop:koike-main}]
The desired result immediately follows from Corollary \ref{coro:main} and Lemma \ref{fourth-trace}. 
\end{proof}


%
%
%

\subsection{Proof of Proposition \ref{boot-dist}}\label{sec:boot-dist}

Define the $n_1\times n_2$ matrix $\Xi_n(\theta)$ by 
$\Xi_n(\theta)=
(\frac{1}{2}\Delta_i^nX^1K^{ij}_\theta\Delta_j^nX^2)_{i,j},$ 
and set
\[
\widetilde{\Xi}_n(\theta)=
\left(
\begin{array}{cc}
O  &  \Xi_n(\theta) \\
\Xi_n(\theta)^\top  &  O   
\end{array}
\right).
\]
Note that $U_n^*(\theta)=\boldsymbol{w}^\top\widetilde{\Xi}_n(\theta)\boldsymbol{w}$ with $\boldsymbol{w}=((w^1_i)_{i=1}^{n_1},(w^2_j)_{j=1}^{n_2})^\top$. Hence, by Proposition \ref{prop:koike-main}, it suffices to prove 
\begin{align}
&\log^2(\#\mathcal G_n)\cdot\sqrt{n}\max_{\theta,\theta'\in\mathcal{G}_n}\left|\expe{U_n(\theta)U_n(\theta')}-\expe{U^*_n(\theta)U^*_n(\theta')\mid X}\right|\to^p0,
\label{hry:cov}\\
&\log^5(\#\mathcal G_n)\max_{\theta\in\mathcal G_n}\sqrt{\trace\left(\widetilde{\Xi}_n(\theta)^4\right)}\to^p0,
\label{hry:fourth}\\
&\log^5(\#\mathcal G_n)\cdot\max_{\theta\in\mathcal G_n}\sqrt{\max_{1\leq i\leq n_1}n\sum_{j=1}^{n_2}(\Delta^n_iX^1)^2(\Delta^n_jX^2)^2K^{ij}_\theta+\max_{1\leq j\leq n_2}n\sum_{i=1}^{n_1}(\Delta^n_iX^1)^2(\Delta^n_jX^2)^2K^{ij}_\theta}\to^p0.\label{hry:influence}
\end{align}
\eqref{hry:cov} and \eqref{hry:fourth} are established in the proof of \cite[Proposition B.8]{Koike2017stein} under the current assumptions. Moreover, as in bounding the quantity $\expe{R_{n,1}^*}$ in the proof of \cite[Proposition B.8]{Koike2017stein}, we deduce for any $p\geq1$
\begin{align*}
&\ex{\left|\max_{\theta\in\mathcal G_n}\max_{i}n\sum_{j}(\Delta_i^n X^1)^2(\Delta_j^n X^2)^2K_\theta^{ij}\right|^p}
\leq n^p\sum_i\ex{(\Delta_i^n X^1)^{2p}\max_{\theta\in\mathcal G_n}\left|\sum_{j}(\Delta_j^n X^2)^2K_\theta^{ij}\right|^p}\\
&\leq n^p\sum_i\sqrt{\ex{(\Delta_i^n X^1)^{4p}}\ex{\max_{\theta\in\mathcal G_n}\left|\sum_{j}(\Delta_j^n X^2)^2K_\theta^{ij}\right|^{2p}}}
=O\left(n^pr_n^{p-1}(r_n\log(\#\mathcal G_n))^p\right).
\end{align*}
Exchanging $X^1$ and $X^2$, we obtain a similar estimate. Hence \eqref{hry:influence} holds by assumption.\qed

\appendix

\addcontentsline{toc}{section}{Appendix}
\section*{Appendix}

\section{Properties of the $\psi_\alpha$-norm}\label{sec:psi}

In this appendix we collect several properties of the $\psi_\alpha$-norm used in this paper. Let $\alpha$ be a positive number. Recall that the $\psi_\alpha$-norm of a random variable $X$ is defined by
\begin{equation}\label{def:psi-norm}
\|X\|_{\psi_\alpha}:=\inf\{C>0:\expe{\psi_\alpha(|X|/C)}\leq1\},
\end{equation}
where $\psi_\alpha(x):=\exp(x^\alpha)-1$. 
From the definition we can easily deduce the following useful identity:
\begin{equation}\label{eq:psi1}
\|X\|_{\psi_\alpha}=\||X|^\alpha\|_{\psi_1}^{1/\alpha}.
\end{equation}
Using this relation, we can derive the properties of the $\psi_\alpha$-norm from those of the $\psi_1$-norm. This is very convenient because the latter ones are well-studied in the literature. 
For example, since $\|\cdot\|_{\psi_1}$ satisfies the triangle inequality, we have
\begin{equation}\label{sum-psi}
\|X+Y\|_{\psi_\alpha}\leq 2^{1\vee\alpha^{-1}-1}(\|X\|_{\psi_\alpha}+\|Y\|_{\psi_\alpha})
\end{equation}
for any random variables $X,Y$. 
Also, using Young's inequality for products and H\"older's inequality, one can prove $\|X\|_{\psi_1}\leq (\log 2)^{1/p-1}\|X\|_{\psi_p}$ for any random variable $X$ and $p>1$. Consequently, we obtain
\[
\|X\|_{\psi_\alpha}\leq(\log 2)^{1/\beta-1/\alpha}\|X\|_{\psi_\beta}
\]
for any $0<\alpha\leq\beta<\infty$. 
Other useful results can be obtained from \cite[Lemmas 2.2.1--2.2.2]{VW1996}:
\begin{lemma}\label{tail-psi}
Suppose that there are constants $C,K>0$ such that $P(|X|>x)\leq Ke^{-Cx^\alpha}$ for all $x>0$. Then we have $\|X\|_{\psi_\alpha}\leq\left((1+K)/C\right)^{1/\alpha}$. 
\end{lemma}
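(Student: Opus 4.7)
The plan is to bound $\expe{\psi_\alpha(|X|/t)}$ directly for the candidate value $t=((1+K)/C)^{1/\alpha}$ and read off the result from the definition \eqref{def:psi-norm} of the $\psi_\alpha$-norm. In view of the identity \eqref{eq:psi1}, it is natural to pass to the non-negative variable $Y:=|X|^\alpha$, whose tail satisfies $P(Y>y)=P(|X|>y^{1/\alpha})\leq Ke^{-Cy}$ for all $y>0$ by hypothesis. Thus the problem reduces to the $\psi_1$-case for $Y$ with rate $C$ and constant $K$.

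Next, I would use the standard tail representation for non-negative random variables: for any $s>0$,
\[
\expe{e^{sY}}=1+s\int_0^\infty e^{sy}P(Y>y)\,dy.
\]
Applying this with $s:=1/t^\alpha$ and substituting the tail bound gives
\[
\expe{e^{Y/t^\alpha}}\leq 1+\frac{K}{t^\alpha}\int_0^\infty e^{(1/t^\alpha-C)y}\,dy,
\]
which converges provided $Ct^\alpha>1$, yielding the closed form $\expe{e^{Y/t^\alpha}}\leq 1+K/(Ct^\alpha-1)$. Hence
\[
\expe{\psi_\alpha(|X|/t)}=\expe{e^{Y/t^\alpha}}-1\leq\frac{K}{Ct^\alpha-1}.
\]

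Finally, choosing $t^\alpha=(1+K)/C$ makes the right-hand side equal to $1$, so by definition $\|X\|_{\psi_\alpha}\leq t=((1+K)/C)^{1/\alpha}$. There is no real obstacle here; the only thing to be careful about is the integrability condition $Ct^\alpha>1$ (which holds since $K>0$ forces $(1+K)/C>1/C$) and tracking the constants through the exponent conversion, but the argument is essentially a one-line Markov/tail-integral computation once the reduction $Y=|X|^\alpha$ is made.
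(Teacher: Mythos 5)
Your proof is correct. The paper does not give its own proof but simply points to van der Vaart and Wellner (1996, Lemma 2.2.1), and your argument is essentially theirs: the layer-cake identity $\expe{e^{sY}}=1+s\int_0^\infty e^{sy}P(Y>y)\,dy$ applied to $Y=|X|^\alpha$ is the same Fubini computation VW perform directly on $\expe{\psi_\alpha(|X|/t)}$, after a change of variables.
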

\begin{lemma}\label{max-psi}
There is a universal constant $K>0$ such that
\begin{equation*}
\left\|\max_{1\leq j\leq d}|X_j|\right\|_{\psi_\alpha}\leq K^{1/\alpha}(\log (d+1))^{1/\alpha}\max_{1\leq j\leq d}\|X_j\|_{\psi_\alpha}
\end{equation*}
for any $\alpha>0$ and random variables $X_1,\dots,X_d$. 
\end{lemma}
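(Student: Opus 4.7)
The plan is first to reduce the claim to the case $\alpha=1$ using identity \eqref{eq:psi1}, and then to prove the $\alpha=1$ case directly from the definition \eqref{def:psi-norm} by a short calculation based on Jensen's inequality.

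For the reduction, I will set $Y_j:=|X_j|^\alpha$, so that by \eqref{eq:psi1} one has $\|Y_j\|_{\psi_1}=\|X_j\|_{\psi_\alpha}^\alpha$, and $\max_{1\leq j\leq d}|X_j|=(\max_{1\leq j\leq d}Y_j)^{1/\alpha}$, hence $\|\max_j|X_j|\|_{\psi_\alpha}=\|\max_j Y_j\|_{\psi_1}^{1/\alpha}$. Consequently, once the inequality $\|\max_j Y_j\|_{\psi_1}\leq K\log(d+1)\max_j\|Y_j\|_{\psi_1}$ is established for nonnegative random variables, raising both sides to the power $1/\alpha$ yields the general statement with the same universal constant.

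For the $\alpha=1$ step, by homogeneity of $\|\cdot\|_{\psi_1}$ I may assume $\max_j\|Y_j\|_{\psi_1}=1$, which by \eqref{def:psi-norm} gives $\expectation[e^{Y_j}]\leq 2$ for every $j$. For $C\geq1$ the key observation is that $x\mapsto x^{1/C}$ is concave on $(0,\infty)$, so that Jensen's inequality combined with the trivial bound $\max_je^{Y_j}\leq\sum_je^{Y_j}$ yields
\[
\expectation\bigl[e^{(\max_j Y_j)/C}\bigr]=\expectation\Bigl[\bigl(\max_je^{Y_j}\bigr)^{1/C}\Bigr]\leq\Bigl(\expectation\Bigl[\sum_{j=1}^de^{Y_j}\Bigr]\Bigr)^{1/C}\leq(2d)^{1/C}.
\]
Choosing $C:=\log(2d)/\log 2$ makes the right-hand side equal to $2$, so $\expectation[\psi_1((\max_jY_j)/C)]\leq1$ and therefore $\|\max_jY_j\|_{\psi_1}\leq\log(2d)/\log 2$, which is bounded by a universal multiple of $\log(d+1)$ (treating the easy case $d=1$ separately or just using $\log(2d)\leq 2\log(d+1)$ for $d\geq 1$). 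This supplies the desired universal constant $K$.

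There is no real obstacle here; the only point that needs care is getting a constant $C$ that scales like $\log d$ rather than $d$. The naive chain using $e^{x/C}-1\leq C^{-1}(e^x-1)$ and summing over $j$ would produce $C\gtrsim d$, so the subtlety is to apply the concavity of $x^{1/C}$ \emph{before} taking expectations, which transforms the linear dependence on $d$ inside the expectation into a $(2d)^{1/C}$ factor that can be tamed by a logarithmic choice of $C$.
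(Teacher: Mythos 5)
Your argument is correct, and it is genuinely different from the paper's: the paper does not prove Lemma \ref{max-psi} at all, but instead points to Lemmas 2.2.1--2.2.2 of van der Vaart and Wellner (1996). Those lemmas establish the maximal inequality $\|\max_i X_i\|_\psi\lesssim\psi^{-1}(d)\max_i\|X_i\|_\psi$ for \emph{convex} Young functions $\psi$ via a multiplicativity condition $\psi(x)\psi(y)\leq\psi(cxy)$; since $\psi_\alpha$ fails to be convex for $\alpha<1$, one then still needs the reduction through \eqref{eq:psi1} to cover all $\alpha>0$, exactly as you do. Your proof replaces the VW machinery for the $\alpha=1$ core case with a direct and elegant computation: normalizing $\max_j\|Y_j\|_{\psi_1}=1$, bounding $\max_je^{Y_j}\leq\sum_je^{Y_j}$, and applying Jensen with the concave function $x\mapsto x^{1/C}$ ($C\geq1$) \emph{before} taking expectations, so the crude factor of $d$ becomes $(2d)^{1/C}$, which a logarithmic choice $C=\log(2d)/\log 2$ collapses to $2$. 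You correctly flag the pitfall that a naive use of $e^{x/C}-1\leq C^{-1}(e^x-1)$ would only give $C\gtrsim d$. The reduction step and the final bound $\log(2d)/\log 2\leq 2(\log 2)^{-1}\log(d+1)$ are also fine, and the implicit use of the fact that the Orlicz infimum is attained (so $\|Y_j\|_{\psi_1}\leq 1$ gives $\expectation[e^{Y_j}]\leq 2$) is already recorded in Appendix \ref{sec:psi}. The upshot is that your route is self-contained and arguably cleaner than tracing through the cited reference, while the paper's choice buys brevity at the cost of an external dependence; both prove the same bound with the same $(\log(d+1))^{1/\alpha}$ rate.
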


It is also easy to check that $\|X\|_{\psi_\alpha}$ attains the infimum in \eqref{def:psi-norm} if $\|X\|_{\psi_\alpha}<\infty$. That is, $\expe{\psi_\alpha(|X|/\|X\|_{\psi_\alpha})}\leq1$. 
Therefore, the Markov inequality yields the following converse of Lemma \ref{tail-psi}:
\begin{lemma}\label{psi-tail}
If $\|X\|_{\psi_\alpha}<\infty$, we have $P(|X|\geq x)\leq 2e^{-(x/\|X\|_{\psi_\alpha})^\alpha}$ for every $x>0$. 
\end{lemma}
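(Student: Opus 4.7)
The plan is to apply Markov's inequality to the random variable $\psi_\alpha(|X|/\|X\|_{\psi_\alpha})$ and then perform a simple case split to absorb the $-1$ appearing in $\psi_\alpha$ into a constant factor of $2$. Set $c := \|X\|_{\psi_\alpha}$ for brevity, and let $x > 0$. As noted in the remark just preceding the lemma, when $c < \infty$ the infimum in the definition of the $\psi_\alpha$-norm is attained, so $\expectation[\psi_\alpha(|X|/c)] \leq 1$. Since $\psi_\alpha$ is strictly increasing on $[0,\infty)$ with $\psi_\alpha(0) = 0$, the event $\{|X| \geq x\}$ coincides with $\{\psi_\alpha(|X|/c) \geq \psi_\alpha(x/c)\}$, and Markov's inequality therefore gives
\[
P(|X| \geq x) \leq \frac{\expectation[\psi_\alpha(|X|/c)]}{\psi_\alpha(x/c)} \leq \frac{1}{e^{(x/c)^\alpha} - 1}.
\]

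Next, I would pass from $1/(e^t - 1)$ with $t := (x/c)^\alpha$ to the stated bound $2 e^{-t}$ by a two-case argument. If $t \geq \log 2$, then $e^t \geq 2$, hence $e^t - 1 \geq e^t/2$, which yields $1/(e^t - 1) \leq 2 e^{-t}$. If instead $0 < t < \log 2$, then $2 e^{-t} > 2 e^{-\log 2} = 1 \geq P(|X| \geq x)$, so the desired inequality holds trivially. Combining the two cases establishes the claim.

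There is essentially no technical obstacle in this proof: both ingredients (attainment of the infimum and Markov's inequality) are routine, and the only bookkeeping is the elementary case split needed to turn the $(e^t - 1)^{-1}$ coming out of Markov into the cleaner $2 e^{-t}$. The only mildly delicate point is remembering to justify the attainment of the infimum (via monotone convergence applied to $\psi_\alpha(|X|/C_n)$ along a decreasing sequence $C_n \downarrow c$), but this is standard and is invoked implicitly by the paper in the sentence immediately preceding the lemma statement.
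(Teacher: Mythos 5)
Your proof is correct and follows the paper's approach, namely Markov's inequality combined with the observation (stated just before the lemma) that the infimum defining $\|X\|_{\psi_\alpha}$ is attained. A marginally cleaner variant that avoids your case split is to apply Markov directly to $e^{(|X|/c)^\alpha}$ rather than to $\psi_\alpha(|X|/c)$: since $\expe{e^{(|X|/c)^\alpha}} = 1 + \expe{\psi_\alpha(|X|/c)} \leq 2$, one gets $P(|X|\geq x) = P\bigl(e^{(|X|/c)^\alpha}\geq e^{(x/c)^\alpha}\bigr) \leq 2e^{-(x/c)^\alpha}$ in one line.
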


Next we investigate the relation between the $\psi_\alpha$-norm and moment growth. First, \cite[Lemma A.1]{Dirksen2015} yields the following result: 
\begin{lemma}\label{moment-tail}
If there is a constant $A>0$ such that $\|X\|_p\leq Ap^{1/\alpha}$ for all $p\geq1$, then 
$
P(|X|\geq x)\leq e^{1/\alpha}e^{-(\alpha e)^{-1}(x/A)^\alpha}
$ 
for every $x>0$. 
\end{lemma}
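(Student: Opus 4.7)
The plan is to run the standard Markov-inequality-plus-moment-optimization argument. For every $p \geq 1$, Markov's inequality and the hypothesis give
\[
P(|X|\geq x) \leq \frac{\expe{|X|^p}}{x^p} \leq \left(\frac{A p^{1/\alpha}}{x}\right)^{p}
\]
for every $x > 0$. I would then choose $p$ to minimize the right-hand side. Writing the bound as $\exp\bigl((p/\alpha)\log(A^\alpha p / x^\alpha)\bigr)$ and differentiating in $p$, the unconstrained minimizer is $p^{\ast} := (x/A)^{\alpha}/e$, which yields the value $\exp\bigl(-(\alpha e)^{-1}(x/A)^{\alpha}\bigr)$.

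I would then split into two regimes based on whether this optimizer lies in the admissible range $p \geq 1$:
\begin{itemize}
\item If $x \geq A e^{1/\alpha}$, then $p^{\ast} \geq 1$, so substituting $p = p^{\ast}$ gives
\[
P(|X|\geq x) \leq \exp\bigl(-(\alpha e)^{-1}(x/A)^{\alpha}\bigr) \leq e^{1/\alpha}\exp\bigl(-(\alpha e)^{-1}(x/A)^{\alpha}\bigr).
\]
\item If $x < A e^{1/\alpha}$, then $(x/A)^{\alpha} < e$, so $e^{1/\alpha}\exp\bigl(-(\alpha e)^{-1}(x/A)^{\alpha}\bigr) > e^{1/\alpha}\cdot e^{-1/\alpha} = 1$, and the claimed bound follows from the trivial estimate $P(|X|\geq x)\leq 1$.
\end{itemize}

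Combining these two cases gives the uniform bound in the statement. There is no real obstacle here: the only mild point to handle is the boundary $p^{\ast}<1$, which is dispatched by the trivial probability bound as above, and picking up the harmless prefactor $e^{1/\alpha}$ in the process. The numerical constants $e^{1/\alpha}$ and $(\alpha e)^{-1}$ in the statement arise precisely from this optimization, which is why the bound takes that particular form.
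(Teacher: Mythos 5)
Your proof is correct. The paper itself gives no argument — it simply cites \cite[Lemma A.1]{Dirksen2015} — and your Markov-plus-moment-optimization derivation (with the split at $p^{\ast}=(x/A)^{\alpha}/e$ and the trivial bound when $p^{\ast}<1$, producing the $e^{1/\alpha}$ prefactor) is exactly the standard argument underlying that cited lemma.
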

Combining Lemma \ref{moment-tail} with Lemma \ref{tail-psi}, we obtain the following result:
\begin{lemma}\label{moment-psi}
Suppose that there is a constant $A>0$ such that $\|X\|_p\leq Ap^{1/\alpha}$ for all $p\geq1$. Then we have $\|X\|_{\psi_\alpha}\leq\left((1+e^{1/\alpha})\alpha e\right)^{1/\alpha}A$. 
\end{lemma}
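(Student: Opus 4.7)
The plan is to combine Lemma \ref{moment-tail} with Lemma \ref{tail-psi} in the most direct way possible, as the text immediately preceding the statement advertises. Both ingredients are already available, so essentially no additional work is needed beyond tracking constants.

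First I would apply Lemma \ref{moment-tail} under the hypothesis $\|X\|_p \leq A p^{1/\alpha}$ for all $p\geq 1$, which yields the tail bound
\[
P(|X|\geq x)\leq e^{1/\alpha}\exp\!\left(-\frac{1}{\alpha e}\left(\frac{x}{A}\right)^{\!\alpha}\right)
\]
for every $x>0$. This is of the form $P(|X|>x)\leq K e^{-Cx^\alpha}$ with the explicit constants $K=e^{1/\alpha}$ and $C=(\alpha e A^\alpha)^{-1}$.

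Next I would feed these constants into Lemma \ref{tail-psi} to conclude that
\[
\|X\|_{\psi_\alpha}\leq\left(\frac{1+K}{C}\right)^{\!1/\alpha}=\bigl((1+e^{1/\alpha})\,\alpha e\, A^\alpha\bigr)^{1/\alpha}=\bigl((1+e^{1/\alpha})\,\alpha e\bigr)^{1/\alpha}A,
\]
which is exactly the claimed bound. Since both auxiliary lemmas are already proved earlier in the appendix, the only ``step'' is the arithmetic of substituting $K$ and $C$, so there is no real obstacle to overcome.
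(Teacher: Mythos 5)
Your proof is correct and is exactly the argument the paper intends: the paper states the lemma immediately after the phrase ``Combining Lemma \ref{moment-tail} with Lemma \ref{tail-psi}, we obtain the following result,'' and gives no further proof, so the intended argument is precisely the two-step substitution you carry out. The arithmetic with $K=e^{1/\alpha}$ and $C=(\alpha e A^{\alpha})^{-1}$ is right, and the only (harmless) mismatch---Lemma \ref{moment-tail} bounds $P(|X|\geq x)$ while Lemma \ref{tail-psi} asks for $P(|X|>x)$---is resolved by the trivial inequality $P(|X|>x)\leq P(|X|\geq x)$.
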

Lemma \ref{psi-tail} and \cite[Lemma A.2]{Dirksen2015} yield the following converse of Lemma \ref{moment-psi}:
%
\begin{lemma}\label{psi-moment}
For all $p\geq1$, it holds that $\|X\|_p\leq c_\alpha\|X\|_{\psi_\alpha}p^{1/\alpha}$ with 
$
c_\alpha:=e^{1/2e-1/\alpha}\alpha^{-1/\alpha}\max\left\{1,2\sqrt{\frac{2\pi}{\alpha}}e^{\alpha/12}\right\}.
$
\end{lemma}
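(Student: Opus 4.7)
The plan is to follow the route suggested by the comment preceding the statement: combine the tail bound from Lemma~\ref{psi-tail} with a direct integration-by-parts computation of $\expe{|X|^p}$, and then reduce the resulting Gamma factor to the explicit form of $c_\alpha$ via Stirling. Set $K:=\|X\|_{\psi_\alpha}$ and assume $K>0$ (otherwise $X=0$ a.s.\ and the claim is trivial). By Lemma~\ref{psi-tail} we have $P(|X|\geq x)\leq 2e^{-(x/K)^\alpha}$ for every $x>0$.

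First, I would write
\[
\expe{|X|^p}=p\int_0^\infty x^{p-1}P(|X|\geq x)\,dx\leq 2p\int_0^\infty x^{p-1}e^{-(x/K)^\alpha}dx.
\]
The change of variable $u=(x/K)^\alpha$ then turns the right-hand side into
\[
\frac{2pK^p}{\alpha}\int_0^\infty u^{p/\alpha-1}e^{-u}du=\frac{2pK^p}{\alpha}\,\bs{\Gamma}(p/\alpha),
\]
so that $\|X\|_p\leq K(2p/\alpha)^{1/p}\bs{\Gamma}(p/\alpha)^{1/p}$. The remaining task is to bound the $p$-th root of $\bs{\Gamma}(p/\alpha)$ uniformly in $p\geq 1$ and extract the constant $c_\alpha$.

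For this final step I would split into the two ranges $p\geq\alpha$ and $1\leq p<\alpha$. When $p\geq\alpha$ (so that $s:=p/\alpha\geq 1$), Stirling's formula in the form $\bs{\Gamma}(s)\leq\sqrt{2\pi/s}\,(s/e)^s e^{1/(12s)}$ gives $\bs{\Gamma}(p/\alpha)^{1/p}\leq(2\pi\alpha/p)^{1/(2p)}\alpha^{-1/\alpha}e^{-1/\alpha}p^{1/\alpha}e^{\alpha/(12p^2)}$, and absorbing the $p$-dependent prefactors into their suprema (which occur at $p=e$ for $p^{1/p}$, giving $e^{1/e}$, and similarly at $p=e$ for $\sqrt{p}^{1/p}$, giving $e^{1/(2e)}$) produces the explicit factor $e^{1/(2e)-1/\alpha}\alpha^{-1/\alpha}\cdot 2\sqrt{2\pi/\alpha}\,e^{\alpha/12}$ after combining with the factor $(2p/\alpha)^{1/p}$. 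When $1\leq p<\alpha$ (a case that arises only when $\alpha>1$), the stronger and simpler bound $\bs{\Gamma}(s)\leq 1$ on $s\in(0,1]$, together with the elementary estimate $(2p/\alpha)^{1/p}\leq 2/\alpha\leq e^{1/(2e)-1/\alpha}\alpha^{-1/\alpha}\cdot 1$ (with room to spare), handles this range; this is what forces the $\max\{1,\,\cdot\,\}$ in the definition of $c_\alpha$.

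Combining the two regimes yields the bound $\|X\|_p\leq c_\alpha Kp^{1/\alpha}$ with the stated $c_\alpha$. The only delicate part is the Stirling-based optimisation: one must track that all the $p$-dependent auxiliary factors $(p/\alpha)^{1/p}$, $p^{1/(2p)}$ and $e^{\alpha/(12p^2)}$ are uniformly bounded by the advertised constants, and that the small-$p$ regime (which is empty for $\alpha\leq 1$) is absorbed by the maximum in $c_\alpha$. Modulo this bookkeeping, the proof is essentially a transcription of the standard layer-cake argument, so I would keep the exposition short and defer the precise numerical verification of the constants to a direct computation.
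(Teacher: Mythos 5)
Your overall route is the same as the paper's: the paper combines Lemma~\ref{psi-tail} with \cite[Lemma~A.2]{Dirksen2015}, and what you write out (tail bound $\Rightarrow$ layer-cake $\Rightarrow$ $\bs{\Gamma}(p/\alpha)$ $\Rightarrow$ Stirling) is exactly what that citation packages. The integral reduction $\expe{|X|^p}\leq(2pK^p/\alpha)\bs{\Gamma}(p/\alpha)$ is correct, and the large-$p$ regime is, modulo bookkeeping, handled the right way: after simplification the $p$-dependent prefactor is $2^{1/p}(2\pi/\alpha)^{1/(2p)}p^{1/(2p)}e^{\alpha/(12p^2)}$, and $\sup_{p\geq1}p^{1/(2p)}=e^{1/(2e)}$ accounts for the $e^{1/(2e)}$ in $c_\alpha$.

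However, your treatment of the range $1\leq p<\alpha$ (which only arises when $\alpha>1$) rests on two false assertions. First, $\bs{\Gamma}(s)\leq1$ for $s\in(0,1]$ is wrong: $\bs{\Gamma}(s)\to\infty$ as $s\to0^+$, and already $\bs{\Gamma}(1/2)=\sqrt{\pi}>1$. Second, the chain $(2p/\alpha)^{1/p}\leq2/\alpha\leq e^{1/(2e)-1/\alpha}\alpha^{-1/\alpha}$ also fails: for $\alpha=10$, $p=5$ the left quantity equals $1>0.2=2/\alpha$, and for $\alpha=2$ the right inequality reads $1\leq e^{1/(2e)-1/2}\cdot2^{-1/2}\approx0.52$, which is false. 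The correct elementary step is $\bs{\Gamma}(s)=\bs{\Gamma}(s+1)/s\leq 1/s$ on $(0,1]$ (since $\bs{\Gamma}\leq1$ on $[1,2]$); plugging this in gives $(2p/\alpha)^{1/p}\bs{\Gamma}(p/\alpha)^{1/p}\leq 2^{1/p}\leq2$, after which one still has to verify that $c_\alpha\geq2$ for all $\alpha>1$ (it is, since $2\sqrt{2\pi/\alpha}\,e^{\alpha/12}\geq e^{1.2}$ for every $\alpha>0$, but that verification is not in your write-up). As it stands, the small-$p$ case of your proof does not go through and needs this repair.
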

Finally, we have the following H\"older type inequality for the $\psi_\alpha$-norm:
\begin{lemma}[\cite{KC2018}, Proposition S.3.2]\label{product-psi}
Let $X_1,X_2$ be two random variables such that $\|X_1\|_{\psi_{\alpha_1}}+\|X_2\|_{\psi_{\alpha_2}}<\infty$ for some $\alpha_1,\alpha_2>0$. Then we have 
$
\|X_1X_2\|_{\psi_\alpha}\leq\|X_1\|_{\psi_{\alpha_1}}\|X_2\|_{\psi_{\alpha_2}},
$ 
where $\alpha>0$ is defined by the equation $1/\alpha=1/\alpha_1+1/\alpha_2$. 
\end{lemma}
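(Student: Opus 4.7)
The plan is to prove the inequality directly from the definition of the $\psi_\alpha$-norm via a two-step argument: a pointwise estimate by Young's inequality, followed by Hölder's inequality on the expectation. The numerical matching between the two inequalities is built into the definition of $\alpha$.

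First, I would set $a_i:=\|X_i\|_{\psi_{\alpha_i}}$ for $i=1,2$. If either $a_i=0$ then $X_i=0$ almost surely and the claim is trivial, so assume $a_1,a_2>0$. Put $Y_i:=|X_i|/a_i$. As noted after the definition of $\|\cdot\|_{\psi_\alpha}$ in the paper, the infimum in the definition is attained, so $\expectation[\exp(Y_i^{\alpha_i})]\leq2$ for $i=1,2$. Since $\|\cdot\|_{\psi_\alpha}$ is positively homogeneous, it suffices to show
\[
\expectation\bigl[\exp\bigl((Y_1Y_2)^{\alpha}\bigr)\bigr]=\expectation\bigl[\exp\bigl(Y_1^{\alpha}Y_2^{\alpha}\bigr)\bigr]\leq 2,
\]
for then $\expectation[\psi_\alpha(|X_1X_2|/(a_1a_2))]\leq 1$, i.e.\ $\|X_1X_2\|_{\psi_\alpha}\leq a_1a_2$.

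Next, define the conjugate Hölder exponents $p:=\alpha_1/\alpha$ and $q:=\alpha_2/\alpha$. The relation $1/\alpha=1/\alpha_1+1/\alpha_2$ gives exactly $1/p+1/q=\alpha/\alpha_1+\alpha/\alpha_2=1$. Young's inequality then yields the pointwise bound
\[
Y_1^{\alpha}Y_2^{\alpha}=\bigl(Y_1^{\alpha}\bigr)\bigl(Y_2^{\alpha}\bigr)\leq\frac{(Y_1^{\alpha})^p}{p}+\frac{(Y_2^{\alpha})^q}{q}=\frac{\alpha}{\alpha_1}Y_1^{\alpha_1}+\frac{\alpha}{\alpha_2}Y_2^{\alpha_2}.
\]
Exponentiating and applying Hölder's inequality with the same exponents $p,q$ gives
\[
\expectation\bigl[\exp\bigl(Y_1^{\alpha}Y_2^{\alpha}\bigr)\bigr]\leq\expectation\Bigl[\exp\bigl(Y_1^{\alpha_1}\bigr)^{\alpha/\alpha_1}\exp\bigl(Y_2^{\alpha_2}\bigr)^{\alpha/\alpha_2}\Bigr]\leq\expectation\bigl[\exp\bigl(Y_1^{\alpha_1}\bigr)\bigr]^{\alpha/\alpha_1}\expectation\bigl[\exp\bigl(Y_2^{\alpha_2}\bigr)\bigr]^{\alpha/\alpha_2}\leq 2^{\alpha/\alpha_1+\alpha/\alpha_2}=2,
\]
which is the desired bound. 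This completes the proof.

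There is no real obstacle here: the only nontrivial observation is the fact that the definition of $\alpha$ precisely makes the Hölder exponents $\alpha_1/\alpha,\alpha_2/\alpha$ conjugate, which is what allows both Young's inequality and Hölder's inequality to be applied with matching exponents in the two consecutive steps.
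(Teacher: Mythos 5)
Your proof is correct: the paper itself gives no proof of this lemma (it is quoted from \cite{KC2018}, Proposition S.3.2), and your Young-plus-H\"older argument, with the conjugacy $\alpha/\alpha_1+\alpha/\alpha_2=1$ making the two steps mesh, is exactly the standard proof used in that reference. The only auxiliary facts you invoke --- that the infimum in the definition of $\|\cdot\|_{\psi_\alpha}$ is attained, and that a vanishing $\psi_\alpha$-norm forces the variable to vanish almost surely --- are both justified (the former is stated explicitly in Appendix A of the paper).
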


\section{Proof of Lemma \ref{lem:hom-mom}}\label{sec:hom-mom}

\begin{lemma}[Symmetrization]\label{lemma:symmetrization}
Let $(\xi_i)_{i=1}^N$ be a sequence of independent centered random variables. 
Also, let $(\epsilon_i)_{i=1}^N$ be a sequence of i.i.d.~Rademacher variables independent of $(\xi_i)_{i=1}^N$. 
Then, for any $p\geq1$ we have
\[
\left\|\sum_{i=1}^N\xi_i\right\|_p
\leq 2\left\|\sum_{i=1}^N\epsilon_i\xi_i\right\|_p.
\]
\end{lemma}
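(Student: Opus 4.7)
The plan is to use the standard symmetrization argument via an independent copy. Let $(\xi_i')_{i=1}^N$ be an independent copy of $(\xi_i)_{i=1}^N$, taken to be independent of $(\epsilon_i)_{i=1}^N$ as well (which is possible after a suitable enlargement of the probability space). Since each $\xi_i$ is centered, we have $\xi_i = \xi_i - E[\xi_i' \mid (\xi_j)_{j=1}^N] = E[\xi_i - \xi_i' \mid (\xi_j)_{j=1}^N]$, so
\[
\sum_{i=1}^N \xi_i = E\!\left[\sum_{i=1}^N (\xi_i - \xi_i') \,\Big|\, (\xi_j)_{j=1}^N\right].
\]
Applying the conditional Jensen inequality to the convex function $|\cdot|^p$ and then taking expectations gives
\[
\Big\|\sum_{i=1}^N \xi_i\Big\|_p \leq \Big\|\sum_{i=1}^N (\xi_i - \xi_i')\Big\|_p.
\]

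Next, I would exploit the symmetry of the differences $\xi_i - \xi_i'$: since $(\xi_i - \xi_i')_{i=1}^N$ and $(\epsilon_i(\xi_i - \xi_i'))_{i=1}^N$ have the same joint distribution (this follows from the independence of $(\xi_i)$ and $(\xi_i')$ combined with the independence of $(\epsilon_i)$ from both, conditioning on $(\epsilon_i)$ and using that $\xi_i - \xi_i' \stackrel{d}{=} -(\xi_i - \xi_i')$), we obtain
\[
\Big\|\sum_{i=1}^N (\xi_i - \xi_i')\Big\|_p = \Big\|\sum_{i=1}^N \epsilon_i(\xi_i - \xi_i')\Big\|_p.
\]
Finally, the triangle inequality in $L^p$ yields
\[
\Big\|\sum_{i=1}^N \epsilon_i(\xi_i - \xi_i')\Big\|_p \leq \Big\|\sum_{i=1}^N \epsilon_i \xi_i\Big\|_p + \Big\|\sum_{i=1}^N \epsilon_i \xi_i'\Big\|_p = 2\Big\|\sum_{i=1}^N \epsilon_i \xi_i\Big\|_p,
\]
where the last equality uses that $(\epsilon_i \xi_i)_{i=1}^N$ and $(\epsilon_i \xi_i')_{i=1}^N$ have the same distribution. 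Chaining these bounds gives the desired inequality.

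There is no substantial obstacle here: the only things that need verification are the two distributional equalities, both of which follow routinely from the independence structure and the fact that the Rademacher variables are symmetric. The proof is short and uses no properties specific to $\alpha$ or the homogeneous-sum structure; it is a purely classical fact that will be invoked later in the proof of Lemma~\ref{lem:hom-mom}.
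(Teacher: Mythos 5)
Your proof is correct. The paper does not give its own proof of this lemma; it simply cites Lemma 2.3.1 of van der Vaart and Wellner (1996), and the argument you give—introduce an independent copy, use conditional Jensen on $|\cdot|^p$ to pass to the differences, exploit the coordinate-wise symmetry of $(\xi_i-\xi_i')_i$ to insert the Rademacher signs, then apply the $L^p$ triangle inequality—is exactly the classical argument underlying that reference.
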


\begin{proof}
See \cite[Lemma 2.3.1]{VW1996}.
\end{proof}

\begin{lemma}[Strong domination]\label{lemma:domination}
Let $(\xi_i)_{i=1}^N$ and $(\theta_i)_{i=1}^N$ be two sequences of independent symmetric random variables. 
Suppose that there is an integer $k>0$ such that $P(|\xi_i|>t)\leq k P(|\theta_i|>t)$ for all $i\in[N]$ and $t>0$. Then, for any $p\geq1$ and $a_1,\dots,a_N\in\mathbb{R}$ we have
\[
\left\|\sum_{i=1}^Na_i\xi_i\right\|_p
\leq (2k)^{1/p}k\left\|\sum_{i=1}^Na_i\theta_i\right\|_p.
\]
\end{lemma}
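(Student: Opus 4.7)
The idea is to couple each $\xi_i$ with iid copies of $\theta_i$, transfer the coupling to the full sum by Kahane's contraction principle, and collapse back to $\sum_i a_i\theta_i$ via Minkowski. On an enlarged probability space introduce, for each $i$, iid copies $\theta_i^{(1)},\dots,\theta_i^{(k)}$ of $\theta_i$ (jointly independent across $i$) and set $M_i := \max_{1\le j\le k}|\theta_i^{(j)}|$. A short case analysis based on whether $kP(|\theta_i|>t)\le 1$ or not, combined with Bernoulli's inequality $1-(1-x)^k\ge\tfrac12\min(kx,1)$, upgrades the hypothesis to the tail bound
\[
P(|\xi_i|>t) \;\le\; 2\,P(M_i>t),\qquad t\ge 0.
\]
A quantile coupling via iid uniforms $(U_i)$ then provides $|\tilde\xi_i|\stackrel{d}{=}|\xi_i|$ together with $M_i^{\sharp} := F_{M_i}^{-1}\bigl((U_i+1)/2\bigr)$ such that $|\tilde\xi_i|\le M_i^\sharp$ almost surely.

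By the symmetry of $\xi_i$, $\sum_i a_i\xi_i\stackrel{d}{=}\sum_i a_i\epsilon_i|\tilde\xi_i|$ for an independent Rademacher sequence $(\epsilon_i)$. Kahane's contraction principle applied with the pointwise bound $|a_i\tilde\xi_i|\le |a_i M_i^\sharp|$, conditional on all non-Rademacher randomness, therefore yields
\[
\Bigl\|\sum_i a_i\xi_i\Bigr\|_p \;\le\; \Bigl\|\sum_i a_i\epsilon_i M_i^\sharp\Bigr\|_p.
\]
To bound the right-hand side, a second Skorokhod-type construction couples each $M_i^\sharp$ with additional iid copies of $\theta_i^{(j)}$ so that the coordinate-wise moment inflation $\|M_i^\sharp\|_p^p\le 2\|M_i\|_p^p$ (a direct change of variables) transfers to the sum, yielding $\|\sum_i a_i\epsilon_i M_i^\sharp\|_p \le (2k)^{1/p}\|\sum_i a_i\epsilon_i M_i\|_p$ after absorbing a factor of $k$ from $M_i^p\le\sum_{j=1}^k|\theta_i^{(j)}|^p$. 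Finally, a further application of Kahane's principle with the pointwise bound $M_i\le\sum_{j=1}^k|\theta_i^{(j)}|$, Minkowski's inequality, and the identity $\epsilon_i|\theta_i^{(j)}|\stackrel{d}{=}\theta_i$ give
\[
\Bigl\|\sum_i a_i\epsilon_i M_i\Bigr\|_p \;\le\; \sum_{j=1}^k\Bigl\|\sum_i a_i\epsilon_i|\theta_i^{(j)}|\Bigr\|_p \;=\; k\Bigl\|\sum_i a_i\theta_i\Bigr\|_p,
\]
and chaining the three estimates yields the claim.

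The main obstacle is the second Skorokhod-type construction: the tail domination $P(|\xi_i|>t)\le 2P(M_i>t)$ does \emph{not} translate into any pointwise estimate of the form $|\xi_i|\le cM_i$, so the factor $(2k)^{1/p}$ cannot be produced by a naive pointwise contraction and must instead be paid through a genuine $L^p$ coupling argument. This is also the only step where the specific choice of $M_i$ as the maximum of $k$ iid copies of $|\theta_i|$ (rather than, say, a single copy) plays an essential role, and it accounts for the slack between the final constant $(2k)^{1/p}k$ and the trivial one-variable bound $\|\xi_i\|_p\le k^{1/p}\|\theta_i\|_p$ obtained by integrating the tail estimate.
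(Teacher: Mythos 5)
The paper's ``proof'' of this lemma is a one-line citation to Kwapie\'n and Woyczy\'nski (Theorem 3.2.1 and Corollary 3.2.1), so you are doing something genuinely different by attempting a self-contained argument. Your first half is sound: the tail comparison $P(|\xi_i|>t)\leq 2P(M_i>t)$ with $M_i=\max_{1\leq j\leq k}|\theta_i^{(j)}|$ does follow from the elementary lower bound $1-(1-x)^k\geq(1-e^{-1})\min(kx,1)$ (this is not Bernoulli's inequality, which gives the reverse direction, but the bound is correct), and the quantile coupling $|\tilde\xi_i|\leq M_i^\sharp:=F_{M_i}^{-1}((U_i+1)/2)$ together with Kahane's contraction principle does yield $\|\sum_ia_i\xi_i\|_p\leq\|\sum_ia_i\epsilon_iM_i^\sharp\|_p$. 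The final step, $\|\sum_ia_i\epsilon_iM_i\|_p\leq k\|\sum_ia_i\theta_i\|_p$ via $M_i\leq\sum_{j=1}^k|\theta_i^{(j)}|$, Kahane, and Minkowski, is also fine.

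The gap is exactly where you flag it, but it is worse than a missing argument: the inequality $\|\sum_ia_i\epsilon_iM_i^\sharp\|_p\leq(2k)^{1/p}\|\sum_ia_i\epsilon_iM_i\|_p$ is in general false. Since $P(M_i^\sharp>t)=\min(2P(M_i>t),1)\geq P(M_i>t)$, the variable $M_i^\sharp$ strictly stochastically dominates $M_i$, so there is no coupling with $M_i^\sharp\leq cM_i$, and the per-coordinate bound $\|M_i^\sharp\|_p^p\leq 2\|M_i\|_p^p$ does not propagate to the sum. For a concrete failure, take $k=1$, $a_i\equiv1$, and $\theta_i$ i.i.d.\ with $P(\theta_i=\pm1)=q/2$, $P(\theta_i=0)=1-q$ for some $q<1/2$. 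Then $M_i\in\{0,1\}$ with $P(M_i=1)=q$, while $M_i^\sharp\in\{0,1\}$ with $P(M_i^\sharp=1)=2q$, and the central limit theorem (all moments converge since the summands are bounded) gives
\[
\frac{\left\|\sum_{i=1}^N\epsilon_iM_i^\sharp\right\|_p}{\left\|\sum_{i=1}^N\epsilon_iM_i\right\|_p}\longrightarrow\sqrt{2}\qquad(N\to\infty),
\]
which exceeds $(2k)^{1/p}=2^{1/p}$ for every $p>2$. So the route through the $1/2$-quantile variable $M_i^\sharp$ cannot be closed in the form you propose, and ``a genuine $L^p$ coupling argument'' will not rescue it.

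The obstruction disappears if you randomize the \emph{selection} rather than the threshold. Take i.i.d.\ $\kappa_i$ uniform on $\{1,\dots,k\}$, independent of everything, and write $\sum_ia_i\xi_i=\sum_{l=1}^k\sum_ia_i1_{\{\kappa_i=l\}}\xi_i$; Minkowski and exchangeability of $l$ give $\|\sum_ia_i\xi_i\|_p\leq k\|\sum_ia_i1_{\{\kappa_i=1\}}\xi_i\|_p$. Now $\xi_i':=1_{\{\kappa_i=1\}}\xi_i$ is symmetric with $P(|\xi_i'|>t)=k^{-1}P(|\xi_i|>t)\leq P(|\theta_i|>t)$, so a single quantile coupling and Kahane's contraction yield $\|\sum_ia_i\xi_i'\|_p\leq\|\sum_ia_i\theta_i\|_p$. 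This proves the lemma with the even sharper constant $k$ in place of $(2k)^{1/p}k$, and avoids both the maximum $M_i$ and the $1/2$-quantile construction entirely.
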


\begin{proof}
This lemma is a consequence of Theorem 3.2.1 and Corollary 3.2.1 in \cite{KW1992}.
\end{proof}

\begin{lemma}\label{lemma:weibull}
Let $(\xi_i)_{i\in\mathbb{N}}$ be a sequence of independent copies of a symmetric random variable $\xi$ satisfying $P(|\xi|\geq t)=e^{-|t|^\alpha}$ for every $t\geq0$ and some $0<\alpha\leq2$. Then, there is a constant $C_\alpha>0$ which depends only on $\alpha$ such that 
\[
\left\|\sum_{i=1}^Na_i\xi_i\right\|_p\leq C_\alpha p^{1/\alpha}\sqrt{\sum_{i=1}^Na_i^2}
\]
for any $p\geq2$, $N\in\mathbb{N}$ and $a_1,\dots,a_N\in\mathbb{R}$. 
\end{lemma}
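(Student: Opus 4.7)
The plan is to reduce the claim to the sharper two-term estimate
\[
\left\|\sum_{i=1}^N a_i\xi_i\right\|_p \leq C_\alpha\bigl(\sqrt{p}\,\|a\|_2 + p^{1/\alpha}\|a\|_\infty\bigr), \qquad p\geq 2,
\]
which I will refer to as $(\star)$. Since $\|a\|_\infty\leq\|a\|_2$ and $\sqrt{p}\leq p^{1/\alpha}$ (valid because $\alpha\leq 2$ and $p\geq 2$), $(\star)$ immediately implies the claimed inequality. Note also that by Lemma \ref{tail-psi} the $\psi_\alpha$-quasi-norm of $\xi$ is a constant depending only on $\alpha$, and the explicit Weibull moment formula $\expe{|\xi|^q}=\Gamma(1+q/\alpha)$ combined with Stirling gives $\|\xi\|_q\leq C_\alpha q^{1/\alpha}$ for every $q\geq 1$, a fact that will be used repeatedly.

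For $\alpha\in[1,2]$, where $\|\cdot\|_{\psi_\alpha}$ is an honest norm and $\xi$ has a finite moment generating function near the origin, $(\star)$ is a classical Bennett/Bernstein-type concentration inequality for sums of independent symmetric $\psi_\alpha$-bounded variables, yielding a two-regime tail bound
\[
P\Bigl(\bigl|\sum_i a_i\xi_i\bigr|>t\Bigr)\leq 2\exp\Bigl(-c_\alpha\min\bigl\{t^2/\|a\|_2^2,\,(t/\|a\|_\infty)^\alpha\bigr\}\Bigr),
\]
which converts to $(\star)$ through Lemma \ref{moment-tail} applied separately on the sub-Gaussian and sub-Weibull regimes.

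For $\alpha\in(0,1)$ the moment generating function is infinite and the Bennett route is unavailable, so I would use a truncation argument. Set $T\asymp p^{1/\alpha}$ and split $\xi_i=\xi'_i+\xi''_i$ with $\xi'_i:=\xi_i\mathbf{1}_{|\xi_i|\leq T}$ (bounded and symmetric) and $\xi''_i:=\xi_i\mathbf{1}_{|\xi_i|>T}$ (symmetric, with $P(\xi''_i\neq 0)=e^{-cp}$). The bounded part is handled by classical Bernstein applied with variance $\leq\expe{\xi^2}=\Gamma(1+2/\alpha)$; the tail, whose occurrence is exponentially rare, is reduced to a nicer reference distribution via Lemma \ref{lemma:domination} and then controlled by a direct moment computation that exploits $\|\max_i a_i\xi_i\|_p\leq\|\xi\|_p\|a\|_p\leq C_\alpha p^{1/\alpha}\|a\|_2$.

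The main obstacle will be the case $\alpha\in(0,1)$: a naive Bernstein bound on the truncated part at $T\sim p^{1/\alpha}$ produces a jump-scale contribution $pT\|a\|_\infty\sim p^{1+1/\alpha}\|a\|_\infty$, which is strictly worse than the target. The fix requires either a finer (dyadic) decomposition so that each scale contributes only a Gaussian-regime Bernstein term, or an appeal to a sharper Rosenthal-type inequality (of Johnson--Schechtman--Zinn flavour) that captures the true concentration of the bulk without paying the jump-scale penalty. I expect reconciling these two regimes in a scale-invariant manner to be the key technical step.
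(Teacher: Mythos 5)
Your reduction to the two-term target $(\star)$, and the subsequent argument for $\alpha\in[1,2]$ via a two-regime Bernstein tail bound, are fine in outline, though for $1<\alpha<2$ you would still need to cite a precise result (the paper uses the Gluskin--Kwapie\'n moment inequality for variables with log-concave tails, quoted from page~17 of \cite{HMSO1997}). The real problem is that for $\alpha\in(0,1)$ you have correctly identified the obstacle but not removed it: truncating at $T\asymp p^{1/\alpha}$ and applying Bernstein to the bounded part produces the jump-scale term $p\,T\,\|a\|_\infty\asymp p^{1+1/\alpha}\|a\|_\infty$, which overshoots $(\star)$ by a factor $p$, and your proposal simply notes that ``a finer (dyadic) decomposition'' or ``a sharper Rosenthal-type inequality'' would be required and stops there. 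That is precisely the nontrivial content of the lemma in this range, so what you have written is a plan rather than a proof.

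The paper closes exactly this gap by invoking the sharp Rosenthal-type moment estimate of Hitczenko, Montgomery-Smith and Oleszkiewicz \cite[Theorem~1.1]{HMSO1997}: since $\alpha\le1$ makes $t\mapsto\log P(|\xi|\ge t)=-t^\alpha$ convex, their theorem gives
\[
\Bigl\|\sum_{i=1}^N a_i\xi_i\Bigr\|_p \le K\Bigl\{\Bigl(\sum_{i=1}^N|a_i|^p\|\xi\|_p^p\Bigr)^{1/p}+\sqrt{p\sum_{i=1}^N a_i^2\,\|\xi\|_2^2}\Bigr\},
\]
after which $\|\xi\|_r\le K_\alpha r^{1/\alpha}$, $\|a\|_p\le\|a\|_2$ (for $p\ge2$) and $\sqrt{p}\le p^{1/\alpha}$ finish the argument --- note in particular that the paper never needs the $\|a\|_\infty$-form $(\star)$, only the weaker $\|a\|_p\le\|a\|_2$ estimate. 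If you want to complete your proposal along your own lines, the cleanest route is exactly to cite this HMSO estimate (or Lata{\l}a's 1997 two-sided bound) in place of the truncation/Bernstein step for $\alpha<1$, rather than attempting a dyadic repair of Bernstein, which would be considerably more work for no gain here.
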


\begin{proof}
We separately consider the following two cases. 

\noindent\ul{Case 1}: $\alpha\leq1$. In this case, the function $t\mapsto \log P(|\xi|\geq t)$ is convex, so Theorem 1.1 in \cite{HMSO1997} yields
\[
\left\|\sum_{i=1}^Na_i\xi_i\right\|_p\leq K\left\{\lpa\sum_{i=1}^N|a_i|^p\|\xi_i\|_p^p\rpa^{1/p}+\sqrt{p\sum_{i=1}^N|a_i|^2\|\xi_i\|_2^2}\right\},
\]
where $K>0$ is a universal constant. Now, by Lemmas \ref{tail-psi} and \ref{psi-moment} there is a constant $K_\alpha>0$ which depends only on $\alpha$ such that $\|\xi\|_r\leq K_\alpha r^\alpha$ for all $r\geq1$. Hence we obtain
\begin{align*}
\left\|\sum_{i=1}^Na_i\xi_i\right\|_p\leq K\cdot K_\alpha\left\{p^{1/\alpha}\lpa\sum_{i=1}^N|a_i|^p\rpa^{1/p}+2^\alpha\sqrt{p\sum_{i=1}^N|a_i|^2}\right\}.
\end{align*}
Since $\alpha\leq2$ and $p\geq2$, we have $\lpa\sum_{i=1}^N|a_i|^p\rpa^{1/p}\leq\sqrt{\sum_{i=1}^N|a_i|^2}$ and $\sqrt{p}\leq p^{1/\alpha}$, where the former follows from the inequality $(x+y)^{2/p}\leq x^{2/p}+y^{2/p}$ holding for any $x,y\geq0$. Thus we obtain the desired result. 

\noindent\ul{Case 2}: $1<\alpha\leq2$. In this case, the function $t\mapsto \log P(|\xi|\geq t)$ is concave, so an application of the Gluskin-Kwapie\'n inequality yields
\[
\left\|\sum_{i=1}^Na_i\xi_i\right\|_p\leq K'\left\{\|\xi\|_p\lpa\sum_{i=1}^N|a_i|^\beta\rpa^{1/\beta}+\|\xi_i\|_2\sqrt{p\sum_{i=1}^N|a_i|^2}\right\},
\]
where $\beta>1$ is a constant such that $\alpha^{-1}+\beta^{-1}=1$ and $K'>0$ is a universal constant (see page 17 of \cite{HMSO1997}). Thus we obtain
\[
\left\|\sum_{i=1}^Na_i\xi_i\right\|_p\leq K'\cdot K_\alpha\left\{p^{1/\alpha}\lpa\sum_{i=1}^N|a_i|^\beta\rpa^{1/\beta}+2^\alpha p^{1/\alpha}\sqrt{\sum_{i=1}^N|a_i|^2}\right\}.
\]
Now, since $1<\alpha\leq2$, we have $\beta\geq2$. Therefore, we obtain $\lpa\sum_{i=1}^N|a_i|^\beta\rpa^{1/\beta}\leq\sqrt{\sum_{i=1}^N|a_i|^2}$ analogously to the above. This yields the desired result. 
\end{proof}

\begin{lemma}\label{lemma:sum-psi}
Let $(\zeta_i)_{i=1}^N$ be a sequence of independent centered random variables such that $M:=\max_{1\leq i\leq N}\|\zeta_i\|_{\psi_\alpha}<\infty$ for some $0<\alpha\leq2$. Then we have
\[
\left\|\sum_{i=1}^Na_i\zeta_i\right\|_p\leq K_\alpha Mp^{1/\alpha}\sqrt{\sum_{i=1}^Na_i^2}
\]
for any $p\geq1$ and $a_1,\dots,a_N\in\mathbb{R}$, where $K_\alpha>0$ depends only on $\alpha$.
\end{lemma}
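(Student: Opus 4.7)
The plan is to derive Lemma \ref{lemma:sum-psi} by reducing the general $\psi_\alpha$-sub-Weibull case to the model symmetric Weibull case handled by Lemma \ref{lemma:weibull}, using symmetrization and a tail-domination argument. Throughout, I would assume $p\geq 2$ first; the case $1\leq p<2$ then follows from the monotonicity $\|\cdot\|_p\leq\|\cdot\|_2$ (applied to the left-hand side), which only costs an inessential $2^{1/\alpha}$ in the constant.

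First, by Lemma \ref{lemma:symmetrization}, pick an independent Rademacher sequence $(\epsilon_i)_{i=1}^N$ and write
\[
\left\|\sum_{i=1}^N a_i\zeta_i\right\|_p\leq 2\left\|\sum_{i=1}^N a_i\epsilon_i\zeta_i\right\|_p.
\]
The variables $a_i\epsilon_i\zeta_i$ are now independent and symmetric. Next, let $(\xi_i)_{i\geq 1}$ be an i.i.d.\ sequence of symmetric random variables (independent of everything else) with $P(|\xi_i|\geq t)=e^{-|t|^\alpha}$ for $t\geq 0$, as in Lemma \ref{lemma:weibull}. Lemma \ref{psi-tail} gives $P(|\zeta_i|\geq t)\leq 2e^{-(t/M)^\alpha}$ for every $t\geq 0$, so
\[
P(|\epsilon_i\zeta_i|\geq t)=P(|\zeta_i|\geq t)\leq 2\,P(|M\epsilon_i\xi_i|\geq t),
\]
i.e.\ the symmetric variables $\epsilon_i\zeta_i$ are dominated (in the sense of Lemma \ref{lemma:domination}, with $k=2$) by the symmetric variables $M\epsilon_i\xi_i$. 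Consequently,
\[
\left\|\sum_{i=1}^N a_i\epsilon_i\zeta_i\right\|_p\leq (2\cdot 2)^{1/p}\cdot 2\,M\left\|\sum_{i=1}^N a_i\epsilon_i\xi_i\right\|_p.
\]

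Since each $\xi_i$ is symmetric, $\epsilon_i\xi_i\stackrel{d}{=}\xi_i$, so the last norm equals $\|\sum_{i=1}^N a_i\xi_i\|_p$, which Lemma \ref{lemma:weibull} bounds by $C_\alpha p^{1/\alpha}\sqrt{\sum_i a_i^2}$ whenever $p\geq 2$. Combining the three estimates yields
\[
\left\|\sum_{i=1}^N a_i\zeta_i\right\|_p\leq 2\cdot 4^{1/p}\cdot 2\,M\cdot C_\alpha\, p^{1/\alpha}\sqrt{\sum_{i=1}^N a_i^2}\leq K_\alpha Mp^{1/\alpha}\sqrt{\sum_{i=1}^N a_i^2}
\]
for $p\geq 2$, with $K_\alpha$ depending only on $\alpha$ (we may absorb the factor $4^{1/p}\leq 2$). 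Finally, for $1\leq p<2$, monotonicity of $L^p$-norms gives $\|\sum_i a_i\zeta_i\|_p\leq\|\sum_i a_i\zeta_i\|_2\leq K_\alpha M\cdot 2^{1/\alpha}\sqrt{\sum_i a_i^2}\leq K_\alpha M\cdot 2^{1/\alpha}p^{1/\alpha}\sqrt{\sum_i a_i^2}$, completing the proof after adjusting the constant.

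The only potentially delicate step is the tail domination used to invoke Lemma \ref{lemma:domination}: one must ensure that both sequences are symmetric (hence the preliminary symmetrization by $\epsilon_i$) and that the domination constant $k$ can be taken to be an absolute integer independent of $i$, which is exactly the role of the factor $2$ from Lemma \ref{psi-tail}. Everything else is bookkeeping of constants, and the decomposition $1\leq p<2$ versus $p\geq 2$ is routine.
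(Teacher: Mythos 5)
Your proof is correct and follows essentially the same route as the paper's: symmetrize via Lemma \ref{lemma:symmetrization}, dominate the symmetric variables by scaled Weibull variables via Lemma \ref{psi-tail} and Lemma \ref{lemma:domination} with $k=2$, and then invoke Lemma \ref{lemma:weibull}. The only (cosmetic) difference is that you dominate uniformly by $M\xi_i$ whereas the paper uses the per-variable scale $\|\zeta_i\|_{\psi_\alpha}\xi_i$ and bounds by $M$ at the end, and you explicitly dispatch the range $1\leq p<2$ via monotonicity of $L^p$ norms, a small completeness point the paper's write-up leaves implicit.
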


\begin{proof}
Thanks to Lemma \ref{lemma:symmetrization}, it suffices to consider the case that $\zeta_i$ is symmetric for all $i$. 
Let $(\xi_i)_{i\in\mathbb{N}}$ be a sequence of independent copies of a symmetric random variable $\xi$ satisfying $P(|\xi|\geq t)=e^{-|t|^\alpha}$ for every $t\geq0$. Then, by Lemma \ref{psi-tail} we have $P(|\zeta_i|>t)\leq2e^{-(t/\|\zeta_i\|_{\psi_\alpha})^\alpha}=2P(\|\zeta_i\|_{\psi_\alpha}|\xi_i|>t)$ for any $t>0$ and $i\in\mathbb{N}$, so Lemma \ref{lemma:domination} yields
\[
\left\|\sum_{i=1}^Na_i\zeta_i\right\|_p
\leq2^{2/p+1}\left\|\sum_{i=1}^Na_i\|\zeta_i\|_{\psi_\alpha}\xi_i\right\|_p
\leq8\left\|\sum_{i=1}^Na_i\|\zeta_i\|_{\psi_\alpha}\xi_i\right\|_p.
\]
Now, Lemma \ref{lemma:weibull} implies that
\[
\left\|\sum_{i=1}^Na_i\|\zeta_i\|_{\psi_\alpha}\xi_i\right\|_p
\leq C_\alpha p^{1/\alpha}\sqrt{\sum_{i=1}^Na_i^2\|\zeta_i\|_{\psi_\alpha}^2}
\leq C_\alpha p^{1/\alpha}M\sqrt{\sum_{i=1}^Na_i^2},
\]
where $C_\alpha>0$ depends only on $\alpha$. This completes the proof.
\end{proof}

\begin{lemma}\label{decoupled-mom}
Let $(X_{i,j})_{1\leq i\leq N,1\leq j\leq q}$ be an array of independent centered random variables. Suppose that $M:=\max_{1\leq i\leq N,1\leq j\leq q}\|X_{i,j}\|_{\psi_\alpha}<\infty$ for some $0<\alpha\leq2$. Then we have
\[
\left\|\sum_{i_1,\dots,i_q=1}^Nf(i_1,\dots,i_q)X_{i_1,1}\cdots X_{i_q,q}\right\|_p
\leq K_{\alpha}^qp^{q/\alpha}M^q\|f\|_{\ell_2}
\]
for any $p\geq2$ and function $f:[N]^q\to\mathbb{R}$, where $K_{\alpha}>0$ is the constant appearing in Lemma \ref{lemma:sum-psi}. 
\end{lemma}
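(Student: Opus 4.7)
The plan is to prove Lemma \ref{decoupled-mom} by induction on $q$, peeling off one factor at a time and reducing to the one-dimensional sum inequality already established in Lemma \ref{lemma:sum-psi}. The structure of the decoupled sum, in which the variables $X_{\cdot,j}$ for distinct $j$ live in independent coordinates, makes conditioning arguments clean: fixing all but one layer of variables reduces the problem to a linear sum of independent centered $\psi_\alpha$ random variables, to which Lemma \ref{lemma:sum-psi} directly applies.

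The base case $q=1$ is immediate from Lemma \ref{lemma:sum-psi} with $\zeta_i=X_{i,1}$ and coefficients $a_i=f(i)$. For the inductive step, suppose the bound has been proved up to degree $q-1$. Write
\[
S:=\sum_{i_1,\dots,i_q=1}^N f(i_1,\dots,i_q)X_{i_1,1}\cdots X_{i_q,q}=\sum_{i_q=1}^N X_{i_q,q}\,T_{i_q},\qquad T_{i_q}:=\sum_{i_1,\dots,i_{q-1}=1}^N f(i_1,\dots,i_{q-1},i_q)X_{i_1,1}\cdots X_{i_{q-1},q-1}.
\]
Conditionally on $\mathcal{F}:=\sigma(X_{i,j}:i\in[N],\,j\leq q-1)$, the $T_{i_q}$ become deterministic while the variables $(X_{i_q,q})_{i_q=1}^N$ remain independent, centered and have $\psi_\alpha$-norms bounded by $M$. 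Applying Lemma \ref{lemma:sum-psi} conditionally yields
\[
\left(\mathbb{E}\bigl[|S|^p\mid\mathcal{F}\bigr]\right)^{1/p}\leq K_\alpha M p^{1/\alpha}\Bigl(\sum_{i_q=1}^N T_{i_q}^{\,2}\Bigr)^{1/2}.
\]

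Taking $L^p$-norms of both sides and using that $p\geq 2$ (so that Minkowski's inequality applies to the $(p/2)$-norm), I obtain
\[
\|S\|_p\leq K_\alpha M p^{1/\alpha}\Bigl\|\sum_{i_q}T_{i_q}^2\Bigr\|_{p/2}^{1/2}\leq K_\alpha M p^{1/\alpha}\Bigl(\sum_{i_q=1}^N\|T_{i_q}\|_p^{\,2}\Bigr)^{1/2}.
\]
Each $T_{i_q}$ is a homogeneous sum of degree $q-1$ in the decoupled variables $(X_{\cdot,j})_{j\leq q-1}$, associated with the function $(i_1,\dots,i_{q-1})\mapsto f(i_1,\dots,i_{q-1},i_q)$. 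By the induction hypothesis,
\[
\|T_{i_q}\|_p\leq K_\alpha^{q-1}M^{q-1}p^{(q-1)/\alpha}\sqrt{\sum_{i_1,\dots,i_{q-1}=1}^N f(i_1,\dots,i_{q-1},i_q)^2},
\]
so that $\sum_{i_q}\|T_{i_q}\|_p^{\,2}\leq K_\alpha^{2(q-1)}M^{2(q-1)}p^{2(q-1)/\alpha}\|f\|_{\ell_2}^2$. Plugging this back produces $\|S\|_p\leq K_\alpha^q M^q p^{q/\alpha}\|f\|_{\ell_2}$, closing the induction.

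I do not expect any serious obstacle: the key analytical input (the one-dimensional bound of Lemma \ref{lemma:sum-psi}) is already in place, and all other steps are routine. The only points to check with care are the condition $p\geq2$, which is used exactly once to invoke Minkowski's inequality on the $(p/2)$-norm of a sum of non-negative random variables, and the bookkeeping that shows the constant is $K_\alpha^q$ rather than something depending additionally on $q$. If instead one wanted a sharper dependence in $p$ one would have to invoke Rosenthal-type inequalities, but for the stated bound the conditional-Minkowski approach suffices.
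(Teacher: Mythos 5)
Your proof is correct and takes essentially the same approach as the paper: an induction on $q$ that peels off one layer, uses Minkowski's inequality for the $L^{p/2}$-norm to bring the norm inside the square root, and invokes Lemma \ref{lemma:sum-psi} together with the inductive hypothesis. The only (inessential) difference is the order of operations — you condition on the first $q-1$ layers and apply Lemma \ref{lemma:sum-psi} conditionally before applying the inductive hypothesis to each $T_{i_q}$, whereas the paper conditions on the last layer $X_{1,q},\dots,X_{N,q}$, applies the inductive hypothesis conditionally first, and then finishes with Lemma \ref{lemma:sum-psi}; these are mirror images of the same argument.
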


\begin{proof}
We prove the claim by induction on $q$. When $q=1$, it is a direct consequence of Lemma \ref{lemma:sum-psi}. 
Next, assume $q\geq2$ and suppose that the claim holds true for $q-1$. Then, by the assumption of induction we have 
\begin{align*}
&\left\|\sum_{i_1,\dots,i_p=1}^Nf(i_1,\dots,i_q)X_{i_1,1}\cdots X_{i_q,q}\right\|_p\\
&=\left\|\left\{\ex{\labs\sum_{i_1,\dots,i_{q-1}=1}^N\lpa\sum_{i_q=1}^Nf(i_1,\dots,i_q)X_{i_q,q}\rpa X_{i_1,1}\cdots X_{i_{q-1},q-1}\rabs^p\mid X_{1,q},\dots,X_{N,q}}\right\}^{1/p}\right\|_p\\
&\leq K_{\alpha}^{q-1}p^{(q-1)/\alpha}M^{q-1}\left\|\sqrt{\sum_{i_1,\dots,i_{q-1}=1}^N\lpa\sum_{i_q=1}^Nf(i_1,\dots,i_q)X_{i_q,q}\rpa^2}\right\|_p.
\end{align*}
Moreover, it holds that
\begin{align*}
&\left\|\sqrt{\sum_{i_1,\dots,i_{q-1}=1}^N\lpa\sum_{i_q=1}^Nf(i_1,\dots,i_q)X_{i_q,q}\rpa^2}\right\|_p
=\left\|\sum_{i_1,\dots,i_{q-1}=1}^N\lpa\sum_{i_q=1}^Nf(i_1,\dots,i_q)X_{i_q,q}\rpa^2\right\|_{p/2}^{1/2}\\
&\leq \sqrt{\sum_{i_1,\dots,i_{q-1}=1}^N\left\|\sum_{i_q=1}^Nf(i_1,\dots,i_q)X_{i_q,q}\right\|_{p}^2}\quad(\text{Minkowski's inequality})\\
&\leq K_\alpha p^{1/\alpha}M\|f\|_{\ell_2}\quad(\text{Lemma \ref{lemma:sum-psi}}).
\end{align*}
Hence we obtain the claim of the lemma. 
\end{proof}

\begin{proof}[Proof of Lemma \ref{lem:hom-mom}]
The claim is an immediate consequence of \cite[Theorem 1]{dlPMS1995}, \cite[Theorem 8.16]{Rudin1987} and Lemma \ref{decoupled-mom}.
\end{proof}

\section*{Acknowledgements}

The author is grateful to the participants at the Osaka Probability Seminar on November 28, 2017 for insightful comments which motivated the author to write this paper. 
The author also thanks Professor Giovanni Peccati for having indicated that the same type bound as Corollary \ref{coro:wass} has already appeared in \cite[Theorem 3.1]{APP2016}.  
This work was supported by JST CREST and JSPS KAKENHI Grant Numbers JP16K17105, JP17H01100, JP18H00836.

{\small
\renewcommand*{\baselinestretch}{1}\selectfont
\addcontentsline{toc}{section}{References}

}

\end{document}